\documentclass[12pt, dvipsnames]{amsart}
\usepackage[T1]{fontenc}

\usepackage[shortlabels, inline]{enumitem}
\usepackage{xcolor}
\usepackage[framemethod=TiKZ]{mdframed}
\usepackage{textcomp}
\usepackage{eurosym}
\usepackage[all]{xy}
\usepackage{soul}
\usepackage{amsmath,mathtools}
\usepackage{amscd}
\usepackage{amsmath,amsthm,latexsym,amssymb,accents,mathtools,amscd}
\usepackage{txfonts,mathrsfs}

\usepackage[colorlinks=true, citecolor=BrickRed, urlcolor=RoyalPurple]{hyperref}
\usepackage{graphicx}
\usepackage[utf8]{inputenc}
\usepackage[english]{babel}
\usepackage[a4paper, portrait, margin=2cm]{geometry}
\usepackage{wrapfig}
\usepackage{tikz-cd}
\usepackage{tikz}
\usetikzlibrary{shapes.misc}
\usepackage[normalem]{ulem}

\usepackage{multicol}
\newcommand{\A}{\mathcal{A}}
\newcommand{\C}{\mathcal{C}}
\newcommand{\D}{\mathcal{D}}
\newcommand{\T}{\mathcal{T}}
\newcommand{\F}{\mathcal{F}}
\newcommand{\G}{\mathcal{G}}

\newcommand{\K}{\mathcal{K}}

\renewcommand{\P}{\mathcal{P}}
\newcommand{\Q}{\mathcal{Q}}
\newcommand{\B}{\mathcal{B}}
\newcommand{\M}{\mathcal{M}}
\newcommand{\I}{\mathcal{I}}

\newcommand{\HH}{\mathcal{H}}

\newcommand{\PP}{\mathbb{P}}

\newcommand{\Z}{\mathbb{Z}}

\newcommand{\E}{\mathcal{E}}

\newcommand{\Mor}{\operatorname{Mor}}

\newcommand{\Ima}{\operatorname{Im}}

\newcommand{\red}{_\mathrm{red}}

\newcommand{\defeq}{\overset{\text{\textup{def}}}{=}}

\newcommand{\hadgesh}[1]{\textcolor{Brown}{\emph{#1}}}

\definecolor{ForestGreen}{rgb}{0.0, 0.27, 0.13}
\definecolor{DukeBlue}{HTML}{001A57}
\definecolor{myblue}{rgb}{0.29,0.47,1}

\renewcommand{\mod}{\textup{-}\curs{mod}}

\newcommand{\Hom}{\mathrm{Hom}}

\newcommand{\Id}{\mathrm{Id}}

\renewcommand{\lim}{\mathop{\textup{lim}}\limits}

\newcommand{\higherlim}[2]{\displaystyle\setbox1=\hbox{\rm lim}
\setbox2=\hbox to \wd1{\leftarrowfill} \ht2=0pt \dp2=-1pt
\setbox3=\hbox{$\scriptstyle{#1}$}
\def\test{#1}\ifx\test\empty
\mathop{\mathop{\vtop{\baselineskip=5pt\box1\box2}}}\nolimits^{#2}
\else
\ifdim\wd1<\wd3
\mathop{\hphantom{^{#2}}\vtop{\baselineskip=5pt\box1\box2}^{#2}}_{#1}
\else
\mathop{\mathop{\vtop{\baselineskip=5pt\box1\box2}}_{#1}}%
\nolimits^{#2}
\fi\fi}

\newcommand{\rk}{\operatorname{rk}\nolimits}

\newcommand{\Ker}{\operatorname{Ker}\nolimits}
\newcommand{\coKer}{\operatorname{coKer}\nolimits}
\newcommand{\VVect}{{\mathrm{Vect}_k}}

\newcommand{\xto}[1]{\xrightarrow{#1}}

\DeclareMathAlphabet\EuR{U}{eur}{m}{n}
\SetMathAlphabet\EuR{bold}{U}{eur}{b}{n}

\newcommand{\curs}{\EuR}

\renewcommand{\mod}{\mbox{-}\curs{mod}}
\newcommand{\Fun}{\curs{Fun}}
\newcommand{\Gr}{\curs{Gr}}
\newcommand{\GG}{\curs{G}}
\newcommand{\EE}{\curs{E}}

\newcommand{\obj}{\mathrm{Obj}}

\newcommand{\Cat}{\curs{Cat}}
\newcommand{\SCat}{\curs{SCat}}
\newcommand{\sSCat}{\curs{sSCat}}
\newcommand{\Aut}{\mathrm{Aut}}
\newcommand{\supp}{\mathrm{Supp}}

\newcommand{\widebar}[1]{\overset{{\mskip1mu\leaders\hrule height0.4pt\hfill\mskip1mu}}{#1}\vphantom{#1}}
\newcommand{\bul}{_\bullet}
\newcommand{\RH}{\mathscr{R}}

\usepackage{amsthm}
\usepackage{amssymb,amsmath,extarrows}
\newtheorem{Thm}{Theorem}[section]
\newtheorem{Prop}[Thm]{Proposition}

\newtheorem{Lem}[Thm]{Lemma}
\newtheorem{Rem}[Thm]{Remark}
\newtheorem{Cor}[Thm]{Corollary}
\newtheorem{Not}[Thm]{Notation}
\newtheorem{Th}{Theorem}

\theoremstyle{definition}
\newtheorem{Defi}[Thm]{Definition}
\newtheorem{Ex}[Thm]{Example}

\title{Representation cohomology of a small category}

\author{Markus Klemetti}
\address{Institute of Mathematics, University of Aberdeen, Aberdeen, UK}
\email{markus.o.klemetti@gmail.com}
\thanks{Klemetti is partially supported by Vilho, Yrj\"o and Kalle V\"ais\"al\"a Foundation}
\author{Ran Levi}
\address{Institute of Mathematics, University of Aberdeen, Aberdeen, UK}
\email{r.levi@abdn.ac.uk}
\thanks{Levi is partially supported by EPSRC grant EP/Y028872/1}
\author{Henri Riihim\"aki}
\address{Nordita, Stockholm University, Stockholm, Sweden}
\email{henri.riihimaki@su.se}
\thanks{Riihim\"aki is  supported by the Wallenberg Initiative on Networks and Quantum Information (WINQ)}
\author{Daniel S\"olch}
\address{Institute of Mathematics, University of Aberdeen, Aberdeen, UK}
\email{daniel.solch@abdn.ac.uk}
\date{\today}

\begin{document}

\begin{abstract}
Let $\C_\bullet$ be a  simplicial object in the category $\Cat$ of small categories. For a field $k$, taking the  Grothendieck groups  of isomorphism classes of $k\C_n$-modules gives rise to a cochain complex, whose cohomology, which we refer to as representation cohomology, is the object studied in this article. In particular, to any small category $\C$, we associate a simplicial object in $\Cat$, where for each $n\geq 0$ the objects of the level $n$ category are the simplices of the nerve of $\C$.  The basic properties of the  resulting representation cohomology of these simplicial objects and certain subobjects are then studied in detail.  We present some general  theoretical computations in favourable cases. 
\end{abstract}

\maketitle

Let $\C$ be a small category and let $k$ be a field. A representation of $\C$ over $k$ is a functor $M$ from $\C$ to the category of finite dimensional vector spaces over $k$.  If $\C$ is a finite category, then a representation of $\C$ over $k$ can equivalently be thought of as a $k\C$-module, where $k\C$ is the category algebra of $\C$ over $k$ \cite{Mi, Xu}. The collection of all representations of a category $\C$ over $k$ forms a category $k\C\mod$, where the morphisms are natural transformations of functors. Representation theory of small categories is a topic of substantial interest in pure and applied algebra \cite{DW, Sc, We}. On the applied side, notable examples arise in topological data analysis \cite{BBO, EH}.

By Drozd's trichotomy theorem \cite{Dr}, small categories are divided into three  representation types - finite, tame and wild. The division refers to the classification of indecomposable $k\C$-modules. In the finite case, one has only finitely many indecomposable modules, while in other two types there are  infinitely many. Tame representation type means that there is a $1$-parameter family of indecomposable $k\C$-modules, and for a category of wild representation type  there is an $n$-parameter family of indecomposable modules for arbitrarily large values of $n$.   It is well known that in almost all small categories, including finite categories and even finite posets,  the representation type is wild. Hence, since classification of indecomposable modules in general is completely out of reach, other methods for studying representations are required. This article offers a new tool which we refer to as \hadgesh{representation cohomology}. 

Representation cohomology is defined for any simplicial or semi-simplicial  object in the category $\Cat$  of small categories and functors between them. If $\D_\bullet$ is such an object and $k$ a field, one associates the (split) Grothendieck group $\Gr(k\D_n)$ of isomorphism classes of $k\D_n$-modules with each category $\D_n$. The simplicial operators then turn $\Gr(k\D_\bullet)$ into a cosimplicial free abelian group generated in  dimension $n$  by the indecomposable $k\D_n$-modules. Representation cohomology is defined to be the cohomology of the corresponding cochain complex. In this article we start with a small category $\C$ to which we associate a family of simplicial and semi-simplicial objects in $\Cat$.  The reader who is  not familiar with  the basic theory of simplicial and semi-simplicial objects   is referred to \cite{McL} for the necessary background that we are going to assume.

Let $\Cat_+\subseteq\Cat$ denote the subcategory whose objects are  small categories that admit no non-identity invertible morphisms and whose morphisms are faithful functors (i.e., injective on morphism sets). Let $\SCat$ and $\sSCat$ denote the categories of simplicial objects and semi-simplicial objects in $\Cat$, respectively. For any small category $\C$, we define a simplicial object $\widetilde{\E}^\C_\bullet\in\SCat$, where for $n\ge 0$, the objects of the category $\widetilde{\E}^\C_n$ are the $n$-simplices of the nerve $|\C|$ of $\C$. We shall be interested primarily in certain subobjects and their associated cohomology theory. Our first theorem summarises the basic properties of these constructions. 

\begin{Th}\label{Th:Rep-Cats}
There are functors $\widetilde{\E}^{-}_\bullet, \widetilde{\G}^{-}_\bullet\colon\Cat\to\SCat$ and $\E^{-}_\bullet, \G^{-}_\bullet\colon\Cat_+\to\sSCat$, together with natural transformations $\widetilde{\iota}\colon \widetilde{\G}^{-}_\bullet\to\widetilde{\E}^{-}_\bullet$ and $\iota\colon \G^{-}_\bullet\to\E^{-}_\bullet$. Restricted to $\Cat_+$, there are also natural transformations $\epsilon\colon\E^{-}_\bullet\to\widetilde{\E}^{-}_\bullet$ and $\gamma\colon\G^{-}_\bullet\to \widetilde{\G}^{-}_\bullet$, such that $\epsilon\circ\iota = \widetilde{\iota}\circ\gamma$. These functors have the following basic properties.
\begin{enumerate}[(1)]
\item $\widetilde{\E}^\C_0 = \widetilde{\G}^\C_0 = \C$ for all $\C\in\Cat$ and $\E^\D_0=\G^\D_0 = \D$ for all $\D\in\Cat_+$. Furthermore, for all $n>0$ and any $\C\in\Cat$, $\obj(\widetilde{\E}^\C_n)=\obj(\widetilde{\G}^\C_n)$ and the  functor $\widetilde{\iota}_\C\colon\widetilde{\G}^\C_n\to\widetilde{\E}^\C_n$ is an inclusion. Similarly, $\obj(\E^\D_n)=\obj(\G^\D_n)$ for any $\D\in\Cat_+$ and $n>0$, and the  functor $\iota_\D\colon\G^\D_n\to\E^\D_n$ is an inclusion.  \label{Th:Rep-Cats-1}

\item  For any $\D\in\Cat_+$ and $n\geq 0$,   the objects of $\E^\D_n$ are the non-degenerate simplices of $|\D|_n$, and the functor $\epsilon_\D\colon\E^\D_n\to\widetilde{\E}^\D_n$ is an inclusion of a full subcategory. Similarly,    the objects of $\G^\D_n$ are the non-degenerate simplices of $|\D|_n$, and the functor $\gamma_\D\colon\G^\D_n\to\widetilde{\G}^\D_n$ is an inclusion of a full subcategory. \label{Th:Rep-Cats-2}

\item  Let $\D_\bullet\subseteq \widetilde{\E}^\C\bul$  be any  semi-simplicial subobject. Then $\Gr(k\D_\bullet)$ is a semi-cosimplicial  abelian group generated in dimension $n$ by the indecomposable $k\D_n$-modules. Furthermore, the resulting cochain complex has the structure of a unital differential graded associative algebra over $\Z$. As a result the associated cohomology has the structure of a graded associative unital ring. \label{Th:Rep-Cats-3}
\end{enumerate}
\end{Th}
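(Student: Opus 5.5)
This theorem is essentially a summary of constructions, so I will prove it by giving the constructions explicitly and then checking the listed properties one at a time.

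\textbf{The simplicial object $\widetilde{\E}^\C_\bullet$.} The objects of $\widetilde{\E}^\C_n$ are the $n$-simplices $\sigma=(c_0\to c_1\to\cdots\to c_n)$ of $|\C|$. A morphism $\sigma\to\tau$ is a ladder of morphisms $c_i\to d_i$ making all squares commute; this is the functor category $\C^{[n]}$. The face and degeneracy functors are induced by precomposition with the coface and codegeneracy maps of $[n]$. For $n=0$ this gives $\C$, and functoriality in $\C$ is clear.

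\textbf{The subobject $\widetilde{\G}^\C_\bullet$.} This is the wide subcategory of $\widetilde{\E}^\C_n$ with the same objects, keeping only those ladders whose vertical maps are built from the simplex structure (for instance, ladders whose vertical maps are identities or composites of the given arrows). I need to check three things: these morphisms are closed under composition, they are preserved by faces and degeneracies, and they are preserved by functors $\C\to\C'$. This yields $\widetilde{\iota}$ and the inclusion statements in (1).

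\textbf{The restriction to $\Cat_+$.} For $\D\in\Cat_+$, I take $\E^\D_n$ and $\G^\D_n$ to be the full subcategories on the non-degenerate simplices, that is, chains with no identity arrows. The key lemma is that faces preserve non-degeneracy. Composing two non-identity arrows $f\colon a\to b$, $g\colon b\to c$ cannot give an identity: if $gf=1_a$ then $a=c$, and in a category with no non-identity isomorphisms this should force $f$ and $g$ to be identities. Here I would use the rigidity in $\Cat_+$ carefully, and if it is needed, the absence of non-trivial idempotents or loops implied by the paper's precise definition. Degeneracies do not preserve non-degeneracy, which is why $\E^\D_\bullet$ and $\G^\D_\bullet$ are only semi-simplicial.

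For functoriality in $\D$, a faithful functor must send non-degenerate chains to non-degenerate chains. This holds because a faithful functor cannot send a non-identity arrow $f$ to an identity, since $f$ and the identity would then have the same image; at least this holds for arrows with equal source and target, and arrows $a\to b$ with $a\neq b$ map to identities only if $F(a)=F(b)$. This last case is the main obstacle. I expect to handle it either through the precise definition of faithful in the paper, namely injective on all morphisms, which rules it out immediately, or by the no-inverses hypothesis. The transformations $\epsilon$ and $\gamma$ are then the full inclusions, and the relation $\epsilon\circ\iota=\widetilde\iota\circ\gamma$ holds because all four maps are inclusions of subcategories of $\widetilde{\E}^\D_n$. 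This gives (2).

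\textbf{Part (3).} A functor $f\colon\D_n\to\D_m$ induces restriction $f^*\colon k\D_m\mod\to k\D_n\mod$. This is additive and respects direct sums, so it gives homomorphisms $\Gr(k\D_m)\to\Gr(k\D_n)$, and the semi-simplicial identities dualize to semi-cosimplicial identities. By Krull--Schmidt for finite-dimensional representations, $\Gr$ is free on the indecomposables. The coboundary is the usual alternating sum.

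For the product, I use the front-face/back-face cup product. For $M$ a $k\D_p$-module and $N$ a $k\D_q$-module, I define $M\smile N$ on $\sigma\in\D_{p+q}$ by $M(\text{front } p\text{-face of }\sigma)\otimes_k N(\text{back } q\text{-face of }\sigma)$. This is functorial because the front and back face operators are composites of faces. It is bilinear on $\Gr$ since $\otimes$ distributes over $\oplus$, and it is associative by the standard Alexander--Whitney identities. The unit is the constant module $k$ on $\D_0$. The Leibniz rule $\delta(x\smile y)=\delta x\smile y+(-1)^p x\smile\delta y$ follows verbatim from the classical cosimplicial computation, since it only uses the relations between faces and front/back faces. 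Hence the cochain complex is a unital DGA over $\Z$, and its cohomology is a graded associative unital ring.
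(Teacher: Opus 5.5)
Your overall route is the paper's: $\widetilde{\E}^\C_n=\C^{[n]}$ with precomposition, $\widetilde{\G}^\C_\bullet$ as a wide simplicial subobject, and $\E^\C_\bullet,\G^\C_\bullet$ as the full subcategories on non-degenerate simplices. Naturality comes from faithful functors, and for (3) you use the front-face/back-face tensor product with unit $[\underline{k}]$ and a face-by-face check of the Leibniz rule, exactly as in Theorem~\ref{Thm:Products}.

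However, the lemma you single out as key is false as stated. Having no non-identity isomorphisms does not prevent $gf=1_a$ with $f,g$ non-identity. Take objects $a,b$ and morphisms $s\colon a\to b$, $r\colon b\to a$ with $rs=1_a$ and $sr=e\neq 1_b$, so the hom-sets are $\{1_a\}$, $\{s\}$, $\{r\}$, $\{1_b,e\}$ (the walking split idempotent). None of $s,r,e$ is invertible, so this category lies in $\Cat_+$. Yet $[s|r]$ is a non-degenerate $2$-simplex whose face $\partial_1[s|r]=[1_a]$ is degenerate. So the full subcategories on non-degenerate simplices are not closed under faces, and nothing in the definition of $\Cat_+$ rules out such idempotents.

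What is really needed is the hypothesis that a composite of two non-identity morphisms is never an identity (equivalently, there are no non-identity split monomorphisms). This holds for direct categories and posets, which are the only cases used later. The paper does not check this either: it says (1) and (2) follow from the definitions, and its remark after Definition~\ref{Def:subobjects} addresses only two-sided inverses. The honest repair is to add this hypothesis, not to try to derive it.

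There are three further points.

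\begin{enumerate}
\item Your fallback for naturality ``by the no-inverses hypothesis'' fails. The functor $[1]\to[0]$ is injective on every hom-set, both categories lie in $\Cat_+$, and it sends the non-degenerate $1$-simplex $[0<1]$ to a degenerate one. So ``faithful'' must mean injective on all of $\Mor_\C$, which is what the paper tacitly uses.

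\item Your $\widetilde{\G}^\C_\bullet$ is never actually defined. Read literally (``identities or composites of the given arrows''), at $n=0$ there are no given arrows, so $\widetilde{\G}^\C_0$ would be discrete rather than $\C$, contradicting (1). The paper's definition is that a non-identity morphism $\underline{x}\to\underline{y}$ is determined by a single morphism $w\colon x_n\to y_0$ of $\C$, with components the composites $x_i\to x_n\xrightarrow{w}y_0\to y_i$. With this definition, closure under composition (via $w'\circ(y_0\to y_n)\circ w$), faces, degeneracies and functors is immediate.

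\item Your appeal to Krull--Schmidt in (3) needs modules of finite total dimension, for instance when $\D_n$ has finitely many objects. For an infinite discrete $\C$, the module with value $k$ everywhere is not a finite combination of indecomposables. This caveat is inherited from the statement itself.
\end{enumerate}
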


The  cohomology theories associated to the (semi-)simplicial objects  in Theorem \ref{Th:Rep-Cats}\ref{Th:Rep-Cats-1} are  denoted by $\widetilde{\EE}^*(-,k)$, $\widetilde{\GG}^*(-,k)$, $\EE(-,k)$ and $\GG^*(-,k)$, respectively. We note that the multiplicative structure of Theorem \ref{Th:Rep-Cats}\ref{Th:Rep-Cats-3} need  be neither commutative nor graded commutative.   Parts \ref{Th:Rep-Cats-1} and \ref{Th:Rep-Cats-2} of the theorem are proven in Section \ref{Sec:Prelims}. Part \ref{Th:Rep-Cats-3} is stated in more detail and proved as Theorem \ref{Thm:Products}.

Next, we observe  that the information  on $\C$ contained in the graded group $\widetilde{\EE}^*(\C,k)$ is concentrated  in dimension $0$. 

\begin{Th}\label{Th:E0G0}
The cohomology theory $\widetilde{\EE}^*(-, k)$ is acyclic, and  for any small category $\C$
\[\widetilde{\EE}^0(\C, k) \cong \Gr(k\Gamma),\]
where $\Gamma = \pi_1(|\C|)$.  Furthermore,  the  map $\widetilde{\iota}^*\colon \widetilde{\EE}^0(\C,k)\to\widetilde{\GG}^0(\C,k)$ is an isomorphism.
\end{Th}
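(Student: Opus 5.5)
The plan is to work directly from the explicit description of $\widetilde{\E}^\C_\bullet$ given in Section \ref{Sec:Prelims}. I will use the model in which an object of $\widetilde{\E}^\C_n$ is an $n$-simplex $x_0\to\cdots\to x_n$ of $|\C|$, and morphisms are ladders of morphisms of $\C$ making all squares commute. The face and degeneracy functors are given by composing, respectively inserting identities, as in the nerve. I have not checked this description against the paper; if it differs, the steps below need to be adapted.

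Restriction along the simplicial operators turns $\Gr(k\widetilde{\E}^\C_\bullet)$ into a cosimplicial abelian group. By Krull--Schmidt, each $\Gr(k\widetilde{\E}^\C_n)$ is free on the indecomposables. I split the argument into positive degrees, degree $0$, and the comparison with $\widetilde{\GG}$.

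\textbf{Positive degrees.} I will construct an extra codegeneracy, equivalently a contracting cochain homotopy, on the complex in degrees $\geq 1$. The aim is a functor $h\colon\widetilde{\E}^\C_{n+1}\to\widetilde{\E}^\C_{n}$, or in the other direction, built from the ``cone'' structure of simplices. The natural candidate inserts an identity at the initial vertex, or forgets the first vertex. Note that $\widetilde{\E}^\C_n$ for $n\geq1$ has a retraction onto the subcategory of simplices starting at a fixed kind of vertex. I then need to verify that restriction $h^*$ satisfies the identities $h^*d^0=\mathrm{id}$ and $h^*d^{i+1}=d^{i}h^*$ on the Grothendieck groups. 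Since restriction is additive, it suffices to check these identities at the level of functors up to natural isomorphism. A natural isomorphism of functors induces isomorphic restricted modules, so identities up to isomorphism are exactly what is needed after passing to $\Gr$.

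\textbf{Degree $0$.} Here $\widetilde{\EE}^0(\C,k)=\Ker(d_0^*-d_1^*\colon\Gr(k\C)\to\Gr(k\widetilde{\E}^\C_1))$. Write an element as $\sum a_M[M]$ over indecomposables $M$. The key claim is that it lies in the kernel exactly when each indecomposable $M$ with $a_M\neq 0$ is locally constant, meaning that every morphism of $\C$ acts by an isomorphism. For the forward direction I compare $d_0^*M$ and $d_1^*M$ via Krull--Schmidt in $k\widetilde{\E}^\C_1$. At an object $f\colon x\to y$ these take values $M(x)$ and $M(y)$, and an isomorphism $d_0^*M\cong d_1^*M$ forces $M(f)$ to be invertible, by comparing with the identity object $1_x$ and naturality along the square from $1_x$ to $f$. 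The cancellation issue, where different $M$'s contribute opposite terms, I handle by showing that $d_0^*$ and $d_1^*$ preserve indecomposability, or at least by grading by the support. Locally constant representations of $\C$ are the same as representations of the fundamental groupoid of $|\C|$, which is equivalent to $\Gamma$ for $\C$ connected (and to a product over components otherwise). This gives $\widetilde{\EE}^0(\C,k)\cong\Gr(k\Gamma)$, and it clearly defines an injective map.

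\textbf{Comparison with $\widetilde{\GG}$.} By Theorem \ref{Th:Rep-Cats}\ref{Th:Rep-Cats-1}, $\widetilde{\G}^\C_0=\C$ and $\widetilde{\G}^\C_1\subseteq\widetilde{\E}^\C_1$ has the same objects. Hence $\widetilde{\GG}^0$ is the kernel of the same difference of restrictions, computed in a category with fewer morphisms. The forward direction of the degree-$0$ argument only uses the objects $1_x$, $f$ and the one square between them. If those lie in $\widetilde{\G}^\C_1$, the same kernel results, and $\widetilde{\iota}^*$ is an isomorphism.

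\textbf{Main obstacle.} The step I expect to be hardest is the degree-$0$ kernel computation. It needs a genuine Krull--Schmidt argument showing that no cancellation among non-locally-constant indecomposables can occur in $d_0^*-d_1^*$. I would first try to prove that $d_0^*$ and $d_1^*$ send non-isomorphic indecomposables to modules with disjoint indecomposable summands.
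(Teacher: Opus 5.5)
Your degree-$0$ outline and the identification with $\Gr(k\Gamma)$ follow the paper's route: locally constant modules, then localisation to the fundamental groupoid. The positive-degree part, however, has a genuine gap.

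\textbf{The proposed contraction does not exist in general.} You want a contracting homotopy given by restriction along functors, with the extra-codegeneracy identities holding up to natural isomorphism. No such functors are produced, and the candidates you name fail.
\begin{enumerate}
\item ``Forgetting the first vertex'' is $\partial_0$. Its restriction is $\delta^0$, which raises degree.
\item ``Inserting an identity at the initial vertex'' is the ordinary degeneracy $s_0$. It satisfies $\partial_0s_0=\mathrm{id}$ and $\partial_{i+1}s_0=s_0\partial_i$ for $i\ge1$. The remaining identity $\partial_1s_0\cong s_0\partial_0$ on $\widetilde{\E}^\C_n$, $n\ge1$, is false: the comparison $\mathrm{id}\Rightarrow s_0\partial_0$ has components $(u_1,1,\ldots,1)$ and is invertible only when $u_1$ is.
\item Retractions of the kind in Example~\ref{Ex:C[n]=C} are connected to the identity only by non-invertible natural transformations, and the split Grothendieck group does not see these.
\end{enumerate}
This route works for groupoids (compare Proposition~\ref{Prop:Groupoids}), but the obstruction in general is structural. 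If $\C$ is a poset, natural isomorphisms are equalities. The identities $\partial_0g=\mathrm{id}$ and $\partial_{i+1}g=g\partial_i$ in degrees $0$ and $1$ then force $g(x)=(c(x)\le x)$, with $c(x_0)=c(x_1)$ whenever $x_0\le x_1$. So every component of $\C$ must have a least element, and the mirror construction at the last vertex needs a greatest one. The poset $a<b>c<d$ has contractible nerve but neither, so no homotopy of your type exists there, although the theorem asserts acyclicity.

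\textbf{The missing idea.} It is the lemma you flag as your main obstacle, but needed for every coface in every degree: each $\delta^i=\partial_i^*$ sends indecomposable $k\widetilde{\E}^\C_n$-modules to indecomposable ones. The paper's proof runs as follows.
\begin{enumerate}
\item Suppose $\partial_i^*M\cong P\oplus Q$, and choose a degeneracy $s$ with $\partial_is=\mathrm{id}$. Then $M\cong s^*P\oplus s^*Q$, so one summand vanishes, say $s^*Q=0$.
\item For every object $\underline v$ there is a morphism $\phi$, in one direction or the other, between $\underline v$ and $s\partial_i\underline v$ with $\partial_i\phi$ an identity. Hence $Q(\phi)$ is invertible and $Q(\underline v)\cong(s^*Q)(\partial_i\underline v)=0$.
\item With Krull--Schmidt and $\sigma^j\delta^j=\mathrm{id}$, this gives: if $\delta^iX=\delta^jY$ with $i<j$, then $X\in\Ima\delta^{j-1}$ and $Y\in\Ima\delta^i$.
\item Write $U^\bullet=\Gr(k\widetilde{\E}^\C_\bullet)$ and work in the normalised complex $U^n/\sum_{i<n}\Ima\delta^i$, whose differential is induced by $\delta^{n+1}$. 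Each indecomposable summand $M$ of a cocycle satisfies $\delta^{n+1}[M]=\delta^i[T]$ for some indecomposable $T$ and some $i\le n$.
\item So $[M]$ is zero in the normalised complex if $i<n$, and lies in $\Ima\delta^n$, hence is a coboundary, if $i=n$. Induction on the number of summands gives acyclicity.
\end{enumerate}

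\textbf{What degree $0$ and the $\widetilde{\GG}^0$ comparison still need.} The no-cancellation step in degree $0$ needs the two-module statement: $\delta^0[M]=\delta^1[N]$ forces both $M$ and $N$ to be locally constant. Proving it uses both morphisms $[1_x]\to[f]\to[1_y]$, not just the one square you mention. For $\widetilde{\GG}^0$, indecomposability of $\partial_i^*M$ in $k\widetilde{\G}^\C_1\mod$ must be proved directly. It is not inherited from $\widetilde{\E}^\C_1$, because $\widetilde{\G}^\C_1$ has fewer morphisms (compare Example~\ref{Ex:delta-not-indecomp}). It does hold, because the morphisms $\phi$ used above, namely $[1_a]\to[v]\to[1_b]$, lie in $\widetilde{\G}^\C_1$.
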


The theorem is proved as Theorem \ref{Thm:E0G0}. If one considers the first coboundary operator as a gradient for $k\C$-modules, then Theorem \ref{Th:E0G0} is essentially the statement that virtual modules of gradient $0$ are locally constant, i.e., either a genuine $k\C$-module that takes every morphism in $\C$ to an isomorphism of vector spaces or a difference of two such modules (compare to \cite[Theorem A]{BLR}). See Section \ref{Sec:Cohomology-EG} for details.

Recall that a category $\C$ is said to be \hadgesh{direct} if it contains no infinite chains of non-identity morphisms and no cycles of positive length. While typically $\widetilde{\GG}^*(\C, k)$ does not vanish  in positive dimensions, for direct categories it coincides with $H^*(|\C|,\Z)$ above dimension $1$, as our next theorem states.  

\begin{Th}\label{Th:Higher-G}
Let $\C$ be a  direct category. Then  the map
$\widetilde{\GG}^n(\C, k)\to H^n(|\C|, \Z)$
 induced by natural inclusion of simplicial objects  $|\C|_\bullet\to \widetilde{\G}^\C_\bullet$ is an epimorphism for $n=1$ and an isomorphism for $n>1$. Furthermore, $\widetilde{\GG}^1(\C, k)$ is the kernel of the first coboundary operator restricted to a certain subgroup of $\Gr(k\widetilde{\G}^\C_1)$ that can be described explicitly.
 \end{Th}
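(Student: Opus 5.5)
The plan is to compare the cochain complex $\Gr(k\widetilde{\G}^\C_\bullet)$ with the simplicial cochain complex of $|\C|$ by means of a degreewise split surjection, and then to show that the kernel is acyclic in the relevant range.

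\textbf{The comparison map.} The inclusion $|\C|_\bullet\to\widetilde{\G}^\C_\bullet$ regards each set of simplices $|\C|_n$ as a discrete category. Restricting a $k\widetilde{\G}^\C_n$-module $M$ to the discrete category $|\C|_n$ records its dimension vector $\sigma\mapsto\dim_k M(\sigma)$. This gives a cochain map
\[
r\colon \Gr(k\widetilde{\G}^\C_\bullet)\to C^\bullet(|\C|;\Z),
\]
because the coface operators on both sides are induced by the same simplicial operators on objects. Each object $\sigma$ carries a simple module $S_\sigma$ with $r[S_\sigma]$ the indicator cochain of $\sigma$. Hence $r$ is surjective in each degree, provided the identification of $\Gr$ of a discrete category with cochains is the correct one. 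Setting $K^\bullet=\Ker r$, we obtain a short exact sequence of cochain complexes $0\to K^\bullet\to\Gr(k\widetilde{\G}^\C_\bullet)\to C^\bullet(|\C|;\Z)\to0$. Its long exact sequence shows that the theorem follows once $H^n(K^\bullet)=0$ for all $n\ge2$. That vanishing gives surjectivity in degree $1$, because $H^2(K)=0$, and bijectivity in degrees $n\ge2$, because $H^n(K)=H^{n+1}(K)=0$.

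\textbf{Using directness.} For a direct category, $\widetilde{\G}^\C_n$ inherits directness, presumably by ordering simplices via the lengths of their underlying chains, so there are no cycles among non-identity morphisms. Consequently every module has a finite filtration with simple subquotients $S_\sigma$, and the indecomposables are controlled by the morphisms between simplices. I would write each generator of $K^n$ as $[M]-\sum_\sigma \dim M(\sigma)[S_\sigma]$. Then I would build a contracting homotopy $h\colon K^{n}\to K^{n-1}$ for $n\ge2$ by an extra-degeneracy type construction: compose with the functor that forgets the last vertex of a simplex, or pull back along a degeneracy. Directness is what should make this well defined, since it lets one induct on the filtration length of $M$. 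I would check $\delta h+h\delta=\Id$ on $K^n$ for $n\ge 2$. The identity is expected to fail in degree $1$, which is why $\widetilde{\GG}^1$ only surjects onto $H^1$.

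\textbf{Description of $\widetilde{\GG}^1$ and the main obstacle.} Finally, I would identify the explicit subgroup in the statement. Let $L\subseteq\Gr(k\widetilde{\G}^\C_1)$ be the span of those indecomposables whose coboundary has no component along the non-simple part of $K^2$, that is, the part of $K^2$ not killed by the homotopy. Tracing the connecting map then gives $\widetilde{\GG}^1(\C,k)=\Ker(\delta|_L)$. The main obstacle is the homotopy on $K^\bullet$. One must verify that the coface maps act on non-split modules compatibly with the chosen degeneracies, and this in turn requires understanding exactly which morphisms $\widetilde{\G}^\C_n$ has. If a global homotopy proves awkward, my fallback is to filter $K^\bullet$ by the number of non-identity morphisms in $\C$ appearing in the support of the module and to prove acyclicity of each filtration quotient separately.
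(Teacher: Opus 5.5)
\textbf{There is a genuine gap: the step that carries the whole theorem is not proved, and the homotopy you propose cannot prove it.}

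\emph{The reduction is fine.} $r$ is a degreewise surjective cochain map, and the long exact sequence shows the first assertion is equivalent to $H^n(K^\bullet)=0$ for $n\ge 2$.

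\emph{The proposed homotopy cannot work.} Pulling back along the last face raises degree. For any $h$ built from codegeneracies, the cosimplicial identities force $dh+hd$ to land in $\sum_{i<n}\Ima(\delta^i)$; for example, $\sigma^0 d+d\sigma^0=\delta^0\sigma^0$. So such an $h$ induces zero on the normalised complex and contracts nothing nontrivial. Composition series are also of no use in the split Grothendieck group.

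\emph{$K^\bullet$ is badly adapted to $d$.} A singleton module at a homogeneous object $\mathbf{1_a}$ has non-singleton cofaces. For instance, when $a$ receives a non-identity morphism, $\delta^0[S_{\mathbf{1_a}}]$ is the interval module on the star $\{[\varphi|1_a|\cdots|1_a]\}$. So your generators $[M]-\sum_\sigma\dim M(\sigma)[S_\sigma]$ do not transform in any controlled way. For the same reason your subgroup $L$ is not well defined. $K^2$ contains no nonzero class of a genuine module, so there is no ``non-simple part'' along which to take components. Finally, the long exact sequence of $0\to K\to\Gr(k\widetilde{\G}^\C_\bullet)\to C^\bullet\to 0$ gives no kernel description of $\widetilde{\GG}^1(\C,k)$ without control of $H^1(K)$, which you do not have.

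\emph{The missing idea.} Split off the cosimplicial subgroup $\I^s(k\widetilde{\G}^\C_\bullet)$ spanned by singleton modules on \emph{non-homogeneous} objects; it is closed under cofaces and codegeneracies. Then prove the quotient $\Gr_0(k\widetilde{\G}^\C_\bullet)$ is acyclic in positive degrees. This rests on Krull--Schmidt:
\begin{enumerate}
\item Modulo $\I^s$, each coface of an indecomposable is indecomposable. Any other summand would contain a homogeneous object, which survives the matching codegeneracy, contradicting $\sigma^i\delta^i=\Id$.
\item In the Moore complex, a relation $\delta^{n+1}[M]=\delta^i[T]$ with $i\le n$ then forces, via codegeneracies, either $[M]\in\Ima\delta^n$ or $[M]\in\Ima\delta^i$ with $i<n$. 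So each summand of a cocycle is a coboundary or zero.
\end{enumerate}
After normalising, $\I^s$ is exactly the normalised cochain complex of $|\C|$ in degrees $\ge 1$, with $\I^s(k\widetilde{\G}^\C_0)=0$. This gives the isomorphism for $n\ge 2$ and the surjection for $n=1$.

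It also produces the explicit subgroup: $\widetilde{\GG}^1(\C,k)\cong\Ker\bigl(d^1|_{\I^s(k\widetilde{\G}^\C_1)}\bigr)$. This holds because the connecting map out of $H^0(\Gr_0)$ vanishes: every $0$-cocycle there is a combination of locally constant modules, hence already a cocycle in $\Gr$.

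\emph{Your $K$ can be rescued only through this result.} For finite $\C$, $K$ embeds in $\Gr_0(k\widetilde{\G}^\C_\bullet)$, since $r$ is injective on $\I^s$. The cokernel is the cochains on homogeneous simplices, which have no cohomology above degree $0$. Hence $H^n(K)\cong H^n(\Gr_0(k\widetilde{\G}^\C_\bullet))$ for $n\ge 2$, and the vanishing you need is exactly the acyclicity you have not established.
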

 
 The theorem is stated in more detail and proved as Theorem \ref{Thm:Higher-G}.
 
 Next we turn to studying the theories $\EE^*$ and $\GG^*$, which turn out to be much more interesting and a lot harder to compute. In both cases we consider first the $0$-dimensional cohomology. A collection of morphisms in $\C$ is said to be a \hadgesh{line-component} if the full subcategory of $\G^\C_1$ having them as objects is a connected component. Similarly, a collection of morphisms is said to be a \hadgesh{square-component} if the full subcategory of $\E^\C_1$ that has them as objects is a connected component. The following theorem examines the implications for a module $M\in k\C\mod$ whose isomorphism class is a $0$-cocycle. 
 
 \begin{Th}\label{Th:E0G0-check}
 Let $\C\in\Cat_+$  and let $M\in k\C\mod$ be a module. Then the following statements hold.
 \begin{enumerate}[(1)]
 \item If $[M]$ represents a class in $\EE^0(\C, k)$, then $M$ is locally constant on each square-component of $\C$ that contains at least two different non-identity morphisms. \label{Th:E0G0-check-1}
 
\item If $X = [M]-[N]$, where $M, N\in k\C\mod$, represents a class in $\EE^0(\C, k)$,  then $\rk X$ is constant on each square-component of $\C$ with at least two non-identity morphisms.\label{Th:E0G0-check-2}

\item If $X = [M]-[N]$, where $M, N\in k\C\mod$, represents a class in $\GG^0(\C, k)$,  then $\rk X$ is constant on each line-component of $\C$.\label{Th:E0G0-check-3}
\end{enumerate}
\end{Th}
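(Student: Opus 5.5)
The plan is to unwind what it means for a class to be a $0$-cocycle, using the description of $\E^\C_1$ and $\G^\C_1$ from Section~\ref{Sec:Prelims}. The objects are the non-identity morphisms $f\colon x\to y$ of $\C$. The face functors $d_0,d_1\colon \E^\C_1\to\C$ (and their restrictions to $\G^\C_1$) send $f$ to $y$ and to $x$ respectively. The first coboundary is $\delta[M]=[d_0^*M]-[d_1^*M]$, up to the sign convention fixed there. Since $\Gr$ is the split Grothendieck group and Krull--Schmidt holds for finite dimensional modules, $\delta X=0$ for $X=[M]-[N]$ is equivalent to a genuine isomorphism of $k\E^\C_1$-modules (respectively $k\G^\C_1$-modules)
\[ d_0^*M\oplus d_1^*N\;\cong\; d_1^*M\oplus d_0^*N .\]
When $N=0$ this is an isomorphism $\Phi\colon d_1^*M\to d_0^*M$, with components $\Phi_f\colon M(x)\to M(y)$ that are natural along morphisms of $\E^\C_1$. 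I will use these two reformulations throughout.

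For \ref{Th:E0G0-check-2} and \ref{Th:E0G0-check-3}, evaluate the displayed isomorphism at an object $f\colon x\to y$ and take dimensions. This gives $\dim M(y)+\dim N(x)=\dim M(x)+\dim N(y)$, that is, $\rk X(x)=\rk X(y)$ for every non-identity $f$ in the component. It remains to propagate this equality across the component. Since a component is connected, it suffices to check that whenever there is a morphism $f\to g$ in $\G^\C_1$ (respectively $\E^\C_1$), the endpoints of $f$ and $g$ are linked by a chain of such equalities.

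For line-components I expect the generating morphisms to be of the form $f\to hf$ or $hf\to h$, so that $f$ and $g$ share an endpoint. Then the values of $\rk X$ at the endpoints of $f$ and of $g$ are all equal, and \ref{Th:E0G0-check-3} follows by induction along a zigzag.

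For square-components, a morphism $f\to g$ is a commutative square $(a,b)$ with $g a=b f$, and $f,g$ need not share a vertex. This is the main obstacle. My first attempt will be to use that the component contains a second non-identity morphism, together with the fact that $\C\in\Cat_+$ has no non-identity isomorphisms. From these I will try to show that the component is generated by squares in which one side is an identity, which again forces a shared vertex. Failing that, I will chain through the diagonal $gf$-type composites, which lie in the same component.

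For \ref{Th:E0G0-check-1}, take $N=0$, and first note that by the previous paragraph $\dim M$ is constant on the component. Let $f\colon x\to y$ lie in the component. By hypothesis there is another non-identity morphism $g$ in the component, and I will arrange, via the connectivity argument above, that $f$ participates in a composite $g\circ f$ or $f\circ g$ in that component. Suppose it is $g\circ f$. Naturality of $\Phi$ along the square $(\mathrm{id}_x,g)\colon f\to gf$ gives $M(g)\Phi_f=\Phi_{gf}$. Naturality along $(f,\mathrm{id}_z)\colon gf\to g$ gives $\Phi_gM(f)=\Phi_{gf}$. Since every $\Phi_h$ is invertible, $M(f)$ is injective and $M(g)$ is surjective. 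The symmetric configuration $f\circ g$ gives the opposite conclusions. Combined with equality of dimensions, $M(f)$ is an isomorphism. Hence $M$ is locally constant on the component. The delicate point is again ensuring that every $f$ in a square-component with at least two morphisms sits in such a composable configuration inside the component; I expect this to be exactly where the "two different non-identity morphisms" hypothesis is used.
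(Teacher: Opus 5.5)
\paragraph{Main gap: Parts (2) and (3).} You have proved a different statement here. The rank invariant is a function on morphisms. By Definition \ref{Def:Line-conn}, a line- or square-component is a set of \emph{non-identity morphisms}. So Part (3) asserts $\rk M(u)-\rk N(u)=\rk M(v)-\rk N(v)$ for $u,v$ in one line-component.

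Evaluating the isomorphism $\eta^X\colon\partial_0^*M\oplus\partial_1^*N\to\partial_0^*N\oplus\partial_1^*M$ at objects, as you do, only yields the Hilbert-function part $\rk X(1_x)=\rk X(1_y)$. It can never detect ranks of non-identity maps.

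The missing idea is naturality of $\eta^X$ along morphisms of $\G^\C_1$. For $u\colon a\to b$ and $v\colon b\to c$, the morphism $(u,v)\colon[u]\to[v]$ induced by $1_b$ gives
\[(N(v)\oplus M(u))\circ\eta^X_{[u]}=\eta^X_{[v]}\circ(M(v)\oplus N(u)).\]
Since $\eta^X_{[u]}$ and $\eta^X_{[v]}$ are isomorphisms, $\rk M(v)+\rk N(u)=\rk N(v)+\rk M(u)$. A general morphism $[u]\to[v]$ of $\G^\C_1$ is $(wu,vw)$ for some $w\colon b\to c$. When $w\neq 1$ it factors as $[u]\xto{(u,w)}[w]\xto{(w,v)}[v]$, so induction along zig-zags finishes. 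This is the paper's proof of Proposition \ref{Prop:G-check-0-cocycles-rk}.

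Incidentally, $\G^\C_1$ is not generated by morphisms $f\to hf$ and $hf\to h$. Those squares have an identity side and live in $\E^\C_1$. A morphism $[f]\to[hf]$ in $\G^\C_1$ would need a map from the codomain of $f$ back to its domain. The true generators $(u,v)$ do share a vertex, so your dimension count survives.

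\paragraph{Part (2).} The same device in $\E^\C_1$ gives, for a square $(s,t)\colon[u]\to[v]$, only $\rk X(s)=\rk X(t)$. This constrains sides of squares, not morphisms that occur only as diagonals.

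The rank claim in (2) is in fact false for virtual modules. Take $\C=[2]$ and the element $T=[J_{0,1}]+[J_{1,2}]-[J_{1,1}]$ of Proposition \ref{Prop:hG0E0-[n]}, where $J_{1,1}=S_1$. It is a $0$-cocycle in $\Gr(k\E^{[2]}_\bullet)$: both $\delta^0T$ and $\delta^1T$ are the class of the constant module on $\E^{[2]}_1$, the poset $01<02<12$. Yet $\rk T$ takes the values $1,0,1$ on $0<1$, $0<2$, $1<2$, which form a single square-component.

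So the constancy of $\dim M-\dim N$ that you do establish is the part of (2) that actually holds. The paper gives no separate argument for (2): Proposition \ref{Prop:E-check-0-cocycles} treats only a genuine module, where (1) yields rank constancy.

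\paragraph{Part (1).} Your mechanism is the paper's, and it gives more than you extract. Naturality along $(1_x,g)\colon[f]\to[gf]$ and $(f,1_z)\colon[gf]\to[g]$ yields $M(g)=\Phi_{gf}\Phi_f^{-1}$ and $M(f)=\Phi_g^{-1}\Phi_{gf}$. Both are isomorphisms outright, so no dimension count or symmetric configuration is needed.

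The covering step you left open is the paper's case analysis of a single morphism $(s,t)\colon[u]\to[v]$ between distinct objects. Here $s$ or $t$ is not an identity:
\begin{enumerate}[(a)]
\item if $t\neq 1$, then $u,t$ form your configuration with composite $tu=vs$;
\item if $s\neq 1$, then $s,v$ do;
\item if $s=1_x$ (resp.\ $t=1_y$), then $v=t\circ u$ (resp.\ $u=v\circ s$).
\end{enumerate}
Every object of a component with at least two objects is an end of such a morphism. That is exactly where the hypothesis of two distinct non-identity morphisms is used.
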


 For our definition of the rank invariant $\rk X$, see Definition \ref{Def:Rank-Inv}. Parts \ref{Th:E0G0-check-1} and \ref{Th:E0G0-check-2} of Theorem \ref{Th:E0G0-check} are proven as Proposition \ref{Prop:E-check-0-cocycles} and Part \ref{Th:E0G0-check-3} as Proposition \ref{Prop:G-check-0-cocycles-rk}.  Theorem \ref{Th:E0G0-check} should be compared with \cite[Theorem B]{BLR}.
 
Next we consider $\GG^0$ cohomology in the case where the category in question is a finite poset that is generated by a rooted tree. For any small category $\C$, let $\underline{k}$ denote the constant $k\C$-module with value $k$ on each object and the identity homomorphism for each morphism. Let $S_\C$ denote the $k\C$-module with the value $k$ on each object and the zero homomorphism for each morphism. By \hadgesh{composition length} of a poset we mean the length of the longest chain of composable irreducible morphisms in it.
 
\begin{Th}\label{Th:0-cocycle-trees}
Let $\P$ be a finite poset of composition length at least $3$ whose Hasse diagram $\HH_\P$ is a rooted tree. Assume that the  root  of $\HH_\P$ has a unique successor. Let $M\in k\P\mod$ be a module such that $[M] \in \GG^0(\P, k)$. Then there is an isomorphism
\[M \cong \bigoplus_m \underline{k}\oplus\bigoplus_n S_\P,\] 
for some non-negative integers $m$ and $n$.
\end{Th}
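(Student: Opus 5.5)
The plan is to use the cocycle condition first numerically, through the rank invariant, and then structurally, through the tree shape of $\HH_\P$. Write $x_0$ for the root and $y_0$ for its unique successor. First I show that, under the hypotheses, all non-identity morphisms of $\P$ lie in a single line-component, i.e.\ $\G^\P_1$ is connected. The morphisms of $\G^\P_1$ come from $2$-simplices $x\to y\to z$ and relate each of $x\to y$ and $y\to z$ to the composite $x\to z$. So it suffices to connect every irreducible arrow of $\HH_\P$ to a fixed one through composites.

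In a rooted tree every vertex other than $x_0$ is joined to $x_0$ by a unique chain of irreducible arrows, and every such chain passes through $y_0$. Composition length at least $3$ ensures there is a chain $x_0\to y_0\to z\to w$. This lets me link the arrow $x_0\to y_0$ to arrows further down, and link arrows in different branches through composites starting at $x_0$ or $y_0$. (Length $\geq 3$ is presumably needed so that the arrow $x_0\to y_0$ is not isolated when $y_0$ branches.) Given connectedness, Theorem~\ref{Th:E0G0-check}\ref{Th:E0G0-check-3}, applied with $X=[M]$, shows that $\rk M$ is constant, say equal to $r$, on all non-identity morphisms.

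Second, the cocycle condition $d_0^*[M]=d_1^*[M]$ in $\Gr(k\G^\P_1)$ means $d_0^*M\cong d_1^*M$. Evaluating at the object $f\colon x\to y$ gives $\dim M(x)=\dim M(y)$. Since the Hasse diagram is connected, $\dim M$ is a constant $d$. Then for every composable pair $x\xto{f}y\xto{g}z$ we have
\[\rk M(gf)=\rk M(g)=\rk M(f)=r,\]
which forces $\Ima M(f)\cap\Ker M(g)=0$. Counting dimensions then gives $M(y)=\Ima M(f)\oplus\Ker M(g)$.

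Third, I build the splitting. For $x\neq x_0$ set $I_x=\Ima M(p\to x)$, where $p$ is the parent of $x$. At $x_0$ set $I_{x_0}$ to be any complement of $\Ker M(x_0\to y_0)$. For each non-leaf $x$, I need a single subspace $K_x$ that is the kernel of \emph{all} outgoing arrows. At leaves I take $K_x$ to be a complement of $I_x$. Once this holds, the $I_x$ form a submodule on which every structure map is an isomorphism, and this submodule is constant because the category is a tree. The $K_x$ form a submodule on which every map is zero, so the kernel part is $\bigoplus S_\P$ provided all $K_x$ have the same dimension $d-r$.

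The main obstacle is this equality of kernels. If $x$ has two children $c_1,c_2$, ranks alone only show that both $\Ker M(x\to c_1)$ and $\Ker M(x\to c_2)$ are complements of $I_x$; they need not coincide. To overcome this, I would use the full isomorphism $d_0^*M\cong d_1^*M$ rather than just the ranks. Restrict it to the full subcategory of $\G^\P_1$ on the arrows $p\to x$, $x\to c_i$ and $p\to c_i$, and compare the two modules there. In $d_1^*M$, the arrows from $p\to x$ to $p\to c_i$ act by the identity on $M(p)$, while in $d_0^*M$ the corresponding maps are $M(x\to c_i)$. Decomposing both restrictions into indecomposables and matching summands should force the kernel summands to agree. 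If the direct comparison fails, the fallback is to argue by induction on the number of vertices: remove a leaf (keeping composition length $\geq 3$ where possible), apply the theorem to the restriction, and then extend the decomposition.
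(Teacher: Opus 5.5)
\textbf{There is a genuine gap, and it comes from a misreading of $\G^\P_1$.}

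\emph{The morphisms of $\G^\P_1$.} The morphisms you describe relate $[x<y]$ and $[y<z]$ to the composite $[x<z]$. Those are the morphisms of the square category $\E^\P_1$. In $\G^\P_1$ a non-identity morphism $[a<b]\to[c<d]$ exists only when $b\le c$. It links $[x<y]$ to $[y<z]$, never to $[x<z]$.

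\emph{Your first step is false.} For every leaf $\ell$, the relation $[x_0<\ell]$ is an isolated object of $\G^\P_1$ (compare $03$ in $\G^{[3]}_1$, Example~\ref{Ex:Line}). So Theorem~\ref{Th:E0G0-check}\ref{Th:E0G0-check-3} says nothing about $\rk M(x_0<\ell)$. This is not a technicality. In your description $\G^{[2]}_1$ would be connected, since the single $2$-simplex ties all three arrows together. But with the interval modules of Proposition~\ref{Prop:hG0E0-[n]}, $M=J_{0,1}\oplus J_{1,2}\oplus S_0\oplus S_2$ represents a class in $\GG^0([2],k)$ with $\rk M(0<1)=\rk M(1<2)=1$ and $M(0<2)=0$. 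So neither rank constancy nor the conclusion holds there. In your construction, the missing input is exactly $\Ima M(x_0<y_0)\cap\Ker M(y_0<\ell)=0$ for a leaf child $\ell$ of $y_0$. For composites $x\to y\to z$ with $x\neq x_0$, the rank equality does hold, but it comes from the morphisms $[w<x]\to[x<z]$, not from your connectivity claim.

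\emph{The main obstacle is left open.} The comparison you propose uses maps $[p<x]\to[p<c_i]$, which do not exist in $\G^\P_1$. ``Matching summands'' is not an argument. The fallback induction would have to handle cases such as $[3]$, where removing the only leaf drops to composition length $2$, and there the statement fails (the example above).

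\textbf{The missing idea is Lemma~\ref{Lem:G-check-0-Kernels}, which uses the morphisms $\G^\P_1$ actually has.} Fix a natural isomorphism $\eta\colon\partial_1^*M\to\partial_0^*M$.
\begin{enumerate}
\item For $z<x<c$, the morphism $[z<x]\to[x<c]$ induced by $1_x$ gives $M(x<c)\circ\eta_{[z<x]}=\eta_{[x<c]}\circ M(z<x)$.
\item Hence, for any $x\neq x_0$, $\Ker M(x<c)=\eta_{[z<x]}(\Ker M(z<x))$ is the same for all children $c$ of $x$, and $\rk M(x<c)=\rk M(z<x)$.
\item At the root, uniqueness of $y_0$ means only one kernel occurs.
\item For a leaf child $\ell$ of $y_0$, pick a child $t$ of $y_0$ that itself has a child $u$; this is where composition length at least $3$ is used. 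The morphism $[x_0<t]\to[t<u]$ gives $\rk M(x_0<t)=\rk M(t<u)=\rk M(x_0<y_0)$.
\item So $\Ima M(x_0<y_0)\cap\Ker M(y_0<t)=0$, and $\Ker M(y_0<\ell)=\Ker M(y_0<t)$ by the kernel equality at $y_0$.
\end{enumerate}
With these facts your splitting $M=I\oplus K$ goes through. It is the same decomposition the paper assembles from $M'/\K_{M'}$, $\K_{M'}$ and $M/M'$, only built directly rather than via projectivity.
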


Theorem \ref{Th:0-cocycle-trees} is restated and proved as Theorem \ref{Thm:0-cocycle-trees}.

For any $\C\in\Cat_+$, there is a natural reduction map $\GG^*(\C,k)\to H^*(|\C|, \Z)$ induced by the obvious inclusion of simplicial objects $|\C|_\bullet\to\G^\C_\bullet$. This gives rise to a connection between representation cohomology and reachability cohomology of quivers.  Let $X$ be a quiver and let $\T_X$ denote the  preorder associated to $X$. The cohomology of the nerve $H^*(|\T_X|, R)$ is defined to be the \hadgesh{reachability cohomology of $X$ with coefficients in $R$} \cite{CR}. In fact,  any  category $\C$ has an associated preorder $\T_\C$, where $x\le y$ if and only if $\C(x,y)\neq\emptyset$. For $\C\in\Cat_+$, such preorder is a poset.  Thus one can define reachability cohomology of $\C$ as above and obtain a natural map
\[\G^*(\T_\C, k)\to H^*(|\T_\C|, \Z).\]
This connection will be explored in a subsequent study.

Computations of $\EE^*$ and $\GG^*$ cohomology are in general very hard due to the categories involved having infinite (and generally wild) representation type. Our final theorem concentrates some computational results. 

\begin{Th}\label{Th:Computations}
For $n\geq 1$, let $[n]$ denote the poset with objects $0,1,\ldots, n$ with the natural order relation. For $n\ge 2$, let $\D_n$ denote the poset with objects $0, 1,\ldots, n+1$, and relations $0<1<j$ for all $j\geq 2$. Let $\Q$ be a finite quiver such that the underlying undirected graph is a cycle with at least two edges.  Then
\begin{enumerate}[(1)]
\item $\EE^0([n],k)\cong \Z^2$, $\GG^0([n], k) \cong \Z^3$ and the natural map $\EE^0([n], k)\to \GG^0([n],k)$ is split injective. \label{Th:Computations-1}

\item $\GG^i(\D_n, k)=0$ for all $i>0$, and $\GG^0(\D_n,k)$ is a free abelian group on a set of generators that is in bijective correspondence with the collection of all indecomposable modules $M\in k\D_n\mod$ such that $M(0)\neq 0$. In particular, $\GG^0(\D_n,k)$ is finitely generated if and only if $n\le 2$. \label{Th:Computations-2}

\item Let $\PP(\Q)$ denote the \emph{path category} of $\Q$. Then $\widetilde{\EE}^0(\PP(\Q), k)$ is a free abelian group on generators indexed by pairs $(p(t), m)$, where $p(t)\in k[t]$ is an irreducible polynomial of positive degree with non-zero constant term and $m\geq 1$ is an integer. \label{Th:Computations-3}
\end{enumerate}
\end{Th}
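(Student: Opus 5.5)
The proof splits into three independent computations, and I would do them in the order (3), (1), (2), from easiest to hardest.

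For part (3), Theorem \ref{Th:E0G0} applies directly. It gives $\widetilde{\EE}^0(\PP(\Q),k)\cong\Gr(k\Gamma)$ with $\Gamma=\pi_1(|\PP(\Q)|)$. Since $|\PP(\Q)|$ is homotopy equivalent to the geometric realisation of the underlying graph of $\Q$, which is a cycle, $\Gamma\cong\Z$ and $k\Gamma\cong k[t,t^{-1}]$. I then invoke the structure theorem for finitely generated modules over the PID $k[t]$. Finite dimensional $k[t,t^{-1}]$-modules are exactly the finite dimensional $k[t]$-modules on which $t$ acts invertibly. The indecomposable ones are therefore the cyclic modules $k[t]/(p(t)^m)$ with $p$ irreducible, $\deg p>0$, $p(0)\neq 0$ and $m\geq 1$. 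The split Grothendieck group is free on these isomorphism classes, which gives the stated basis. The only point needing care is that the isomorphism of Theorem \ref{Th:E0G0} is compatible with identifying locally constant $k\PP(\Q)$-modules with $k\Gamma$-modules; this is built into that theorem.

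For part (1), $k[n]$ has finite representation type, with indecomposables the interval modules $k_{[a,b]}$ for $0\le a\le b\le n$. A $0$-cochain is therefore an integer combination $X=\sum c_{ab}[k_{[a,b]}]$. I would first write out $\E^{[n]}_1$ and $\G^{[n]}_1$ explicitly: their objects are the relations $i<j$, and their morphisms come from squares and lines respectively, as in Section~\ref{Sec:Prelims}. Next I compute the pullbacks $d_0^*X$ and $d_1^*X$ on the restrictions of interval modules. The cocycle condition then becomes a finite linear system in the $c_{ab}$. Theorem \ref{Th:E0G0-check} supplies the key constraint: $\rk X$ must be constant on square-components, respectively line-components. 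This cuts the system down sharply, and I expect the solution space to be spanned by $[\underline{k}]$ and $[S_{[n]}]$ (or the sum of the simples) for $\EE^0$, with one additional generator for $\GG^0$, presumably a module supported at an end of the chain. Split injectivity follows once bases are chosen so that the $\EE^0$ basis is part of the $\GG^0$ basis. I would verify the small cases $n=1,2$ by hand to pin down the third generator before doing the general computation.

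For part (2), $\D_n$ has composition length $2$, so $|\D_n|$ has non-degenerate simplices only in dimensions $\le 2$. Hence $\G^{\D_n}_\bullet$ is concentrated in degrees $0,1,2$, and I need surjectivity of $\delta^0$ onto the cocycles of degree $1$ and surjectivity of $\delta^1$. The categories $\G^{\D_n}_1$ and $\G^{\D_n}_2$ have small, explicit shapes: objects are the relations $0<1$, $1<j$, $0<j$ and the chains $0<1<j$. I would decompose their modules and produce explicit preimages of generators, for example by extending modules by zero or constantly, to prove vanishing in positive degrees. For $\GG^0$, I would show the cocycle condition splits: indecomposables with $M(0)=0$ have non-cancelling coboundaries, while each indecomposable with $M(0)\neq0$, possibly corrected by a canonical term, yields a cocycle. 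The finiteness statement follows because the Hasse diagram is a star with centre $1$ and $n+1$ arms. For $n=2$ this is the Dynkin type $D_4$, which has finitely many indecomposables; for $n\ge3$ the representation type is infinite, and infinitely many indecomposables have $M(0)\neq0$. The main obstacle is this $\GG^0$ analysis, namely showing that the cocycles are exactly indexed by indecomposables with $M(0)\neq0$ with no hidden relations, since wildness rules out a case-by-case list. I would try to argue through restriction to the arm $0<1$, where the cocycle condition lives, using the rank invariant from Theorem \ref{Th:E0G0-check}\ref{Th:E0G0-check-3}.
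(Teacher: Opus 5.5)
Part (3) is correct and follows the paper's argument: Theorem~\ref{Th:E0G0} reduces it to $\Gr(k[t,t^{-1}])$, and the structure theorem does the rest. Parts (1) and (2) have genuine gaps.

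\textbf{Part (1).} Solving the finite linear system is a sound method. However, the solution you expect is wrong, and your split-injectivity argument depends on it.
The class $[S_{[n]}]$ is \emph{not} an $\EE^0$-cocycle. In $\E^{[2]}_1=(01\to 02\to 12)$ one gets $\partial_0^*S_{[2]}=(k\xto{0}k\xto{=}k)$ but $\partial_1^*S_{[2]}=(k\xto{=}k\xto{0}k)$. In general $\partial_0^*S_{[n]}$ splits along the columns $\{j=\text{const}\}$ of $\E^{[n]}_1$, while $\partial_1^*S_{[n]}$ splits along the rows $\{i=\text{const}\}$. Theorem~\ref{Th:E0G0-check}\ref{Th:E0G0-check-1}, which you cite, already excludes it, since $S_{[n]}$ is a genuine module that kills every non-identity morphism.
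Nor is the extra $\GG^0$ generator a module supported at an end of the chain. By Theorem~\ref{Th:E0G0-check}\ref{Th:E0G0-check-3}, a cocycle has constant dimension vector, because every morphism except $0<n$ lies in one line-component. For $n\ge 3$, Theorem~\ref{Thm:0-cocycle-trees} shows the only genuine modules that are $\GG^0$-cocycles are $a\underline{k}\oplus bS_{[n]}$.
What you are missing is the \emph{virtual} class $T_{[n]}=[J_{0,n-1}]+[J_{1,n}]-[J_{1,n-1}]$, where $J_{a,b}$ is the interval module on $[a,b]$. On both $\E^{[n]}_1$ and $\G^{[n]}_1$, two of its three terms cancel under each coface, so $\delta^0T_{[n]}=\delta^1T_{[n]}=[\underline{k}]$.
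The correct picture is $\EE^0([n],k)=\langle[\underline{k}],T_{[n]}\rangle\subset\GG^0([n],k)=\langle[\underline{k}],T_{[n]},[S_{[n]}]\rangle$. Thus $[S_{[n]}]$ is the class that separates the two theories, and it spans the free cokernel, which gives the splitting.
The paper proves spanning from constancy of $\rk X$ on the large line-component. This forces any cocycle with no $[J_{0,n}]$ or $[S_{[n]}]$ component to equal $A\cdot T_{[n]}$.
Note also that (1) requires $n\ge 2$, as in Proposition~\ref{Prop:hG0E0-[n]}. For $n=1$, $\G^{[1]}_1=\E^{[1]}_1$ is a single object, so $\GG^0([1],k)=\EE^0([1],k)\cong\Z^2$.

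\textbf{Part (2).} Your vanishing argument for $\GG^1$ and $\GG^2$ is only sketched, but it goes the paper's way. For a $1$-cocycle $([N],[V_N])$ the paper's preimage is $M(0)=0$, $M(1)=N(01)$, $M(j)=N(1j)\oplus N(01)$.
For $\GG^0$, your dichotomy (indecomposables vanishing at $0$ versus not) is the right one. But you never produce the canonical correcting term, and the route you propose cannot produce it.
\begin{itemize}
\item The cocycle condition does not live on the arm $0<1$. On the star $01\to 1j$ of $\G^{\D_n}_1$, $\partial_0^*M$ is $M$ restricted to all the arms $1<j$. By contrast, $\partial_1^*M$ has $M(0)$ at the centre and $M(0<1)$ on every edge. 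The isolated objects $0j$ add dimension conditions.
\item The rank invariant cannot index a basis by isomorphism classes of indecomposables. For $\D_3$ over an infinite field, four distinct lines in $k^2$ give infinitely many non-isomorphic indecomposables with the same rank invariant.
\end{itemize}

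The missing construction is the paper's $I(N)=N\oplus N'\oplus N''$ (Proposition~\ref{Prop:dandelion-2}). Here $N'(1)=N(0)$ and $N'(j)=N(1)$, with structure maps $N(0<1)$; $N''$ places $N(0)$ at each leaf.
Both $d^0[I(N)]$ and $d^0[I(N_0)]$ equal $N|_{\{1,\dots,n+1\}}$ placed on the star, plus $\dim N(j)+\dim N(1)$ copies of $S_{0j}$. Hence $[I(N)]-[I(N_0)]$ is a $0$-cocycle containing $[N]$ exactly once, and otherwise only summands vanishing at $0$. This gives linear independence.

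The paper stops at this containment and defers generation, but your dichotomy finishes it. On classes vanishing at $0$, $\delta^0$ is injective, and $\delta^1=\dim(-)(1)\cdot\sum_j[S_{1j}]$. So after subtracting the $[I(N)]-[I(N_0)]$, any remaining cocycle $X'$ satisfies $X'=c\sum_{j\ge 2}[S_j]$ with $c=\dim X'(1)=0$.
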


Parts \ref{Th:Computations-1}  and \ref{Th:Computations-3} of Theorem \ref{Th:Computations} are Propositions \ref{Prop:hG0E0-[n]} and \ref{Prop:Cyclic-E0}, respectively. Part \ref{Th:Computations-2} is proved in Propositions \ref{Prop:dandelion-1} and \ref{Prop:dandelion-2}.

To get an idea of what higher-dimensional $\EE^*$ and $\GG^*$ groups look like, we consider a small subcomplex of the defining cochain complex of both theories. Specifically, in each case and every $n\geq 0$ we consider the subgroup of the respective Grothendieck group generated by indecomposable modules where $\dim_k(M(x))\le 1$ for all objects $x$ in the category in question. This is in fact the first stage of a filtration one can define on those cochain complexes, but when the dimension of any indecomposable module at each object is bounded by $1$, there are at most finitely many isomorphism classes of indecomposable modules, which makes the calculation more feasible. We wrote a simple SAGE code that allows such computations for finite posets and used it to obtain some sample calculations. The results are displayed in Section \ref{Sec:Computational}.

The authors are grateful to Ehud Meir for several illuminating discussions on representation theory of quivers and small categories. 

\tableofcontents

\section{Preliminaries and Examples}
\label{Sec:Prelims}
In this section we give the four variations of the cohomology theories discussed in this article. We start by defining, for any small category $\C$, a simplicial object $\widetilde{\E}^\C_\bullet$ in the category $\Cat$ of small categories. This simplicial object can be thought of as an enveloping object, as all other variations are simplicial subobjects of $\widetilde{\E}^\C_\bullet$. We then use these constructions to define the respective representation cohomology theories. 

\subsection{The simplicial envelope of a small category}
\label{SSec:Prelims-Setup}
Let $n\in \mathbb N$. We denote by $[n]$ the poset, considered as a category,  with object set  $\{0,1,\ldots,n\}$ and the natural order as morphisms.  Let $\C$ be any small category. 
Let 
\[\widetilde{\E}^\C_n \defeq \C^{[n]},\]
namely the \hadgesh{functor category from $[n]$ to $\C$}, with morphisms given by natural transformations, as usual. Let $\Delta$ denote the standard simplex category with objects $[n]$, $n\geq 0$, and order-preserving functions as morphisms. Define a simplicial object in $\Cat$ \[\widetilde{\E}^\C_\bullet\colon\Delta^{\mathrm{op}}\to \Cat\] 
by $\widetilde{\E}^\C_\bullet([n]) \defeq \widetilde{\E}^\C_n$, and for a morphism $\alpha\colon [n] \to [m]$, let $\widetilde{\E}^\C_\bullet(\alpha)$ be the induced functor 
\[\widetilde{\E}^\C_m = \C^{[m]} \xto{\alpha^*} \C^{[n]} = \widetilde{\E}^\C_n.\]

More explicitly, let $|\C|$ denote the nerve of $\C$. Then  one can regard the objects of $\widetilde{\E}^\C_n$ as the $n$-simplices of $|\C|$, namely sequences
\begin{equation}\label{Eq:Object}
\underline{x}=x_0\stackrel{u_1}{\rightarrow}x_1\stackrel{u_2}{\rightarrow}\cdots\stackrel{u_n}{\rightarrow}x_n
\end{equation}
of objects and morphisms in $\C$. If
\[
\underline{y}=y_0\stackrel{v_1}{\rightarrow}y_1\stackrel{v_2}{\rightarrow}\cdots\stackrel{v_n}{\rightarrow}y_n
\]
is another such sequence, then   a morphism  $\underline{f}\colon \underline{x}\to \underline{y}$ is  a commutative diagram in $\C$
\[
\xymatrix{
x_0\ar[d]_{f_0}\ar[r]^{u_1}&  x_1 \ar[d]^{f_{1}}\ar[r]^{u_2}& \cdots  \ar[r]^{u_n} & x_n \ar[d]^{f_{n}} \\
y_0\ar[r]_{v_1}&  y_1 \ar[r]_{v_2}& \cdots  \ar[r]_{v_n} &y_n. }
\] 
Objects in $\widetilde{\E}^\C_n$ will be denoted by $\underline{x}$ to mean a sequence as in (\ref{Eq:Object}) above, if keeping track of the objects involved is required, or by the symbol $[u_1|u_2|\cdots|u_n]$ if one needs to explicitly name the morphisms involved. A morphism in $\widetilde{\E}^\C_n$ will be denoted by $\underline{f} = (f_0, f_1,\ldots, f_n)$, with $f_i\in\Mor_\C(x_i,y_i)$.

We will refer to the simplicial object $\widetilde{\E}^\C_\bullet$ as the \hadgesh{simplicial envelope of $\C$}. The following example justifies the terminology. For a small category $\C$, let $\C_\bullet$ denote the constant simplicial object in $\Cat$ with value $\C$, namely $\C_n = \C$ for all $n\geq 0$, with all simplicial operators given by the identity functor.
\newcommand{\ev}{\mathrm{ev}}

\begin{Ex}\label{Ex:C[n]=C}
Let $\C$ be a small category. Then for each $m\geq 0$, there is an  inclusion functor $\iota_m\colon\C_m\to \widetilde{\E}^\C_m$ that takes an object $c\in\C_m$ to the constant sequence $\mathbf{1_c}\defeq (c=c=\cdots=c)$ in $\widetilde{\E}^\C_m$. There are two evaluation functors $\ev_0, \ev_m\colon \widetilde{\E}^\C_m\to \C_m$ given by sending an object $\underline{c} = c_0\xto{}c_1\xto{}\cdots\xto{}c_m$ in $\widetilde{\E}^\C_m$ to $c_0$ and $c_m$,  respectively. Clearly $\ev_0\circ\iota_m=\ev_m\circ\iota_m = 1_{\C_m}$. There are natural transformations $\eta_m\colon\iota_m\circ\ev_0\to 1_{\widetilde{\E}^\C_m}$ and $\gamma_m\colon 1_{\widetilde{\E}^\C_m}\to \iota_m\circ\ev_m$ given by taking an object $\underline{c}$, as above, to the obvious morphisms $\mathbf{1_{c_0}}\to \underline{c}$ and $\underline{c}\to \mathbf{1_{c_m}}$, respectively. It follows that $\C$ and $\widetilde{\E}^\C_m$ have homotopy equivalent nerves for each $m$. 

Notice that  the evaluation functors  $\widetilde{\E}^\C_\bullet\to \C_\bullet$ does not induce morphisms of simplicial objects, because $\ev_0$  does not respect the face operator $\partial_0$ and $\ev_m$ does not respect the face operator $\partial_m$. However, the inclusion functors do define a morphism of simplicial objects $\C_\bullet \to \widetilde{\E}^\C_\bullet$ that is a homotopy equivalence in every dimension. Hence, the inclusions induce a homotopy equivalence of the geometric realisations
\[\iota_*\colon||\C_\bullet|| \xto{\simeq} ||\widetilde{\E}^\C_\bullet||,\]
where for any simplicial object $\D_\bullet$ in $\Cat$ the symbol $||\D_\bullet||$ means  first  realising $\D_n$ for all $n$ and then taking the geometric realisation of the corresponding simplicial space. Clearly, $||\C_\bullet||$ (and hence also $||\widetilde{\E}^\C_\bullet||$) is homotopy equivalent to the geometric realisation of the nerve of $\C$. \qed
\end{Ex}

\begin{Rem}
Recall that  the singular set of a topological space $X$ is the set of all continuous maps from the standard simplex $\Delta^n$ to $X$. The simplicial object $\widetilde{\E}^\C_\bullet$ could be considered as a categorification of the singular set construction. However, as we shall see, the resulting cohomology theories behave very differently from the topological analog. 
\end{Rem}

Lying between the category $\C$ and its simplicial envelope, certain (semi-)simplicial subobjects will be of interest.

\begin{Defi}\label{Def:subobjects}
Consider the following subobjects of $\widetilde{\E}^\C_\bullet$.
\begin{enumerate}[(1)]
\item For $\C\in\Cat$, let  $\widetilde{\G}^\C_\bullet\subseteq\widetilde{\E}^\C_\bullet$ denote the simplicial subobject with the same  object sets in each dimension, but where non-identity morphisms  $\underline{x}\to\underline{y}$   are induced by morphisms $x_n\to y_0$ in $\C$.  \label{Def:subobjects-1}

\item For $\C\in\Cat_+$, define $\E^\C_\bullet\subseteq \widetilde{\E}^\C_\bullet$ and $\G^\C_\bullet\subseteq \widetilde{\G}^\C_\bullet$ to be the semi-simplicial subobjects, where in each dimension they are the respective full subcategories whose objects consist of the
non-degenerate simplices of $|\C|$.  \label{Def:subobjects-2}
\end{enumerate}
\end{Defi}

The  simplicial object $\widetilde{\G}^\C_\bullet$ will be referred to as the \hadgesh{compositional simplicial envelope of $\C$}. The same argument as in Example \ref{Ex:C[n]=C} shows that here too the inclusion induces a homotopy equivalence $||\C_\bullet|| \xto{\simeq} ||\widetilde{\G}^\C_\bullet||$.

\begin{Rem}
The restriction on $\C$ in Definition \ref{Def:subobjects}\ref{Def:subobjects-2} is necessary,  because the boundary of an object of the form $[u_1|\cdots|u_i|u_i^{-1}|\cdots|u_n]$ would have at least one degenerate face, which is not allowed in these constructions. 
\end{Rem}

The definition of the simplicial envelope $\widetilde{\E}^\C_\bullet$ and Definition \ref{Def:subobjects} easily imply Statements \ref{Th:Rep-Cats-1} and \ref{Th:Rep-Cats-2} of Theorem \ref{Th:Rep-Cats}. To complete the proof we only need to prove the naturality statement of the theorem.

If $\C\in\Cat_+$  then, by the definitions, one has an obvious  commutative square of inclusions
\begin{equation}\label{Eq:Square}
\xymatrix{
\G^\C_\bullet\ar[r]^{\gamma_\C}\ar[d]_{\iota_\C} & \widetilde{\G}^\C_\bullet\ar[d]^{\widetilde{\iota}_\C}\\
\E^\C_\bullet\ar[r]^{\epsilon_\C} & \widetilde{\E}^\C_\bullet
}\end{equation}

If $\C,\D\in\Cat$ are categories, then any functor $F\colon\C\to \D$ induces  a morphism of simplicial objects in $\Cat$ 
\[\widetilde{\E}^F\colon\widetilde{\E}^\C_\bullet\to \widetilde{\E}^\D_\bullet\quad\text{and}\quad \widetilde{\G}^F\colon\widetilde{\G}^\C_\bullet\to \widetilde{\G}^\D_\bullet.\]
This shows that the constructions $\widetilde{\E}^-_\bullet$ and $\widetilde{\G}^-_\bullet$ are functorial. One easily checks that $\widetilde{\E}^F$ and $\widetilde{\G}^F$  commute with the respective inclusions $\widetilde{\iota}_\C$ and $\widetilde{\iota}_\D$,  so $\widetilde{\iota}\colon\widetilde{\G}^-_\bullet\to\widetilde{\E}^-_\bullet$  is a natural transformation.

If $\C, \D\in \Cat_+$  and $F\colon\C\to \D$ is faithful, then $F$ sends any non-degenerate sequence of morphisms in $\C$ to a non-degenerate sequence of morphisms in $\D$. Hence one similarly gets a morphism of semi-simplicial objects in $\Cat_+$ 
\[\E^F\colon\E^\C_\bullet\to \E^\D_\bullet\quad\text{and}\quad \G^F\colon\G^\C_\bullet\to \G^\D_\bullet\]
that commute with $\iota_\C$ and $\iota_\D$ in (\ref{Eq:Square}).

Finally,  the inclusion functors $\gamma$ and $\epsilon$  clearly satisfy $\widetilde{\G}^F\circ\gamma_\C = \gamma_\D\circ\G^F$ and  $\widetilde{\E}^F\circ\epsilon_\C = \epsilon_\D\circ\E^F$, respectively. Hence $\gamma$ and $\epsilon$ are both natural transformations. This completes the proof of Parts \ref{Th:Rep-Cats-1} and \ref{Th:Rep-Cats-2} of Theorem \ref{Th:Rep-Cats}.

\begin{Rem}\label{Rem:Traitor}
The constructions $\E^\C_\bullet$ and $\G^\C_\bullet$ are well defined for arbitrary objects in $\Cat_+$, but they are not functorial with respect to arbitrary functors $F\colon \C\to \D$. This is because if $F$ is  not faithful, then the map induced on nerves may send a non-degenerate object in $|\C|_n$  to a degenerate object in $|\D|_n$.
\end{Rem}

The following corollary is immediate.
\begin{Cor}\label{Cor:Equivalence}
Let $\C, \D$ be small categories, and let $F\colon\C\to\D$ be an equivalence of categories. Then the induced functors $\widetilde{\E}^F$, $\widetilde{\G}^F$, $\E^F$ and $\G^F$ are equivalences of (semi-)simplicial objects in $\Cat$.
\end{Cor}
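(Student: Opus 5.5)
Let $G\colon\D\to\C$ be a quasi-inverse of $F$, with natural isomorphisms $\eta\colon 1_\C\Rightarrow GF$ and $\theta\colon FG\Rightarrow 1_\D$. The plan is to apply the constructions levelwise and check three things in each dimension $n$. First, $\widetilde{\E}^F_n$ is an equivalence. Second, the same holds for $\widetilde{\G}^F_n$, $\E^F_n$ and $\G^F_n$. Third, the quasi-inverses and natural isomorphisms assemble compatibly with the (semi-)simplicial operators.

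\textbf{The envelope.} Since $\widetilde{\E}^\C_n=\C^{[n]}$, post-composition with $F$, $G$, $\eta$ and $\theta$ gives functors $F_*\colon\C^{[n]}\to\D^{[n]}$ and $G_*$, together with natural isomorphisms $\eta_*\colon 1\Rightarrow G_*F_*$ and $\theta_*\colon F_*G_*\Rightarrow 1$. The component of $\eta_*$ at $\underline{x}$ is $(\eta_{x_0},\ldots,\eta_{x_n})$. This is a morphism of $\C^{[n]}$ by naturality of $\eta$, and it is invertible because each $\eta_{x_i}$ is. Post-composition commutes strictly with pre-composition $\alpha^*$ for every $\alpha\colon[n]\to[m]$. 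Hence $F_*$, $G_*$, $\eta_*$, $\theta_*$ are maps of simplicial objects and simplicial natural transformations, so $\widetilde{\E}^F$ is a simplicial equivalence.

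\textbf{The compositional envelope.} For $\widetilde{\G}$ the issue is that $\eta_*$ must have components in the subcategory $\widetilde{\G}^\C_n$. Its non-identity morphisms are only those induced by a morphism $x_n\to y_0$, so the component $(\eta_{x_0},\ldots,\eta_{x_n})$ need not lie there. This is the step I expect to be the main obstacle. Working through the definition, a morphism $\underline{x}\to\underline{y}$ in $\widetilde{\G}^\C_n$ is presumably the tuple $(u_n\cdots u_1\,\text{-composites}, \ldots)$ built from a single $h\colon x_n\to y_0$, namely $f_i = v_i\cdots v_1\, h\, u_n\cdots u_{i+1}$. A natural isomorphism $\underline{x}\cong G_*F_*\underline{x}$ is then generally not of this form unless $\eta$ is an identity.

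I would therefore first try to use the fact that $F$ restricted to skeleta is an isomorphism. Choose skeleta $\C_0\subseteq\C$ and $\D_0\subseteq\D$ such that $F$ induces an isomorphism $\C_0\cong\D_0$. By functoriality, $\widetilde{\G}^F$ restricted to $\widetilde{\G}^{\C_0}_\bullet$ is then an isomorphism. It remains to show that each inclusion $\widetilde{\G}^{\C_0}_\bullet\to\widetilde{\G}^{\C}_\bullet$ is an equivalence levelwise. If the $\widetilde{\G}$-morphisms fail to realise this, I would fall back on reading the word ``equivalence'' as equivalence of the module categories or of the Grothendieck groups, which is what the cohomology sees. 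This reading can be checked through essential surjectivity on isomorphism classes of objects together with full faithfulness of post-composition by $F$ on the composite-defined morphisms. Full faithfulness follows since $F$ is fully faithful on $\C(x_n,y_0)$.

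\textbf{The semi-simplicial variants.} For $\E^F$ and $\G^F$, note first that in $\Cat_+$ an equivalence is faithful and reflects identities, since it is injective on morphisms. Hence $F$ and $G$ send non-degenerate simplices to non-degenerate ones. Moreover, $\E^\C_n$ and $\G^\C_n$ are full subcategories of $\widetilde{\E}^\C_n$ and $\widetilde{\G}^\C_n$, so the restricted functors remain fully faithful. Essential surjectivity restricts as well, since an isomorphism $\underline{y}\cong F_*G_*\underline{y}$ joins two non-degenerate simplices. Compatibility with the face maps is inherited from the ambient simplicial case.
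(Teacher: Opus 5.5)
Your argument for $\widetilde{\E}^F$ is correct. The $\widetilde{\G}^F$ case, however, fails at exactly the step you flagged, and it cannot be repaired. The claim that $\widetilde{\G}^{\C_0}_n\to\widetilde{\G}^\C_n$ is an equivalence is false, because isomorphisms in $\widetilde{\G}^\C_n$, $n\ge 1$, are rigid.

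To see this, let $\underline{x}=[u_1|\cdots|u_n]$ and $\underline{y}=[v_1|\cdots|v_n]$, and let $\alpha\colon\underline{x}\to\underline{y}$ be a non-identity isomorphism, induced by $h\colon x_n\to y_0$. Its inverse is then also non-identity, so it is induced by some $h'\colon y_n\to x_0$. Put $g=h'v_n\cdots v_1h$. The equation $\alpha^{-1}\alpha=1_{\underline{x}}$ says $u_i\cdots u_1\,g\,u_n\cdots u_{i+1}=1_{x_i}$ for all $i$. Comparing components $i$ and $i+1$ shows that $u_{i+1}$ has the two-sided inverse $u_i\cdots u_1\,g\,u_n\cdots u_{i+2}$. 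Hence a simplex with a non-invertible entry is isomorphic in $\widetilde{\G}^\C_n$ only to itself.

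Now let $\D$ have objects $0,1,1'$, an isomorphism $\varphi\colon 1\to 1'$, and morphisms $a\colon 0\to 1$ and $a'=\varphi a$. The inclusion $F\colon[1]\to\D$ of $a$ is an equivalence onto a skeleton. The object $[a']\in\widetilde{\G}^\D_1$ is not in the image of $\widetilde{\G}^F_1$, and by the rigidity above it is isomorphic to no other object. So $\widetilde{\G}^F_1$ is not essentially surjective.

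Your fullness claim also fails when $F$ is not injective on objects, because identities of $\widetilde{\G}$ are not induced by any $h$. For example, take the category with an isomorphism $\psi\colon x\to y$, a morphism $b\colon x\to z$, its composite $b\psi^{-1}\colon y\to z$, and nothing leaving $z$, and collapse it onto $[1]$. Then $[b]\neq[b\psi^{-1}]$ have the same image, but no morphism $[b]\to[b\psi^{-1}]$ exists, since it would need some $z\to y$.

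Your fallback does not rescue the claim either. Besides changing what is being asserted, restriction along $\widetilde{\G}^F_1$ sends the nonzero singleton module $S_{[a']}$ to $0$. This module is well defined: the only other object mapping to $[a']$ is $[1_0]$, and $[a']$ has no morphism back to it. So even $\Gr(k\widetilde{\G}^\D_1)\to\Gr(k\widetilde{\G}^{[1]}_1)$ is not injective.

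Thus the $\widetilde{\G}^F$ clause of the corollary is false in general. It holds when $F$ is an isomorphism, or for groupoids, where $\widetilde{\G}=\widetilde{\E}$. The paper gives no argument beyond calling the corollary immediate, and the part that genuinely is immediate is your post-composition argument for $\widetilde{\E}^F$. Its only later use, for groupoids, needs just that case.

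For $\E^F$ and $\G^F$ you should not lean on the ambient $\widetilde{\G}$ case at all. These only make sense for $\C,\D\in\Cat_+$, where every isomorphism is an identity. So the unit and counit of the equivalence are identities, and $F$ is an isomorphism of categories. Consequently $\E^F$ and $\G^F$ are isomorphisms of semi-simplicial objects.
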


For any small category $\C$, the representation theory of the objects we defined is obviously very rich. Next we consider some natural subobjects that could help the investigation.
\begin{Defi}\label{Def:Restrictions}
Let $\C$ be a small category.
\begin{enumerate}[(1)]
\item Let $X\subseteq|\C|$ be a simplicial subset of $|\C|$. Define $\widetilde{\E}^{\C, X}_\bullet$ to be the simplicial object in $\Cat$, where $\widetilde{\E}^{\C, X}_n\subseteq \widetilde{\E}^{\C}_n$ is the full subcategory whose objects are the $n$-simplices of $X$. 
\label{Def:Restrictions-1}
\item Let $\M\subseteq \Mor_\C$ be any subset that is closed under compositions. Define  $\widetilde{\E}^{\C, \M}_\bullet$ to be the simplicial object, where for any morphism $\underline{f}=(f_0, f_1,\ldots, f_n)$, one has $f_i\in \M$ for all $i$. \label{Def:Restrictions-2}
\end{enumerate}
\end{Defi}

One can similarly make the analogous constructions of Definition \ref{Def:Restrictions} for $\widetilde{\G}^\C_\bullet$ and, when it makes sense, for $\E^\C_\bullet$ and $\G^\C_\bullet$. 

%%%%%%%%%%%%%%%%%%%%%%%%%%%%%%%%%%%%%%%%%%%

\subsection{Examples}
\label{SSec:Prelims-Examples}

\begin{Ex}\label{Ex:Restrictions}
A canonical example  of Definition \ref{Def:Restrictions}\ref{Def:Restrictions-1} arises by considering $X = |\C|^{(n)}$, the $n$-skeleton of $|\C|$. A useful example of \ref{Def:Restrictions-2} is given by  taking $\M=\emptyset$, so each $\widetilde{\E}^{\C,\emptyset}_n$ is the discrete category with objects the $n$-simplices of $\C$. Thus, upon identifying discrete categories with their sets of objects, $\widetilde{\E}^{\C,\emptyset}_\bullet$ is isomorphic to $|\C|$ as a simplicial set. \qed
\end{Ex}

The case where $\C$ is the category of a group is particularly simple, as the next example shows.

\begin{Ex}\label{Ex:BG}
Let $\Gamma$ be a discrete group, and let $\C=\B\Gamma$ denote the one object category with morphisms given by $\Gamma$ with the obvious composition rule. Then the objects of $\widetilde{\E}^{\B\Gamma}_n$ are in $1-1$ correspondence with $\Gamma^n$. Since all morphisms in $\B\Gamma$ are automorphisms, it is easy to see that $\iota\colon\widetilde{\G}^{\B\Gamma}_\bullet\to\widetilde{\E}^{\B\Gamma}_\bullet$ is an isomorphism of simplicial objects. It follows that each $\widetilde{\E}^{\B\Gamma}_n$ is a (connected) groupoid, where every morphism $\underline{g}\to\underline{h}$ is determined uniquely by any  of the elements forming it.  Hence  $\widetilde{\E}^{\B\Gamma}_n$ is equivalent as a category to ${\B\Gamma}$ for all $n\geq 0$, and the same holds for $\widetilde{\G}^{\B\Gamma}_n$. \qed
\end{Ex}

\begin{Ex}\label{Ex:Line}
Let $\C = [3]$. Then $\widetilde{\E}^{[3]}_1$ is a poset with Hasse diagram:
 \[
 \xymatrix{
&&& 03\ar[d]\\
&&02\ar[d]\ar[ur]&13\ar[d]\\
&01\ar@[red][d]\ar[ur]&12\ar@[red][d]\ar[ur]&23\ar@[red][d]\\
\textcolor{red}{00}\ar@[red][ur]&\textcolor{red}{11}\ar@[red][ur]&\textcolor{red}{22}\ar@[red][ur]&\textcolor{red}{33}\\
}
\]
where $ij$ denotes the morphism $i<j$ in $[3]$ considered as an object in $\widetilde{\E}^{[3]}_1$. The subcategory $\E^{[3]}_1\subseteq\widetilde{\E}^{[3]}_1$ whose objects are the non-degenerate simplices in $|[3]|$ is drawn in black. The objects and morphisms in red are present in $\widetilde{\E}^{[3]}_1$ but not in $\E^{[3]}_1$.

The subcategory $\widetilde{\G}^{[3]}_1$ has the Hasse diagram 
\[\xymatrix{&&&&&& 03\ar[dd]\\
&&&&02\ar[d]\\
00 \ar[r]\ar[rrrrrruu]\ar[rrrru] & 01  \ar[r] & 11   \ar[r]\ar[drrrr] & 12  \ar[r] & 22  \ar[r] & 23  \ar[r] & 33\\
&&&&&&13\ar[u]}\]
while the Hasse diagram for  $\G^{[3]}_1$ has the form
\[13\longleftarrow 01\longrightarrow 12\longrightarrow 23 \longleftarrow 02 \quad 03.\] 
Notice that $\widetilde{\G}^{[3]}_1$ is a connected category in this example, while $\G^{[3]}_1$ is not.  \qed
\end{Ex}

\subsection{Connectivity}\label{SS:Connectivity}

\begin{Lem}\label{Lem:KCn-connected}
Let $\C$ be a connected small category. Then  for each $n\geq 0$ the categories $\widetilde{\E}^\C_n$ and $\widetilde{\G}^\C_n$ are connected.
\end{Lem}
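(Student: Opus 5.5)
The plan is to connect every object of $\widetilde{\E}^\C_n$ (respectively $\widetilde{\G}^\C_n$) to a constant sequence $\mathbf{1_c}$ by a zigzag of morphisms. Then we use connectivity of $\C$ to connect any two constant sequences. Since $\widetilde{\G}^\C_n\subseteq\widetilde{\E}^\C_n$ has the same objects, it suffices to carry this out inside $\widetilde{\G}^\C_n$; the result for $\widetilde{\E}^\C_n$ follows immediately.

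\emph{Step 1: constant sequences are connected to each other.} For a morphism $f\colon c\to d$ in $\C$, consider the morphism $\mathbf{1_c}\to\mathbf{1_d}$ given by $(f,f,\ldots,f)$. This diagram commutes trivially. It lies in $\widetilde{\G}^\C_n$ because it is induced by the morphism $f\colon x_n = c\to d = y_0$: its components are $f_i = f$, which are the composites $\Id_d\circ f\circ\Id_c$ with the structure maps of the two sequences. Since $\C$ is connected, any $c,d$ are joined by a zigzag of morphisms in $\C$. Applying this construction to each step gives a zigzag from $\mathbf{1_c}$ to $\mathbf{1_d}$.

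\emph{Step 2: each sequence is connected to a constant one.} Let $\underline{x}=x_0\xto{u_1}\cdots\xto{u_n}x_n$. The natural transformation $\gamma_n$ of Example \ref{Ex:C[n]=C} gives a morphism $\underline{x}\to\mathbf{1_{x_n}}$ with components $u_n\circ\cdots\circ u_{i+1}\colon x_i\to x_n$. In $\widetilde{\G}^\C_n$ this is exactly the morphism induced by $\Id_{x_n}\colon x_n\to x_n = y_0$, taking $f_i$ to be the composite of the structure maps from $x_i$ through $x_n$ to $y_i$. So it lies in $\widetilde{\G}^\C_n$. Combining Steps 1 and 2 connects any two objects.

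\emph{Main point to check.} The only step needing care is the precise meaning of ``induced by a morphism $x_n\to y_0$'' in Definition \ref{Def:subobjects}. The reading I adopt is that, for $h\colon x_n\to y_0$, the induced morphism has components $f_i = (v_i\circ\cdots\circ v_1)\circ h\circ(u_n\circ\cdots\circ u_{i+1})$. I would verify that both morphisms used above are of this form and that the resulting diagrams commute, which is immediate from associativity. For $n=0$ the statement reduces to connectivity of $\C$ itself, since $\widetilde{\E}^\C_0=\widetilde{\G}^\C_0=\C$.
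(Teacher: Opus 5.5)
Your proof is correct and is essentially the paper's argument. You connect each object to a constant sequence by a morphism in $\widetilde{\G}^\C_n$, link the constant sequences using a zig-zag in $\C$, and then pass to $\widetilde{\E}^\C_n$ because it has the same objects. The only cosmetic difference is that the paper attaches the second object via $\mathbf{1_{b_0}}\to\underline{b}$ rather than $\underline{b}\to\mathbf{1_{b_n}}$, which is immaterial.
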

\begin{proof}
Let $\underline{a} = a_0\to\cdots\to a_n$ and $\underline{b} = b_0\to\cdots\to b_n$ be  objects in $\widetilde{\G}^\C_n$, and let $\mathbf{1_{a_n}}$ and $\mathbf{1_{b_0}}$ be the constant objects on $a_n$ and $b_0$, respectively. Then there are  morphisms  $\underline{a}\to\mathbf{1_{a_n}}$ and $\mathbf{1_{b_0}}\to\underline{b}$ in $\widetilde{\G}^\C_n$. Since $\C$ is connected, there is a zig-zag of morphisms in $\C$ from $a_n$ to $b_0$, which in turn induces a zig-zag of constant objects in $\widetilde{\G}^\C_n$ from $\mathbf{1_{a_n}}$ to $\mathbf{1_{b_0}}$. Since $\widetilde{\E}^\C_n$ has the same objects as $\widetilde{\G}^\C_n$, the same applies to $\widetilde{\E}^\C_n$.
\end{proof}

By contrast, connectivity of $\C$ is not hereditary for $\E^\C_n$ and $\G^\C_n$ if $n>0$. For instance, if $\C$ is the category with Hasse diagram given by a $1$ to $n$ corolla (a digraph with one input vertex and $n$ output vertices), where $n>1$, then $\E^\C_1 = \G^\C_1$ consists of $n$ isolated objects. If $\C$ is a small category and $a_0\to\cdots\to a_n$ is a chain of morphisms in $\C$, where $a_0$ is minimal and $a_n$ maximal ($\C(-,a_0)=\emptyset$ and $\C(a_n,-)=\emptyset$, respectively), then the corresponding object is isolated   in $\G^\C_n$. 

The following, rather specialised, concepts of connectivity will be useful later. 
\begin{Defi}
\label{Def:Line-conn}
Let $\C$ be a small category. A subset $\M\subseteq \Mor_\C$ of non-identity morphisms is said  to be  \hadgesh{line-connected} if  the full subcategory of $\G^\C_1$ whose objects are the elements of $\M$ is connected. The subset $\M$ is said to be a \hadgesh{line-component} if it is line-connected and if the subcategory of $\G^\C_1$ that it generates is a connected component. 

Similarly, $\M\subseteq \Mor_\C$ is said to be \hadgesh{square-connected} if the full subcategory of $\E^\C_1$ whose objects are the elements of $\M$ is connected. The subset $\M$ is said to be a \hadgesh{square-component} if it is square-connected and if the subcategory of $\E^\C_1$ that it generates is a connected component.  
\end{Defi}

Clearly a line-connected set of morphisms is also square-connected, but not vice versa. For instance, if $\C$ is the category $[2]$, then $\E^\C_1$ is the interval $01\to 02\to 12$, while $\G^\C_1$ has two connected components: the interval $01\to 12$ and the singleton $02$. On the other hand, the category with objects $0, 1, 2$ and unique morphisms from $0$ to $1$ and $2$ is neither line- nor square-connected.

\begin{Defi}\label{Def:Succ-Pred}
Let $\C$ be any category and let $\varphi\colon a\to b$  be a morphism in $\C$. An irreducible morphism $b\to x$ in $\C$, if one exists, is said to be a \hadgesh{successor of $\varphi$}. Similarly, an irreducible morphism $y\to a$, if one exists, is called  a \hadgesh{predecessor of $\varphi$}.
\end{Defi}

Of course neither successors nor predecessors, if they exist, are necessarily unique, except in some special circumstances.  Recall that a category $\C$ is called a \hadgesh{direct category} (or a $1$-way category) if  it contains no infinite  chains of nonidentity morphisms, including cycles of positive  length. This implies, in particular,  that all endomorphisms in a direct  category are identity morphisms.

\begin{Lem}\label{Lem:Rooted-trees-succ-pred}
Let $\C$ be a finite  direct category. Then the following statements are equivalent.
\begin{enumerate}[(1)]
\item Every morphism $\varphi\colon x\to y$ in $\C$, such that $\C(y,x)\neq\emptyset$ for some $y\neq x$, has a unique predecessor. \label{Lem:Rooted-trees-succ-pred-1}
\item The quiver whose edge  set consists of all irreducible morphisms in $\C$ forms a disjoint union of  rooted trees. \label{Lem:Rooted-trees-succ-pred-2}
\end{enumerate}
\end{Lem}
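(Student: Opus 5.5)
The hypothesis in (1) appears to contain a typo. I will read it as: every morphism $\varphi\colon x\to y$ whose source $x$ is not minimal (i.e.\ $\C(z,x)\neq\emptyset$ for some $z\neq x$) has a unique predecessor. I take a \emph{rooted tree} to be a quiver with a vertex, the root, from which all edges point away, so that every non-root vertex has exactly one incoming edge.

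The plan is to reduce both conditions to a statement about in-degrees in the quiver $Q$. The vertices of $Q$ are the objects of $\C$, and its edges are the irreducible morphisms.

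\emph{Factorisation into irreducibles.} Since $\C$ is finite and direct, every non-identity morphism $x\to y$ is a finite composite of irreducible morphisms. To see this, factor a reducible morphism repeatedly. Directness forbids infinite chains of non-identity morphisms, so the process must stop. Consequently an object $x$ admits a non-identity morphism into it if and only if $Q$ has an edge ending at $x$.

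\emph{Reformulating (1).} A predecessor of $\varphi\colon x\to y$ is by definition an irreducible morphism ending at $x$. It therefore depends only on the source $x$, and not on $\varphi$. Since $\C$ is direct, it has no non-identity endomorphisms, so there are no loops in $Q$. Every non-minimal object is the source of some morphism, at least its identity. Hence (1) is equivalent to the following condition on in-degrees:
\begin{center}
every vertex of $Q$ has in-degree $0$ or $1$.
\end{center}
Here minimal objects are exactly the vertices of in-degree $0$. Parallel irreducible morphisms $x\rightrightarrows y$ count as in-degree $2$ at $y$.

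\emph{Graph-theoretic step: (1) $\Rightarrow$ (2).} Directness also forbids directed cycles in $Q$, since a directed cycle would give a cycle of non-identity morphisms. I then argue as follows.
\begin{itemize}
\item Let $Q'$ be a connected component of $Q$, and consider any undirected cycle in $Q'$ with $k$ edges on $k$ distinct vertices. Each vertex of the cycle receives at most one edge overall, hence at most one edge of the cycle. By counting, each receives exactly one, so the cycle is a directed cycle. This is impossible, so $Q'$ is a tree.
\item Starting from any vertex and following the unique incoming edge backwards, the path terminates because $Q$ is finite and acyclic. It ends at a vertex of in-degree $0$.
\item Uniqueness of that vertex within $Q'$ follows from the tree property. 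Two distinct sources in a tree would force some vertex on the connecting path to have in-degree $2$.
\end{itemize}
So each component is a tree rooted at its unique source, with all edges directed away from the root.

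\emph{Converse: (2) $\Rightarrow$ (1).} In a disjoint union of rooted trees, every non-root vertex has exactly one incoming edge and each root has none. By the reformulation above, this gives (1).

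The step I expect to be the main obstacle is the first one: fixing the correct reading of condition (1), and checking carefully that the existence of predecessors matches non-minimality. This is exactly where finiteness and directness of $\C$ are used, through the factorisation into irreducibles. The remaining argument is standard graph theory.
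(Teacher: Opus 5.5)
Your proof is correct and follows essentially the same route as the paper. In both, condition (1) says that each non-minimal object receives exactly one irreducible morphism, and following these backwards to a minimal object gives the rooted trees (the paper's ``unique directed path of irreducible morphisms from $x_0$ to $x$''); your version also makes explicit what the paper leaves implicit: the corrected reading of the hypothesis, the factorisation into irreducibles that identifies minimal objects with in-degree-$0$ vertices, and the need to apply (1) to identity morphisms too, without which the poset with $a<c$ and $b<c$ would be a counterexample.
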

\begin{proof} Clearly, \ref{Lem:Rooted-trees-succ-pred-2} implies \ref{Lem:Rooted-trees-succ-pred-1}. For the converse, notice that for each object $x\in\C$, such that $\C(-,x)\neq\emptyset$, there is a unique object $x_0$ with $\C(-,x_0)=\emptyset$, and a unique directed path of irreducible morphisms from $x_0$ to $x$. Thus, the collection of all irreducible morphisms in $\C$ forms a disjoint union of rooted trees. 
\end{proof}

%%%%%%%%%%%%%%%%%%%%%%%%%%%%%%%%%%%%%%%
%%%%%%%%%%%%%%%%%%%%%%%%%%%%%%%%%%%%%%%

\subsection{Module categories and the Grothendieck group and ring}
\label{SSec:Prelims-modules}
Fix a field $k$ and let $\VVect$ denote the category of \hadgesh{finite dimensional} vector spaces over $k$. For $\C\in\Cat$, a $k\C$-module is a functor $M\colon\C\to\VVect$. Let $k\C\mod$ denote the category of $k\C$-modules with morphisms given by natural transformations. 

\begin{Rem}
If $\C$ is a finite category, then there is an isomorphism of categories between $k\C\mod$ and the category of ordinary finite dimensional modules over the category algebra $k\C$, but this isomorphism does not hold in case $\C$ is infinite \cite{Xu}.
\end{Rem} 

For any additive category $\A$, let $\Gr(\A)$ denote the Grothendieck group of isomorphism classes of objects in $\A$. The operation in $\Gr(\A)$ is defined by the binary operation in $\A$ and the zero object is the zero object of $\A$. We shall be interested specifically in categories of the form $k\C\mod$, which are always abelian. If $\A = (\A, \otimes, I) $ is a symmetric monoidal additive category, then the monoidal structure of $\A$ induces a product operation on $\Gr(\A)$ with $I$ as the unit of the operation. Notice that since $\VVect$ is a symmetric monoidal category, so is any module category of the form $k\C\mod$, where the monoidal structure is given by tensor product over $k$, and the unit element is the constant module on $\C$ with value $k$ and identities on all morphisms.  Hence $\Gr(k\C)$ has a commutative ring structure induced by the monoidal structure of $k\C\mod$. 

\begin{Rem}
In this article we divert from conventional notation for the Grothendieck group. The object we denote by  by $\Gr(k\C)$ would be denoted in the literature by $\mathbf{K_0}(\C, \VVect)$ or  $\mathbf{K_0}(\Fun(\C, \VVect))$ or similar variations. Also, within our own convention,  we denote what should be  $\Gr(k\C\mod)$ by $\Gr(k\C)$  for short.
\end{Rem}

\begin{Rem}
The Grothendieck group as defined above is sometimes referred to in the literature as the \hadgesh{split} Grothendieck group. One may also consider the quotient group of $\Gr(k\C)$ where the operation is defined by $[M] + [K] = [E]$ if there is a short exact sequence 
\[0\to K\to E\to M\to 0\]
of $k\C$-modules. This explains the use of the word `split'. Unless otherwise specified, we shall not consider this quotient. 
\end{Rem}

\begin{Defi}
\label{Def:Rank-Inv}
Let $\C$ be a small category. Define a homomorphism $\rk\colon\Gr(k\C)\to \Z^{\Mor_\C}$ as follows. For a module  $M\in k\C\mod$ and a morphism $\varphi$ in $\C$, let $\rk[M](\varphi) = \rk(M(\varphi))$. Extend to the full Grothendieck group by additivity. The homomorphism $\rk$ will be referred to as the \hadgesh{rank invariant}. 
\end{Defi}
 Notice that the rank invariant, as defined above, also keeps track of the Hilbert function, namely the function that associates with a module $M\in k\C\mod$ and an object $x\in \C$ the dimension $\dim_k(M(x)) = \rk[M](1_x)$.

\begin{Defi}\label{Def:Interval}
Let $\C$ be a small direct category. An \hadgesh{interval in $\C$} is a subcategory $\Q\subseteq\C$ that is convex  and connected, namely  such that  for any $x, y\in \Q$ the following two conditions are satisfied:
\begin{enumerate}[(1)]
\item $x$ and $y$ are in the same connected component of $\C$.
\item If  $z\in\C$ and $\alpha\colon x\to z$ and $\beta\colon z\to y$ are morphisms, such that $\beta\circ\alpha$ is a morphism in $\Q$, then $z\in\Q$ and $\alpha, \beta$ are morphisms in $\Q$.
\end{enumerate}

If $\Q\subseteq\C$ is an interval, then the  \hadgesh{interval module determined by $\Q$}  is the module $I_\Q\in k\C\mod$ that takes the value $k$ on each $x\in\Q$, the identity morphism for each morphism in $\Q$ and the value $0$ for each $z\notin\Q$.
\end{Defi}

Notice that convexity implies that the concept of an interval module is well defined, because if $\Q\subseteq\C$ is an interval and $x\notin\Q$, then no morphism in $\Q$ factors through $x$. The following lemma is basic.

\begin{Lem}\label{Lem:Interval-modules-indec}
For any small direct category $\C$, and interval $\Q\subseteq\C$, the interval module $I_\Q\in k\C\mod$ is indecomposable.
\end{Lem}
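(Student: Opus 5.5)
The plan is to compute the endomorphism ring of $I_\Q$ directly and show it is a field, namely $k$ itself. That suffices, since a module whose endomorphism ring is local (in particular a field) is indecomposable. Concretely, if $I_\Q\cong A\oplus B$ with both summands nonzero, the two projections would be nontrivial idempotents in $\mathrm{End}(I_\Q)$, which is impossible in a field. Note also that $I_\Q\neq 0$ whenever $\Q$ is nonempty, which we assume.

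First I will check that $I_\Q$ is a well-defined functor, so that the endomorphism computation makes sense. On a morphism $\alpha\colon x\to y$ of $\C$ with both ends in $\Q$ but $\alpha$ not a morphism in $\Q$, one might worry about the value. I will read the definition as assigning the identity to morphisms of $\C$ between objects of $\Q$, and use convexity to justify this. Functoriality then reduces to one point: a composite $\beta\circ\alpha$ through an object $z$ must be assigned zero exactly when one factor is zero. If $\beta\circ\alpha$ lies in $\Q$, convexity forces $z\in\Q$ and $\alpha,\beta$ in $\Q$, so both sides equal the identity. If $\beta\circ\alpha$ does not lie in $\Q$, its value is zero, and if $z\notin\Q$ one factor already has value zero. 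The paper notes this well-definedness just before the lemma, so I will cite it rather than redo it.

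Next, let $\eta\colon I_\Q\to I_\Q$ be a natural transformation. For $z\notin\Q$ we have $\eta_z=0$, since $I_\Q(z)=0$. For $x\in\Q$, $\eta_x$ is multiplication by a scalar $\lambda_x\in k$. For every morphism $\alpha\colon x\to y$ in $\Q$, naturality gives $\eta_y\circ I_\Q(\alpha)=I_\Q(\alpha)\circ\eta_x$, and since $I_\Q(\alpha)=\mathrm{id}_k$ this yields $\lambda_x=\lambda_y$.

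The main obstacle is concluding that $\lambda$ is constant on all of $\Q$. This needs $\Q$ to be connected as a category through its own morphisms, not just through zigzags in $\C$. Condition (1) of Definition \ref{Def:Interval} literally only says that objects of $\Q$ lie in the same component of $\C$. A zigzag in $\C$ could leave $\Q$, and there $\eta$ vanishes, so it gives no constraint on $\lambda$.

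I would first try to read ``convex and connected'' as including connectedness of $\Q$ itself, which the word ``connected'' in the definition suggests, and invoke that. If only condition (1) is available, the statement can fail. For example, take $\C$ to have objects $x,z,y$ with morphisms $x\to z\leftarrow y$ and $\Q=\{x,y\}$ with identities only. Then $I_\Q$ decomposes. So the intended hypothesis must be that $\Q$ is connected, and I would state that explicitly.

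With that hypothesis, any two objects of $\Q$ are joined by a zigzag of morphisms of $\Q$, so $\lambda$ is constant. Hence $\eta=\lambda\cdot\mathrm{id}$ and $\mathrm{End}(I_\Q)\cong k$, which proves that $I_\Q$ is indecomposable.
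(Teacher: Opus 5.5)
Your proof is correct, and it takes a slightly different route from the paper's. The paper works directly with a splitting $I_\Q=M\oplus N$. Restricted to $\Q$, this splits the constant module with value $k$, so each object of $\Q$ has exactly one nonzero summand. Since morphisms of $\Q$ act by isomorphisms, the resulting partition of the objects of $\Q$ must be trivial, and since $I_\Q$ vanishes off $\Q$, the other summand is $0$.

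You instead show $\mathrm{End}(I_\Q)\cong k$ and rule out nontrivial idempotents. These are two views of the same computation: the paper's partition is exactly where the projection onto $M$ is $1$ versus $0$. Your version gives the stronger fact that $I_\Q$ is a brick, while the paper's needs only dimension counting.

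Both arguments use, at the same point, that $\Q$ is connected through its \emph{own} morphisms. The paper's step ``since $I_\Q$ is constant on $\Q$, this is impossible'' is exactly this. You are right that condition (1) of Definition~\ref{Def:Interval}, read literally, does not provide it. Your example $x\to z\leftarrow y$ with $\Q=\{x,y\}$ shows the lemma fails under that reading, so stating the hypothesis explicitly is the right fix.

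One correction to your well-definedness paragraph. The reading you announce, identity on \emph{every} morphism of $\C$ between objects of $\Q$, is not functorial in general, and convexity does not justify it. A convex connected $\Q$ need not be full. Take the poset with $x<z<y$, $x<c$, $d<c$, $d<y$, and the subcategory on $x,c,d,y$ whose non-identity morphisms are $x<c$, $d<c$, $d<y$. It is convex and connected, yet $x<y$ factors through $z\notin\Q$, so it must act by $0$.

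The check you actually carry out (value $0$ whenever $\beta\circ\alpha\notin\Q$) is for the other reading: identity on morphisms of $\Q$, zero on all others. That is the reading the paper's remark after the definition supports. Your main argument is unaffected, since it only uses that morphisms of $\Q$ act by the identity.
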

\begin{proof}
Suppose $I_\Q= M\oplus N$. Then, restricting $I_\Q$, $M$ and $N$ to $\Q$, we obtain a decomposition $M|_\Q\oplus N|_\Q$ for the constant module on $\Q$ with values $k$. If both $M|_\Q$ and $N|_\Q$ are nontrivial, then the objects of $\Q$ can be partitioned into two disjoint non-empty sets such that each restricted module obtains the value $k$ on one subset and vanishes on the other. But since $I_\Q$ is constant on $\Q$, this is impossible. Hence we may assume without loss of generality that $M|_\Q=I_\Q|_\Q$ and $N|_\Q=0$. But since $I_\Q$ vanishes outside of $\Q$, it follows that $N=0$ and so $M = I_\Q$.
\end{proof}

\begin{Defi}\label{Def:Inverse-Interval}
Let $\C$ and $\D$ be direct categories and let $F\colon \C\to\D$ be a functor. Let $\Q\subseteq\D$ be an interval. Let $F^{-1}(\Q)\subseteq\C$ denote the subcategory whose objects are all objects  $x\in\C$ such that $F(x)\in\Q$, and whose morphisms are all morphisms $\varphi$ between these objects such that $F(\varphi)$ is a morphism in $\Q$.
\end{Defi}

\begin{Lem}\label{Lem:Inverse-Interval}
Let $\C$ and $\D$ be direct categories and let $F\colon \C\to\D$ be a functor. Let $\Q\subseteq\D$ be an interval. Then $F^{-1}(\Q)$ is a disjoint union of intervals in $\C$. Thus if $I_\Q\in k\D\mod$ is the interval module defined by $\Q$, then $F^*(I_\Q)\in k\C\mod$ is isomorphic to a direct sum of the interval modules corresponding to the intervals in $\C$ whose disjoint union forms $F^{-1}(\Q)$.
\end{Lem}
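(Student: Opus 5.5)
The plan is to decompose $F^{-1}(\Q)$ into its connected components $\Q_1,\ldots,\Q_r$ (possibly infinitely many) and show that each $\Q_i$ is an interval in $\C$ in the sense of Definition \ref{Def:Interval}. Then the statement about modules follows directly from the definitions: $F^*(I_\Q)=I_\Q\circ F$ splits as a direct sum of the $I_{\Q_i}$.

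\emph{Step 1: $F^{-1}(\Q)$ is a subcategory.} Identities lie in it, since $F(1_x)=1_{F(x)}$ is in $\Q$. Composites lie in it because $\Q$ is a subcategory and $F$ preserves composition.

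\emph{Step 2: convexity.} Suppose $\alpha\colon x\to z$ and $\beta\colon z\to y$ are morphisms in $\C$ with $\beta\circ\alpha$ in $F^{-1}(\Q)$. Then $F(\beta)\circ F(\alpha)=F(\beta\circ\alpha)$ is a morphism of $\Q$, and it factors through $F(z)$. Convexity of $\Q$ then gives $F(z)\in\Q$ and shows that $F(\alpha)$ and $F(\beta)$ are morphisms of $\Q$. Hence $z\in F^{-1}(\Q)$, and $\alpha,\beta$ are morphisms of $F^{-1}(\Q)$.

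Now let $\Q_i$ be the connected component of $F^{-1}(\Q)$ containing $\beta\circ\alpha$. It also contains $\alpha$ and $\beta$, since these are morphisms of $F^{-1}(\Q)$ joining $x$, $z$ and $y$. So each component $\Q_i$ is convex. It is connected by construction, which gives condition (1). Therefore each $\Q_i$ is an interval.

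\emph{Step 3: modules.} On an object $x\in\C$, the module $F^*(I_\Q)$ takes the value $k$ if $x\in F^{-1}(\Q)$ and $0$ otherwise. On a morphism $\varphi$ with $F(\varphi)\in\Q$, it takes the value $\mathrm{id}_k$.

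There is one subtle point, and it is the main obstacle. A morphism $\varphi\colon x\to y$ may have both endpoints in $F^{-1}(\Q)$ while $F(\varphi)$ is not a morphism of $\Q$. I would handle this as follows. For the $k\C$-module structure one needs $I_\Q(F(\varphi))$, and $I_\Q$ is well defined only because its value on a morphism between objects of $\Q$ is the identity. The intended reading of Definition \ref{Def:Interval} is presumably that $\Q$ is full on its objects, or at least that $I_\Q$ sends every morphism between objects of $\Q$ to the identity. Granting that, $F^*(I_\Q)(\varphi)=\mathrm{id}_k$ whenever both endpoints of $\varphi$ lie in $F^{-1}(\Q)$.

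The same reading of $I_{\Q_i}$ applies to each component. Morphisms between distinct components, or between a component and the complement of $F^{-1}(\Q)$, have at least one zero endpoint and therefore act as $0$. So the summands $I_{\Q_i}$ are compatible, and the obvious identification
\[
F^*(I_\Q)\cong\bigoplus_i I_{\Q_i}
\]
is a natural isomorphism, checked objectwise. If one wants a finite sum to stay within finite dimensional modules, note that at each object only one summand is nonzero, so the sum is pointwise finite.
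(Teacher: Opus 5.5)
Your proof is correct and follows the paper's argument. The steps match: $F^{-1}(\Q)$ is closed under composition; each connected component is convex, because a factorization can be pushed forward by $F$ and convexity of $\Q$ applied; and $F^*(I_\Q)$ is then identified objectwise with $\bigoplus_i I_{\Q_i}$. The paper passes over both the non-fullness subtlety you raise and the pointwise-finiteness remark.

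Of your two readings of $I_\Q$, only the fullness one makes the last step work: it forces every morphism of $\C$ between objects of $F^{-1}(\Q)$ to lie in $F^{-1}(\Q)$, so no morphisms join distinct components. Under the weaker reading, a morphism $\varphi$ joining two components with $F(\varphi)\notin\Q$ would act by $\mathrm{id}_k$ on $F^*(I_\Q)$ but by $0$ on $\bigoplus_i I_{\Q_i}$. The reading that works without any fullness assumption is to let $I_\Q$ act by $0$ on every morphism not in $\Q$; convexity makes this functorial.
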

\begin{proof}
Notice first that $F^{-1}(\Q)$, as defined in Definition \ref{Def:Inverse-Interval}, is a well-defined subcategory, i.e., it is closed under compositions. If $\varphi, \psi$ are morphisms in $F^{-1}(\Q)$ such that $\varphi\circ\psi$ is defined in $\C$, then $F(\varphi)\circ F(\psi) = F(\varphi\circ\psi)$ is defined in $\Q$, and so $\varphi\circ\psi$ is in $F^{-1}(\Q)$. 

Let $\P\subseteq F^{-1}(\Q)$ be a connected component. We claim that $\P$ is convex, namely that if $\varphi\colon x\to y$ is a morphism in $\P$ and there exists some $z\in\C$ and morphisms $\psi'\colon x\to z$ and $\psi''\colon z\to y$ such that $\psi''\circ\psi' = \varphi$, then $z\in \P$ as well as the morphisms $\psi', \psi''$. But in $\D$ one has the obvious relation $F(\varphi) =  F(\psi'')\circ F(\psi')$. Hence $z\in F^{-1}(\Q)$ and so are $\psi'$ and $\psi''$, and they are clearly all in the same connected component as $x, y$ and $\varphi$. Thus $F^{-1}(\Q)$ is a disjoint union of intervals in $\C$ as claimed. 

Now, let $I_\Q\in k\D\mod$ be the interval module corresponding to $\Q$. Then 
 $F^*(I_\Q) = I_\Q\circ F$, so 
 \[F^*(I_\Q)(x) = \begin{cases} k & F(x)\in\Q\\ 0 & F(x)\notin\Q.\end{cases}\]
Let $x, y\in \P\subseteq F^{-1}(\Q)$ be objects, where $\P$ is an interval component of $F^{-1}(\Q)$, and assume that there is a morphism $\varphi\colon x\to y$ in $\P$. Then $F^*(I_\Q)(x) = F^*(I_\Q)(y) = k$ and  $F^*(I_\Q)(\varphi)\colon F(x)\to F(y)$ is the identity. Thus, the module that receives the value $k$ at each object of $\P$ and the identity morphism for any morphisms in $\P$ and the value $0$ elsewhere is the interval module $I_\P$. 

Since $F^{-1}(\Q)$ is a disjoint union of intervals, each of which is a connected component in $F^{-1}(\Q)$, it follows that $F^*(I_\Q)$ is isomorphic to a direct sum of interval modules $I_\P$, where $\P$ is a connected component of $F^{-1}(\Q)$. This completes the proof of the lemma.
\end{proof}

\begin{Not}
In the case where an interval $\Q\subseteq\C$ consists of a single object $x\in\C$, the corresponding interval module will be denoted by \hadgesh{$S_x$}. In many cases, for instance when $\C$ is  a poset, these are the simple $k\C$-modules. However, since this is not always the case we shall refer to modules of the form $S_x$ as \hadgesh{singleton modules}. 
\end{Not}

%%%%%%%%%%%%%%%%%%%%%%%%%%%%%%%%%%%%%%%%%%%
%%%%%%%%%%%%%%%%%%%%%%%%%%%%%%%%%%%%%%%%%%%
%%%%%%%%%%%%%%%%%%%%%%%%%%%%%%%%%%%%%%%%%%%
%%%%%%%%%%%%%%%%%%%%%%%%%%%%%%%%%%%%%%%%%%%
%%%%%%%%%%%%%%%%%%%%%%%%%%%%%%%%%%%%%%%%%%%
%%%%%%%%%%%%%%%%%%%%%%%%%%%%%%%%%%%%%%%%%%%
%%%%%%%%%%%%%%%%%%%%%%%%%%%%%%%%%%%%%%%%%%%
%%%%%%%%%%%%%%%%%%%%%%%%%%%%%%%%%%%%%%%%%%%
%%%%%%%%%%%%%%%%%%%%%%%%%%%%%%%%%%%%%%%%%%%
%%%%%%%%%%%%%%%%%%%%%%%%%%%%%%%%%%%%%%%%%%%
%%%%%%%%%%%%%%%%%%%%%%%%%%%%%%%%%%%%%%%%%%%

\section{Representation cohomology of small categories}
\label{Sec:Rep-Coho}

A (semi-)simplicial object $\C_\bullet$  in $\Cat$ naturally gives rise to  a (semi-)cosimplicial abelian group by applying the Grothendieck group construction. 

\subsection{Definitions}
\label{SSec:Rep-Coho-defs}
Let $\C_\bullet$ be an arbitrary semi-simplicial object in $\Cat$. For any field $k$ the Grothendieck group $\Gr(k\C_\bullet)$ is a semi-cosimplicial abelian group, 
where the face operators $\partial_i$ induce coface homomorphisms 
\[\delta^i \colon \Gr(k\C_n)\to \Gr(k\C_{n+1})\]
defined by 
\[\delta^i[M] \defeq [\partial_i^*M] = [M\circ \partial_i],\]
for each $0\le i\le n$.  Let $d^n\colon\Gr(k\C_n)\to \Gr(k\C_{n+1})$ denote the coboundary operator obtained as the alternating sum of the $\delta^i$, as usual.  Similarly, if $\C_\bullet$ is a simplicial object in $\Cat$ with degeneracy operators $s_i$, we denote the $\sigma^i\defeq s_i^*\colon\Gr(k\C_{n+1})\to \Gr(k\C_{n})$ the corresponding codegeneracy operators.

\begin{Defi}
\label{Def:Rep-Coho} 
Let $\C_\bullet$ be a semi-simplicial object in $\Cat$ and let $k$ be a field. Define the \hadgesh{representation cohomology $\mathscr{R}^*(\C_\bullet, k)$ of $\C_\bullet$ over $k$} by \[\RH^*(\C_\bullet, k) \defeq H^*(\Gr(k\C_\bullet), d^*).\] 
\end{Defi}

\begin{Ex}\label{Ex:Simp-grp}
Let $\Gamma\bul$ be a simplicial group. Then $k\Gamma\bul\mod$ is a cosimplicial object in $\Cat$, and hence one has the associated representation cohomology $\RH^*(\Gamma\bul, k)$.
\end{Ex}

 If $F\colon\C_\bullet\to\D_\bullet$ is a functor of semi-simplicial objects in $\Cat$, then one has the obvious induced homomorphism 
\[F^*\colon \RH^*(\D_\bullet, k) \to \RH^*(\C_\bullet, k).\]

Next we restrict to the special cases of the (semi-)simplicial objects that arise from the constructions of Section \ref{Sec:Prelims}. Fix a small category $\C$. In Section \ref{SSec:Prelims-Setup} we defined two simplicial objects in $\Cat$, $\widetilde{\E}^\C_\bullet$ and $\widetilde{\G}^\C_\bullet$, and two semi-simplicial objects $\E^\C_\bullet$ and $\G^\C_\bullet$. This gives rise to four respective cohomology theories associated to $\C$. For any small category $\C$, define
\[\widetilde{\EE}^*(\C, k) \defeq \RH^*(\widetilde{\E}^\C_\bullet, k)\quad\text{and}\quad  \widetilde{\GG}^*(\C, k)\defeq \RH^*(\widetilde{\G}^\C_\bullet, k).\] 
If $\C$ does not contain any non-identity invertible morphisms, then we define 
\[\EE^*(\C, k) \defeq \RH^*(\E^\C_\bullet, k)\quad\text{and}\quad  \GG^*(\C, k)\defeq \RH^*(\G^\C_\bullet, k).\] 
Thus, when all variants are defined, Diagram (\ref{Eq:Square}) of semi-simplicial objects in $\Cat$ and naturality of our constructions yield a  commutative square:
\[\xymatrix{
\GG^*(\C, k) & \widetilde{\GG}^*(\C, k) \ar[l]_{\gamma^*}\\
\EE^*(\C, k)\ar[u]^{\check{\iota}^*} & \widetilde{\EE}^*(\C, k)\ar[u]_{\iota^*} \ar[l]_{\epsilon^*}
}\]
The aim of this article is to study the properties of these cohomology theories and the information they provide about the category $\C$ and the module category $k\C\mod$.

\subsection{Basic Properties}
\label{SSec:Basic}
Clearly, if $F$ is an equivalence of semi-simplicial objects in $\Cat$, then $F^*$ is an isomorphism. To be an equivalence of semi-simplicial objects  is a stronger condition  than requiring $F$ to be an  equivalence of categories in each dimension, since the inverses must also form a functor of semi-simplicial objects. 
However, the next lemma shows that having dimension-wise equivalences suffices in the case of the cohomology theories we consider here.

\begin{Lem}\label{Lem:Equiv}
Let $F\colon\C\bul\to\D\bul$ be a functor of semi-simplicial objects in $\Cat$ such that $F_n\colon\C_n\to\D_n$ is an equivalence of categories for each $n$. Then the induced map $F^*$ is an isomorphism. In particular, if $F\colon\C\to\D$ is an equivalence of categories, then $\widetilde{\E}^F_\bullet$, $\widetilde{\G}^F_\bullet$, $\E^F_\bullet$ and $\G^F_\bullet$ all induce isomorphisms on the respective cohomology.
\end{Lem}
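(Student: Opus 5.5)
The plan is to show that $F^*$ is already an isomorphism at the cochain level. Then it automatically commutes with the coboundaries, and the cohomology isomorphism is immediate.

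First, for each $n$, precomposition with the equivalence $F_n\colon\C_n\to\D_n$ gives a functor $F_n^*\colon k\D_n\mod\to k\C_n\mod$, $M\mapsto M\circ F_n$. This functor is itself an equivalence of categories. Indeed, if $G_n$ is a quasi-inverse with natural isomorphisms $G_nF_n\cong 1$ and $F_nG_n\cong 1$, then $G_n^*$ is a quasi-inverse of $F_n^*$, since whiskering natural isomorphisms gives natural isomorphisms $M\circ F_n\circ G_n\cong M$, and similarly for the other composite.

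Second, an equivalence of additive categories preserves direct sums and zero objects, and it induces a bijection on isomorphism classes. Hence $F_n^*$ induces a group isomorphism $\Gr(k\D_n)\to\Gr(k\C_n)$, carrying indecomposables bijectively to indecomposables.

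Third, I check compatibility with the cosimplicial structure. Because $F$ is a map of semi-simplicial objects, we have $\partial_i\circ F_{n+1}=F_n\circ\partial_i$ strictly. Therefore
\[F_{n+1}^*\delta^i[M]=[M\circ\partial_i\circ F_{n+1}]=[M\circ F_n\circ\partial_i]=\delta^iF_n^*[M],\]
and it follows that $F^*$ commutes with $d^*$. So $F^*$ is an isomorphism of cochain complexes, and it induces an isomorphism $\RH^*(\D\bul,k)\to\RH^*(\C\bul,k)$. Note that the quasi-inverses $G_n$ need not assemble into a semi-simplicial map. This does not matter, because we only use them to conclude that each $F_n^*$ is bijective on Grothendieck groups; the chain map itself is $F^*$.

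For the last sentence of the lemma, I would apply the first part to $\widetilde{\E}^F_\bullet$, $\widetilde{\G}^F_\bullet$, $\E^F_\bullet$ and $\G^F_\bullet$. What remains is to check that each is a dimensionwise equivalence; Corollary \ref{Cor:Equivalence} essentially gives this, but here is the direct argument.

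For $\widetilde{\E}$, the functor $\C^{[n]}\to\D^{[n]}$ induced by an equivalence $F$ is an equivalence, by the standard fact that $\Fun([n],-)$ preserves equivalences.

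For $\widetilde{\G}$, I would check the three properties directly.
\begin{itemize}
\item Faithfulness is inherited from $\widetilde{\E}$.
\item Fullness: a morphism $\underline{Fx}\to\underline{Fy}$ in $\widetilde{\G}^\D_n$ is induced by a morphism $Fx_n\to Fy_0$. This lifts uniquely to a morphism $x_n\to y_0$, because $F$ is fully faithful.
\item Essential surjectivity: given a simplex in $\D$, choose isomorphisms $d_i\cong Fc_i$ and transport the chain along them.
\end{itemize}
The expected obstacle is in this transport step, since the resulting isomorphism between objects of $\widetilde{\G}^\D_n$ is not obviously induced by a single morphism $x_n\to y_0$. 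I would handle this by reading Definition \ref{Def:subobjects} as allowing isomorphisms componentwise, or by the remark that in $\Cat_+$ equivalences are isomorphisms on skeleta. In the $\Cat_+$ cases, faithfulness means non-degeneracy is preserved and reflected, so the same argument restricts to the full subcategories $\E$ and $\G$.
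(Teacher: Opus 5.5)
Your first part (each $F_n^*$ is an equivalence of module categories, hence an isomorphism on Grothendieck groups, and the strict identity $\partial_i F_{n+1}=F_n\partial_i$ makes $F^*$ a cochain isomorphism) is exactly the paper's argument. The gap is in the $\widetilde{\G}$ case of the last clause. The obstacle you flagged is real, and neither workaround removes it. Reading Definition \ref{Def:subobjects} as allowing componentwise isomorphisms changes the simplicial object being studied. The $\Cat_+$ observation is correct: every category in $\Cat_+$ is skeletal, so an equivalence there is an isomorphism. But it only covers $\E$, $\G$, and the case $\C,\D\in\Cat_+$, whereas $\widetilde{\G}^{-}_\bullet$ is defined on all of $\Cat$.

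In fact $\widetilde{\G}^F_n$ need not be an equivalence at all. Let $\D$ be the preorder with objects $d,d',e$, where $d\cong d'$ and there are unique morphisms $d\to e$ and $d'\to e$. Let $\C\subseteq\D$ be the full subcategory on $d,e$, so the inclusion $F$ is an equivalence. In $\widetilde{\G}^\D_1$, a non-identity morphism out of $[d'\to e]$ is induced by a morphism with domain $e$, so its target must be $[1_e]$. There is no morphism $[1_e]\to[d'\to e]$, since $\D(e,d')=\emptyset$. Hence $[d'\to e]$ is isomorphic only to itself, and it is not in the essential image of $\widetilde{\G}^F_1$.

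Your fullness argument has a related hole: it ignores identity morphisms between objects with equal images. Take the quasi-inverse $G\colon\D\to\C$ that collapses $d'$ onto $d$. The objects $[d\to e]$ and $[d'\to e]$ have the same image, but there is no morphism between them, so $\widetilde{\G}^G_1$ is not full.

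The paper's own proof gets the $\widetilde{\G}$ case from Corollary \ref{Cor:Equivalence}, and this example shows that corollary is false for $\widetilde{\G}$. So for categories with non-identity isomorphisms, the $\widetilde{\G}$ clause is not established by your argument or by the paper's. Proving it would need an idea other than dimensionwise equivalence, since the first part of the lemma does not apply. Everything else in your proposal is correct, including the $\widetilde{\E}$ case, where $F^{[n]}\colon\C^{[n]}\to\D^{[n]}$ is an equivalence.
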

\begin{proof}
The second claim follows from the first claim and Corollary \ref{Cor:Equivalence}. Since each $F_n$ is an equivalence of categories, it follows that $F_n^*\colon k\D_n\mod\to k\C_n\mod$ is an equivalence of categories and hence induces an isomorphism $F_n^*\colon\Gr(k\D_n)\to\Gr(k\C_n)$. Hence the  map $F^*$ of graded abelian groups induced by $F$ is a cochain map that is a dimension-wise isomorphism, and the proof is complete.
\end{proof}

An obvious example arises from the category of a group, or more generally, a groupoid.

\begin{Prop}\label{Prop:Groupoids}
Let $\HH$ be a groupoid. Then 
\[\widetilde{\EE}^0(\HH, k) \cong\widetilde{\GG}^0(\HH, k) \cong \bigoplus_{x}\Gr(k\Aut_\HH(x)),\]
where the sum runs over representatives of the connected components of $\HH$, and $\widetilde{\EE}^i(\HH, k) = \widetilde{\GG}^i(\HH, k)=0$ for all $i>0$.
\end{Prop}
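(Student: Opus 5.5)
The plan is to reduce the cochain complex, up to dimensionwise isomorphism, to the cochain complex of a constant cosimplicial abelian group. Then everything is forced: $H^0$ is the group in degree $0$ and all higher cohomology vanishes.

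\textbf{Reduction to a connected groupoid.} Both $\widetilde{\E}^\HH_\bullet$ and $\widetilde{\G}^\HH_\bullet$ split as disjoint unions over the connected components of $\HH$, because a simplex lies in a single component. A module over a disjoint union of categories is a family of modules, one on each piece. So the Grothendieck groups, and with them the cochain complexes, are direct sums over components. It therefore suffices to treat a connected groupoid $\HH$.

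\textbf{The two objects coincide.} As in Example \ref{Ex:BG}, every morphism of $\HH$ is invertible. Consequently $\widetilde{\G}^\HH_\bullet=\widetilde{\E}^\HH_\bullet$: a morphism $\underline{f}\colon\underline{x}\to\underline{y}$ in $\widetilde{\E}^\HH_n$ is determined by any single component $f_i$. In particular it is determined by the composite $v_1^{-1}\cdots f_n u_n\cdots$, which is a morphism $x_n\to y_0$. So I will only treat $\widetilde{\E}$.

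\textbf{Each level is equivalent to a fixed automorphism group.} Fix an object $x$ and let $G=\Aut_\HH(x)$. Each $\widetilde{\E}^\HH_n$ is a connected groupoid, since it is connected by Lemma \ref{Lem:KCn-connected} and every morphism in it is invertible. The inclusion $\iota_n\colon\HH\to\widetilde{\E}^\HH_n$ of constant sequences is an equivalence: it is fully faithful, because a morphism between constant sequences is a single morphism of $\HH$, and it is essentially surjective by connectivity. Hence $\widetilde{\E}^\HH_n\simeq\HH\simeq\B G$.

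Example \ref{Ex:C[n]=C} shows that the maps $\iota_n$ assemble into a morphism of simplicial objects $\HH_\bullet\to\widetilde{\E}^\HH_\bullet$, where $\HH_\bullet$ denotes the constant simplicial object on $\HH$. This morphism is a dimensionwise equivalence of categories. By Lemma \ref{Lem:Equiv} it induces an isomorphism
\[\widetilde{\EE}^*(\HH,k)\cong\RH^*(\HH_\bullet,k).\]

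\textbf{Cohomology of the constant object.} For $\HH_\bullet$ every face map is the identity, so $\Gr(k\HH_\bullet)$ is the constant semi-cosimplicial group with value $A=\Gr(k\HH)\cong\Gr(kG)$. The coboundary $d^n=\sum_{i=0}^{n+1}(-1)^i\,\mathrm{id}$ is $0$ for $n$ even and $\mathrm{id}$ for $n$ odd. The complex is therefore
\[A\xrightarrow{0}A\xrightarrow{\mathrm{id}}A\xrightarrow{0}A\xrightarrow{\mathrm{id}}\cdots,\]
whose cohomology is $A$ in degree $0$ and zero in all positive degrees.

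Summing over components gives the statement of the proposition. The only step needing care is the fully faithful and essentially surjective check for $\iota_n$, which rests on the invertibility of morphisms in $\HH$.
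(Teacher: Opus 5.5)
Your proof is correct and follows essentially the paper's argument. You identify $\widetilde{\G}^\HH_\bullet$ with $\widetilde{\E}^\HH_\bullet$, split over connected components, and compare with a constant simplicial object through a levelwise equivalence and Lemma \ref{Lem:Equiv}. The only difference is that you use the constant object on $\HH$ itself, via the inclusions of Example \ref{Ex:C[n]=C}, rather than first passing to $\B\Aut_\HH(x)$ and then to the basepoint subobject $\mathbf{p}_\bullet$. This saves one application of Lemma \ref{Lem:Equiv}, and you also write out the complex $A\xrightarrow{0}A\xrightarrow{\mathrm{id}}A\cdots$, which the paper leaves implicit.

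One small slip: the composite you wrote for the morphism $x_n\to y_0$ does not parse. It should be $v_1^{-1}\cdots v_n^{-1}f_n$, or equivalently $f_0u_1^{-1}\cdots u_n^{-1}$.
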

\begin{proof}
Since every morphism in $\HH$ is invertible, it is clear that $\widetilde{\E}^\HH_\bullet \cong \widetilde{\G}^\HH_\bullet$, hence the corresponding cohomology groups coincide. Write $\HH$ as a disjoint union of connected components $\HH = \coprod \HH_x$, where $x\in\HH_x$. Then 
\[\widetilde{\EE}^*(\HH, k) \cong \bigoplus \widetilde{\EE}^*(\HH_x, k).\]
Furthermore, the inclusion $\B\Aut_{\HH}(x) \to \HH_x$ is an equivalence of categories, and the equivalence extends to equivalences $\widetilde{\E}^{\B\Aut_{\HH}(x)}_n\xto{\simeq}\widetilde{\E}^{\HH_x}_n$ for all $n$. Hence, by Lemma \ref{Lem:Equiv}, $\widetilde{\EE}^*(\B\Aut_{\HH}(x), k)\cong\widetilde{\EE}^*(\HH_x, k)$. Thus it suffices to consider the case where $\HH = \B\Gamma$, for some discrete group $\Gamma$.

For each $n\ge 0$,  the objects of the category $\widetilde{\E}^{\B\Gamma}_n$ are the simplices of the nerve of $\Gamma$, namely  ordered $n$-tuples $\underline{x} = [x_1|\ldots| x_n]$ of elements of $\Gamma$ (considered as morphisms of the single object of $\Gamma$). A morphism  $\underline{x}\to\underline{y}$ in $\widetilde{\E}^{\B\Gamma}_n$ is an $(n+1)$-tuple $(z_0,z_1,\ldots, z_n)$ of elements of $\Gamma$ such that $z_ix_i = y_iz_{i-1}$ for all $1\le i\le n$. Since all elements of $\Gamma$ are invertible, such a morphism is determined uniquely by each of the $z_j$. Hence in particular $\widetilde{\E}^{\B\Gamma}_n\cong \widetilde{\G}^{\B\Gamma}_n$ for all $n$. Furthermore, since all morphisms in $\widetilde{\E}^{\B\Gamma}_n$ are isomorphisms, it is a connected groupoid, and hence equivalent as such to the automorphism group of the constant object $\underline{e}$, where $e\in\Gamma$ is the identity element. 

For each $n\ge 0$, let $\mathbf{p}_n\subseteq\widetilde{\E}^{\B\Gamma}_n$ denote the full subcategory on the basepoint (the image of the degeneracy of the unique object in $\widetilde{\E}^{\B\Gamma}_0 = \B\Gamma$). Then $\mathbf{p}_n\cong \B\Gamma $, and by the argument above the inclusion $\mathbf{p}_n\subseteq\widetilde{\E}^{\B\Gamma}_n$ is an equivalence of categories. Let $\mathbf{p}_\bullet$ be the simplicial object in $\mathbf{Cat}$ where all faces and degeneracies are the identity. Then the inclusion $\mathbf{p}_\bullet\to\widetilde{\E}^{\B\Gamma}_\bullet$ is a simplicial functor, that is dimension-wise an equivalence of categories.  By Lemma \ref{Lem:Equiv} the inclusion induces an isomorphism on cohomology. It follows that 
\[\widetilde{\EE}^0(\B\Gamma, k)\cong \widetilde{\GG}^0(\B\Gamma, k)\cong \Gr(k\Gamma),\] 
and $\widetilde{\EE}^i(\B\Gamma, k)=\widetilde{\GG}^i(\B\Gamma, k)=0$ for $i>0$. 
\end{proof}

Notice that Proposition \ref{Prop:Groupoids}  is an easy example of Theorem \ref{Th:E0G0}. The next example shows that representation cohomology generalises ordinary cohomology of the nerve of a category. 

\newcommand{\Fl}{\mathrm{Fl}}

\begin{Ex}\label{Ex:Discrete-cat-coho}
Let $\C$ be a small category and let $\widetilde{\E}^{\C,\emptyset}_\bullet = \widetilde{\G}^{\C,\emptyset}_\bullet$ and (assuming $\C$ has no non-identity invertible morphisms)  $\E^{\C,\emptyset}_\bullet = \G^{\C,\emptyset}_\bullet$ be the corresponding discrete (semi-)simplicial objects. 

A $k\widetilde{\E}^{\C,\emptyset}_n$-module is an assignment of a finite dimensional $k$-vector space with every simplex in $\widetilde{\E}^{\C,\emptyset}_n$. Thus $\Gr(k\widetilde{\E}^{\C,\emptyset}_n)$ is the group of all functions $|\C|_n\to \Z$, and hence it is isomorphic to $\Hom(C_n(|\C|), \Z)$, where $C_n(|\C|)$ is the $n$-th chain group of the nerve. It follows that 
\[\RH^*(\widetilde{\E}^{\C,\emptyset}_\bullet, k) =  \RH^*(\widetilde{\G}^{\C,\emptyset}_\bullet, k)\cong H^*(|\C|, \Z).\] 
Similarly, if $\C$ does not contain non-identity invertible morphisms, then $\E^{\C,\emptyset}_\bullet$ is the  semi-simplicial object, with $\E^{\C,\emptyset}_n$  the discrete category with objects the non-degenerate simplices of $|\C|_n$.  Thus we have $\RH^*(\E^{\C,\emptyset}_\bullet, k) =  \RH^*(\G^{\C,\emptyset}_\bullet, k)\cong H^*(|\C|, \Z)$. The respective restrictions induce homomorphisms and a commutative diagram
\begin{equation}\label{Diag:RH*-diag}
\xymatrix{
\GG^*(\C, k)\ar@{-->}[drr]&&&&\widetilde{\GG}^*(\C, k)\ar@{-->}[llll]\ar[dll]\\
&& H^*(|\C|, \Z)\\
\EE^*(\C, k)\ar@{-->}[uu]\ar@{-->}[urr] &&&&\widetilde{\EE}^*(\C, k)\ar[uu]\ar@{-->}[llll]\ar[ull]
}\end{equation}
where the dashed arrows serve as a reminder that the theories $\GG^*$ and $\EE^*$ only exist under the assumption that $\C$ has no non-identity invertible morphisms.
\end{Ex}

\begin{Rem}\label{Rem:Reachability}
Let $X$ be an acyclic  quiver and let $\T_X$ denote its \hadgesh{reachability category} \cite{CR}, i.e.,  the thin category (preorder) associated to $X$. Thus $\T_X$ is  the category whose objects are the vertices of $X$ and with a unique non-identity morphism $x\to y$ if there is a directed path in $X$ from $x$ to $y$ and only the identity endomorphism for each object. The \hadgesh{reachability homology} of the quiver $X$ can be defined as the ordinary homology of the nerve of $\T_X$ \cite[Remark 4.2]{HR}. Similarly, one can define \hadgesh{reachability cohomology} as the cohomology of the nerve of $\T_X$. By Example \ref{Ex:Discrete-cat-coho}, $\RH^*(\G^{\T_X,\emptyset}_\bullet, k) \cong H^*(|\T_X|, \Z)$, and the obvious inclusion $\iota_\emptyset\colon\G^{\T_X,\emptyset}_\bullet \to \G^{\T_X}_\bullet$ induces a homomorphism  
\[\iota_\emptyset\colon \GG^*(\T_X, k) \to  H^*(|\T_X|, \Z).\]

Furthermore, if $X$ is finite and $R$  is an arbitrary commutative ring, then one has 
\[\Gr(k\G^{\T_X}_\bullet)\otimes R \to \Gr(k\G^{\T_X, \emptyset}_\bullet)\otimes R \cong \Hom(C_*(|\T_X|),\Z)\otimes R\cong \Hom(C_*(|\T_X|), R).\]
Taking cohomology, one gets a homomorphism 
\[H^*(\Gr(k\G^{\T_X}_\bullet)\otimes R)\to H^*(|\T_X|, R)\]
where the target is reachability cohomology of $X$ with coefficients in $R$.
\end{Rem}

Computing representation cohomology in general is very difficult. This is primarily due to the fact that the representation type of  a small category is typically infinite and almost always wild. This motivates the introduction of filtrations on our constructions.

\begin{Defi}\label{Def:MaxDim-d}
Let $\C$ be a small category. A module $M\in k\C\mod$ is said to be of \hadgesh{bounded dimension $d$} if $\dim_k(M(x))\le d$ for each object $x\in \C$.
\end{Defi}

Obviously, if $M\in k\widetilde{\E}^\C_n\mod$ is an indecomposable module of bounded dimension $d$, then $\delta^i[M]$ is a direct sum of (not necessarily indecomposable) modules of bounded dimension $d$.  Hence the graded subgroup of $\Gr(k\widetilde{\E}^\C\bul)$ that is generated by indecomposable modules of bounded dimension $d$   is closed under coface (and codegeneracy) operators, and hence forms a cosimplicial abelian subgroup of $\Gr(k\widetilde{\E}^\C\bul)_d\le\Gr(k\widetilde{\E}^\C\bul)$. Thus we obtain an \hadgesh{increasing filtration of $\Gr(k\widetilde{\E}^\C\bul)$} by subgroups
\[\Gr(k\widetilde{\E}^\C\bul)_1\le\Gr(k\widetilde{\E}^\C\bul)_2\le \cdots\le \Gr(k\widetilde{\E}^\C\bul)_d \le \cdots \le\Gr(k\widetilde{\E}^\C\bul).\]
Of course, the same considerations apply to $\widetilde{\G}$, $\E$ and $\G$. The corresponding cohomology groups will be denoted by $\widetilde{\EE}^*(\C,k)_d$, and similarly for the other variants.

The subgroup $\Gr(k\widetilde{\E}^\C\bul)_d$ is generally still hard to control even for $d=1$. An exception occurs if $\C$ is a finite poset whose Hasse diagram is a tree.   In this case the number of isomorphism classes of indecomposable modules  $M\in k\C\mod$  of bounded dimension $1$ is  finite. In fact, isomorphism classes of such modules are in bijective correspondence with (connected) subtrees of the Hasse diagram. Notice also that if $\C$ is a a poset whose Hasse diagram is a disjoint union of trees, then any module $M\in k\C\mod$ of bounded dimension $1$ is a sum of interval modules, but in general one may have infinitely many isomorphism classes of interval modules (See Section \ref{SSec:Cyclic}). 

The following corollary is obvious. 

\begin{Cor}
If $\C$ is a finite poset whose Hasse diagram is disjoint union of trees, then the groups $\Gr(k\widetilde{\E}^\C_n)_1, \Gr(k\widetilde{\G}^\C_n)_1, \Gr(k\E^\C_n)_1$ and $\Gr(k\G^\C_n)_1$ are finitely generated for all $n\ge 0$. Hence the corresponding cohomology groups are finitely generated abelian groups.
\end{Cor}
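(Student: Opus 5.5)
The plan is to establish finite generation of the cochain groups first and then deduce the statement about cohomology formally.

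\emph{Step 1: finitely many indecomposables of bounded dimension $1$.} Let $\C$ be a finite poset whose Hasse diagram is a disjoint union of trees. An indecomposable $M\in k\C\mod$ of bounded dimension $1$ has support a connected set of objects of $\C$. On each Hasse edge of that support, $M$ is nonzero: if it vanished on an edge, then, since the Hasse diagram is a forest, deleting that edge splits the support and $M$ decomposes. So every arrow between objects of the support acts by a nonzero scalar, again because the Hasse diagram is a tree. Rescaling the basis vector at each vertex, working outward from a chosen vertex along the tree, we can make all these scalars equal to $1$. Thus $M$ is determined up to isomorphism by its support, which is a connected subtree, and there are only finitely many of those. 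This is the bijection with subtrees asserted just before the corollary, and it shows $\Gr(k\C)_1$ is free of finite rank.

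\emph{Step 2: the categories $\D_n$ in each dimension.} Each of the categories $\D_n=\widetilde{\E}^\C_n,\ \widetilde{\G}^\C_n,\ \E^\C_n,\ \G^\C_n$ is a finite category, since there are finitely many chains in a finite poset. The key claim, and the main obstacle, is that $\D_n$ has only finitely many isomorphism classes of indecomposable modules of bounded dimension $1$. These $\D_n$ are themselves posets (subposets of $\C^{[n]}$), but their Hasse diagrams need not be forests; Example~\ref{Ex:Line} shows squares appearing in $\widetilde{\E}^{[3]}_1$. The approach for this step is as follows.
\begin{enumerate}[(1)]
\item A module of dimension at most $1$ on a finite category is given by a support set $S$ together with nonzero scalars on the morphisms inside $S$ (with zero on some morphisms allowed), subject to the commutativity relations.
\item Over an infinite field, a cycle in the Hasse diagram can carry a one-parameter family of such modules. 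In a poset, however, every cycle in the Hasse diagram comes from a commuting relation, and this forces the product of scalars around the cycle to be $1$. The product condition alone is not enough, though: it must be checked that the gauge freedom of rescaling vertices eliminates all remaining parameters.
\item To carry out that check, I would show that a thin category with dimension-$\le1$ data reduces to a cohomology class in $H^1$ of the support with coefficients in $k^\times$, and that this class vanishes because the relations kill every cycle.
\item This leaves finitely many choices: the support $S$ together with the pattern of zero and nonzero arrows.
\end{enumerate}
For $\D_n$ thin the relations do force this. If this argument does not go through for some variant, I would retreat to the weaker claim that suffices for the corollary as stated: interpret each $\Gr(k\D_n)_1$ as generated by the finitely many classes obtained by iterated cofaces from level $0$. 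That, however, changes the definition, so I would aim for the full thin-category argument.

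\emph{Step 3: deduce the cohomology statement.} Each $\Gr(k\D_n)_1$ is a free abelian group on finitely many generators. The filtered subcomplex is closed under the coboundary $d^*$. Subgroups and quotients of finitely generated abelian groups are finitely generated, so every cohomology group $H^n$ of the filtered complex is finitely generated.
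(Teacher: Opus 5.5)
\textbf{The gap is in Step 2, items (2)--(4).} Your claim that ``in a poset, every cycle in the Hasse diagram comes from a commuting relation'' is false. So is its consequence, that the $k^\times$-valued $H^1$ of the nonzero part of a module vanishes. A poset can contain a Hasse cycle with no pair of parallel chains at all, for instance a crown, and such cycles occur in exactly the categories at issue.

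Take $\C$ with $a,a'<m<b,b'$; its Hasse diagram is the star $K_{1,4}$, a tree. In $\G^\C_1$ a morphism $[x_0<x_1]\to[y_0<y_1]$ exists iff $x_1\le y_0$. Hence the objects $[a<m],[a'<m],[m<b],[m<b']$ form a connected component with four arrows and no composable pairs, i.e.\ the quiver $\widetilde{A}_3$ in alternating orientation. Put $k$ at these four objects, identities on three arrows and $\lambda\in k^\times$ on the fourth. This gives indecomposable modules $M_\lambda$ of bounded dimension $1$. They are pairwise non-isomorphic, because the alternating product of the four scalars is invariant under change of basis.

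The same thing happens for two of the other families:
\begin{itemize}
\item In $\E^\C_1$, the objects above together with $[a<b],[a<b'],[a'<b],[a'<b']$ form a poset whose Hasse diagram is an $8$-cycle and whose intervals are chains.
\item For $\widetilde{\E}$, take $\C=[3]$. The interval $[(0,1,2),(1,2,3)]$ of $\widetilde{\E}^{[3]}_2$ is a Boolean cube. Its six middle elements, with their six covering relations, form a hexagon carrying no commutativity relation.
\end{itemize}
So over an infinite field these Grothendieck groups are not finitely generated. Your item (4) needs $H^1(-;k^\times)$ of every possible nonzero part to be finite, and that is exactly what fails.

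\textbf{Consequences.} Step 2 cannot be completed, and the corollary as stated is false for infinite $k$. For finite $k$ it is trivial, since a finite category has only finitely many modules of bounded dimension $1$ up to isomorphism. It fails even for cohomology. In the star example, $\G^\C_2$ is discrete on the four simplices $[x<m<y]$, and $d^1[M_\lambda]=2\sum_{\underline{x}}[S_{\underline{x}}]$ does not depend on $\lambda$. So the elements $[M_\lambda]-[M_1]$ span a free subgroup of infinite rank in $\ker d^1$. On the other hand $\Ima d^0$ is finitely generated, being an image of $\Gr(k\C)_1$. Thus $\GG^1(\C,k)_1$ is not finitely generated.

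The paper gives no argument beyond calling the corollary obvious, after noting finiteness for $k\C\mod$ itself. That finiteness does not transfer from $\C$ to the categories in each dimension, and you were right to single out this transfer as the real issue.

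Your $H^1$ reduction does go through for $\widetilde{\G}^\C_n$. There each non-homogeneous object has exactly two covering neighbours, $\mathbf{1_{x_0}}$ and $\mathbf{1_{x_n}}$. So every cycle in the nonzero part is homotopic, through parallel chains, to a closed walk in a subdivision of the Hasse tree of $\C$, and such a walk is null-homotopic. For the other three families, a true statement needs $k$ finite or a smaller generating set such as interval modules. The fallback you mention changes the definition rather than proving the claim.
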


%\iffalse
%%%%%%%%%%%%%%%%%%%%%%%%%%%%%%%%%%%%%%%%%%%
%%%%%%%%%%%%%%%%%%%%%%%%%%%%%%%%%%%%%%%%%%%
\subsection{Products}

Let $\C$ be a small category. Then its nerve $|\C|$ is a simplicial set, whose chain complex admits the Alexander-Whitney diagonal  approximation, which induces the usual cup product on $H^*(|\C|, \Z)$. Unsurprisingly, the representation cohomology theories for $\C$ defined above admit a (not generally commutative)  ring structure that is compatible with that of $H^*(|\C|, \Z)$ via the homomorphisms in Diagram (\ref{Diag:RH*-diag}).

For each $n>0$ and $p, q\le n$, let
\[\varphi_p\colon|\C|_n\to|\C|_p\quad\text{and}\quad \beta_q\colon|\C|_n\to|\C|_q\]
denote the front and back face maps respectively, where
\[\varphi_p[u_1|\cdots|u_n] \defeq [u_1|\cdots|u_p],\quad  \text{and}\quad \beta_q[u_1|\cdots|u_n] \defeq [u_{n-q+1}|\cdots|u_n].\]
Since these maps are defined as iterated face maps, they induce homomorphisms on Grothendieck groups
\[\varphi_p^*\colon\Gr(k\widetilde{\E}^\C_p)\to\Gr(k\widetilde{\E}^\C_n),\quad\text{and}\quad \beta_q^*\colon\Gr(k\widetilde{\E}^\C_q)\to\Gr(k\widetilde{\E}^\C_n),\]
as well as on the respective components of any semi-simplicial subgroup of $\Gr(k\widetilde{\E}^\C_\bullet)$. In particular, these maps are natural with respect to inclusions of semi-simplicial subgroups, as well as with respect to maps induced by functors $\C\to\D$ that induce semi-simplicial maps between the corresponding objects. In particular, under the identification of $\Gr(k\widetilde{\E}^{\C,\emptyset}\bul)$ with the cochain complex $C^*(|\C|, \Z)$, and using the product structure map $\mu$ in the Grothendieck group, one has a map
\[\Gr(k\widetilde{\E}^{\C,\emptyset}_p)\otimes\Gr(k\widetilde{\E}^{\C,\emptyset}_q)\xto{\varphi_p^*\otimes\beta_{q}^*} \Gr(k\widetilde{\E}^{\C,\emptyset}_{p+q})\otimes\Gr(k\widetilde{\E}^{\C,\emptyset}_{p+q})\xto{\mu}\Gr(k\widetilde{\E}^{\C,\emptyset}_{p+q}),\]
which coincides with the map induced by the Alexander-Whitney diagonal.

Similarly, for each $n\geq 0$ and a pair of non-negative integers $p, q\le n$, there are  obvious functors
\[\widetilde{\E}^\C_{n}\to  \widetilde{\E}^\C_{p} \quad\text{and}\quad \widetilde{\E}^\C_{n}\to  \widetilde{\E}^\C_{q},\]
given by sending an object $[u_1|\cdots  |u_{n}]$ to $[u_1|\cdots|u_p]$ and $[u_{n-q+1}|\cdots |u_{n}]$ respectively, with the obvious action on morphisms. 
Taking  Grothendieck groups and using the multiplicative structure, we get a map
\[\cup\colon\Gr(k\widetilde{\E}^\C_{p})\otimes\Gr(k\widetilde{\E}^\C_{q})\to \Gr(k\widetilde{\E}^\C_{p+q})\otimes\Gr(k\widetilde{\E}^\C_{p+q})\to \Gr(k\widetilde{\E}^\C_{p+q}),\]
and similarly for the corresponding groups in any semi-simplicial subobject of $\Gr(\widetilde{\E}^\C_\bullet)$. 

\begin{Thm}\label{Thm:Products}
Let $\C$ be a small category and let $\D_\bullet\subseteq \widetilde{\E}^\C\bul$  be any  semi-simplicial subobject. Then the restriction of $\cup$ to $\Gr(k\D\bul)$
\[\cup\colon\Gr(k\D_p)\otimes\Gr(k\D_q)\to\Gr(k\D_{p+q})\]
for each pair of non-negative integers $p$ and  $q$, induces the structure of a differential graded $\Z$-algebra on $\Gr(k\D_\bullet)$. Thus the cohomology $H^*(\Gr(k\D_\bullet), d)$ has  the structure of a unital graded associative ring, with the unit given by the class of the constant module $\underline{k}$. Furthermore, this structure is natural with respect to inclusion of semi-simplicial subobjects and functors that induce semi-simplicial maps between the corresponding objects. 
\end{Thm}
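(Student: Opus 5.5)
The plan is to treat $\Gr(k\D_\bullet)$ exactly like the cochain complex of a semi-simplicial set with the Alexander--Whitney cup product. The pointwise tensor product over $k$ plays the role of multiplication of integer-valued functions, and restriction along face functors plays the role of precomposition with face maps. I will use the explicit formula
\[X\cup Y \defeq \mu(\varphi_p^*X\otimes\beta_q^*Y),\qquad X\in\Gr(k\D_p),\ Y\in\Gr(k\D_q),\]
where $\mu$ is induced by $\otimes_k$ on $k\D_{p+q}\mod$. Everything then reduces to three formal facts:
\begin{enumerate}[(a)]
\item each restriction $\alpha^*\colon k\D_m\mod\to k\D_n\mod$ along a functor $\alpha\colon\D_n\to\D_m$ is additive and strict monoidal, since $\alpha^*(M\otimes N)=\alpha^*M\otimes\alpha^*N$ and $\alpha^*\underline{k}=\underline{k}$;
\item the front and back face functors $\varphi_p,\beta_q$ are iterated face operators of $\D_\bullet$, so they are defined on any semi-simplicial subobject and satisfy the usual semi-simplicial identities;
\item $\mu$ is bilinear, associative and commutative on the Grothendieck ring.
\end{enumerate}

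\textbf{Well-definedness and associativity.} Additivity of restriction and distributivity of $\otimes$ over $\oplus$ show that $\cup$ is bilinear, so it is defined on the free abelian groups. The values stay in $\Gr(k\D_{p+q})$ because $\varphi_p$ and $\beta_q$ restrict to $\D_\bullet$. For associativity, take $X,Y,Z$ in degrees $p,q,r$. Both bracketings give
\[\varphi_p^*X\otimes(\text{middle face})^*Y\otimes\beta_r^*Z,\]
where the middle face sends $[u_1|\cdots|u_{p+q+r}]$ to $[u_{p+1}|\cdots|u_{p+q}]$. This follows from the identities $\varphi_p\circ\varphi_{p+q}=\varphi_p$, $\beta_q\circ\varphi_{p+q}=\varphi_q\circ\beta_{q+r}$ and $\beta_r\circ\beta_{q+r}=\beta_r$, together with (a) and associativity of $\otimes$. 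For the unit, $\varphi_0^*$ and $\beta_0^*$ send $\underline{k}\in k\D_0\mod$ to $\underline{k}$, and $\underline{k}\otimes M\cong M$. Note that $\varphi_0$ and $\beta_0$ land in $\D_0$, which equals $\C$ or a subcategory of it, so $\underline{k}$ is available there.

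\textbf{Leibniz rule.} This is the step needing care, and I expect it to be the main obstacle, mostly because of sign bookkeeping. I will prove
\[d(X\cup Y)=dX\cup Y+(-1)^p X\cup dY\]
by splitting $d(X\cup Y)=\sum_{i=0}^{p+q+1}(-1)^i\delta^i(X\cup Y)$ into the ranges $i\le p$ and $i\ge p+1$. By (a), $\delta^i(X\cup Y)=\partial_i^*\varphi_p^*X\otimes\partial_i^*\beta_q^*Y$.
\begin{itemize}
\item For $i\le p$, the face $\partial_i$ only touches the front part. So $\beta_q\circ\partial_i=\beta_q$ on $\D_{p+q+1}$, and $\varphi_p\circ\partial_i=\partial_i\circ\varphi_{p+1}$.
\item For $i\ge p+1$, one has $\varphi_p\circ\partial_i=\varphi_p$ and $\beta_q\circ\partial_i=\partial_{i-p}\circ\beta_{q+1}$.
\end{itemize}
Hence the first range contributes $\sum_{i=0}^{p}(-1)^i\delta^i X\cup Y$. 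The second range contributes $(-1)^p\sum_{j=1}^{q+1}(-1)^{j}X\cup\delta^jY$. The missing terms are $(-1)^{p+1}\delta^{p+1}X\cup Y$ and $(-1)^pX\cup\delta^0Y$. Both equal $\varphi_p^*X\otimes\beta_q^*Y$ pulled back along the map deleting the ``middle'' vertex $p$, i.e. the composite that forgets nothing but splits at vertex $p$. They therefore cancel, exactly as in the simplicial-set proof. I must check this identification of functors carefully with the paper's conventions for $\partial_i$ on $\C^{[n]}$: $\partial_0$ drops $x_0$, $\partial_n$ drops $x_n$, and inner faces compose. Everything here is an equality of functors, not just of objects, so the modules agree on the nose.

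\textbf{Consequences.} Bilinearity, associativity, the unit and Leibniz together make $\Gr(k\D_\bullet)$ a unital DGA over $\Z$. Standard homological algebra then gives the graded associative unital ring structure on $H^*$, with unit the class of $\underline{k}$; this class is a cocycle since $d\underline{k}=\underline{k}-\underline{k}=0$. For naturality, a semi-simplicial map $F\colon\D_\bullet\to\D'_\bullet$, whether an inclusion or induced by a functor $\C\to\C'$, commutes with $\varphi_p$ and $\beta_q$. By (a), $F^*$ is therefore a map of DGAs.
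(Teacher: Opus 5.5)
Your proposal is correct and follows the paper's proof. Associativity, the unit and naturality are treated as formal. The real content is the Leibniz rule: you split the faces at $p$ and cancel $(-1)^{p+1}\delta^{p+1}X\cup Y$ against $(-1)^pX\cup\delta^0Y$. The paper does this with explicit object-wise formulas, whereas your face-map identities also handle morphisms.

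One correction to your description of the cancelling term. It is not a pullback along the map deleting vertex $p$; that pullback gives $\delta^pX\cup Y$. It is simply $\varphi_p^*X\otimes\beta_q^*Y$ on $\D_{p+q+1}$, omitting the edge $u_{p+1}$, since $\partial_{p+1}\circ\varphi_{p+1}=\varphi_p$ and $\partial_0\circ\beta_{q+1}=\beta_q$.
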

\begin{proof}
Associativity of the pairing and naturality of the product are clear from the construction. Similarly, the class of the constant object $[\underline{k}]
\in\Gr(k\D_0)$ is clearly a two sided unit for this product. Thus it remains to show that  $\cup$  is a cochain map, namely that the diagram
\[\xymatrix{
\Gr(k\D_p)\otimes\Gr(k\D_q)\ar[rrrr]^(.7)\mu\ar[d]^{(d^p\otimes 1)\top ((-1)^p1\otimes d^q)}&&&& \Gr(k\D_{p+q})\ar[d]^{d}\\
{\left(\Gr(k\D_{p+1})\otimes\Gr(k\D_q)\right)\oplus\left(\Gr(k\D_{p})\otimes\Gr(k\D_{q+1})\right)}\ar[rrrr]^(.7){\mu_l + \mu_r} &&&&\Gr(k\D_{p+q+1}),
}\]
where $\mu_l$ and $\mu_r$ are the multiplication maps on the left and right summands, respectively, commutes for all $p, q\ge 0$.

Let $M\in k\D_p\mod$ and $N\in k\D_q\mod$ be any modules. Then $\mu([M]\otimes[N])$ is in the class of a module $P\in k\D_{p+q}\mod$, which takes the value $M([u_1|\cdots|u_p])\otimes N([u_{p+1}|\cdots|u_{p+q}])$ on the object $[u_1|\cdots|u_{p+q}]$. Then the $i$-th summand of $d^{p+q}[P]$ is $(-1)^i\delta^i[P]$, where $\delta^i[P]$ is represented by the module $Q_i$ defined by  
\begin{multline}\label{Eq:Prod-Chn}
Q_i([u_1|\cdots|u_{p+q+1}])   = 
\begin{cases}
	M([u_2|\cdots|u_{p+1}])\otimes N([u_{p+2}|\cdots|u_{p+q+1}]) & i=0\\
	M([u_1|\cdots|u_{i+1}u_i|\cdots|u_{p+1}])\otimes N([u_{p+2}|\cdots|u_{p+q+1}]) & 0 < i \le  p\\
	M([u_1|\cdots|u_{p}])\otimes N([u_{p+1}|\cdots|u_{i+1}u_i|\cdots|u_{p+q+1}]) & p < i \le  p+q\\
						M([u_1|\cdots|u_{p}])\otimes N([u_{p+1}|\cdots|u_{p+q}]) & i = p+q+1
						\end{cases}
\end{multline}
On the other hand, with the usual sign convention of a differential on a tensor product of cochain complexes,
\[d^{p+q}([M]\otimes[N]) = d^p([M])\otimes[N] + (-1)^p[M]\otimes d^q([N]).\]
Let $U_i$ represent the module $\delta^i[M]$, $0\le i\le p+1$ and let $V_{p+j}$ represent $\delta^j[N]$, $0\le j\le q+1$. Then
\[U_i([u_1|\cdots|u_{p+1}]) = 
\begin{cases}
	M([u_2|\cdots|u_{p+1}]) & i=0\\
	M([u_1|\cdots|u_{i+1}u_i|\cdots|u_{p+1}]) & 0<i\le p\\
	M([u_1|\cdots|u_{p}]) & i=p+1
\end{cases}
\] 
Similarly,
\[V_{p+j}([u_{p+1}|\cdots|u_{p+q+1}]) =
\begin{cases}
	N([u_{p+2}|\cdots|u_{p+q+1}]) & j=0\\
	N([u_{p+1}|\cdots|u_{j+p+1}u_{j+p}|\cdots|u_{p+q+1}]) & 0 < j\le q\\
	N([u_{p+1}|\cdots|u_{p+q}]) & j=q+1
\end{cases} 
\]
Apply $\mu$ to the summands of $d^{p+q}([M]\otimes[N]) $ that reside in $\Gr(k\D_{p+1})\otimes\Gr(k\D_q)$.  
\begin{multline}\label{Eq:Prod-Chn-2}
(U_i\otimes N)([u_1|\cdots|u_{p+q+1}])  = U_i([u_1|\cdots|u_{p+1}])\otimes N([u_{p+2}|\cdots|u_{p+q+1}]) =\\
\begin{cases}
	M([u_2|\cdots|u_{p+1}])\otimes N([u_{p+2}|\cdots|u_{p+q+1}]) & i=0\\
	M([u_1|\cdots|u_{i+1}u_i|\cdots|u_{p+1}])\otimes N([u_{p+2}|\cdots|u_{p+q+1}]) & 0<i\le p\\
	M([u_1|\cdots|u_{p}])\otimes N([u_{p+2}|\cdots|u_{p+q+1}]) & i=p+1
\end{cases}
\end{multline}
For $0\le i\le p$ these coincide with Equation (\ref{Eq:Prod-Chn}) and will appear with the same sign. For $i=p+1$ we have a summand $(-1)^{p+1}M([u_1|\cdots|u_{p}])\otimes N([u_{p+2}|\cdots|u_{p+q+1}])$.

Similarly, for the summands that reside in $\Gr(k\D_{p})\otimes\Gr(k\D_{q+1})$, we have 
\begin{align*}
(M\otimes V_{p+j})([u_1|\cdots|u_{p+q+1}]) & = M([u_1|\cdots|u_{p}])\otimes V_{p+j}([u_{p+1}|\cdots|u_{p+q+1}]) =\\
&\begin{cases}
	M([u_1|\cdots|u_{p}])\otimes N([u_{p+2}|\cdots|u_{p+q+1}]) & j=0\\
	M([u_1|\cdots|u_{p}])\otimes N([u_{p+1}|\cdots|u_{j+p+1}u_{j+p}|\cdots|u_{p+q+1}]) & 0 < j\le q\\
	M([u_1|\cdots|u_{p}])\otimes N([u_{p+1}|\cdots|u_{p+q}]) & j=q+1
\end{cases} 
\end{align*}
The summand corresponding to $j=0$ is $M([u_1|\cdots|u_{p}])\otimes N([u_{p+2}|\cdots|u_{p+q+1}])$ and will appear with a sign $(-1)^{p+2}$, and so will cancel the  summand from  (\ref{Eq:Prod-Chn-2}) that corresponds to $i=p+1$. The other summands with the correct signs are also seen easily to coincide with corresponding terms in (\ref{Eq:Prod-Chn}). This completes the proof that the $\cup$ is a cochain map. 
\end{proof}

The following corollary is immediate.

\begin{Cor}\label{Cor:Reduction}
Let $\C$ be a small category. Then the map 
\[\GG^*(\C, k)\to H^*(|C|, \Z)\] is a ring homomorphism.  Thus all homomorphisms in Diagram (\ref{Diag:RH*-diag}) are ring homomorphisms.
\end{Cor}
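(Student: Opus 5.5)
The plan is to realise the reduction map as the map induced in cohomology by a morphism of differential graded algebras, and then to apply Theorem \ref{Thm:Products} on both sides. Assume that $\C$ has no non-identity invertible morphisms, so that $\G^\C_\bullet$ is defined. The map $\GG^*(\C,k)\to H^*(|\C|,\Z)$ comes from the inclusion of semi-simplicial objects $\G^{\C,\emptyset}_\bullet\subseteq\G^\C_\bullet$, as in Example \ref{Ex:Discrete-cat-coho}. This inclusion is the identity on objects in every dimension. Restriction along it is therefore a cochain map
\[
r\colon\Gr(k\G^\C_\bullet)\to\Gr(k\G^{\C,\emptyset}_\bullet),
\]
which sends the class of a module $M$ to the function assigning to each non-degenerate simplex $\underline{x}$ the integer $\dim_k M(\underline{x})$.

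\emph{Step 1: $r$ respects the cup product.} Both $\G^\C_\bullet$ and $\G^{\C,\emptyset}_\bullet$ are semi-simplicial subobjects of $\widetilde{\E}^\C_\bullet$. The front and back face functors $\varphi_p,\beta_q$ used to define $\cup$ commute with the inclusion $\G^{\C,\emptyset}_\bullet\subseteq\G^\C_\bullet$, because both are iterated face operators. Restricting a module to a subcategory also commutes with objectwise tensor products over $k$. It follows that $r(x\cup y)=r(x)\cup r(y)$ at the cochain level, and $r$ sends the unit $[\underline{k}]$ to the constant function $1$. This is precisely the naturality clause of Theorem \ref{Thm:Products}.

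\emph{Step 2: the target product is the cup product on $H^*(|\C|,\Z)$.} We identify $\Gr(k\G^{\C,\emptyset}_n)$ with integer-valued functions on the non-degenerate $n$-simplices, as in Example \ref{Ex:Discrete-cat-coho}. Under this identification the product $\mu$ becomes pointwise multiplication of dimensions, since $\dim(V\otimes W)=\dim V\cdot\dim W$. Hence $\cup$ becomes $(f\cup g)(\sigma)=f(\varphi_p\sigma)\,g(\beta_q\sigma)$, which is the Alexander--Whitney formula, as noted before Theorem \ref{Thm:Products}. This expression is the cup product on the normalised cochain complex. Normalised cochains compute $H^*(|\C|,\Z)$ as a ring, so the identification is one of rings.

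\emph{Step 3: the other maps in Diagram (\ref{Diag:RH*-diag}).} The same two steps apply verbatim to every other arrow. Each arrow is induced either by an inclusion of semi-simplicial subobjects of $\widetilde{\E}^\C_\bullet$, namely $\gamma$, $\iota$, $\widetilde\iota$ and $\epsilon$, or by a restriction to the corresponding discrete subobject. By naturality in Theorem \ref{Thm:Products}, each such map is multiplicative on cochains and preserves the unit. Passing to cohomology, each is therefore a ring homomorphism.

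The only point I expect to need care is Step 2. For $\widetilde{\EE}^*$ and $\widetilde{\GG}^*$ the discrete subobject uses all simplices, giving unnormalised cochains; for $\EE^*$ and $\GG^*$ it uses only non-degenerate simplices, giving normalised ones. In both cases the Alexander--Whitney product is the standard cup product, and the two complexes are identified by the usual ring isomorphism on cohomology. Beyond this check, nothing is required except the formal naturality already established in Theorem \ref{Thm:Products}.
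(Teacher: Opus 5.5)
Your proposal is correct and is essentially the argument the paper leaves implicit when it calls the corollary immediate. That argument combines the naturality of the product in Theorem \ref{Thm:Products} under inclusions of semi-simplicial subobjects with the observation, made just before that theorem, that on the discrete subobject the product is the Alexander--Whitney cup product. Your extra care about normalised versus unnormalised cochains for the two kinds of discrete subobject covers a point the paper glosses over, and you handle it correctly.
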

%\fi

%%%%%%%%%%%%%%%%%%%%%%%%%%%%%%%%%%%%%%%%%%%
%%%%%%%%%%%%%%%%%%%%%%%%%%%%%%%%%%%%%%%%%%%
%%%%%%%%%%%%%%%%%%%%%%%%%%%%%%%%%%%%%%%%%%%
%%%%%%%%%%%%%%%%%%%%%%%%%%%%%%%%%%%%%%%%%%%
%%%%%%%%%%%%%%%%%%%%%%%%%%%%%%%%%%%%%%%%%%%
%%%%%%%%%%%%%%%%%%%%%%%%%%%%%%%%%%%%%%%%%%%
%%%%%%%%%%%%%%%%%%%%%%%%%%%%%%%%%%%%%%%%%%%
%%%%%%%%%%%%%%%%%%%%%%%%%%%%%%%%%%%%%%%%%%%
%%%%%%%%%%%%%%%%%%%%%%%%%%%%%%%%%%%%%%%%%%%
%%%%%%%%%%%%%%%%%%%%%%%%%%%%%%%%%%%%%%%%%%%
%%%%%%%%%%%%%%%%%%%%%%%%%%%%%%%%%%%%%%%%%%%

\section{\texorpdfstring{$\widetilde{\EE}^*$ and $\widetilde{\GG}^*$ cohomology}{widetilde E and G cohomology}}
\label{Sec:Cohomology-EG}

In this section we show that  $\widetilde{\EE}^*(\C, k)$ is acyclic for any small category $\C$ and any field $k$. We compute $\widetilde{\EE}^0(\C, k)$ and show that it is isomorphic to $\widetilde{\GG}^0(\C, k)$. Assuming $\C$ is a direct category, we show that   $\widetilde{\GG}^*(\C, k)$ in positive dimensions is essentially the integral cohomology  of the nerve of $\C$. 

\subsection{\texorpdfstring{The groups $\widetilde{\EE}^0(\C, k)$, $\widetilde{\GG}^0(\C, k)$}{The groups widetilde E0 and G0}}
\label{SSec:E-G-low}
The main aim of this subsection is to show that $\widetilde{\EE}^0(\C, k)\cong \widetilde{\GG}^0(\C, k)$  is the free abelian group generated by the indecomposable locally constant  $k\C$-modules  and conclude with the proof of Theorem \ref{Th:E0G0}. The following three statements are preparatory.

\begin{Lem}\label{Lem:delta-Loc-Const}
Let $\C$ be a small category and let $M$ be either a $k\widetilde{\E}^\C_n$-module or a $k\E^\C_n$-module that is locally constant. Then, for every $0\le i\le n$, $\delta^i[M] = \delta^{i+1}[M]$. Therefore, $d^n[M] = 0$ if $n$ is even and $d^n[M] = \delta^n[M]$ if $n$ is odd. Furthermore, if $M\in k\widetilde{\G}^\C_0\mod$  is locally constant, then $\delta^0[M] = \delta^1[M]$.
\end{Lem}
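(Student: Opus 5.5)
To prove $\delta^i[M]=\delta^{i+1}[M]$, I would construct an explicit natural isomorphism of $k\widetilde{\E}^\C_{n+1}$-modules $\partial_i^*M\cong\partial_{i+1}^*M$. Here \emph{locally constant} means that $M$ sends every morphism to an isomorphism. Fix an object $\underline{x}=[u_1|\cdots|u_{n+1}]$ of $\widetilde{\E}^\C_{n+1}$, written $x_0\xto{u_1}\cdots\xto{u_{n+1}}x_{n+1}$.

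The face $\partial_i\underline{x}$ deletes $x_i$ (composing $u_{i+1}u_i$ when $0<i$), and $\partial_{i+1}\underline{x}$ deletes $x_{i+1}$. Both faces are sequences of length $n$ that agree except in position $i$, where one has $x_{i+1}$ and the other has $x_i$. I would define a morphism
\[\theta_{\underline{x}}\colon\partial_{i+1}\underline{x}\to\partial_i\underline{x}\]
in $\widetilde{\E}^\C_n$. It has components $1$ in every position except position $i$, where the component is $u_{i+1}\colon x_i\to x_{i+1}$. For the end cases $i=0$ and $i=n$, the same recipe applies with the composite at the other side missing.

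The next step is to check that the squares commute. In position $i-1\to i$, the top arrow $u_i$ followed by $u_{i+1}$ equals the bottom arrow $u_{i+1}u_i$ preceded by $1$. In position $i\to i+1$, the top arrow $u_{i+2}u_{i+1}$ agrees with $u_{i+1}$ followed by $u_{i+2}$. All other squares are identities.

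Then I would check naturality in $\underline{x}$. For a morphism $\underline{f}\colon\underline{x}\to\underline{y}$, the required equation reduces in position $i$ to $f_{i+1}u_{i+1}=v_{i+1}f_i$, which is exactly the commutativity of $\underline{f}$. Hence $M(\theta_{\underline{x}})\colon\partial_{i+1}^*M(\underline{x})\to\partial_i^*M(\underline{x})$ is a natural transformation. Each component is invertible because $M$ is locally constant, so $\partial_{i+1}^*M\cong\partial_i^*M$.

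For $\E^\C_n$, the same $\theta_{\underline{x}}$ works. The category $\E^\C_n$ is a full subcategory, and both faces of a non-degenerate simplex are non-degenerate, so $\theta_{\underline{x}}$ lies in $\E^\C_n$.

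The alternating sum then telescopes: $d^n[M]=\sum_{i=0}^{n+1}(-1)^i\delta^i[M]$ has $n+2$ equal terms. When $n$ is even this is an even number of equal terms with alternating signs, so the sum is $0$. When $n$ is odd the terms cancel in pairs and a single term $\pm\delta^i[M]$ survives, which is $\delta^n[M]$ up to the sign convention.

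For $\widetilde{\G}^\C_0=\C$, the objects of $\widetilde{\G}^\C_1$ are the morphisms $u\colon x_0\to x_1$, with $\partial_1u=x_0$ and $\partial_0u=x_1$. The only morphisms available in $\widetilde{\G}^\C_1$ are those induced by maps from the last object to the first, so here $\theta_u$ should not be built as above. Instead I would use $u$ itself, viewed as a morphism $x_0\to x_1$ in $\C=\widetilde{\G}^\C_0$. Naturality then needs only to be checked against morphisms of $\widetilde{\G}^\C_1$, which are commuting squares, and the check is the same as before.

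The main obstacle is bookkeeping rather than anything conceptual. In particular, the $\widetilde{\G}$ case in higher dimensions fails, because $\theta$ is not of the allowed form, which explains why the statement restricts to $n=0$ there. I would be careful to state that only the $0$-dimensional $\widetilde{\G}$ case is claimed.
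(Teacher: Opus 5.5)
Your proposal is correct and is essentially the paper's own argument. It uses the same morphism $\partial_{i+1}\underline{x}\to\partial_i\underline{x}$ with $u_{i+1}$ in position $i$ and identities elsewhere, the same full-subcategory observation for $\E^\C_n$, and the same natural isomorphism with components $M(u)$, checked against the commuting squares that make up the morphisms of $\widetilde{\G}^\C_1$.

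Two cosmetic points. First, no sign hedging is needed for $n$ odd: $\sum_{i=0}^{n+1}(-1)^i=1$ gives exactly $d^n[M]=\delta^0[M]=\delta^n[M]$. Second, the $\widetilde{\G}^\C_0$ case is not a departure from your construction; it is your $n=0$ recipe (there $\theta_u=u$) restricted along the inclusion $\widetilde{\G}^\C_1\subseteq\widetilde{\E}^\C_1$.
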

\begin{proof}
Let $[v_1|\cdots|v_{n+1}]\in\widetilde{\E}^\C_{n+1}$ be any object. Then for $0<i\le n$ there is a morphism 
\[\partial_{i+1}[v_1|\cdots|v_{n+1}]\to \partial_i[v_1|\cdots|v_{n+1}]\]
 in $\widetilde{\E}^\C_{n}$ that takes the form
\[\xymatrix{
a_0\ar[r]^{v_1}\ar@{=}[d] & a_1\ar[r]^{v_2}\ar@{=}[d]& \cdots &\ar[r]^{v_{i-1}}& a_{i-1}\ar[rr]^{v_{i}}\ar@{=}[d]&& a_{i}\ar[rr]^{v_{i+2}\circ v_{i+1}}\ar[d]^{v_{i+1}} && a_{i+2}\ar[r]\ar@{=}[d]&\cdots & \ar[r] & a_{n+1}\ar@{=}[d]\\
a_0\ar[r]^{v_1} & a_1\ar[r]^{v_2}& \cdots &\ar[r]^{v_{i-1}} & a_{i-1}\ar[rr]^{v_{i+1}\circ v_{i}}&& a_{i+1}\ar[rr]^{v_{i+2}} && a_{i+2}\ar[r]&\cdots & \ar[r] & a_{n+1}\\
}.\]
Since $M$ is locally constant, applying it to this morphism we obtain an isomorphism 
\[\partial_{i+1}^*M([v_1|\cdots|v_{n+1}])\xto{\cong} \partial_i^*M([v_1|\cdots|v_{n+1}]).\]
For $i=0$ one has 
\[\xymatrix{
a_0\ar[rr]^{v_2\circ v_1}\ar[d]^{v_1} && a_2\ar[r]^{v_2}\ar@{=}[d]& \cdots  \ar[r] & a_{n+1}\ar@{=}[d]\\
a_1\ar[rr]^{v_2} && a_2\ar[r]^{v_2}& \cdots  \ar[r]& a_{n+1}\\
}\]
that gives an isomorphism 
\[\partial_{1}^*M([v_1|\cdots|v_{n+1}])\xto{\cong} \partial_0^*M([v_1|\cdots|v_{n+1}]).\]
Similarly, for $i=n$, the morphism
\[\xymatrix{
a_0\ar[r]^{v_1}\ar@{=}[d] & a_1\ar[r]^{v_2}\ar@{=}[d]& \cdots  \ar[r] & a_{n-1}\ar[rr]^{v_n}\ar@{=}[d] && a_{n}\ar[d]^{v_{n+1}}\\
a_0\ar[r]^{v_1} &                 a_1\ar[r]^{v_2}                 & \cdots  \ar[r] & a_{n-1}\ar[rr]^{v_{n+1}\circ v_n}  && a_{n+1}\\
}\]
gives an isomorphism
\[\partial_{n+1}^*M([v_1|\cdots|v_{n+1}])\xto{\cong} \partial_n^*M([v_1|\cdots|v_{n+1}]).\]
These isomorphisms are clearly natural with respect to morphisms in $\widetilde{\E}^\C_n$, and so they define natural isomorphisms $\partial_{i+1}^*M\xto{\cong}\partial^*_iM$ for all $0\le i\le n$, so $\delta^{i+1}[M] = \delta^i[M]$. 
Furthermore, since  $\E^\C_n$ is a full subcategory of $\widetilde{\E}^\C_n$,  the same argument applies, and so the statement holds for any $M\in k\E^\C_n\mod$ as well.

Finally, if $M\in k\widetilde{\G}^\C_0\mod$ is locally constant, then for each object $[v]\in\widetilde{\G}^\C_1$, where $a\xto{v} b$, one has an isomorphism 
\[M(a) = M(\partial_1[v]) \xto{M(v)} M(\partial_0[v]) = M(b).\]
Define a natural isomorphism of $k\widetilde{\G}_1^\C$-modules $\alpha\colon M\circ\partial_1 \to M\circ\partial_0$ by $\alpha_{[v]} = M(v)$. If $[u]$ is another object in $\widetilde{\G}^\C_1$, where $u\colon c\to d$, then a morphism $[v]\to[u]$ is determined by a morphism $w\colon b\to c$ in $\C$. Hence, there is  a commutative square
\[\xymatrix{
a \ar[r]^{v}\ar[d]_{w\circ v} & b\ar[d]^{u\circ w}\\
c\ar[r]^u & d
}\]
and   
\[M(u\circ w)\circ \alpha_{[v]}  = \alpha_{[u]}\circ M(w\circ v).\]
Thus $\alpha$ is a natural isomorphism and the proof is complete.
\end{proof}

\begin{Cor}\label{Cor:delta-Loc-Const}
Let $\C$ be a small category and let $M, N$ be either $k\widetilde{\E}^\C_n$-modules or  $k\E^\C_n$-modules. Let $X = [M]-[N]$ be an element in the corresponding Grothendieck group.  Write $M\cong M_0\oplus M_1$, and $N\cong N_0\oplus N_1$, where $M_0$ and $N_0$ are maximal locally constant summands in $M$ and $N$ respectively. Set $X_i = [M_i]- [N_i]$, for $i = 0, 1$. Then for any $n\geq 0$
\[d^n(X) = \begin{cases}
				d^n(X_1) & n\;\text{even}\\
				d^n(X_1) + \delta^n(X_0) & n\;\text{odd}
		\end{cases}
\]
In particular, if $X$ is an $n$-cocycle for $n$ odd, then $M_0\cong N_0$. Furthermore, if  $X\in\Gr(k\widetilde{\G}^\C_0)$ is any element, then $d^0(X) = d^0(X_1)$.
\end{Cor}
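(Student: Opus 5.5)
The plan is to use additivity of $d^n$ and of the coface maps $\delta^i$ on the Grothendieck group, together with Lemma \ref{Lem:delta-Loc-Const}.

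Since $M\cong M_0\oplus M_1$ and $N\cong N_0\oplus N_1$, we have $X = X_0 + X_1$ in the Grothendieck group, so $d^n(X) = d^n(X_0) + d^n(X_1)$. Because $M_0$ and $N_0$ are locally constant, Lemma \ref{Lem:delta-Loc-Const} gives $d^n[M_0]=d^n[N_0]=0$ for $n$ even and $d^n[M_0]=\delta^n[M_0]$, $d^n[N_0]=\delta^n[N_0]$ for $n$ odd. Subtracting and using linearity yields $d^n(X_0)=0$ or $\delta^n(X_0)$ respectively, which is the displayed formula. The $\widetilde{\G}^\C_0$ statement follows in the same way from the last assertion of the lemma: there $\delta^0[M_0]=\delta^1[M_0]$, and likewise for $N_0$, so $d^0(X_0)=0$ and $d^0(X)=d^0(X_1)$.

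The cocycle claim for $n$ odd is the real content. If $d^n(X)=0$, then $\delta^n(X_0) = -d^n(X_1)$. The key observation I would establish is that $\partial_n^*$ (equivalently $\delta^n$ up to the indexing of faces) of a locally constant module is locally constant. This holds because a functor composed with a module that inverts all morphisms still inverts all morphisms.

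I would then argue that no summand of $d^n(X_1)$ can cancel a locally constant summand. This is the main obstacle, since a pullback of a non-locally-constant indecomposable could a priori contain locally constant summands. My first approach is to use the degeneracy $s$ with $\partial_n\circ s=\mathrm{id}$, available in $\widetilde{\E}^\C_\bullet$, and compare by pulling back. Alternatively, I would try to show directly from the definition of $d$ that the locally constant parts cancel pairwise among the $\delta^i[M_1]$.

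Failing that, I would use maximality of $M_0$ together with a Krull--Schmidt comparison. Writing $\delta^n[M_0]-\delta^n[N_0]$ as a difference of locally constant modules, I would apply $s_n^*$, using $\partial_n s_n=\mathrm{id}$, to recover $[M_0]-[N_0]$ from it. It then remains to argue that $s_n^*$ applied to the non-locally-constant cancelling terms contributes nothing locally constant. Combining these, the locally constant isotypic parts must agree, giving $M_0\cong N_0$ by Krull--Schmidt for finite-dimensional modules.
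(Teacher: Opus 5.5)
Your proof of the displayed formula and of the $\widetilde{\G}^\C_0$ statement matches the paper's (additivity plus Lemma~\ref{Lem:delta-Loc-Const}) and is fine.

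\textbf{The gap is in the cocycle claim.} You correctly identify the crux: $d^n[M_1]$ and $d^n[N_1]$ must contain no locally constant indecomposable summand. But you only list candidate strategies and carry out none of them. Your ``key observation'' (pullbacks of locally constant modules are locally constant) is the easy direction and does not address this.

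The missing argument works one coface at a time and uses the maximality of $M_0$.
\begin{enumerate}
\item For each $0\le i\le n+1$, choose a degeneracy $s$ with $\partial_i\circ s=\mathrm{id}$: take $s=s_i$ if $i\le n$ and $s=s_n$ if $i=n+1$. Then $M_1\cong s^*\partial_i^*M_1$.
\item Suppose $\partial_i^*M_1\cong L\oplus R$ with $L\neq 0$ locally constant. Then $M_1\cong s^*L\oplus s^*R$, and $s^*L$ is locally constant.
\item $s^*L$ is nonzero. Indeed, $L$ is nonzero on a whole connected component of $\widetilde{\E}^\C_{n+1}$. Every component contains a constant object, since there is a morphism $\mathbf{1_{a_0}}\to\underline{a}$, and constant objects lie in the image of $s$.
\item This contradicts the maximality of $M_0$. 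Hence every $\delta^i[M_1]$, and so $d^n[M_1]$, is a combination of non-locally-constant indecomposables; the same holds for $d^n[N_1]$.
\item By contrast, $\delta^n[M_0]$ and $\delta^n[N_0]$ involve only locally constant indecomposables, since summands of locally constant modules are locally constant.
\item Uniqueness of decomposition then splits $\delta^n[M_0]+d^n[M_1]=\delta^n[N_0]+d^n[N_1]$ and yields $\delta^n[M_0]=\delta^n[N_0]$. Applying $\sigma^n$, with $\sigma^n\delta^n=\mathrm{id}$, gives $M_0\cong N_0$.
\end{enumerate}
Note that this is needed for every $i$, not just $i=n$.

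Your other routes do not close:
\begin{itemize}
\item There are no locally constant parts among the $\delta^i[M_1]$ to cancel pairwise.
\item Applying $\sigma^n$ to the whole relation only gives $X_0=-\sum_{i<n}(-1)^i\delta^i\sigma^{n-1}X_1$, and maximality of $M_0$ says nothing directly about $\sigma^{n-1}X_1$.
\end{itemize}

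\textbf{The restriction to $\widetilde{\E}^\C_\bullet$ is essential.} You note that degeneracies exist only for $\widetilde{\E}^\C_\bullet$, and this is more than a technicality: for $\E^\C_\bullet$ the cocycle claim is false. The paper's own proof also invokes $\sigma^i$ in that case.

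For a counterexample, take $\C=[2]$ and $n=1$. Then $\E^{[2]}_1$ is the chain $01\to 02\to 12$, and $\E^{[2]}_2$ has the single object $012$. Let $X=[S_{01}]-[\underline{k}]$, where $S_{01}$ is the singleton module at $01$. Both $d^1[S_{01}]=\delta^2[S_{01}]$ and $d^1[\underline{k}]$ equal the class of the one-dimensional module at $012$, so $d^1X=0$. Yet $M_0=0\not\cong\underline{k}=N_0$.

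So the cocycle part of the statement, and the argument above, must be restricted to $\widetilde{\E}^\C_\bullet$.
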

\begin{proof}
The first part of the corollary and its last statement follow at once from Lemma \ref{Lem:delta-Loc-Const}. Thus, let $n$ be odd and  assume that $X$ is a cocycle. 
Since $d^n([M]-[N]) = 0$, one has by Lemma \ref{Lem:delta-Loc-Const}
\[d^n[M] = \delta^n([M_0])+d^n([M_1]) = \delta^n([N_0]) +d^n([N_1]) = d^n[N].\]
Notice that $d^n[M_1]$, written in terms of its unique decomposition as a linear combination of indecomposable modules, of cannot contain a locally constant summand, since if it does, then at least one summand of the form $\delta^i[M_1]$ must contain a locally constant summand. But then $[M_1]= \sigma^i\delta^i[M_1]$ contains a locally constant summand, where $\sigma^i$ is the $i$-th codegeneracy operator, thus contradicting maximality of $M_0$.  Similarly, $d^n[N_1]$ does not contain locally constant summands. 
It follows that $d^n[M_1] = d^n[N_1]$ and $\delta^n[M_0] = \delta^n[N_0]$ and, by applying $\sigma^n$ to the second equality, that $M_0\cong N_0$.
\end{proof}

\begin{Lem}\label{Lem:delta-indecomp}
Let $\C$ be a small category and    let $M\in k\widetilde{\E}^\C_n\mod$ be an indecomposable module. Then for each $0\le i\le n$, any representative of the element $\delta^i[M] = [\partial_i^*M]\in \Gr(k\widetilde{\E}^\C_{n+1})$ is indecomposable in $k\widetilde{\E}^\C_{n+1}\mod$. Furthermore, the same claim  holds for any indecomposable module $M\in k\widetilde{\G}^\C_0\mod$.
\end{Lem}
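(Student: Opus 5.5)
The plan is to show that each face functor $\partial_i\colon\widetilde{\E}^\C_{n+1}\to\widetilde{\E}^\C_n$ admits a section $s\colon\widetilde{\E}^\C_n\to\widetilde{\E}^\C_{n+1}$ (namely the degeneracy $s_i$) together with a natural transformation between $s\circ\partial_i$ and the identity. This would make restriction along $\partial_i$ fully faithful on module categories. The key observation is then standard: if $\partial_i^*\colon k\widetilde{\E}^\C_n\mod\to k\widetilde{\E}^\C_{n+1}\mod$ is fully faithful, then
\[\mathrm{End}(\partial_i^*M)\cong\mathrm{End}(M).\]
For finite dimensional valued functors, indecomposability is equivalent to the absence of nontrivial idempotents in the endomorphism ring. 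Hence $\partial_i^*M$ is indecomposable exactly when $M$ is.

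To get full faithfulness, I would realise $\partial_i$ as a reflection or coreflection. Write $\partial_i=\alpha^*$, where $\alpha=d^i\colon[n]\to[n+1]$ is the coface. It has adjoints in the poset sense: $\sigma^{i}\circ d^i=\mathrm{id}$ and $\sigma^{i-1}\circ d^i=\mathrm{id}$, where $\sigma^{i}$ and $\sigma^{i-1}$ are the codegeneracies. One also has $d^i\sigma^{i}\le \mathrm{id}_{[n+1]}$, or $\ge$ for the other choice. Order-preserving maps between posets related this way form adjunctions $d^i\dashv\sigma^{i}$ or $\sigma^{i-1}\dashv d^i$ of functors $[n]\leftrightarrows[n+1]$. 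Applying $\C^{(-)}$ reverses these into adjunctions between $\widetilde{\E}^\C_{n+1}$ and $\widetilde{\E}^\C_n$. In these adjunctions $\partial_i$ is one adjoint and the degeneracy $s_i$ (or $s_{i-1}$) is the other, with unit or counit the identity because $\sigma d^i=\mathrm{id}$. For the boundary cases $i=0$ and $i=n+1$ only one of the two adjunctions exists, but one suffices.

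Given an adjunction $L\dashv R$ in which one of the functors, say $R\colon\A\to\B$, satisfies $RL=\mathrm{id}$ via the unit, I would check directly that $R^*\colon k\B\mod\to k\A\mod$ is fully faithful. The natural transformation $\epsilon\colon LR\to\mathrm{id}$ gives every object $a$ a morphism relating $a$ to $LRa$. Given $\theta\colon R^*M\to R^*N$, define $\bar\theta_b=\theta_{Lb}$, using $RLb=b$. Naturality of $\theta$ along $\epsilon_a$, combined with the triangle identity $R\epsilon_a=\mathrm{id}$, shows that $\theta_a=\theta_{LRa}$ and that $\bar\theta$ is natural. Hence $\theta=R^*\bar\theta$, which gives fullness; faithfulness is immediate since $R$ is surjective on objects. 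The dual case is symmetric.

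For $\widetilde{\G}^\C_0=\C$ and $\partial_0,\partial_1\colon\widetilde{\G}^\C_1\to\C$, I would argue directly. Take $\partial_1$, sending $[v\colon a\to b]$ to $a$, with section $c\mapsto[1_c]$. The morphism $[v]\to[1_b]$ determined by $1_b$ has underlying pair $(v,1_b)$. Using these comparison morphisms, the same fullness argument should go through. The main obstacle I expect is exactly here, since $\widetilde{\G}^\C_1$ has fewer morphisms and I must verify that the required naturality squares are squares in $\widetilde{\G}^\C_1$. If that fails, I would fall back on computing $\mathrm{End}(\partial_i^*M)$ by hand, showing that any endomorphism is determined by its values on the constant objects $\mathbf{1_c}$.
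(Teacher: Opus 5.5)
Your proof is correct, and it is organised differently from the paper's, although both rely on the same comparison morphisms. The paper argues by contradiction. Assuming $\partial_i^*M\cong P\oplus Q$, it applies the degeneracy ($s_i^*$, or $s_n^*$ when $i=n+1$) to get $M\cong s_i^*P\oplus s_i^*Q$, so that, say, $s_i^*Q=0$. It then pushes a nonzero $x\in Q(\underline v)$ along the morphism relating $\underline v$ and $s_i\partial_i\underline v$, for instance $(1_{a_0},\ldots,1_{a_{i-1}},v_{i+1},1_{a_{i+1}},\ldots,1_{a_{n+1}})$, which $\partial_i$ sends to an identity. This lands in $Q(s_i\partial_i\underline v)=0$, and the paper treats $0<i<n+1$, $i=0$ and $i=n+1$ separately. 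Those morphisms are exactly the units and counits of your adjunctions.

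Packaging them as full faithfulness of $\partial_i^*$ gives you more. You get $\mathrm{Hom}(\partial_i^*M,\partial_i^*N)\cong\mathrm{Hom}(M,N)$ for all $M,N$, not just an isomorphism of endomorphism rings, and a uniform treatment of every face. It also explains why the statement fails for $\widetilde{\G}^\C_n$ with $n\ge 1$ (Example~\ref{Ex:delta-not-indecomp}): there the unit and counit components are not morphisms of $\widetilde{\G}^\C_{n+1}$. The paper's argument, in exchange, is entirely elementary.

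Two slips should be fixed.

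First, the inequalities are reversed. For your maps $d^i\colon[n]\to[n+1]$ and $\sigma^i,\sigma^{i-1}\colon[n+1]\to[n]$, evaluating at $j=i$ gives $d^i\sigma^i(i)=i+1$ and $d^i\sigma^{i-1}(i)=i-1$. Hence $d^i\sigma^i\ge\Id$ and $d^i\sigma^{i-1}\le\Id$, and the poset adjunctions are $\sigma^i\dashv d^i$ and $d^i\dashv\sigma^{i-1}$. Since $\C^{(-)}$ reverses $1$-cells but not $2$-cells, these become $\partial_i\dashv s_i$ with counit $\partial_is_i=\Id$, and $s_{i-1}\dashv\partial_i$ with unit $\Id=\partial_is_{i-1}$. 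Nothing downstream changes, since you only use that one of these exists with identity unit or counit.

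Second, the obstacle you anticipate for $\widetilde{\G}^\C_0$ does not arise, but for $\partial_1$ you named the wrong comparison morphism. Since $\partial_1(v,1_b)=v$, the morphism $(v,1_b)\colon[v]\to[1_b]$ is the unit of $\partial_0\dashv s_0$. For $\partial_1$ you need the counit $(1_a,v)\colon[1_a]\to[v]$ of $s_0\dashv\partial_1$. These two are induced by $1_b$ and $1_a$ respectively, and $s_0(g)=(g,g)$ is induced by $g$, so all of them lie in $\widetilde{\G}^\C_1$. Naturality and the triangle identities are equations already holding in $\widetilde{\E}^\C_1$, so both adjunctions restrict to $\widetilde{\G}^\C_1$ and your fullness argument applies verbatim. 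This is the same observation with which the paper closes its proof.
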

\begin{proof}
Fix an integer $0\le i\le n+1$ and suppose that $\delta^i[M]=[P] + [Q]$, where $P, Q\neq 0$ in $k\widetilde{\E}^\C_{n+1}\mod$. 
Then,
\[
[M]=(\sigma^i\circ \delta^i)[M] = \sigma^i[P] +  \sigma^i[Q] = [s_i^*P\oplus s_i^*Q],
\]
where $\sigma^i=s_i^*$ denotes the $i$-th codegeneracy operator. 
Since $M$ is indecomposable, either $s_i^*P=0$ or $s_i^*Q=0$. Without loss of generality, we may assume that $s_i^*Q=0$.   Since $Q\neq 0$, there is a non-zero element $x\in Q([v_1|v_2|\cdots|v_{n+1}])$ for some $\underline{v} = [v_1|v_2|\cdots|v_{n+1}]\in\widetilde{\E}^\C_{n+1}$, where $v_i\colon a_{i-1}\to a_i$ is a morphism in $\C$. Fix a natural isomorphism
$\eta\colon P\oplus Q\to \partial_i^*M$,  and consider the element 
\[(0,x)\in P(\underline{v})\oplus Q(\underline{v}).\] 
Let
\[y \defeq \eta_{\underline{v}}(0,x)\in \partial_i^*M(\underline{v}) = 
\begin{cases}
M([v_2|\cdots|v_{n+1}]) & i=0\\
M([v_1|v_2|\cdots|v_{n}]) & i=n+1\\
M([v_1|v_2|\cdots|v_{i+1}v_i|\cdots|v_{n+1}]) & \text{otherwise}
\end{cases}
\]

For $0<i<n+1$, 
\begin{align*}
&\partial_i^*M\left([v_1|v_2|\cdots|v_{n+1}]\xto{(1_{a_0},1_{a_1},\ldots, 1_{a_{i-1}}, v_{i+1}, 1_{a_{i+1}},\ldots, 1_{a_{n+1}})}[v_1|v_2|\cdots|v_{i+1}v_i|1_{a_{i+1}}|\cdots|v_{n+1}]\right)(y) = \\
&M\left([v_1|\cdots|v_{i-1}|v_{i+1}v_i|\cdots|v_{n+1}]\xto{(1_{a_0},1_{a_1},\ldots, 1_{a_{i-1}},  1_{a_{i+1}},\ldots, 1_{a_{n+1}})}
[v_1|\cdots|v_{i-1}|v_{i+1}v_i|\cdots|v_{n+1}]\right)(y) = y\neq 0.\end{align*}
But, 
\begin{align*}
0\neq \eta^{-1}_{[v_1|v_2|\cdots|v_{i+1}v_i|1_{a_{i+1}}|\cdots|v_{n+1}]}(y) = &\left(P\oplus Q\right)(1_{a_0},1_{a_1},\ldots, 1_{a_{i-1}}, v_{i+1}, 1_{a_{i+1}},\ldots, 1_{a_{n+1}})(0,x)   = \\ 
&\left(0, Q(1_{a_0},1_{a_1},\ldots 1_{a_{i-1}}, v_{i+1}, 1_{a_{i+1}},\ldots, 1_{a_{n+1}})(x)\right).
\end{align*}
Thus $Q(1_{a_0},1_{a_1},\ldots 1_{a_{i-1}}, v_{i+1}, 1_{a_{i+1}},\ldots, 1_{a_{n+1}})(x)\neq 0$ in  
\[Q([v_1|v_2|\cdots|v_{i+1}v_i|1_{a_{i+1}}|\cdots|v_{n+1}]) = s_i^*Q([v_1|v_2|\cdots|v_{i+1}v_i|\cdots|v_{n+1}]) = 0\] and we obtain a contradiction.  The argument is very similar for $i=0$ and will be omitted.

Finally, for $i=n+1$,  a morphism $ [v_1|v_2|\cdots|v_n|v_{n+1}] \to [v_1|v_2|\cdots|v_{n}|1_{a_n}]$ needs not exist, but there is a morphism the other way. Thus
consider the map
\begin{align*}\partial_{n+1}^*M\left([v_1|v_2|\cdots|v_{n}|1_{a_n}]\xto{(1_{a_0},1_{a_1},\ldots,1_{a_{n}}, v_{n+1})}[v_1|v_2|\cdots|v_n|v_{n+1}]\right) = \\
M\left([v_1|\cdots|v_{n}]\xto{(1_{a_0},\ldots,1_{a_{n}})}[v_1|\cdots|v_{n}]\right).\end{align*}
Since $y\in \partial_{n+1}^*M([v_1|v_2|\cdots|v_{n+1}])=\partial_{n+1}^*M([v_1|v_2|\cdots|v_{n}|1_{a_n}])$, and the map above is the identity, we have
\[\eta_{[v_1|v_2|\cdots|v_{n}|1_{a_n}]}^{-1}(y) = (x',x'')\in (P\oplus Q)([v_1|v_2|\cdots|v_{n}|1_{a_n}])\]
By naturality, $(P\oplus Q)(1_{a_0},1_{a_1},\ldots,1_{a_{n}}, v_{n+1})$ is an isomorphism, so 
\[(P\oplus Q)(1_{a_0},1_{a_1},\ldots,1_{a_{n}}, v_{n+1})(x',x'') = (0,x).\] 
In particular, $Q(1_{a_0},1_{a_1},\ldots,1_{a_{n}}, v_{n+1})(x'') = x\neq 0$.
But, once more, $x\in Q([v_1|v_2|\cdots|v_{n}|1_{a_n}]) = s_n^*Q([v_1|v_2|\cdots|v_n]) = 0$ and we obtain a contradiction.
 Hence $\delta^iM$ is indecomposable for all $0\le i \le n+1$ and the claim follows for $k\widetilde{\E}^\C_\bullet$. 

It remains to prove the last statement of the lemma. Let $M\in k\C\mod = k\widetilde{\G}^C_0\mod$ be an indecomposable module. We must show that $\partial_i^*M$ is indecomposable in $k\widetilde{\G}^C_1\mod$ for $i=0,1$. But for any morphism $a\xto{v}b$ in $\C$, one has the morphisms $[v]\xto{(v, 1_b)}[1_b]$ and $[1_a]\xto{(1_a,v)}[v]$ in $\widetilde{\G}^\C_1$ induced by $1_b$ and $1_a$ respectively. Thus one has
\[\partial_0^*M\left([v]\xto{(v, 1_b)}[1_b]\right) = M(1_b), \quad\text{and}\quad \partial_1^*M\left([1_a]\xto{(1_a, v)}[v]\right) = M(1_a).\]
The proof then proceeds in exactly the same way as the general case for $k\widetilde{\E}^\C_n$.
\end{proof}

We note that Lemma \ref{Lem:delta-indecomp} does not hold in $\widetilde{\G}^\C_n$  for $n>0$, as the following example shows. 

\begin{Ex}\label{Ex:delta-not-indecomp}
Let $\P$ be the poset $0<1$. The category $\widetilde{\G}^\P_1$ has objects $\{00, 01, 11\}$ and the obvious morphisms between them. Let $S_{01}$ denote the simple module on $\widetilde{\G}^\P_1$ that takes the value $k$ on $01$ and $0$ elsewhere. Then 
$\delta^1[S_{01}] = [S_{01}\circ\partial_1]$, and the latter takes the value $k$ on $001$ and $011$, and $0$ on all other objects. But $001$ and $011$ are not comparable in $\widetilde{\G}^\P_2$ (they are in $\widetilde{\E}^\P_2$), so 
\[S_{01}\circ\partial_1 \cong S_{001} \oplus S_{011},\]
and hence $\delta^1[S_{01}]$ is not represented by an indecomposable module.
\end{Ex}

\begin{Lem}\label{Lem:Loc-Const}
Let $\C$ be a small category and let $\D_\bullet$ denote either $\widetilde{\E}^\C_\bullet$ or $\widetilde{\G}^\C_\bullet$.
 Let  $M, N\in k\D_0\mod$ be modules that satisfy $\delta^0[M] = \delta^1[N]$. Then $M$ and $N$ are locally constant. 
\end{Lem}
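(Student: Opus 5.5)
We first restrict the given isomorphism to the image of the degeneracy, and then use the cone morphisms in $\D_1$ attached to each morphism $v$ of $\C$. Fix a natural isomorphism $\eta\colon \partial_0^*M\xto{\cong}\partial_1^*N$ of $k\D_1$-modules. For an object $[v]$ with $v\colon a\to b$ we have $\partial_0[v]=b$ and $\partial_1[v]=a$, so $\eta_{[v]}\colon M(b)\to N(a)$ is an isomorphism. Taking $v=1_a$ gives isomorphisms $\theta_a\defeq\eta_{[1_a]}\colon M(a)\to N(a)$ for every object $a$.

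Next we check that $\theta$ is a natural isomorphism $M\cong N$. This is the same as saying $s_0^*\eta$ identifies $M=\sigma^0\delta^0 M$ with $N=\sigma^0\delta^1 N$. A morphism $w\colon a\to a'$ of $\C$ gives a morphism $(w,w)\colon[1_a]\to[1_{a'}]$ in $\widetilde{\E}^\C_1$. In $\widetilde{\G}^\C_1$ this morphism is the one induced by $w\colon x_1=a\to y_0=a'$. Naturality of $\eta$ along it gives $N(w)\theta_a=\theta_{a'}M(w)$. Hence it suffices to show that $M$ is locally constant.

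To do this, fix $v\colon a\to b$ and use two morphisms in $\D_1$, each available in both $\widetilde{\E}^\C_1$ and $\widetilde{\G}^\C_1$ (as in the proof of Lemma \ref{Lem:delta-indecomp}):
\[[1_a]\xto{(1_a,v)}[v]\quad\text{and}\quad [v]\xto{(v,1_b)}[1_b].\]
\begin{enumerate}[(1)]
\item Naturality of $\eta$ along $(1_a,v)$ compares $\partial_0^*M(1_a,v)=M(v)\colon M(a)\to M(b)$ with $\partial_1^*N(1_a,v)=N(1_a)=\mathrm{id}$. This gives $\eta_{[v]}\circ M(v)=\theta_a$. Since $\theta_a$ and $\eta_{[v]}$ are isomorphisms, $M(v)$ is an isomorphism.
\item Naturality along $(v,1_b)$ compares $\partial_0^*M(v,1_b)=M(1_b)=\mathrm{id}$ with $\partial_1^*N(v,1_b)=N(v)$. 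This gives $N(v)\circ\eta_{[v]}=\theta_b$, so $N(v)$ is an isomorphism as well.
\end{enumerate}
Thus both $M$ and $N$ are locally constant. The second item shows this for $N$ directly, so the comparison $M\cong N$ from the previous paragraph is consistent with it but not needed.

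The main obstacle is bookkeeping rather than ideas. One must get the face conventions right, namely that $\partial_0[v]$ is the target and $\partial_1[v]$ is the source. One must also confirm that the two chosen morphisms exist in $\widetilde{\G}^\C_1$, where morphisms $\underline{x}\to\underline{y}$ are induced by maps $x_1\to y_0$. For $(1_a,v)\colon[1_a]\to[v]$ the inducing map is $1_a\colon a\to a$, and for $(v,1_b)\colon[v]\to[1_b]$ it is $1_b\colon b\to b$; both are legitimate. Everything else is a direct naturality-square computation.
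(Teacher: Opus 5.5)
Your proof is correct and is essentially the paper's argument. The paper also fixes a natural isomorphism $\delta^0M\to\delta^1N$ and reads off, from the naturality squares along $(1_a,v)\colon[1_a]\to[v]$ and $(v,1_b)\colon[v]\to[1_b]$ (both of which exist in $\widetilde{\G}^\C_1$ as well), that $M(v)$ and $N(v)$ are isomorphisms; your additional observation that the components $\theta_a=\eta_{[1_a]}$ assemble into an isomorphism $M\cong N$ is correct but, as you note, not needed.
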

\begin{proof}
Notice that $\widetilde{\E}^\C_0 = \widetilde{\G}^\C_0 = \C$. We prove the statement first with respect to $\delta^i$ on $\widetilde{\E}^\C_0$, and then observe that the same argument works to prove it with respect to $\delta^i$ on $\widetilde{\G}^\C_0$. 

Let $\alpha\colon \delta^0M\rightarrow \delta^1N$ be a natural isomorphism, and let $[v]\in \widetilde{\E}^\C_1$ be an object, where $v\colon a\to b$ is a morphism in $\C$. Then one has a commutative diagram 
\[
\xymatrix{
\delta^0M([1_a]) \ar[d]^{\alpha_{1_a}}_{\cong} \ar[rr]^{\delta^0M(1_a,v)}&& \delta^0M([v]) \ar[d]^{\alpha_{v}}_{\cong}\ar[rr]^{\delta^0M(v,1_b)} && \delta^0M([1_b]) \ar[d]^{\alpha_{1_b}}_{\cong}\\
\delta^1N([1_a]) \ar[rr]^{\delta^1N(1_a,v)}&& \delta^1N([v])\ar[rr]^{\delta^1N(v,1_b)} && \delta^1N([1_b])
}
\] 
By definition, the same diagram can be written as
\[
\xymatrix{
M(a) \ar[d]^{\alpha_{1_a}}_{\cong} \ar[rr]^{M(v)}&& M(b) \ar[d]^{\alpha_{v}}_{\cong}\ar[rr]^{\Id} && M(b) \ar[d]^{\alpha_{1_b}}_{\cong}\\
N(a) \ar[rr]^{\Id}&& N(a) \ar[rr]^{N(v)} && N(b). }
\] 
It follows that both $M(v)$ and $N(v)$ are isomorphisms, and since $v$ is an arbitrary morphism in $\C$, $M$ and $N$ are both locally constant. 

To prove the corresponding statement for $\widetilde{\G}^\C_\bullet$, notice that the morphisms of the form $[1_a]\xto{(1_a, v)}[v]$ and $[v]\xto{(v, 1_b)}[1_b]$ exist in $\widetilde{\G}^\C_1$ for each $v\colon a\to b$ in $\C$, as observed in the proof of Lemma \ref{Lem:delta-indecomp}. 
\end{proof}

We are now ready  to show that 
$\widetilde{\EE}^0(\C, k)$ and $\widetilde{\GG}^0(\C, k)$ are generated by isomorphism classes of locally constant $k\C$-modules.

\begin{Prop}\label{Prop:G0}
Let $\C$ be a small category. Then $\widetilde{\EE}^0(\C, k)$ is the free abelian group generated by all isomorphism classes of indecomposable locally constant modules in $k\C\mod$. Furthermore, the inclusion $\widetilde{\G}^\C_\bullet\to\widetilde{\E}^\C_\bullet$ induces an isomorphism $\widetilde{\EE}^0(\C, k)\to \widetilde{\GG}^0(\C, k)$.
\end{Prop}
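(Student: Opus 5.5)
We work in degree $0$, where $\widetilde{\E}^\C_0=\widetilde{\G}^\C_0=\C$ and the coboundary is $d^0=\delta^0-\delta^1$. Hence $\widetilde{\EE}^0(\C,k)=\Ker d^0\le\Gr(k\C)$, and likewise for $\widetilde{\GG}^0(\C,k)$, computed with the cofaces into $\widetilde{\G}^\C_1$. Since $\Gr(k\C)$ is free abelian on the indecomposable modules, it suffices to identify the kernel with the span of the indecomposable locally constant modules, for both simplicial objects simultaneously.

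First, the span lies in the kernel. If $M$ is locally constant, then Lemma \ref{Lem:delta-Loc-Const} gives $\delta^0[M]=\delta^1[M]$; its first statement (the case $n=0$) covers $\widetilde{\E}^\C_\bullet$ and its last statement covers $\widetilde{\G}^\C_\bullet$. So $d^0[M]=0$ in either theory. A direct summand of a locally constant module is again locally constant, because a direct sum of maps is an isomorphism exactly when each summand map is. Hence the locally constant modules decompose into indecomposable locally constant modules, and their classes span a free subgroup $L$ of the kernel.

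For the reverse inclusion, let $X=[M]-[N]$ with $d^0X=0$. By Corollary \ref{Cor:delta-Loc-Const} (using its last statement in the $\widetilde{\G}$ case), split off the maximal locally constant summands: $M\cong M_0\oplus M_1$ and $N\cong N_0\oplus N_1$. Then $d^0(X)=d^0(X_1)$, so $X_1=[M_1]-[N_1]$ is also a cocycle, and we may assume $M$ and $N$ have no nonzero locally constant summand. We also cancel common indecomposable summands, so $M$ and $N$ share none. The cocycle condition reads $\delta^0[M]+\delta^1[N]=\delta^1[M]+\delta^0[N]$. Write $M=\bigoplus M_j$ with each $M_j$ indecomposable. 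By Lemma \ref{Lem:delta-indecomp}, which holds for $\widetilde{\E}^\C_1$ and for $\widetilde{\G}^\C_1$ in degree $0$, each $\delta^i[M_j]$ is a single indecomposable. Uniqueness of decompositions (Krull--Schmidt holds in $k\C\mod$ for finite dimensional values, since endomorphism rings of indecomposables are local pointwise) therefore lets us match summands on the two sides. Consider $\delta^0[M_j]$ for some $j$. It must equal either some $\delta^1[M_{j'}]$ or some $\delta^0[N_l]$. The latter is impossible: applying the codegeneracy $\sigma^0$ gives $M_j\cong N_l$, contradicting the cancellation. Hence $\delta^0[M_j]=\delta^1[M_{j'}]$, and Lemma \ref{Lem:Loc-Const} shows that $M_j$ is locally constant, a contradiction. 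So $M=0$, and by the symmetric argument $N=0$. Therefore $\Ker d^0=L$ in both theories.

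Finally, $\widetilde{\iota}^*$ is the identity of $\Gr(k\C)$ in degree $0$. It maps $\Ker d^0_{\widetilde{\E}}=L$ onto $\Ker d^0_{\widetilde{\G}}=L$, which gives the claimed isomorphism. The main obstacle is the matching step in the second paragraph: one must confirm that Krull--Schmidt uniqueness holds in $k\widetilde{\E}^\C_1\mod$ for possibly infinite $\C$. If that is problematic, the fallback is to argue locally at the objects $[1_a]$, $[v]$, $[1_b]$ of $\widetilde{\E}^\C_1$, exactly as in the proof of Lemma \ref{Lem:Loc-Const}.
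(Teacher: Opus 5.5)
Your proof is correct and follows essentially the same route as the paper: you use Corollary \ref{Cor:delta-Loc-Const} to reduce to modules with no locally constant summands, Lemma \ref{Lem:delta-indecomp} and uniqueness of decomposition to match indecomposable summands in $\delta^0[M]+\delta^1[N]=\delta^1[M]+\delta^0[N]$, Lemma \ref{Lem:Loc-Const} to rule out $\delta^0[M_j]=\delta^1[M_{j'}]$, and the codegeneracy $\sigma^0$ to finish. The differences are cosmetic: you cancel common summands first and reach a contradiction, whereas the paper matches each $\delta^0[M_j]$ with some $\delta^0[N_l]$ to get $\delta^0[M]=\delta^0[N]$ and then applies $\sigma^0$; you also spell out the easy inclusion and the degree-$0$ identification of $\widetilde{\iota}^*$, which the paper leaves implicit.
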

\begin{proof}
Let $X = [M]-[N]\in\Gr(k\widetilde{\E}^\C_0)$ be a $0$-cocycle and assume that neither $M$ nor $N$ is locally constant. Since $X$ is a cocycle we have the relation $\delta^0[M]+\delta^1[N] = \delta^0[N]+\delta^1[M]$. By Corollary \ref{Cor:delta-Loc-Const}, we may assume that neither $M$ nor $N$ contain any locally constant summand. 
Write $M$ and $N$ as sums of indecomposable modules
\[
[M]=[M_1]+\cdots+[M_m], \quad \text{and} \quad [N]=[N_1]+\cdots+[N_n],
\]
where $M_1,\ldots,M_m,N_1,\ldots,N_n$ are indecomposable.   Then by hypothesis
\[
\delta^0[M_1]+\cdots+\delta^0[M_m]+\delta^1[N_1]+\cdots+\delta^1[ N_n] = \delta^0[N_1]+\cdots+\delta^0[N_n]+\delta^1[M_1]+\cdots+\delta^1[ M_m].
\]
By Lemma \ref{Lem:delta-indecomp}, all  summands on both sides of this equation are classes of indecomposable modules, and since the decomposition is unique, both sides of the equation must have the same indecomposable terms. By Lemma \ref{Lem:Loc-Const} and our assumption that  $M$ and $N$ do not contain any locally constant summand, the module $\delta^0[M_1]\neq\delta^1[M_i]$ for all $i\in\{1,\ldots,m\}$. Thus, $\delta^0[M_1]=\delta^0[N_{j}]$ for some $j\in \{1,\ldots,n\}$. Proceeding inductively, we get $m=n$ and
\[
\delta^0[M]=\delta^0[M_1]+\cdots+\delta^0[M_m] = \delta^0[N_1]+\cdots+\delta^0[N_n] = \delta^0[N].
\]
Applying the codegeneracy  $\sigma^0$ on both sides, we get $[M]=[N]$.
Hence $X = 0$ and it follows that the only $0$-cocycles are differences of locally constant modules. This proves the claim for $\widetilde{\EE}^0(\C, k)$.

Since Corollary \ref{Cor:delta-Loc-Const} and Lemmas \ref{Lem:delta-indecomp} and \ref{Lem:Loc-Const} apply also to $\Gr(k\widetilde{\G}^\C_0)$, it follows that $\widetilde{\GG}^0(\C, k)$ is also generated by equivalence classes of locally constant modules, proving the second claim.
\end{proof}

As a corollary we obtain an explicit identification of $\widetilde{\EE}^0(\C, k)$. 

\begin{Prop}\label{Prop:E0}
Let $\C$ be a small connected category. Then
\[\widetilde{\GG}^0(\C, k)\cong \widetilde{\EE}^0(\C, k) \cong \Gr(k\Gamma),\]
where $\Gamma = \pi_1(|\C|)$.
\end{Prop}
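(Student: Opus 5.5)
By Proposition \ref{Prop:G0}, $\widetilde{\GG}^0(\C, k)\cong\widetilde{\EE}^0(\C, k)$, and both are free abelian on the isomorphism classes of indecomposable locally constant $k\C$-modules. So it suffices to identify the locally constant modules with finite dimensional $k\Gamma$-modules. The identification must respect direct sums and indecomposability, so that it gives an isomorphism of Grothendieck groups $\Gr(k\Gamma)\cong\Gr(\mathrm{LC})$, where $\mathrm{LC}\subseteq k\C\mod$ is the full subcategory of locally constant modules.

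\textbf{Step 1: factor through the groupoid.} Let $\C[\C^{-1}]$ be the groupoid obtained from $\C$ by formally inverting all morphisms, with localisation functor $L\colon\C\to\C[\C^{-1}]$. By the universal property of localisation, a functor $M\colon\C\to\VVect$ sends every morphism to an isomorphism exactly when it factors uniquely through $L$. Natural transformations between such functors correspond bijectively as well, because the morphisms of $\C[\C^{-1}]$ are zig-zags of morphisms of $\C$ and their formal inverses. Hence $L^*$ is an isomorphism of categories
\[k\C[\C^{-1}]\mod\xto{\cong}\mathrm{LC}.\]
Since $L^*$ is additive, it preserves direct sums and indecomposables, and therefore induces an isomorphism $\Gr(k\C[\C^{-1}])\cong\Gr(\mathrm{LC})$.

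\textbf{Step 2: identify the groupoid.} Since $\C$ is connected, so is $\C[\C^{-1}]$. Hence the inclusion $\B\Aut(x)\to\C[\C^{-1}]$ at any object $x$ is an equivalence of categories, which induces an equivalence of module categories and therefore an isomorphism of Grothendieck groups. It remains to invoke the classical fact (Quillen, Gabriel--Zisman) that $\Aut_{\C[\C^{-1}]}(x)\cong\pi_1(|\C|,x)$.

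If I wanted to avoid citing this fact, I would prove it directly. Choose a maximal tree $T$ in the underlying graph of $\C$, with vertices the objects and edges the non-identity morphisms. The group $\pi_1(|\C|,x)$ is presented by the edges not in $T$, subject to one relation per $2$-simplex of the nerve, namely $[g|f]$ gives $gf = g\circ f$. The group $\Aut_{\C[\C^{-1}]}(x)$ has exactly the same presentation, with generators the morphisms of $\C$ and relations the compositions in $\C$, after collapsing along $T$.

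\textbf{Main obstacle.} The only real obstacle is this identification of the localisation with the fundamental group. Everything else is formal once Proposition \ref{Prop:G0} is in hand.

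Combining the two steps,
\[\widetilde{\GG}^0(\C, k)\cong\widetilde{\EE}^0(\C, k)\cong\Gr(\mathrm{LC})\cong\Gr(k\C[\C^{-1}])\cong\Gr(k\Gamma),\]
where $\Gamma=\pi_1(|\C|)$. This is the claimed statement, and it is consistent with Proposition \ref{Prop:Groupoids} in the groupoid case.
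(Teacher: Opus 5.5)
Your proposal is correct and follows essentially the same route as the paper. Proposition~\ref{Prop:G0} reduces everything to locally constant $k\C$-modules, the universal property of the groupoid localisation identifies these with $k\C[\C^{-1}]$-modules, and Quillen's identification $\Aut_{\C[\C^{-1}]}(x)\cong\pi_1(|\C|)$ finishes the argument. The only cosmetic difference is that the paper phrases the middle step as an isomorphism $\widetilde{\EE}^0(\C_0,k)\to\widetilde{\EE}^0(\C,k)$ and then invokes Proposition~\ref{Prop:Groupoids}, whereas you go directly through the equivalence $\B\Aut(x)\simeq\C[\C^{-1}]$ at the level of module categories.
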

\begin{proof}
Let $\C_0$ denote the localisation of $\C$ at the full set of its morphisms. Thus $\C_0$ is the groupoid associated with $\C$. By the universal property of localisation, for any locally constant $M\in k\C\mod$, there exists a unique $M_0\in k\C_0\mod$ that factors $M$. In other words, there is a $1-1$ correspondence between isomorphism classes of locally constant $k\C$-modules and isomorphism classes of $k\C_0$-modules, and this correspondence is induced by the localisation projection. Thus the homomorphism
\[\widetilde{\EE}^0(\C_0, k) \to \widetilde{\EE}^0(\C, k)\]
is an isomorphism. 

By Proposition \ref{Prop:Groupoids}, $\widetilde{\EE}^0(\C_0, k)\cong \Gr(k\Aut_{\C_0}(x))$ for any object $x\in \C$, and by \cite[Proposition 1]{Qu}, $\pi_1(|\C|) \cong \Aut_{\C_0}(x)$. 
\end{proof}

%%%%%%%%%%%%%%%%%%%%%%%%%%%%%%%%%%%%%%%%%%%%%%%%%%%%%%
%%%%%%%%%%%%%%%%%%%%%%%%%%%%%%%%%%%%%%%%%%%%%%%%%%%%%%

\subsection{\texorpdfstring{Higher-dimensional $\widetilde{\EE}^*$ and $\widetilde{\GG}^*$ cohomology}{Higher dimensional widetilde E and G cohomology}}
\label{SSec:Higher-Cohomology-EG-tilde}

Having dealt with $\widetilde{\EE}^0$ and $\widetilde{\GG}^0$, in this section we  consider the higher-dimensional cohomology  groups. The main results, Theorems \ref{Thm:E0G0} and \ref{Thm:Higher-G} are that $\widetilde{\EE}^*(\C, k)$ is acyclic for any small connected category $\C$ and that $\widetilde{\GG}^n(\C, k)$ coincides with $H^n(|\C|, \Z)$ for all small connected and direct categories $\C$ and all $n\geq 2$.

\subsubsection{\texorpdfstring{Acyclicity of  $\,\widetilde{\EE}^*$}{Acyclicity of widetilde E}}
\label{SSSec:Higher-Cohomology-E-tilde}

We now show that the higher cohomology groups $\widetilde{\EE}^i(\C,k)$ vanish for all small categories $\C$. To prove this some preparation is required. 

Let $U^\bullet$ be a  cosimplicial abelian group. Recall the \hadgesh{Moore cochain complex}  $(N^\bullet(U), d^\bullet)$ defined by $N^0(U) \defeq U^0$ and for $n\geq 1$
\[N^n(U)\defeq \coKer\left(\bigoplus_{i=0}^{n-1} U^{n-1} \xto{\sum_{i=0}^{n-1}\delta^i} U^n\right),\] 
with $d^n = (-1)^{n}\delta^{n+1}\colon N^n(U)\to N^{n+1}(U)$. It is well known that the standard cohomology of $U^\bullet$ is isomorphic to that of $(N^\bullet(U), d^\bullet)$ (See for instance \cite[Sec. III.2]{GJ}, or \cite[Sec. 8.4]{Wei}). 

\begin{Lem}
\label{Lem:coboundary-rels}
Let $U^\bullet$ be a cosimplicial abelian group, and let $X, Y\in U^n$ be elements, where $n\geq 1$. Suppose that $\delta^i X = \delta^j Y$ for some $0\le i< j\le n+1$. Then the following statements hold.
\begin{enumerate}[(1)]
\item If $i=j-1$, then $X=Y$ and $X \in \Ima(\delta^i)$.
\label{Lem:coboundary-rels-1}

\item If $i<j-1$, then  $X \in \Ima(\delta^{j-1})$ and $Y \in \Ima(\delta^i)$.
\label{Lem:coboundary-rels-2}
\end{enumerate}
\end{Lem}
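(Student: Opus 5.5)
The plan is to use only the cosimplicial identities, namely $\sigma^j\delta^i = \delta^i\sigma^{j-1}$ for $i<j$, $\sigma^j\delta^j=\sigma^j\delta^{j+1}=\mathrm{id}$, and $\sigma^j\delta^i=\delta^{i-1}\sigma^j$ for $i>j+1$. Here $\delta^i\colon U^n\to U^{n+1}$ for $0\le i\le n+1$ and $\sigma^j\colon U^{n+1}\to U^n$ for $0\le j\le n$. The strategy is to apply suitably chosen codegeneracies to the relation $\delta^iX=\delta^jY$. This recovers $X$ or $Y$ directly in some cases and exhibits it as a coface image in others.

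For part \ref{Lem:coboundary-rels-1}, where $j=i+1$, apply $\sigma^i$ to both sides of $\delta^iX=\delta^{i+1}Y$. Since $\sigma^i\delta^i=\mathrm{id}=\sigma^i\delta^{i+1}$, this gives $X=Y$ immediately. To see that $X\in\Ima(\delta^i)$, I would apply a different codegeneracy and then re-apply a coface. The first candidate is $\sigma^{i+1}$ when $i+1\le n$, where $\sigma^{i+1}\delta^i=\delta^i\sigma^i$. Alternatively, since $X=Y$ we have $\delta^iX=\delta^{i+1}X$, which is the equalizer condition. For a cosimplicial abelian group, the standard fact is that the equalizer of $\delta^i,\delta^{i+1}$ on $U^n$ is the image of $\delta^i\colon U^{n-1}\to U^n$. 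I would verify it by taking $Z=\sigma^{i}X$ (or $\sigma^{i-1}X$, depending on the index range) and checking $\delta^iZ=X$. The check applies $\sigma^{i}$ to $\delta^{i}X=\delta^{i+1}X$ after pre-composing appropriately. Concretely, $\delta^i\sigma^i X$ should be compared with $X$ via the identity $\delta^{i+1}\delta^i=\delta^i\delta^i$ together with injectivity of $\delta^{i}$ (it has left inverse $\sigma^i$). Thus it suffices to prove $\delta^{i}\delta^i\sigma^iX=\delta^{i}X$. Using $\delta^iX=\delta^{i+1}X$ and the mixed identities $\sigma^{i}\delta^{i+1}\cdots$, this should reduce to a finite computation. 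The boundary case $i=n$, where $\sigma^{n}$ does not exist on $U^n$, needs $\sigma^{n-1}$ instead. This uses $n\ge1$, which is why the hypothesis is there.

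For part \ref{Lem:coboundary-rels-2}, where $i<j-1$, apply $\sigma^{j-1}$. Since $i<j-1$, we have $\sigma^{j-1}\delta^i=\delta^i\sigma^{j-2}$, and $\sigma^{j-1}\delta^j=\mathrm{id}$. Hence $Y=\delta^i(\sigma^{j-2}X)\in\Ima(\delta^i)$. Symmetrically, apply $\sigma^i$. Then $\sigma^i\delta^i=\mathrm{id}$, and since $j>i+1$, $\sigma^i\delta^j=\delta^{j-1}\sigma^i$. Hence $X=\delta^{j-1}(\sigma^iY)\in\Ima(\delta^{j-1})$. Both of these are one-line computations.

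The main obstacle is the image statement in part \ref{Lem:coboundary-rels-1}. The cosimplicial identities alone do not make it a single-codegeneracy argument, and choosing the right preimage (probably $\sigma^{i}X$ or $\sigma^{i-1}X$, handled separately at the extreme indices $i=0$ and $i=n$) will require careful bookkeeping of the identities. If the direct computation stalls, I would fall back on the normalized/Moore complex decomposition $U^n\cong N^n(U)\oplus\sum_i\Ima\delta^i$ to argue that an element equalized by adjacent cofaces has zero normalized component in the relevant direction.
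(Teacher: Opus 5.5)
Your part (2) is complete and matches the paper's argument. Recovering $Y$ via $\sigma^{j-1}$ (using $\sigma^{j-1}\delta^j=\mathrm{id}$ and $\sigma^{j-1}\delta^i=\delta^i\sigma^{j-2}$) is actually better than the paper's use of $\sigma^j$. The codegeneracy $\sigma^j$ is not defined on $U^{n+1}$ when $j=n+1$, and that is exactly the case used later in Proposition~\ref{Prop:Acyclicity}. The deduction $X=Y$ in part (1) is also fine.

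The gap is the second claim of part (1), $X\in\Ima(\delta^i)$, which you never establish. You write down the relevant identity $\sigma^{i+1}\delta^i=\delta^i\sigma^i$ and then drop it. You then say the claim is not a single-codegeneracy argument, which is incorrect. In its place you reduce to ``it suffices to show $\delta^i\delta^i\sigma^iX=\delta^iX$''. By the injectivity of $\delta^i$ that you invoke, this is just an equivalent reformulation of the original claim, so it is circular. It also leads to composites $\delta\sigma$, which the cosimplicial identities do not rewrite, so the promised ``finite computation'' never appears. The Moore-complex fallback is too vague to show membership in the image of the particular coface $\delta^i$.

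The missing step is a one-liner parallel to your part (2). Once $X=Y$, the hypothesis reads $\delta^{i+1}X=\delta^iX$, and you apply a codegeneracy to this relation.

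For $i\le n-1$, applying $\sigma^{i+1}$ gives
\[X=\sigma^{i+1}\delta^{i+1}X=\sigma^{i+1}\delta^iX=\delta^i\sigma^iX,\]
which is the paper's proof.

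For $i=n$ (so $j=n+1$), neither $\sigma^{n+1}$ on $U^{n+1}$ nor $\sigma^n$ on $U^n$ exists. The paper's written proof passes over this case, even though it is the one used in Proposition~\ref{Prop:Acyclicity}. Instead apply $\sigma^{n-1}$ and use $\sigma^{n-1}\delta^n=\mathrm{id}$ and $\sigma^{n-1}\delta^{n+1}=\delta^n\sigma^{n-1}$. This gives $X=\delta^n\sigma^{n-1}X$, and this is where $n\ge 1$ is needed. More uniformly, $\sigma^{i-1}$ works for every $i\ge 1$ and $\sigma^{i+1}$ for every $i\le n-1$.

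So your guessed preimages $\sigma^iX$ and $\sigma^{i-1}X$ were the right ones. What was missing was applying the codegeneracy to the relation $\delta^{i+1}X=\delta^iX$, rather than trying to verify $\delta^i\sigma^iX=X$ from the other side.
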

\begin{proof}
If $i=j-1$, then applying $\sigma^i$ to $\delta^{i}X = \delta^{i+1}Y$, we get $X=Y$. Also 
\[X=\sigma^{i+1}\delta^{i+1}X = \sigma^{i+1}\delta^iX = \delta^i\sigma^iX.\] This proves Part \ref{Lem:coboundary-rels-1}.
If $i<j-1$, then 
\[X = \sigma^i\delta^iX = \sigma^i\delta^jY = \delta^{j-1}\sigma^iY,\quad\text{and}\quad
Y = \sigma^j\delta^jY = \sigma^j\delta^iX = \delta^i\sigma^{j-1}X.\]
This proves Part \ref{Lem:coboundary-rels-2}.
\end{proof}

%%%

\begin{Prop}\label{Prop:Acyclicity}
Let $\C$ be a small category. Then the cosimplicial object $\Gr(k\widetilde{\E}^\C_\bullet)$ is acyclic.
\end{Prop}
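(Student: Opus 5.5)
The plan is to work with the Moore cochain complex $N^\bullet(U)$ of the cosimplicial abelian group $U^\bullet = \Gr(k\widetilde{\E}^\C_\bullet)$. Its cohomology agrees with $\widetilde{\EE}^*(\C,k)$, so it suffices to show $H^n(N^\bullet(U))=0$ for all $n\ge 1$.

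The first step is to describe $N^n(U)$ explicitly. Each $U^n$ is free abelian on the isomorphism classes of indecomposable $k\widetilde{\E}^\C_n$-modules. By Lemma \ref{Lem:delta-indecomp}, each coface $\delta^i$ sends an indecomposable class to an indecomposable class. It follows that the image of $\sum_{i=0}^{n-1}\delta^i$ is the subgroup spanned by the set $B_n'$ of indecomposables of the form $\delta^i[Y]$, with $Y$ indecomposable and $0\le i\le n-1$. Hence $N^n(U)$ is free abelian on the complementary set $B_n$ of indecomposables that lie in the image of none of $\delta^0,\dots,\delta^{n-1}$. Moreover, $\delta^{n+1}$ is injective on indecomposable classes, since $\sigma^{n+1}\delta^{n+1}=1$.

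Next take an $n$-cocycle in $N^n(U)$ with $n\ge 1$. It is represented by $X=\sum_{M\in B_n} a_M[M]$ with $\delta^{n+1}X \in \operatorname{span}(B_{n+1}')$. The elements $\delta^{n+1}[M]$ are distinct indecomposables, and $N^{n+1}(U)$ is free on the indecomposables outside $B_{n+1}'$. Therefore every $M$ with $a_M\neq 0$ satisfies $\delta^{n+1}[M]=\delta^i[Y]$ for some indecomposable $Y$ and some $i\le n$. Lemma \ref{Lem:coboundary-rels}, applied with $j=n+1$, then shows in both of its cases ($i=n$ and $i<n$) that $[M]\in\Ima(\delta^n)$. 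Write $[M]=\delta^n[Z_M]$, where $Z_M=\sigma^n M$ up to the relevant identification; $Z_M$ is indecomposable because $\delta^n[Z_M]$ is.

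It remains to check that $Z_M$ survives in $N^{n-1}(U)$. Suppose instead that $[Z_M]=\delta^i[W]$ with $i\le n-2$. The cosimplicial identities then give $[M]=\delta^n\delta^i[W]=\delta^i\delta^{n-1}[W]$, which contradicts $M\in B_n$. Hence $Z_M\in B_{n-1}$, and $X=\pm d^{n-1}\bigl(\sum a_M[Z_M]\bigr)$ in $N^\bullet(U)$, so the cocycle is a coboundary.

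The main delicate point is the passage from a linear cocycle condition to a condition on each basis element separately. This is exactly where the indecomposability of cofaces (Lemma \ref{Lem:delta-indecomp}) and the injectivity of $\delta^{n+1}$ on indecomposables are used; note that codegeneracies need not preserve indecomposability, which is why the argument avoids them except through Lemma \ref{Lem:coboundary-rels}. One also has to track the sign convention $d^n=(-1)^n\delta^{n+1}$ and the restriction $n\ge 1$ required by Lemma \ref{Lem:coboundary-rels}. Degree $0$ is excluded, consistent with Proposition \ref{Prop:G0}.
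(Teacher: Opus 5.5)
Your proof is correct and is essentially the paper's argument. Both work in the Moore complex and combine Lemma~\ref{Lem:delta-indecomp} with Lemma~\ref{Lem:coboundary-rels}; the difference is that you make the basis $B_n$ of $N^n(U)$ explicit and handle all summands of the cocycle at once, whereas the paper removes them one at a time by induction on decomposition length.

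One correction is needed. When $i<n$, Lemma~\ref{Lem:coboundary-rels}\ref{Lem:coboundary-rels-2} gives $[M]\in\Ima(\delta^i)$, and it is $[Y]$ that lies in $\Ima(\delta^n)$. So that case is excluded by $M\in B_n$ rather than producing $[M]\in\Ima(\delta^n)$. Also, the indices are off by one: you need $\sigma^n\delta^{n+1}=1$ and $Z_M=\sigma^{n-1}M$.
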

\begin{proof}
Let $U^\bullet$ denote $\Gr(k\widetilde{\E}^\C_\bullet)$ for short, and let $(N^\bullet(U), d^\bullet)$ denote the Moore cochain complex associated to $U^\bullet$. Let $\widebar{Z}\in N^n(U)$ be a cocycle, represented by $Z \in U^n$.
Write 
\[Z = \sum_{k=1}^r[M_k] - \sum_{l=1}^s[N_l],\]
where $M_k, N_l\in k\widetilde{\E}^\C_n\mod$ are non-zero and indecomposable. We may assume that the sets $\{M_k\}_{k=1}^r$ and $\{N_l\}_{l=1}^s$ have no modules that are isomorphic. We refer to $r+s$ as the  \hadgesh{decomposition length of $Z$}, and this is well defined by our assumption.  

Since  $d^n\widebar{Z}= \widebar{\delta^{n+1}Z} = 0$ by assumption, there exist elements $\{Y_i\}_{i=0}^{n}$ in $N^n(U)$, such that $\delta^{n+1}Z = \sum_{i=0}^{n} \delta^iY_i$. By Lemma \ref{Lem:delta-indecomp}, $\delta^{n+1}[M_1]$ is an indecomposable module. Hence it is represented by an indecomposable summand $T_1$ of some $Y_j$, such that $\delta^{n+1}[M_1] = \delta^i[T_1]$. There are two possibilities: if $i=n$, then, by  Lemma \ref{Lem:coboundary-rels}\ref{Lem:coboundary-rels-1}, $[M_1] = [T_1]\in\Ima(\delta^{n})$, while if $i<n$, then, by  Lemma \ref{Lem:coboundary-rels}\ref{Lem:coboundary-rels-2},  $[M_1]\in\Ima(\delta^i)$ and $[T_1]\in\Ima(\delta^{n})$. In the first case,  $\widebar{[M_1]}$ is a coboundary in $N^n(U)$. In the second case, since $i<n$, $\widebar{[M_1]}=0$ in $N^n(U)$. Thus $Z-[M_1]$ is a cocycle that represents the same cohomology class as $Z$, and is of decomposition length $r+s-1$. By downward induction on the decomposition length of $Z$, each of its indecomposable summands is either a coboundary or $0$ in $N^n(U)$, and hence $\widebar{Z}$ itself is a coboundary and the proof is complete.
\end{proof}

We are now ready to complete the proof of Theorem \ref{Th:E0G0}, which we restate below.

\begin{Thm}\label{Thm:E0G0}
Fix a field $k$ and $\C\in\Cat$. Then 
\begin{enumerate}[(1)]
\item The  group $\widetilde{\EE}^0(\C, k)$ is a free abelian group generated by the locally constant indecomposable modules in $k\C\mod$. \label{Thm:E0G0-1}

\item The natural map $\widetilde{\EE}^0(\C,k)\to\widetilde{\GG}^0(\C,k)$ is an isomorphism. \label{Thm:E0G0-2}

\item The cohomology theory $\widetilde{\EE}^*(-, k)$ is acyclic and 
\[\widetilde{\GG}^0(\C, k)\cong \widetilde{\EE}^0(\C, k) \cong \Gr(k\Gamma),\]
where $\Gamma = \pi_1(|\C|)$. \label{Thm:E0G0-3}
\end{enumerate}
\end{Thm}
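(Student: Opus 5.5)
This theorem assembles results already proved, so the plan is to cite them in order and check that their hypotheses match.

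For Part \ref{Thm:E0G0-1}, I would invoke Proposition \ref{Prop:G0}. It shows that every $0$-cocycle $X=[M]-[N]\in\Gr(k\widetilde{\E}^\C_0)$ is a difference of locally constant modules. Conversely, by Lemma \ref{Lem:delta-Loc-Const} (the case $n=0$, which is even), every locally constant module satisfies $d^0[M]=0$. In degree $0$ there are no coboundaries, so $\widetilde{\EE}^0(\C,k)$ is exactly the subgroup of $\Gr(k\C)$ spanned by the classes of locally constant modules. This subgroup is free on the indecomposable locally constant modules, for two reasons. First, $\Gr(k\C)$ is free on the indecomposables, since decompositions of finite dimensional modules are unique by Krull--Schmidt. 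Second, a direct summand of a locally constant module is again locally constant: if $M(v)=A\oplus B$ is an isomorphism, then so are $A$ and $B$. Hence the locally constant modules decompose into indecomposable locally constant summands.

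For Part \ref{Thm:E0G0-2}, I would again cite Proposition \ref{Prop:G0}. Its proof for $\widetilde{\G}$ uses Lemmas \ref{Lem:delta-indecomp}, \ref{Lem:Loc-Const} and Corollary \ref{Cor:delta-Loc-Const} in their $\widetilde{\G}^\C_0$-versions. The map $\widetilde{\iota}^*$ in degree $0$ is the identity on $\Gr(k\C)$, because $\widetilde{\G}^\C_0=\widetilde{\E}^\C_0=\C$. Both cocycle groups are the same subgroup of locally constant classes: on the $\widetilde{\G}$ side these classes are cocycles by the last statement of Lemma \ref{Lem:delta-Loc-Const}. So the identity restricts to an isomorphism between them.

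For Part \ref{Thm:E0G0-3}, acyclicity in positive degrees is Proposition \ref{Prop:Acyclicity}. That proposition works with the normalized complex $N^\bullet(U)$, whose cohomology agrees with that of $(\Gr(k\widetilde{\E}^\C_\bullet),d)$. This applies because $\widetilde{\E}^\C_\bullet$ is a genuine simplicial object, so codegeneracies exist. The identification with $\Gr(k\Gamma)$ is Proposition \ref{Prop:E0}, which requires $\C$ to be connected. If $\C$ is disconnected, I would note that $\widetilde{\E}^\C_n$ splits as a disjoint union over the components of $\C$: an $n$-simplex, together with all its morphisms, lies in a single component. So the cochain complex splits as a direct sum, and the statement holds componentwise, with $\Gamma$ read as the fundamental group of each component.

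The only point requiring care is the degree-$0$ bookkeeping, namely that every locally constant module is actually a cocycle in both theories, so that the cocycle groups coincide rather than one merely containing the other. That is supplied by Lemma \ref{Lem:delta-Loc-Const}. Everything else is direct citation.
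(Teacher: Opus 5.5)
Your proposal is correct and follows the paper's own proof. The paper likewise just cites Proposition \ref{Prop:G0} for Parts (1) and (2), and Propositions \ref{Prop:Acyclicity} and \ref{Prop:E0} for Part (3); your extra bookkeeping fills in details the paper leaves implicit: that locally constant modules are $0$-cocycles in both theories (Lemma \ref{Lem:delta-Loc-Const}), that summands of locally constant modules are locally constant, and the componentwise reduction for disconnected $\C$, which Proposition \ref{Prop:E0} does not cover.
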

\begin{proof}
Parts \ref{Thm:E0G0-1}  and \ref{Thm:E0G0-2} are  Proposition \ref{Prop:G0}, and Part \ref{Thm:E0G0-3} is Propositions \ref{Prop:E0} and \ref{Prop:Acyclicity}.
\end{proof}

We obtain a few immediate corollaries.

\begin{Cor}\label{Cor:Any-Cx}
Let $X$ be a simplicial complex and let $\F X$ denote its face poset, considered as a category. Let $\Gamma_X\defeq \pi_1(|X|)$. Then $\widetilde{\EE}^0(\F X, k)\cong\Gr(k\Gamma_X)$. In particular, if $X$ is simply connected then $\widetilde{\EE}^0(\F X,k) \cong \Z$.
\end{Cor}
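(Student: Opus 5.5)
The plan is to apply Theorem \ref{Thm:E0G0} to the category $\F X$ and then identify the fundamental group of its nerve. By Part \ref{Thm:E0G0-3} of that theorem, for any small category $\C$ one has $\widetilde{\EE}^0(\C,k)\cong\Gr(k\pi_1(|\C|))$. Strictly speaking, the proof of Proposition \ref{Prop:E0} uses connectivity of $\C$, so I first assume $X$ is connected (equivalently, $\F X$ is connected). In the disconnected case the statement should be read componentwise: both $\widetilde{\EE}^0$ and the Grothendieck group of representations of a groupoid split over connected components, as in the proof of Proposition \ref{Prop:Groupoids}.

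The key geometric input is that the nerve $|\F X|$ is the barycentric subdivision $\mathrm{sd}\,X$. Its $n$-simplices are chains $\sigma_0\subseteq\sigma_1\subseteq\cdots\subseteq\sigma_n$ of faces, and the non-degenerate ones are the strict chains, which are exactly the simplices of $\mathrm{sd}\,X$. The geometric realisation of the nerve is therefore homeomorphic to $|\mathrm{sd}\,X|\cong|X|$, and so $\pi_1(|\F X|)\cong\pi_1(|X|)=\Gamma_X$. Combining this with Theorem \ref{Thm:E0G0} gives $\widetilde{\EE}^0(\F X,k)\cong\Gr(k\Gamma_X)$.

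For the second statement, suppose $X$ is simply connected, so $\Gamma_X$ is trivial and $\Gr(k\Gamma_X)=\Gr(\VVect)$. Every finite dimensional vector space is a direct sum of copies of the single indecomposable $k$, so this group is free of rank one, i.e.\ $\Z$. The same conclusion can be read off Part \ref{Thm:E0G0-1} directly: a locally constant $k\F X$-module factors through the groupoid localisation, which is equivalent to a point, so up to isomorphism the constant module $\underline{k}$ is the only indecomposable.

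I do not expect a real obstacle. The only points that need care are the connectivity hypothesis and the identification of $|\F X|$ with the barycentric subdivision, which is standard.
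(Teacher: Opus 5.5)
Your proposal is correct and follows the same route as the paper, which simply combines Theorem \ref{Thm:E0G0} with the fact that $|\F X|$ is homotopy equivalent to $|X|$. You additionally make that equivalence explicit through the barycentric subdivision, and you flag the connectivity hypothesis of Proposition \ref{Prop:E0}, which the paper leaves implicit.
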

\begin{proof}
Since the geometric realisation of the nerve $|\F X|$  is homotopy equivalent to the geometric realisation of $X$, the claim follows at once from Theorem \ref{Thm:E0G0}.
\end{proof}

\begin{Cor}\label{Cor:E0-trees}
Let $\Q = (V, E)$ be a finite quiver and let $\PP(\Q)$ denote its path category. Then $\widetilde{\EE}^0(\PP(\Q), k) \cong \Gr(k\Gamma)$, where $\Gamma$ is the free group on $N = m+ |E| - |V| = m - \chi(\Q)$ generators, where $m$ is the number of connected components of $\Q$ and $\chi(\Q)$ is its Euler characteristic.
\end{Cor}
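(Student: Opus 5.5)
Since $\widetilde{\EE}^*(-,k)$ is additive over connected components, I first reduce to the case where $\Q$ is connected. The path category $\PP(\Q)$ of a disconnected quiver is the disjoint union of the path categories of its components. Hence $\Gr(k\widetilde{\E}^{\PP(\Q)}_\bullet)$ splits as a direct sum over components, and $|\PP(\Q)|$ is a disjoint union as well. Theorem \ref{Thm:E0G0} then gives, for each component $\Q_j$, an isomorphism $\widetilde{\EE}^0(\PP(\Q_j),k)\cong\Gr(k\pi_1(|\PP(\Q_j)|))$. The $m$ summands assemble to $\Gr(k\Gamma)$ once the relevant groupoid is identified, exactly as in Proposition \ref{Prop:Groupoids}. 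The formula $N=m+|E|-|V|$ is additive over components, since $m-\chi(\Q)=\sum_j(1-\chi(\Q_j))$. I will therefore state the result componentwise and concentrate on a single connected quiver, where $N=1-\chi(\Q)=1-|V|+|E|$.

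For connected $\Q$, Proposition \ref{Prop:E0} gives $\widetilde{\EE}^0(\PP(\Q),k)\cong\Gr(k\Gamma)$ with $\Gamma=\pi_1(|\PP(\Q)|)$. It therefore suffices to show that $\Gamma$ is free of rank $1-|V|+|E|$. The cleanest route is the localisation argument already used in the proof of Proposition \ref{Prop:E0}: $\pi_1(|\PP(\Q)|)\cong\Aut_{\PP(\Q)_0}(x)$, where $\PP(\Q)_0$ is the groupoid obtained by inverting all morphisms. The free-category universal property says that a functor $\PP(\Q)\to\mathcal{G}$ into a groupoid is the same as a quiver map $\Q\to\mathcal{G}$. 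Consequently $\PP(\Q)_0$ is the free groupoid on the graph $\Q$, and its vertex groups are the fundamental group of the underlying undirected graph, viewed as a $1$-dimensional CW complex.

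It remains to compute that vertex group. Choose a spanning tree $T$ of the underlying graph; it has $|V|-1$ edges. Collapsing $T$ in the free groupoid gives an equivalence with the one-object groupoid freely generated by the remaining $|E|-|V|+1$ edges. This is the standard fact that the fundamental group of a connected graph is free on the edges not in a spanning tree. Hence $\Gamma$ is free of rank $1-\chi(\Q)$.

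The step most worth writing carefully is the identification of the localisation of a free category with the free groupoid, together with its vertex group. I would make it explicit via universal properties rather than appeal to the homotopy type of the nerve. An alternative is to note that $|\PP(\Q)|$ is homotopy equivalent to the graph $\Q$, since the nerve of a free category deformation retracts onto its $1$-skeleton spanned by generating arrows. I would mention that as a cross-check but not rely on it. Everything else is bookkeeping from Theorem \ref{Thm:E0G0} and additivity over components.
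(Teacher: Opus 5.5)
Your argument for a connected quiver is correct and is essentially the paper's. The paper combines Theorem \ref{Thm:E0G0} with the asserted fact that $\pi_1(|\PP(\Q)|)$ is the fundamental group of the underlying graph, which is free of rank $1+|E|-|V|$. Your route through the localisation $\PP(\Q)_0$, identified with the free groupoid on $\Q$, followed by the spanning-tree collapse, is a more self-contained algebraic justification of that same identification.

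The gap is in your reduction to the connected case, and it cannot be repaired. Proposition \ref{Prop:Groupoids} does not say that the component summands ``assemble to $\Gr(k\Gamma)$''. It gives a direct sum $\bigoplus_{j}\Gr(k\Gamma_j)$ over the components, with $\Gamma_j$ free of rank $N_j=1-\chi(\Q_j)$. The fact that $N=\sum_j N_j$ does not turn this sum into $\Gr(kF_N)$. Take $\Q$ to be two isolated vertices, or more generally a forest with $m\ge 2$ components. Then Theorem \ref{Thm:E0G0}\ref{Thm:E0G0-1} gives $\widetilde{\EE}^0(\PP(\Q),k)\cong\Z^m$, with one constant module per component. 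On the other side $N=0$, so $\Gr(k\{1\})\cong\Z$, and the two do not match. When some component contains a cycle, the two sides do turn out to be abstractly isomorphic. That is only because both are free abelian of rank $\max(|k|,\aleph_0)$, not because of any assembly.

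So the printed statement is really a statement about connected $\Q$, that is $m=1$. The paper's one-line proof tacitly assumes this too: it speaks of \emph{the} fundamental group and rests on Proposition \ref{Prop:E0}, which requires connectedness. You should either restrict to $m=1$ or state the general answer as $\bigoplus_{j=1}^m\Gr(kF_{1-\chi(\Q_j)})$, rather than claiming the summands combine into a single $\Gr(k\Gamma)$.
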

\begin{proof}
The fundamental group of the path category of a quiver is the free group of the geometric realisation of the underlying quiver. That, in turn, is well known to be the free group of rank $N = m+ |E| - |V|$. The corollary now follows at once from Theorem \ref{Thm:E0G0}.
\end{proof}

\begin{Ex}\label{Ex:E0-ex}
The following are easy examples of categories for which one obtains an explicit expression for $\widetilde{\EE}^0$. Fix a field $k$.
\begin{enumerate}[(1)]
\item Let $\Q$ be a connected tree and let $\P$ be the poset it generates. Then $\widetilde{\EE}^0(\P, k)\cong \Z$. \label{Ex:E0-ex-1}

\item Let $\Q$ be any quiver whose underlying undirected graph is a cycle. Then $\widetilde{\EE}^0(\PP(\Q),k) \cong \Gr(k[\Z])$.

\item Let $\Q$ be an $(n,m)$ bipartite quiver. Then $\widetilde{\EE}^0(\PP(\Q), k)\cong\Gr(kF_\Q)$, where $F_\Q$ is a free group of rank $nm -n -m +1$.
\end{enumerate}
\end{Ex}

For a finite poset $\P$ and an object $x\in\P$, denote by $\P_{\ge x}$ the sub-poset, whose objects are all $y\in\P$ such that $y\geq x$. Recall that the modules that take the value $k$ on $\P_{\ge x}$ for some $x$ and the value $0$ elsewhere are exactly the indecomposable projective $k\P$-modules.

\begin{Cor}
\label{Cor:Loc-const=constant}
Let $\P$ be a finite  poset that is generated by a connected tree or is the face poset of a simply connected simplicial complex, and let $M\in k\P\mod$ be a locally constant module. Then  $M$  is isomorphic to a finite direct sum of modules of the form $\underline{k}$. In particular, if $\P$ is a poset whose Hasse diagram is a rooted tree, then $M$ is projective.
\end{Cor}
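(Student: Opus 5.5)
Since $M$ is locally constant, it factors through the localisation $\C_0$ of $\P$ at all of its morphisms, as in the proof of Proposition \ref{Prop:E0}. The plan is to show that $\C_0$ is a connected groupoid with trivial automorphism groups, so that locally constant modules are sums of copies of $\underline{k}$. The projectivity statement then follows by identifying $\underline{k}$ with an indecomposable projective.

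First I will show that $|\P|$ is simply connected. If $\P$ is generated by a connected tree $\Q$, then $|\P|$ has the homotopy type of the realisation of $\Q$, and this is contractible. In the paper's language this is Example \ref{Ex:E0-ex}\ref{Ex:E0-ex-1}, or Corollary \ref{Cor:E0-trees} with $N=0$. If $\P=\F X$ for a simply connected complex $X$, then $|\F X|\simeq|X|$, as in Corollary \ref{Cor:Any-Cx}. In both cases \cite[Proposition 1]{Qu} gives $\Aut_{\C_0}(x)\cong\pi_1(|\P|)=1$, and $\C_0$ is connected because $\P$ is. So $\C_0$ is equivalent to the trivial category with one object and one morphism.

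Next I will use the correspondence between locally constant $k\P$-modules and $k\C_0$-modules. Under it, $M$ corresponds to a $k\C_0$-module, that is, to a finite dimensional vector space $V=M(x)$ for a fixed object $x$. Choosing a basis of $V$ then gives $M\cong\bigoplus_{\dim V}\underline{k}$.

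The step that needs the most care is making this concrete: I must check that transporting $M(x)$ along zig-zags gives isomorphisms independent of the chosen zig-zag. This is exactly the triviality of $\pi_1$, so it comes out of Quillen's identification. Alternatively, Theorem \ref{Thm:E0G0} already says that $\widetilde{\EE}^0(\P,k)\cong\Gr(k\{1\})\cong\Z$ is free on the locally constant indecomposables. Hence there is exactly one such indecomposable, and it must be $\underline{k}$. Krull--Schmidt for finite dimensional modules over the finite poset $\P$ then gives $M\cong\underline{k}^{m}$.

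Finally, suppose the Hasse diagram of $\P$ is a rooted tree with root $r$. Every $y\in\P$ satisfies $y\ge r$, so $\P_{\ge r}=\P$. Therefore $\underline{k}$ is the module taking the value $k$ on $\P_{\ge r}$ and $0$ elsewhere, which is the indecomposable projective at $r$. A finite direct sum of projectives is projective, so $M$ is projective.
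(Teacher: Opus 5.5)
Your argument is correct and is essentially the paper's. The paper reads off from Example \ref{Ex:E0-ex} and Theorem \ref{Thm:E0G0} that $\widetilde{\EE}^0(\P,k)\cong\Z$ is free on the locally constant indecomposables, so $\underline{k}$ is the only one (your ``alternative''); your main localisation-and-Quillen argument is the mechanism behind Proposition \ref{Prop:E0} applied directly to $M$, and your projectivity step ($\underline{k}$ is the indecomposable projective on $\P_{\ge r}=\P$) is the same as the paper's.
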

\begin{proof}
By Part \ref{Ex:E0-ex-1} of Example \ref{Ex:E0-ex}, $\widetilde{\EE}^0(\P, k)\cong \Z$ and it is generated by locally constant $k\P$-modules. But $\underline{k}$ is a locally constant module, and it clearly generates a copy of $\Z$ in $\widetilde{\EE}^0(\P, k)$, which implies the general claim. The second statement follows from the fact that the indecomposable projective modules in $k\P\mod$ are those modules that are constant with value $k$ on an overset $\P_{\geq x}$ for some $x\in \P$ and that vanish outside this set.
\end{proof}

%%%%%%%%%%%%%%%%%%%%%%%%%%%%%%%%%%%%%%%%%%%%%%%%%%%%%%
%%%%%%%%%%%%%%%%%%%%%%%%%%%%%%%%%%%%%%%%%%%%%%%%%%%%%%
%%%%%%%%%%%%%%%%%%%%%%%%%%%%%%%%%%%%%%%%%%%%%%%%%%%%%%
%%%%%%%%%%%%%%%%%%%%%%%%%%%%%%%%%%%%%%%%%%%%%%%%%%%%%%
%%%%%%%%%%%%%%%%%%%%%%%%%%%%%%%%%%%%%%%%%%%%%%%%%%%%%%

\subsubsection{\texorpdfstring{$\widetilde{\GG}^*$ cohomology for direct categories}{G-cohomology}}
\label{SSSec:Higher-G-tilde-Cohomology}

We now describe the $\widetilde{\GG}^*(\C, k)$ where   $\C$ is a \hadgesh{direct category}.  For any $n\in\mathbb N$, let $\I(k\widetilde{\G}^\C_n)\le \Gr(k\widetilde{\G}^\C_n)$ denote the subgroup  generated by  interval modules (recall that by Lemma \ref{Lem:Interval-modules-indec}, interval modules are indecomposable). By Lemma \ref{Lem:Inverse-Interval},  $\I(k\widetilde{\G}^\C_\bullet)$  is a cosimplicial subgroup of $\Gr(k\widetilde{\G}^\C_\bullet)$. 

Objects of $\widetilde{\G}_n^\C$ will be written as $\mathbf{v} = [v_1|v_2|\cdots|v_n]$ for short. If $a\in\C$ is any object, then we shall denote by $\mathbf{1_a}$ the object $[1_a|1_a|\cdots|1_a]\in \widetilde{\G}^\C_n$. Objects of the form $\mathbf{1_a}$ will be referred to as \hadgesh{homogeneous objects} of $\widetilde{\G}_n^\C$. In particular, every object of $\widetilde{\G}_0^\C$ is homogeneous by default. If $\alpha\colon \mathbf{v} \to\mathbf{u}$ is any morphism in $\widetilde{\G}_n^\C$, with $a$ the codomain of $v_n$ and $b$ the domain of $u_1$, then $\alpha$ factors through both $\mathbf{1_a}$ and $\mathbf{1_b}$. On the other hand, the morphisms $\mathbf{v}\to \mathbf{1_a}$ and $\mathbf{1_b}\to \mathbf{u}$ are irreducible. It follows that  morphisms of these two types are the only irreducible morphisms in $\widetilde{\G}^\C_n$. For example, in the poset $\widetilde{\G}^{[3]}_1$, the obvious morphism $(0,1)\to (2,3)$ factors as 
\[(0,1)\to(1,1)\to(1,2)\to(2,2)\to(2,3).\]

For an arbitrary small category $\D$ and a module $M\in k\D\mod$, define the \hadgesh{support of $M$} to be the full subcategory  $\supp(M)\subseteq \D$ whose objects are   $d\in\D$ such that $M(d)\neq 0$. For $0\le i \le n$, let $\partial_i\colon \widetilde{\G}^\C_n\to \widetilde{\G}^\C_{n-1}$ denote the $i$-th face operator, as usual. Then for any $M\in k\widetilde{\G}^\C_{n-1}\mod$,
\[
\supp(\partial_i^* M) = \partial_i^{-1}(\supp(M)).
\]
In particular, if $M$ is an interval module and $\supp(\partial_i^* M)$ is connected, then $\partial_i^* M$ is an interval module by Lemma \ref{Lem:Inverse-Interval}. In general, $\supp(\partial_i^* M)$ need not be connected, as we saw in Example \ref{Ex:delta-not-indecomp}.

By definition, the support of an interval module is convex and connected. Below we refer to an interval consisting of a single object $\mathbf{v}$ as a \hadgesh{singleton interval}, or simply a \hadgesh{singleton}. Such singletons will be denoted by $\{\mathbf{v}\}$, where $\mathbf{v}$ is the defining object. 

\begin{Lem}\label{Lem:preim-simples}
Suppose that $S=\{\mathbf{v}\}$ is a singleton  in $\widetilde{\G}^\C_n$, and let $i\in \{0,\ldots,n+1\}$.
\begin{enumerate}[(1)]
\item If $\mathbf{v} = \mathbf{1_a}$, for some $a\in\C$,  then 
\[\partial_i^{-1}(\{\mathbf{1_a}\}) = \begin{cases}
								\{\mathbf{1_a}\} & 1\le i\le n\\
								[\varphi|1_a|\cdots|1_a],\;\varphi\in\C(-,a) & i = 0\\
								[1_a|\cdots|1_a|\psi], \; \psi\in\C(a,-) & i = n+1.
						\end{cases}
\]\label{Lem:preim-simples-a}

\item If $\mathbf{v}$ is a non-homogeneous object, then the preimage $\partial_i^{-1}(\{\mathbf{v}\})$ is a disjoint union of singleton intervals on non-homogeneous objects. \label{Lem:preim-simples-b}
\end{enumerate}
\end{Lem}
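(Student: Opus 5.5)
The proof is a direct computation with the face operators $\partial_i\colon\widetilde{\G}^\C_{n+1}\to\widetilde{\G}^\C_n$. We first fix the description of $\partial_i$ on an object $\mathbf{u}=[u_1|\cdots|u_{n+1}]$: it deletes $u_1$ for $i=0$, deletes $u_{n+1}$ for $i=n+1$, and replaces $u_i,u_{i+1}$ by $u_{i+1}u_i$ for $1\le i\le n$. Since $\supp(\partial_i^*S)=\partial_i^{-1}(\{\mathbf{v}\})$, we must identify the set of objects $\mathbf{u}$ with $\partial_i\mathbf{u}=\mathbf{v}$, and then the morphisms of $\widetilde{\G}^\C_{n+1}$ among them. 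Throughout we use that $\C$ is direct, so that all endomorphisms are identities.

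For Part (1), take $\mathbf{v}=\mathbf{1_a}$.

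\emph{Case $1\le i\le n$.} The condition $\partial_i\mathbf{u}=\mathbf{1_a}$ forces every $u_j$ with $j\ne i,i+1$ to be $1_a$, and also $u_{i+1}u_i=1_a$. In a direct category a composite equal to an identity forces both factors to be identities. Otherwise $u_i\colon a\to b$ and $u_{i+1}\colon b\to a$ would form a cycle of positive length, or $b=a$ and then $u_i$ is an invertible endomorphism, hence $1_a$. So the preimage is exactly $\{\mathbf{1_a}\}$.

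\emph{Case $i=0$.} The conditions are $u_2=\cdots=u_{n+1}=1_a$ with $u_1=\varphi$ arbitrary, subject to its codomain being $a$. This gives the family $[\varphi|1_a|\cdots|1_a]$ with $\varphi\in\C(-,a)$.

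\emph{Case $i=n+1$.} Symmetrically, we get $[1_a|\cdots|1_a|\psi]$ with $\psi\in\C(a,-)$.

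Part (1) only describes the preimage sets, so it is complete at this point. We may add the observation that these families are generally not singletons, and that the morphisms among them are those of $\widetilde{\G}^\C_{n+1}$ induced by maps from the last object to the first.

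For Part (2), let $\mathbf{v}$ be non-homogeneous. First, every $\mathbf{u}\in\partial_i^{-1}(\mathbf{v})$ is non-homogeneous, because $\partial_i$ sends homogeneous objects to homogeneous objects.

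The key claim is that there are no non-identity morphisms in $\widetilde{\G}^\C_{n+1}$ between distinct elements of the preimage; note that the preimage is taken in the sense of Definition~\ref{Def:Inverse-Interval}, where a morphism must map to a morphism of the interval $\{\mathbf{v}\}$, that is, to $1_{\mathbf{v}}$.

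Take a non-identity morphism $\mathbf{u}\to\mathbf{u}'$ between non-homogeneous objects. It is induced by some $w\colon x_{n+1}\to y_0$, where $x_{n+1}$ is the last object of $\mathbf{u}$ and $y_0$ the first object of $\mathbf{u}'$. Its image under $\partial_i$ is the morphism $\partial_i\mathbf{u}\to\partial_i\mathbf{u}'$, namely $\mathbf{v}\to\mathbf{v}$. This image is the identity of $\mathbf{v}$ only if its components are the identity maps, and then $\mathbf{v}$ would have to be homogeneous. Indeed, a morphism $\mathbf{v}\to\mathbf{v}$ in $\widetilde{\G}$ that is induced from last to first gives a cycle from the last object of $\mathbf{v}$ back to its first object. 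Together with the chain $\mathbf{v}$ itself, directness then forces every $v_j$ to be an identity, so $\mathbf{v}$ is homogeneous.

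Hence any non-identity morphism of $\widetilde{\G}^\C_{n+1}$ between objects of the preimage does not lie in $\partial_i^{-1}(\{\mathbf{v}\})$. The preimage is therefore discrete, a disjoint union of singletons. Each singleton is trivially convex and connected, so each is an interval, consistent with Lemma~\ref{Lem:Inverse-Interval}.

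The main obstacle I expect is justifying that a composite-to-identity, or the cycle created by a self-morphism of $\mathbf{v}$, forces identities. This requires careful use of directness, in particular that endomorphisms are identities and that there are no cycles of positive length. I would also check separately the edge cases $i=0$ and $i=n+1$ in Part (2), where one end of $\mathbf{u}$ is free.
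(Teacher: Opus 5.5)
Your proposal is correct and follows the paper's approach: compute the preimages of $\mathbf{1_a}$ directly from the face maps, and for non-homogeneous $\mathbf{v}$ note that preimage objects are non-homogeneous and pairwise incomparable because $\C$ is direct. Your version spells out the cycle argument that the paper compresses into ``since $\C$ is direct'': the image of any induced morphism gives a map from the last object of $\mathbf{v}$ to its first, which closes the chain into a cycle. That argument actually shows there are no non-identity morphisms at all between preimage objects, which is exactly the paper's incomparability claim.
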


\begin{proof}
Let $\textbf{v}\in \widetilde{\G}^\C_n$ be an object.  If $\textbf{v} = \mathbf{1_a}$ for some $a\in\C$, then for  $1\le i\le n$, one has $\partial_i^{-1}(\{\mathbf{1_a}\}) = \{\mathbf{1_a}\}$. If $i=0$, the objects in $\partial_0^{-1}(\{\mathbf{1_a}\})$ have the form $[\varphi|1_a|\cdots|1_a]$, where $\varphi$ is a morphism in $\C$ with codomain $a$. All such objects, where the codomain is not $a$, map by an irreducible morphism to $\mathbf{1_a}$ and are not comparable among themselves,  because $\C$ is direct  by hypothesis. Similarly, the objects in $\partial_{n+1}^{-1}(\{\mathbf{1_a}\})$ have the form $[1_a|\cdots|1_a|\psi]$, where $\psi$ is a morphism in $\C$ with domain $a$. In this case, there is an irreducible morphism from $\mathbf{1_a}$ to each of these objects that are not comparable among themselves, again because $\C$ is direct.  This proves Part \ref{Lem:preim-simples-a}.

For Part \ref{Lem:preim-simples-b}, notice that  if  $\mathbf{v}=[v_1|v_2|\cdots|v_n]$ is non-homogeneous, then every object in $\partial_i^{-1}(\{\mathbf{v}\})$ is non-homogeneous and any two such  objects  are not comparable since $\C$ is direct. This completes the proof.
\end{proof}

For a poset $\P$ and an interval $S$ in $\P$ we denote by $|S|$ the size of a maximal chain of irreducible relations in $S$.

\begin{Prop}\label{Prop:preim}
Let $n\geq 1$. Suppose that $S$ is an interval in $\widetilde{\G}^\C_n$ with $|S|>1$. Then, for any $0\le i\le n+1$, the preimage $\partial_i^{-1}(S)$ is a disjoint union of intervals that can be described as follows.
\begin{enumerate}[(1)]
\item  For any $1\le i \le n$, the preimage $\partial_i^{-1}(S)$ is connected. \label{Prop:preim-a}
\end{enumerate}

\noindent For $i=0$ or $i=n+1$, the inverse image $\partial_i^{-1}(S)$ always contains a connected component that  includes all homogeneous objects  in $\partial_i^{-1}(S)$. In addition, let $\mathbf{v} = [v_1|\ldots|v_n]$ be an object in $S$, with $a_0$ and $a_n$ the domain of $v_1$ and the codomain of $v_n$, respectively.  

\begin{enumerate}[(1)]\setcounter{enumi}{1}
\item If $\mathbf{1_{a_0}}\to\mathbf{v}$ is a morphism in $S$, then for each non-identity morphism $\alpha$ with codomain $a_0$ the preimage $\partial_0^{-1}(S)$ includes the non-homogeneous  object $[\alpha|v_1\cdots|v_n]$.
\label{Prop:preim-b}

\item If $\mathbf{v}\to \mathbf{1_{a_n}}$ is a morphism in $S$, then for each non-identity morphism $\beta$  with domain $a_n$ the preimage $\partial_{n+1}^{-1}(S)$  includes  the  non-homogeneous   object $[v_1\cdots|v_n|\beta]$.
\label{Prop:preim-c}
\end{enumerate}
In particular, all singletons in $\partial_0^{-1}(S)$ and $\partial_{n+1}^{-1}(S)$ are non-homogeneous.
\end{Prop}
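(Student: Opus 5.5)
The overall strategy is to work entirely with the explicit description of the irreducible morphisms of $\widetilde{\G}^\C_n$ given just before Lemma~\ref{Lem:preim-simples}. Every morphism $\mathbf{v}\to\mathbf{u}$ factors as $\mathbf{v}\to\mathbf{1_a}\to\cdots\to\mathbf{1_b}\to\mathbf{u}$, where the two outer arrows are the irreducible ones. Hence an interval $S$ with $|S|>1$ is connected through homogeneous objects: every non-homogeneous object of $S$ is joined by an irreducible morphism to some homogeneous object of $S$. By convexity, this homogeneous object lies in $S$ whenever $S$ contains a morphism out of (or into) that object. I would first record that $\partial_i^{-1}(S)$ is a disjoint union of intervals; this is Lemma~\ref{Lem:Inverse-Interval} applied to $\partial_i$. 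So all the work is about connectivity, and about identifying which objects lie in which component.

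For Part~\ref{Prop:preim-a} with $1\le i\le n$, the key observation is that $\partial_i$ preserves the first entry's domain and the last entry's codomain. It also sends $\mathbf{1_a}\in\widetilde{\G}^\C_{n+1}$ to $\mathbf{1_a}$, and the preimage of a homogeneous object is exactly that homogeneous object (Lemma~\ref{Lem:preim-simples}\ref{Lem:preim-simples-a}). Now take $\mathbf{w}\in\partial_i^{-1}(S)$ with $\mathbf{w}$ a simplex $a_0\to\cdots\to a_{n+1}$. The irreducible morphisms $\mathbf{1_{a_0}}\to\mathbf{w}\to\mathbf{1_{a_{n+1}}}$ map under $\partial_i$ to $\mathbf{1_{a_0}}\to\partial_i\mathbf{w}\to\mathbf{1_{a_{n+1}}}$. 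Since $|S|>1$ and $S$ is convex and connected, at least one of $\mathbf{1_{a_0}},\mathbf{1_{a_{n+1}}}$ lies in $S$. So $\mathbf{w}$ is joined inside $\partial_i^{-1}(S)$ to a homogeneous object of $\partial_i^{-1}(S)$. It then remains to see that the homogeneous objects of $\partial_i^{-1}(S)$, which are exactly those of $S$, are connected to one another. Any zigzag in $S$ between homogeneous objects can be rewritten as a zigzag through homogeneous objects and non-homogeneous ones adjacent to them. Each non-homogeneous step $\mathbf{1_a}\to\mathbf{v}\to\mathbf{1_b}$ in $S$ lifts to $\mathbf{1_a}\to s_{i-1}\mathbf{v}\to\mathbf{1_b}$ (inserting an identity), which lies in $\partial_i^{-1}(S)$.

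For $i=0$ and $i=n+1$, the same argument handles the component containing the homogeneous objects. The homogeneous objects of $\partial_0^{-1}(S)$ are the $\mathbf{1_a}$ with $\mathbf{1_a}\in S$, and zigzags lift via the degeneracy $s_0$, since $\partial_0 s_0=\mathrm{id}$. So all homogeneous objects lie in one component. Parts~\ref{Prop:preim-b} and~\ref{Prop:preim-c} are direct checks. One has $\partial_0[\alpha|v_1|\cdots|v_n]=\mathbf{v}\in S$, so this object belongs to $\partial_0^{-1}(S)$, and it is non-homogeneous because $\alpha$ is not an identity. The case $i=n+1$ is symmetric, with $\partial_{n+1}[v_1|\cdots|v_n|\beta]=\mathbf{v}$.

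The last claim, that singleton components are non-homogeneous, follows because a homogeneous $\mathbf{1_a}\in\partial_0^{-1}(S)$ has a neighbour there. Since $|S|>1$, the object $\mathbf{1_a}$ has a neighbour in $S$, and that edge lifts through $s_0$.

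The step I expect to be the main obstacle is the bookkeeping in lifting zigzags in Part~\ref{Prop:preim-a}. The issue is that edges of the form $\mathbf{v}\to\mathbf{1_b}$ must be lifted compatibly, and the degeneracy used must hit a morphism of $\widetilde{\G}^\C_{n+1}$ that is built from the connecting morphism $x_n\to y_0$. I would verify this directly from Definition~\ref{Def:subobjects}\ref{Def:subobjects-1}. I also need to check that the direct hypothesis, used implicitly through Lemma~\ref{Lem:preim-simples}, is what ensures the objects $[\alpha|v_1|\cdots|v_n]$ are not forced into the homogeneous component.
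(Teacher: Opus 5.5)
Your proposal is correct and follows essentially the same route as the paper. It uses Lemma~\ref{Lem:Inverse-Interval} for the interval decomposition, shows that every object of $\partial_i^{-1}(S)$ is joined by an irreducible morphism to a homogeneous object of the preimage, and lifts zigzags between homogeneous objects through a degeneracy with $\partial_i s_j=\Id$. For the adjacency step you argue directly from the inner faces preserving the endpoints $a_0,a_{n+1}$, where the paper runs its Case~1/Case~2 analysis of the neighbourhoods $N_a$.

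Neither worry in your last paragraph is a real obstacle. The $s_j$ are functors on $\widetilde{\G}^\C_\bullet$ because it is a simplicial subobject. Parts (2) and (3), as stated, need only $\partial_0[\alpha|v_1|\cdots|v_n]=\mathbf{v}$ and its mirror image, so the direct hypothesis plays no role there.
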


\begin{proof}
By Lemma \ref{Lem:Inverse-Interval}, the preimages of intervals are disjoint unions of intervals. Thus it remains to investigate the nature of those intervals. Consider first preimages of the morphisms that are irreducible  in $S$.

\noindent\underline{Case 1:}   Irreducible morphisms of the  form $\mathbf{1_a}\to\mathbf{v}$.
\begin{itemize}
\item If $1\le i\le n+1$, then
$\partial_i^{-1}(\mathbf{1_a}\to\mathbf{v})$ consists of the object $\mathbf{1_a}$ and non-homogeneous objects $\mathbf{u} = [u_1|\cdots|u_{n+1}]$, with $a$ the domain of $u_1$ and $\mathbf{u}\in\partial_i^{-1}(\mathbf{v})$. This is connected, because there is an irreducible morphism $\mathbf{1_a}\to \mathbf{u}$  for any $\mathbf{u}$ that satisfies the conditions above. 

\item If $i=0$, then
$\partial_0^{-1}(\mathbf{1_a}\to\mathbf{v})$ consists of the preimage $\partial_{0}^{-1}(\mathbf{1_a})$ that is obviously connected and the objects $\mathbf{u}\in\partial_0^{-1}(\mathbf{v})$, all of which are non-homogeneous.  Such objects $\mathbf{u}$, where the domain of $u_1$ receives no morphisms from $a$, form isolated singleton intervals by Lemma \ref{Lem:preim-simples}, whereas the objects where $a$ maps to the domain of $u_1$ form a connected component. 
\end{itemize}

\noindent\underline{Case 2:} Irreducible morphisms of the form $\mathbf{u}\to \mathbf{1_a}$.
\begin{itemize}
\item If $0\le i \le n$, then  $\partial_i^{-1}(\mathbf{u}\to \mathbf{1_a})$ consist of the preimage $\partial_{n+1}^{-1}(\mathbf{1_a})$ that is obviously connected and non-homogeneous objects $\mathbf{w} = [w_1|\cdots|w_{n+1}]$, with $a$ the codomain of $w_{n+1}$ and $\mathbf{w}\in\partial^{-1}(\mathbf{u})$. This is connected because there is an irreducible morphism $\mathbf{w}\to\mathbf{1_a}$  for each $\mathbf{w}$ that satisfies the conditions above. 

\item If $i=n+1$, then  $\partial_{n+1}^{-1}(\mathbf{u}\to \mathbf{1_a})$ consists of the object $\mathbf{1_a}$ and all objects in $\partial_{n+1}^{-1}(\mathbf{u})$, all of which are non-homogeneous. Those objects for which there are no morphisms from the  codomain of $u_{n+1}$ to $a$ form isolated  singleton intervals by Lemma \ref{Lem:preim-simples}, while the other objects together with $\mathbf{1_a}$ form a connected component. 
\end{itemize}

Next, recall that the morphisms of the form $\mathbf{1_a}\to\mathbf{u}$ and $\mathbf{v}\to \mathbf{1_a}$ are the only irreducible morphisms in $\widetilde{\G}^\C_n$.  The convexity condition that intervals must satisfy implies that any interval $S$ is determined uniquely by the irreducible morphisms it contains. For each $a\in \C$ such that $\mathbf{1_a}\in S$, let $N_a$ denote the neighbourhood of $\mathbf{1_a}$ in $S$, that is the subcategory that contains $\mathbf{1_a}$ and all irreducible morphisms into and out of $\mathbf{1_a}$ in $
S$. Then $\partial^{-1}_i(N_a)$ is connected for $1\le i\le n$. Since  $S$ is connected, any two homogeneous objects  $\mathbf{1_a}$ and $\mathbf{1_b}$ in $S$ are connected by some zig-zag of morphisms in $S$. Since $\partial_i\circ s_i = \Id$, applying the degeneracy operator $s_i$ to that zig-zag  gives a zig-zag of morphisms in $\partial^{-1}_i(S)$ between $\mathbf{1_a}\to\mathbf{u}$ and $\mathbf{v}\to \mathbf{1_b}$ in $\partial_i^{-1}(S)$. Therefore, all homogeneous objects in  $\partial^{-1}_i(S)$ reside in the same connected component.  
Since $\partial^{-1}_i(S)$ is a union of all inverse images of  the form $\partial^{-1}_i(N_a)$, it follows that $\partial^{-1}_i(S)$ is connected. Part \ref{Prop:preim-a} follows.

By the same argument, except using the identity  $\partial_0\circ s_0 = \Id$, we see that all homogeneous objects in  $\partial^{-1}_0(S)$ reside in the same connected component.  Hence the inverse image under $\partial_0$ of each object in $S$ consists of objects in $\widetilde{\G}^\C_{n+1}$ that are also in that same connected component as the homogeneous objects, and possibly objects that are  singleton objects there. This proves Part \ref{Prop:preim-b}. Part \ref{Prop:preim-c} is proved similarly, using the simplicial identity $\partial_{n+1}\circ s_n=\Id$.
\end{proof}

\begin{Cor}\label{Cor:interval-deltas}
For $n\geq 1$, let $M\in k\widetilde{\G}^\C_n\mod$ be an interval module defined on some non-singleton interval. Then
\begin{enumerate}[(1)]
\item   $\partial^*_iM$ is a non-singleton interval module for all $1\le i \le n$.
\item $\partial^*_0 M = P\oplus Q$, where $P$ is a non-singleton interval module and $Q$ is a direct sum of singleton interval modules. Furthermore, the support of each summand of $Q$  is an object of the form $[\alpha|v_1|\cdots|v_n]$, as in Proposition \ref{Prop:preim}\ref{Prop:preim-b}.

\item $\partial^*_{n+1} M = P\oplus Q$, where $P$ is a non-singleton interval module and $Q$ is a direct sum of singleton interval modules.  Furthermore, the support of each summand of $Q$  is an object of the form $[v_1|\cdots|v_n|\beta]$, as in Proposition \ref{Prop:preim}\ref{Prop:preim-c}.
\end{enumerate}
\end{Cor}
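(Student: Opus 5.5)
The plan is to read the corollary directly off Lemma \ref{Lem:Inverse-Interval} and Proposition \ref{Prop:preim}, applied with $S=\supp(M)$ and the functor $F=\partial_i\colon\widetilde{\G}^\C_{n+1}\to\widetilde{\G}^\C_n$. First note that $\C$ is assumed direct throughout this subsection, so each $\widetilde{\G}^\C_m$ is direct and Lemma \ref{Lem:Inverse-Interval} applies. It gives
\[\partial_i^*M\cong\bigoplus_{\P}I_\P,\]
where $\P$ runs over the connected components of $\partial_i^{-1}(S)$, and each such component is an interval. Hence the whole task is to identify these components, and Proposition \ref{Prop:preim} (which requires $|S|>1$, i.e.\ $S$ non-singleton) is exactly the tool for this.

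For $1\le i\le n$, Proposition \ref{Prop:preim}\ref{Prop:preim-a} says that $\partial_i^{-1}(S)$ is connected, so $\partial_i^*M=I_{\partial_i^{-1}(S)}$ is an interval module. To see that this interval is not a singleton, I would use the simplicial identity $\partial_i s_i=\Id$ (or $\partial_i s_{i-1}=\Id$). Since $S$ is a non-singleton, it contains an irreducible morphism $\mathbf{x}\to\mathbf{y}$, and applying $s_i$ gives a non-identity morphism $s_i\mathbf{x}\to s_i\mathbf{y}$ lying in $\partial_i^{-1}(S)$. Here $s_i$ is injective on objects because $\partial_i s_i=\Id$, so the two endpoints are distinct objects.

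For $i=0$ and $i=n+1$, Proposition \ref{Prop:preim} says that there is a distinguished component containing all homogeneous objects of the preimage; call its interval module $P$. The same degeneracy argument, now using $s_0$ and $s_n$ respectively, shows that this component contains a non-identity morphism, so $P$ is a non-singleton interval module. A non-singleton $S$ must contain a homogeneous object, since irreducible morphisms always have a homogeneous endpoint; thus the distinguished component is nonempty. The remaining components are collected into $Q$, and the claims to verify are that they are singletons and that their supports have the stated form. By Proposition \ref{Prop:preim}, every singleton component is non-homogeneous, and the case analysis in its proof (Cases 1 and 2) shows that any object not in the main component is isolated. For $i=0$, such an object $[\alpha|v_1|\cdots|v_n]$ has $\partial_0$-image $\mathbf{v}\in S$, and it is disconnected precisely when the domain of $\alpha$ admits no comparison forcing a link. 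So these objects are of the form described in Proposition \ref{Prop:preim}\ref{Prop:preim-b}, and symmetrically for $i=n+1$ they have the form in \ref{Prop:preim-c}.

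The main obstacle is the claim that every non-main component is a singleton, rather than merely a possibly larger interval. To prove it I would argue directly using the description of irreducible morphisms in $\widetilde{\G}^\C_{n+1}$: they all have the form $\mathbf{1_a}\to\mathbf{u}$ or $\mathbf{u}\to\mathbf{1_a}$. So any non-singleton connected component contains a homogeneous object, and by Proposition \ref{Prop:preim} that homogeneous object lies in the main component.
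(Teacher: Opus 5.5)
Your proposal is correct and follows the paper's route. The paper gives no separate proof and treats the corollary as an immediate read-off from Lemma \ref{Lem:Inverse-Interval} and Proposition \ref{Prop:preim}; your degeneracy argument (via $\partial_i s_i=\Id$) for non-singleton-ness, and your factorization of every non-identity morphism through a homogeneous object to show the other components are singletons, simply make that read-off explicit.

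The one step you leave vague is that each singleton support $[\alpha|v_1|\cdots|v_n]$ really is as in Proposition \ref{Prop:preim}\ref{Prop:preim-b}, and it tightens with the same idea. Isolation rules out $[\alpha|\mathbf{v}]\to\mathbf{1_{a_n}}$ in the preimage, so $\mathbf{v}\to\mathbf{1_{a_n}}$ is not in $S$. By convexity, the non-identity morphism at $\mathbf{v}$ in $S$ must then be incoming, so $\mathbf{1_{a_0}}\to\mathbf{v}$ lies in $S$. Finally, $\alpha\neq 1_{a_0}$, because $s_0\mathbf{v}$ is joined to $\mathbf{1_{a_0}}$ by $s_0(\mathbf{1_{a_0}}\to\mathbf{v})$.
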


\begin{Defi}
For each integer $n\ge 0$, let $\I^s(k\widetilde{\G}^\C_n)$ be the subgroup of $\I(k\widetilde{\G}^\C_n)$ generated by non-homogeneous singleton modules.
\end{Defi}

\begin{Lem}\label{Lem:I_0-cosimplicial}
$\I^s(k\widetilde{\G}^\C_\bullet)$ is a cosimplicial subgroup of $\I(k\widetilde{\G}^\C_\bullet)$.
\end{Lem}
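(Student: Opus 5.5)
We have to check that $\I^s(k\widetilde{\G}^\C_\bullet)$ is closed under all coface operators $\delta^i=\partial_i^*$ and all codegeneracy operators $\sigma^i=s_i^*$. Since the group is generated by classes of non-homogeneous singleton modules $S_{\mathbf{v}}$, it suffices to check each operator on one generator. Throughout, $\C$ is the direct category fixed in this subsection, so Lemma \ref{Lem:preim-simples} and Lemma \ref{Lem:Inverse-Interval} are available.

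\textbf{Cofaces.} Let $\mathbf{v}\in\widetilde{\G}^\C_n$ be non-homogeneous and let $0\le i\le n+1$. By Lemma \ref{Lem:Inverse-Interval}, $\partial_i^*S_{\mathbf{v}}$ is the direct sum of the interval modules on the connected components of $\partial_i^{-1}(\{\mathbf{v}\})$. By Lemma \ref{Lem:preim-simples}\ref{Lem:preim-simples-b}, this preimage is a disjoint union of singletons on non-homogeneous objects. Hence
\[\delta^i[S_{\mathbf{v}}]=\sum_{\mathbf{u}\in\partial_i^{-1}(\mathbf{v})}[S_{\mathbf{u}}],\]
which is a finite sum, since $\partial_i^*S_{\mathbf{v}}$ takes values in finite-dimensional spaces only if the preimage is finite. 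If the preimage were infinite, the element would not lie in the Grothendieck group at all. So I will either note this as an implicit finiteness hypothesis, as in the ambient cosimplicial structure of $\I(k\widetilde{\G}^\C_\bullet)$ that the paper already asserts, or observe that the paper's earlier claim that $\I(k\widetilde{\G}^\C_\bullet)$ is cosimplicial already covers it. In either case every summand is a non-homogeneous singleton, so $\delta^i[S_{\mathbf{v}}]\in\I^s(k\widetilde{\G}^\C_{n+1})$.

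\textbf{Codegeneracies.} Let $\mathbf{v}\in\widetilde{\G}^\C_{n+1}$ be non-homogeneous and let $s_i\colon\widetilde{\G}^\C_n\to\widetilde{\G}^\C_{n+1}$ insert an identity. Then $s_i^*S_{\mathbf{v}}$ is supported on $s_i^{-1}(\mathbf{v})$. Since $s_i$ is injective on objects, this preimage is empty unless $\mathbf{v}$ has an identity in position $i+1$, in which case it is the single object $\mathbf{w}=\partial_i\mathbf{v}$ (equivalently $\partial_{i+1}\mathbf{v}$).

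\begin{itemize}
\item The object $\mathbf{w}$ is non-homogeneous: if $\mathbf{w}=\mathbf{1_a}$, then $\mathbf{v}=s_i\mathbf{1_a}=\mathbf{1_a}$, which is a contradiction.
\item The restriction $s_i^*S_{\mathbf{v}}$ is the singleton module $S_{\mathbf{w}}$: the only endomorphism of $\mathbf{w}$ is the identity, because $\C$ is direct, so there is no other structure to check.
\end{itemize}

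Thus $\sigma^i[S_{\mathbf{v}}]$ is either $0$ or $[S_{\mathbf{w}}]\in\I^s(k\widetilde{\G}^\C_n)$.

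The only delicate point is the coface case, and there the geometric content is entirely carried by Lemma \ref{Lem:preim-simples}\ref{Lem:preim-simples-b}. The main subtlety is the finiteness of preimages, which I would handle exactly as the paper does for $\I(k\widetilde{\G}^\C_\bullet)$. Closure under both families of operators, together with additivity, proves the lemma.
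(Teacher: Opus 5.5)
Your proof is correct and matches the paper's argument: closure under cofaces comes from Lemma \ref{Lem:preim-simples}\ref{Lem:preim-simples-b}, and a direct computation shows that $\sigma^i[S_{\mathbf{v}}]$ is either $0$ or $[S_{\partial_i\mathbf{v}}]$, where $\partial_i\mathbf{v}$ is non-homogeneous because otherwise $\mathbf{v}$ would be. One correction to your finiteness aside: $\partial_i^*S_{\mathbf{v}}$ is always pointwise finite-dimensional, since its values are only $k$ or $0$, so an infinite preimage would still give a genuine module whose class lies in the Grothendieck group; the actual problem is that this class would not lie in the subgroup generated by singleton classes, and the finiteness needed to rule this out is a tacit hypothesis the paper also uses.
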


\begin{proof}
By Lemma \ref{Lem:preim-simples}\ref{Lem:preim-simples-b}, the cofaces map non-homogeneous singleton modules to sums of non-homogeneous singleton modules. Note, in particular, that $\I^s(k\widetilde{\G}^\C_0)=0$, since there are no non-homogeneous singletons  in $\widetilde{\G}^\C_0$. This shows that $\I^s(k\widetilde{\G}^\C_\bullet)$ is closed under cofaces.

To show closure under degeneracies, consider  the non-homogeneous singleton modules $S_{\textbf{v}}$, where $\textbf{v}=[v_1|\cdots|v_n]\in \widetilde{\G}_n^\C$ and $n\geq 1$. Then for $0\le i\le n-1$,  $\sigma^iS_{\textbf{v}}$ evaluated on $\textbf{w}\in \widetilde{\G}_{n-1}^\C$ is given by
\[
(\sigma^iS_{\textbf{v}})(\textbf{w})=S_{\mathbf{v}}(s_i(\mathbf{w}))=
\begin{cases}
k &   s_i(\textbf{w})=\textbf{v}\\
0 & \text{otherwise.} 
\end{cases}
\] 
But $s_i(\textbf{w})=\textbf{v}$ if and only if $v_{i+1}$ is an identity morphism in $\C$, and $\partial_i(\textbf{v})=\textbf{w}$. Therefore,
\[
\sigma^iS_{\textbf{v}}=
\begin{cases}
S_{\partial_i(\textbf{v})} &  v_{i+1}=\Id\\
0 & \text{otherwise.} 
\end{cases}
\] 
Note that if $v_{i+1}$ is an identity morphism, then $\partial_i(\textbf{v})$ cannot be homogeneous, because otherwise $\textbf{v}$ would be homogeneous.
\end{proof}

\begin{Defi}
Let $\C$ be a direct category. The \hadgesh{reduced  interval module group for $\C$ with coefficients in $k$} is the cosimplicial abelian group
\[
\I_0(k\widetilde{\G}^\C_\bullet)\defeq \I(k\widetilde{\G}^\C_\bullet)/\I^s(k\widetilde{\G}^\C_\bullet).
\] 
Similarly, the \hadgesh{reduced Grothendieck group for $\widetilde{\G}^\C_\bullet$ with coefficients in $k$} is the cosimplicial abelian group
\[
\Gr_0(k\widetilde{\G}^\C_\bullet)\defeq \Gr(k\widetilde{\G}^\C_\bullet)/\I^s(k\widetilde{\G}^\C_\bullet).
\]
We denote the image of $X\in \Gr(k\widetilde{\G}^\C_n)$ in the reduced Grothendieck group by $X_0$, and the cofaces and codegeneracies by $\delta_0^i$ and $\sigma_0^i$, respectively. Obviously, $\I_0(k\widetilde{\G}^\C_\bullet)$ is a subgroup of $\Gr_0(k\widetilde{\G}^\C_\bullet)$.
\end{Defi}

In particular, if $M$ and $N$ are $k\widetilde{\G}^\C_n$-modules such that $[M]_0 = [N]_0$ in $\Gr_0(k\widetilde{\G}^\C_n)$, then all the indecomposable summands of $M$ and $N$ are isomorphic, with the exception of the non-homogeneous singleton module summands.

By Proposition \ref{Cor:interval-deltas}, any coface $\delta_0^i$ in $\I_0(k\widetilde{\G}_\bullet^\C)$ maps a class of an interval module to a class of an interval module. In fact, the same is true for general indecomposable modules in $\Gr_0(k\widetilde{\G}_n^\C)$.

\begin{Lem}\label{Lem:reduced-indecomp}
For any $0\le i\le n+1$ and  an indecomposable module $M\in k\widetilde{\G}^\C_n\mod$, the coface $\delta^i_0([M]_0)$ is the class of an indecomposable module   $N\in k\widetilde{\G}_{n+1}^\C\mod$.
\end{Lem}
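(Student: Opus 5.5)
The plan is to mimic the codegeneracy argument of Lemma \ref{Lem:delta-indecomp}, now carried out in the reduced cosimplicial group $\Gr_0(k\widetilde{\G}^\C_\bullet)$. By Lemma \ref{Lem:I_0-cosimplicial}, $\I^s$ is closed under cofaces and codegeneracies, so the cosimplicial identity $\sigma^i_0\delta^i_0=\Id$ holds there. The quotient $\Gr_0(k\widetilde{\G}^\C_n)$ is still free, with basis the indecomposable modules other than non-homogeneous singletons. Hence $\delta^i_0([M]_0)$ is uniquely a sum of such classes.

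First I dispose of the trivial case. If $M$ is itself a non-homogeneous singleton, then $[M]_0=0$, so the statement should be read for $[M]_0\neq 0$. Now write $\partial_i^*M\cong N_1\oplus\cdots\oplus N_r\oplus Q$, where the $N_j$ are indecomposable but not non-homogeneous singletons and $Q\in\I^s$. Then $\delta^i_0[M]_0=\sum_j[N_j]_0$, and we must show $r=1$. Applying $\sigma^i$ gives
\[
[M]=\sum_j[s_i^*N_j]+[s_i^*Q],
\]
and $[s_i^*Q]\in\I^s$. Since $M$ is indecomposable and not in $\I^s$, exactly one $s_i^*N_j$ is nonzero, say $j=1$, and $s_i^*N_1\cong M$. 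The remaining $s_i^*N_j$ vanish.

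The key step is to show that an indecomposable summand $N$ of $\partial_i^*M$ with $s_i^*N=0$ must be a non-homogeneous singleton; this is where I expect the difficulty. As in Lemma \ref{Lem:delta-indecomp}, take $0\neq x\in N(\mathbf{v})$. For each morphism in $\widetilde{\G}^\C_{n+1}$ that $\partial_i$ sends to an identity, $\partial_i^*M$ sends it to an isomorphism, and the same then holds for each summand. One must then find such a morphism from $\mathbf{v}$ into (or from) a degenerate object $s_i(\mathbf{w})$. In $\widetilde{\G}$ this is not always possible: by the description of irreducible morphisms, nontrivial morphisms go only through homogeneous objects. This is exactly the phenomenon of Example \ref{Ex:delta-not-indecomp}.

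I would argue as follows. If $\mathbf{v}$ is connected in $\supp(N)$ to any other object, then some irreducible morphism $\mathbf{v}\to\mathbf{1_a}$ or $\mathbf{1_a}\to\mathbf{v}$ is nonzero on $N$. Homogeneous objects are degenerate, so $\mathbf{1_a}=s_i(\mathbf{1_a})$ and $N(\mathbf{1_a})=s_i^*N(\mathbf{1_a})=0$. Thus any object in $\supp(N)$ has all maps to and from homogeneous objects zero. Consequently every morphism into or out of $\mathbf{v}$ acts by zero on $N$, since each non-identity morphism factors through a homogeneous object. So $N$ restricted to $\mathbf{v}$ splits off, and by indecomposability $N$ is supported on the single object $\mathbf{v}$. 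The module $N$ is then $k$-valued there with only identity endomorphisms, because $\C$ is direct and hence $\widetilde{\G}^\C_{n+1}$ has trivial endomorphisms. So $N$ is a singleton module, and $\mathbf{v}$ is non-homogeneous since $N(\mathbf{1_a})=0$ for homogeneous objects. This shows $N_j\in\I^s$ for $j\ge 2$, contradicting the choice of the $N_j$, so $r=1$ and $\delta^i_0[M]_0=[N_1]_0$ is the class of an indecomposable module.

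The point needing care is the factorisation claim: every non-identity morphism of $\widetilde{\G}^\C_{n+1}$ factors through a homogeneous object, and endomorphism rings of single objects are trivial. This uses directness of $\C$ and should be checked against the description of irreducible morphisms given before Lemma \ref{Lem:preim-simples}.
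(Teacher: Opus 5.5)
Your proposal is correct and is essentially the paper's argument: you split off $M$ by applying a codegeneracy to $\partial_i^*M$, and you observe that any summand killed by the codegeneracy vanishes on all homogeneous objects, since these are fixed by degeneracies, and is therefore a sum of non-homogeneous singletons, because every non-identity morphism of $\widetilde{\G}^\C_{n+1}$ factors through a homogeneous object. The only slip is that for $i=n+1$ there is no codegeneracy $\sigma^{n+1}$, so in that case you should apply $\sigma^n$, using $\partial_{n+1}s_n=\Id$, as the paper does; with that change your argument goes through verbatim.
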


\begin{proof}
Suppose that
\[
\delta_0^i([M]_0)=[\partial^*_i(M)]_0=[P\oplus Q]_0= [P]_0+ [Q]_0
\]
 for some $k\widetilde{\G}^\C_{n+1}$-modules $P$ and $Q$. 
 
Suppose first that $i\neq n+1$. Then
\[
[M]_0=(\sigma_0^i\circ \delta_0^i)([M]_0)=\sigma_0^i([P]_0)+\sigma_0^i([Q]_0).
\]
Since $M$ is indecomposable, either $\sigma_0^i([P]_0)=0$ or $\sigma_0^i([Q]_0)=0$. Without loss of generality, we may assume that $\sigma_0^i([Q]_0)=0$. If $Q\notin \I^s(k\widetilde{\G}^\C_{n+1})$ then $\supp(Q)$ contains at least one homogeneous object $\mathbf{1_a}$ for some object $a\in\C$ (since every morphism that is not the identity on a non-homogeneous object factors through a homogeneous object). Now,
\[
\sigma^iQ(\mathbf{1_a}) = Q(s_i(\mathbf{1_a}))=Q(\mathbf{1_a})\neq 0.
\]
Hence  $\sigma_0^i([Q]_0)=[s^*_i Q]_0\neq 0$, which is a contradiction to our assumption. Therefore $Q\in\I^s(k\widetilde{\G}^\C_n)$, so $[Q]_0=0$. Finally, for $i=n+1$,  use the cosimplicial identity $\sigma^{n}\circ \delta^{n+1}=\Id$, and the claim follows similarly.
\end{proof}

\begin{Prop}\label{Prop:reduced-acyclicity}
Let $\C$ be a direct category. Then the reduced cosimplicial Grothendieck group $\Gr_0(k\widetilde{\G}_\bullet^\C)$ and the reduced interval module group $\I_0(k\widetilde{\G}_\bullet^\C)$ are both acyclic.
\end{Prop}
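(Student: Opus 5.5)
I will follow the proof of Proposition \ref{Prop:Acyclicity} essentially verbatim, replacing $\Gr(k\widetilde{\E}^\C_\bullet)$ with the reduced cosimplicial group $U^\bullet\defeq\Gr_0(k\widetilde{\G}^\C_\bullet)$, and then with $\I_0(k\widetilde{\G}^\C_\bullet)$. The two properties of $\Gr(k\widetilde{\E}^\C_\bullet)$ that the earlier argument used are the following. First, Lemma \ref{Lem:coboundary-rels}, which holds in any cosimplicial abelian group and so applies to $U^\bullet$ without change. Second, Lemma \ref{Lem:delta-indecomp}, which says that cofaces send indecomposables to indecomposables. Its replacement here is Lemma \ref{Lem:reduced-indecomp}. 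One point must be checked: $\Gr_0(k\widetilde{\G}^\C_n)$ is free abelian on the classes $[M]_0$ of indecomposable modules other than non-homogeneous singletons. This holds because $\I^s$ is spanned by a subset of the Krull--Schmidt basis of $\Gr(k\widetilde{\G}^\C_n)$, so the quotient is free on the complementary basis elements. It follows that every element of $U^n$ has a unique decomposition, and the notion of decomposition length is well defined.

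With this in place, let $\widebar{Z}\in N^n(U)$ be a cocycle with $n\ge 1$. Write $Z=\sum_k[M_k]_0-\sum_l[N_l]_0$ in the basis, with no cancellations. The cocycle condition gives $\delta_0^{n+1}Z=\sum_{i=0}^n\delta_0^iY_i$ for some $Y_i\in U^n$. By Lemma \ref{Lem:reduced-indecomp}, $\delta_0^{n+1}[M_1]_0$ is a single basis element (note that it is nonzero, since $\sigma_0^n\delta_0^{n+1}=\Id$). Hence it must coincide with $\delta_0^i[T]_0$ for some indecomposable summand $T$ of some $Y_j$ and some $0\le i\le n$. Uniqueness of the decomposition is what guarantees that such a term exists on the right-hand side; I expect this matching step to be the delicate point. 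Lemma \ref{Lem:coboundary-rels} then finishes the step. If $i=n$, we get $[M_1]_0=[T]_0\in\Ima(\delta^n_0)$, so $[M_1]_0$ represents a coboundary in $N^n(U)$. If $i<n$, we get $[M_1]_0\in\Ima(\delta_0^i)$, so its image in $N^n(U)$ vanishes. In both cases $Z-[M_1]_0$ is again a cocycle in the same class, with smaller decomposition length, and induction on the length shows that $\widebar Z$ is a coboundary. The same argument handles the $N_l$.

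Degree $0$ needs separate care, since $H^0$ is the kernel of $d^0=\delta_0^0-\delta_0^1$. If $\delta_0^0X=\delta_0^1X$, I will compare basis terms: by Lemma \ref{Lem:reduced-indecomp} each $\delta_0^0[M]_0$ is a basis element, and it must equal some $\delta_0^1[M']_0$. As in Lemma \ref{Lem:Loc-Const}, this should force $M$ and $M'$ to be locally constant. So I expect $H^0$ to be the classes of locally constant modules, not zero. Whether "acyclic" is meant to include degree $0$ is therefore something I will have to check against the intended statement. I read it, as with Proposition \ref{Prop:Acyclicity}, as vanishing in positive degrees, and I will state that explicitly.

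For $\I_0(k\widetilde{\G}^\C_\bullet)$ I will run the same argument. By Corollary \ref{Cor:interval-deltas}, Lemma \ref{Lem:preim-simples} and Lemma \ref{Lem:I_0-cosimplicial}, $\I_0$ is a cosimplicial subgroup. It is free on the non-singleton-type interval classes, and its cofaces send interval basis elements to interval basis elements. The one extra point is that the elements $Y_i$ can be taken inside $\I_0$. I will handle this by projecting onto the interval basis: this projection commutes with the $\delta_0^i$ on interval classes, and the terms that matched against $\delta_0^{n+1}[M_1]_0$ are themselves interval classes, because $\sigma_0^n\delta_0^{n+1}$ recovers $[M_1]_0$.
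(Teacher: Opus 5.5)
Your proposal is correct and follows the paper's proof essentially verbatim: it runs the Moore-complex argument of Proposition \ref{Prop:Acyclicity} in $\Gr_0(k\widetilde{\G}^\C_\bullet)$, with Lemma \ref{Lem:reduced-indecomp} replacing Lemma \ref{Lem:delta-indecomp} and induction on decomposition length, then repeats it with interval modules for $\I_0(k\widetilde{\G}^\C_\bullet)$, and your reading of ``acyclic'' as vanishing in positive degrees is exactly how the paper uses the result in Theorem \ref{Thm:Higher-G}. Your projection step for $\I_0$ is harmless but unnecessary, because running the argument in the Moore complex of $\I_0$ itself already produces $Y_i\in\I_0(k\widetilde{\G}^\C_n)$.
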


\begin{proof} 
Acyclicity of both objects is proven very similarly to the proof of  Proposition \ref{Prop:Acyclicity}, using Lemmas \ref{Lem:coboundary-rels} and \ref{Lem:reduced-indecomp}, where the latter  is analogous to Lemma \ref{Lem:delta-indecomp}.

Let $U^\bullet$ denote $\Gr_0(k\widetilde{\G}^\C_\bullet)$ for short, and let $(N^\bullet(U), d^\bullet)$ denote the Moore cochain complex associated to $U^\bullet$. We denote the cofaces in the Moore cochain complex $N^\bullet(U)$ by $\delta_N^i$, and the cofaces in  $U^\bullet$ by $\delta_U^i$. For $X\in \Gr(k\widetilde{\G}_\bullet^\C)$, we denote its image in $U^\bullet$ by  $X^U$, and the image of $X^U$ in the Moore cochains  $N^\bullet(U)$ by $X^N$. 

Let $X \in \Gr(k\widetilde{\G}_n^\C)$ represent a cocycle $X^N\in N^n(U)$.
Write 
\[X = \sum_{k=1}^r[M_k] - \sum_{l=1}^s[N_l],\]
where $M_k, N_l\in k\widetilde{\G}^\C_n\mod$ are non-zero and indecomposable. We may assume that the sets $\{M_k\}_{k=1}^r$ and $\{N_l\}_{l=1}^s$ have neither modules that are isomorphic, nor non-homogeneous singleton modules. Hence it makes sense to consider $r+s$ as the \hadgesh{decomposition length} of $X$. 

Since, by assumption,  $d^n X^N= (\delta_U^{n+1}X^U)^N = (\delta^{n+1}X)^N=0$ in the Moore cochains, there exist elements $\{Y_i = [P_i]-[Q_i]\}_{i=0}^{n}$ in $\Gr(k\widetilde{\G}_n^\C)$ such that 
\[(\delta^{n+1}X)^U =  \sum_{k=1}^r(\delta^{n+1}[M_k])^U - \sum_{l=1}^s(\delta^{n+1}[N_l])^U = \sum_{i=0}^{n} (\delta^i Y_i)^U.\] 
By Lemma \ref{Lem:reduced-indecomp}, $(\delta^{n+1}[M_1])^U$ is represented by some indecomposable $k\widetilde{\G}_{n+1}^\C$-module. Hence, for some $0\le i\le n$, there is an indecomposable summand $T_1$ of $P_i$,  such that $(\delta^{n+1}[M_1])^U = (\delta^i [T_1])^U$. There are two possibilities: if $i=n$, then, by Lemma \ref{Lem:coboundary-rels}\ref{Lem:coboundary-rels-1}, $[M_1]^U = [T_1]^U\in\Ima(\delta^{n}_U)$, while if $i<n$, then, by Lemma \ref{Lem:coboundary-rels}\ref{Lem:coboundary-rels-2},  $[M_1]^U\in\Ima(\delta^i_U)$ and $[T_1]^U\in\Ima(\delta^{n}_U)$. In the first case,  $[M_1]^U$ is a coboundary in $N^n(U)$. In the second case, since $i<n$, $[M_1]^N=0$ in $N^n(U)$. Thus $(X-[M_1])^U$ is a cocycle that represents the same cohomology class as $X^U$, and $X-[M_1]$ is of decomposition length $r+s-1$. By downward induction on the decomposition length of $X$, each of its indecomposable summands is either a coboundary or $0$ in $N^n(U)$, and hence $X^U$ itself is a coboundary and the proof for $\Gr_0(k\widetilde{\G}_\bullet^\C)$ is complete.

The proof for $\I_0(k\widetilde{\G}_\bullet^\C)$  uses the same argument as above, but with interval modules (which are inherently indecomposable) instead of general indecomposable modules.
\end{proof}

For a cochain complex $C^*$ of a cosimplicial abelian group, let $C\red^*$ denote the subcomplex of  $C^*$ given by the dimension-wise intersection of  kernels of all codegeneracy operators. It is a basic fact that the inclusion $\C\red^*\to \C^*$ induces an isomorphism in cohomology. We are now ready to prove Theorem \ref{Th:Higher-G}, which we restate in more detail as follows.

\begin{Thm}\label{Thm:Higher-G}
For any direct category $\C$ the following statements hold.
\begin{enumerate}[(1)]
\item The inclusions $\I^s(k\widetilde{\G}^\C_\bullet)\to \I(k\widetilde{\G}^\C_\bullet)\to \Gr(k\widetilde{\G}^\C_\bullet)$ induce isomorphisms 
\[H^n(\I^s(k\widetilde{\G}^\C_\bullet))\xto{\cong} H^n(\I(k\widetilde{\G}^\C_\bullet))\xto{\cong} \widetilde{\GG}^n(\C, k)\]
for all $n\geq 2$. \label{Thm:Higher-G-1}
\item For $n \le 1$ one has
\[\widetilde{\GG}^0(\C, k)\cong H^0(\Gr_0(k\widetilde{\G}^\C_\bullet))\quad\text{and}\quad  \widetilde{\GG}^1(\C, k)\cong \Ker\left(\I^s(k\widetilde{\G}^\C_1)\xto{d^1} \I^s(k\widetilde{\G}^\C_2)\right).\] \label{Thm:Higher-G-2}
\item The map
$\widetilde{\GG}^n(\C, k)\to H^n(|\C|, \Z)$
 induced by natural inclusion of simplicial objects  $|\C|_\bullet\to \widetilde{\G}^\C_\bullet$ is an epimorphism for $n=1$ and an isomorphism for $n>1$. \label{Thm:Higher-G-3}
\end{enumerate}
In fact the inclusion $\I^s(k\widetilde{\G}^\C_\bullet)\red\to C^*(|\C|, \Z)$ induces an isomorphism in cohomology for all $n\geq 1$.
\end{Thm}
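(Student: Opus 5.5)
The plan is to run the two short exact sequences of cosimplicial abelian groups
\[0\to\I^s(k\widetilde{\G}^\C_\bullet)\to\I(k\widetilde{\G}^\C_\bullet)\to\I_0(k\widetilde{\G}^\C_\bullet)\to 0,\qquad 0\to\I^s(k\widetilde{\G}^\C_\bullet)\to\Gr(k\widetilde{\G}^\C_\bullet)\to\Gr_0(k\widetilde{\G}^\C_\bullet)\to 0\]
through their long exact cohomology sequences. Lemma \ref{Lem:I_0-cosimplicial} guarantees that these are sequences of cosimplicial groups. By Proposition \ref{Prop:reduced-acyclicity} the quotients $\I_0$ and $\Gr_0$ are acyclic, i.e.\ their cohomology vanishes in positive degrees.

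\textbf{Part (1).} In degrees $n\ge 2$ the long exact sequences show that $H^n(\I^s)\to H^n(\I)$ and $H^n(\I^s)\to\widetilde{\GG}^n(\C,k)$ are isomorphisms. Both maps are compatible with the inclusion $\I\subseteq\Gr$, so the composite statement of (1) follows.

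\textbf{Part (2), degree $0$.} Since $\I^s(k\widetilde{\G}^\C_0)=0$, we have $H^0(\I^s)=0$, and the sequence begins
\[0\to\widetilde{\GG}^0\to H^0(\Gr_0)\to H^1(\I^s)\to\widetilde{\GG}^1\to H^1(\Gr_0)=0.\]
To get $\widetilde{\GG}^0\cong H^0(\Gr_0)$, I must show the connecting map $H^0(\Gr_0)\to H^1(\I^s)$ is zero. Equivalently, any $X\in\Gr(k\widetilde{\G}^\C_0)$ with $d^0X\in\I^s$ is already a cocycle. By Corollary \ref{Cor:delta-Loc-Const} we may drop the locally constant summands. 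Then I argue as in Proposition \ref{Prop:G0}: by Lemma \ref{Lem:delta-indecomp}, $\delta^0$ and $\delta^1$ of indecomposables are indecomposable, and $\delta^i$ of a nonzero module is never a non-homogeneous singleton, since its support contains homogeneous objects. Hence $d^0X\in\I^s$ forces $d^0X=0$.

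\textbf{Part (2), degree $1$.} The sequence now gives $\widetilde{\GG}^1\cong H^1(\I^s)$. The codegeneracy $\sigma^0$ of a singleton vanishes unless $v_1$ is an identity, and $\I^s_0=0$, so $H^1(\I^s)$ is simply $\Ker\bigl(d^1\colon\I^s_1\to\I^s_2\bigr)$, as claimed.

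\textbf{Part (3) and the final claim.} I would identify $\I^s(k\widetilde{\G}^\C_\bullet)$ with the cochains on the simplicial set of non-homogeneous simplices of $|\C|$. In degree $n$ it is the free group on non-homogeneous simplices, and by Lemma \ref{Lem:preim-simples}\ref{Lem:preim-simples-b} the cofaces act as pullback of indicator functions, exactly as in Example \ref{Ex:Discrete-cat-coho}. Passing to normalized complexes via the codegeneracy formula in Lemma \ref{Lem:I_0-cosimplicial}, $\I^s\red$ consists of the functions supported on non-degenerate simplices. For $n\ge1$ every non-degenerate simplex is non-homogeneous, so $\I^s\red$ agrees with $C\red^*(|\C|,\Z)$ in degrees $n\ge 1$. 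The two complexes differ only in degree $0$, where $\I^s\red$ is $0$ and $C\red^0$ is the group of all functions on objects.

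It remains to compare cohomology. The map agrees with the normalized cochain inclusion in all degrees $n\ge1$, so the cohomology groups coincide for $n\ge 2$. In degree $1$ the induced map is $\Ker d^1\to\Ker d^1/\Ima d^0$, which is surjective. Composing with (1) and (2) gives (3), once I check that the map in (3) is this one: restriction along $|\C|_\bullet\to\widetilde{\G}^\C_\bullet$ followed by the identification of Example \ref{Ex:Discrete-cat-coho}. The final claim says this is an isomorphism for all $n\ge 1$, whereas I only obtain an epimorphism in degree $1$. So in degree $1$ I would read the final claim as a statement about the cocycles.

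\textbf{Main obstacle.} I expect the hardest step to be the careful verification that cofaces on $\I^s$ really match the simplicial-set coboundary. Lemma \ref{Lem:preim-simples}\ref{Lem:preim-simples-b} supplies the needed disjointness. The other delicate point is that homogeneous preimages never occur, and this uses the directness of $\C$.
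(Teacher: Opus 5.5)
Your proposal is correct and follows the paper's proof essentially step for step. Part (1) uses the two short exact sequences with acyclic quotients $\I_0$ and $\Gr_0$. Part (2) uses $\I^s(k\widetilde{\G}^\C_0)=0$ together with the vanishing of the connecting map $H^0(\Gr_0)\to H^1(\I^s)$. Part (3) identifies $\I^s\red$ with the normalized cochains in degrees $n\ge 1$.

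Your basis argument for the vanishing connecting map is a tighter justification than the paper's bare assertion that $H^0(\Gr_0)$ is generated by locally constant modules. The argument is that each $\delta^i$ of an indecomposable is indecomposable and has homogeneous objects in its support, so $d^0X\in\I^s$ forces $d^0X=0$. Your caveat that the final claim only yields an epimorphism in degree $1$ is correct, and it matches what the paper's own proof actually establishes.
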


\begin{proof}
The short exact sequence of cosimplicial groups
\begin{equation}
    0\rightarrow \I^s(k\widetilde{\G}_\bullet^\C)\rightarrow \Gr(k\widetilde{\G}_\bullet^\C)\rightarrow \Gr_0(k\widetilde{\G}_\bullet^\C)\rightarrow 0 \label{Eq:Higher-G}
\end{equation}
induces a long exact sequence in cohomology. 
Since, by Proposition \ref{Prop:reduced-acyclicity}, $H^n(\Gr_0(k\widetilde{\G}_\bullet^\C))=0$ for $n\geq 1$, the maps 
\[
 H^n(\I^s(k\widetilde{\G}^\C_\bullet))\xto{\cong} H^n(\Gr(k\widetilde{\G}_\bullet^\C)) = \widetilde{\GG}^n(\C, k)
\]
are isomorphisms for all $n\geq 2$.
  Similarly, Proposition \ref{Prop:reduced-acyclicity} and the short exact sequence 
\[
    0\rightarrow \I^s(k\widetilde{\G}_\bullet^\C)\rightarrow \I(k\widetilde{\G}_\bullet^\C)\rightarrow \I_0(k\widetilde{\G}_\bullet^\C)\rightarrow 0
\]
imply that the maps $H^n(\I^s(k\widetilde{\G}_\bullet^\C))\to H^n(\I(k\widetilde{\G}_\bullet^\C))$ are isomorphisms for all $n\geq 2$.  
Part \ref{Thm:Higher-G-1} follows.

Next, notice that $\Gr(k\widetilde{\G}_0^\C ) = \Gr_0(k\widetilde{\G}_0^\C)$, since $\I^s(k\widetilde{\G}_0^\C)=0$. Clearly every linear combination of isomorphism classes of locally constant modules is an  element of $H^0(\Gr_0(k\widetilde{\G}^\C_\bullet))$, as well as of $\widetilde{\GG}^0(\C,k)$. Hence the natural map $\widetilde{\GG}^0(\C, k)\to H^0(\Gr_0(k\widetilde{\G}^\C_\bullet))$, which by construction is a monomorphism, is also an epimorphism and hence an isomorphism. It follows that the first connecting homomorphism in the long exact cohomology sequence associated to the short exact sequence (\ref{Eq:Higher-G}) is trivial. Thus, 
\[\GG^1(\C,k)\cong H^1(\I^s(k\widetilde{\G}^\C_\bullet) )= \Ker\left(\I^s(k\widetilde{\G}^\C_1)\xto{d^1} \I^s(k\widetilde{\G}^\C_2)\right),\]
where the second equality holds because $\I^s(k\widetilde{\G}_0^\C)=0$.
Part \ref{Thm:Higher-G-2} follows.

To prove the last statement, notice first that if $\mathbf{v}\in\widetilde{\G}^\C_n$ for $n\geq 1$, then 
\[
\sigma^iS_{\textbf{v}}=
\begin{cases}
S_{\partial_i(\textbf{v})} &  v_{i+1}=\Id\\
0 & \text{otherwise.} 
\end{cases}
\] 
(See the proof of Lemma \ref{Lem:I_0-cosimplicial}). Hence for $n\geq 1$, if $S_{\mathbf{v}}\in\I^s(k\widetilde{\G}^\C_n)$ is a singleton module where $\mathbf{v}$ contains an identity morphism, then  $S_{\mathbf{v}}$ is not in the kernel of at least one codegeneracy, and hence not in the intersection of all kernels. Clearly if $\sigma^iS_\mathbf{v} = \sigma^iS_\mathbf{u}$ then $\mathbf{v} = \mathbf{u}$, because in that case there is some $\mathbf{w}\in \widetilde{\G}^\C_{n-1}$  such that $\mathbf{v} = s_i(\mathbf{w}) = \mathbf{u}$.  Thus $\I^s(k\widetilde{\G}^\C_n)\red$, for $n\geq 1$,  is freely generated by singleton modules $S_{\mathbf{v}}$, where $\mathbf{v}\in \widetilde{\G}^\C_n$ is non-degenerate.    Since $\I^s(k\widetilde{\G}^\C_0) = 0$, the cochain map 
\[\I^s(k\widetilde{\G}^\C_\bullet)\red\to C^*(|\C|, \Z)\]
that takes   the singleton module $S_{\mathbf{v}}$ on a non-degenerate simplex $\mathbf{v}$ to the characteristic  function $\chi_\mathbf{v}$ induces an isomorphism in cohomology in all dimensions $n\ge 2$ and an epimorphism for $n=1$. Notice that this map factors through the inclusion $\I^s(k\widetilde{\G}^\C_\bullet)\red\to\Gr(k\widetilde{\G}^\C_\bullet)$ followed by the projection to $C^*(|\C|, \Z)$. This proves Part \ref{Thm:Higher-G-3}.
\end{proof}

%%%%%%%%%%%%%%%%%%%%%%%%%%%%%%%%%%%%%%%%%%%
%%%%%%%%%%%%%%%%%%%%%%%%%%%%%%%%%%%%%%%%%%%
%%%%%%%%%%%%%%%%%%%%%%%%%%%%%%%%%%%%%%%%%%%
%%%%%%%%%%%%%%%%%%%%%%%%%%%%%%%%%%%%%%%%%%%
%%%%%%%%%%%%%%%%%%%%%%%%%%%%%%%%%%%%%%%%%%%
%%%%%%%%%%%%%%%%%%%%%%%%%%%%%%%%%%%%%%%%%%%
%%%%%%%%%%%%%%%%%%%%%%%%%%%%%%%%%%%%%%%%%%%
%%%%%%%%%%%%%%%%%%%%%%%%%%%%%%%%%%%%%%%%%%%
%%%%%%%%%%%%%%%%%%%%%%%%%%%%%%%%%%%%%%%%%%%
%%%%%%%%%%%%%%%%%%%%%%%%%%%%%%%%%%%%%%%%%%%
%%%%%%%%%%%%%%%%%%%%%%%%%%%%%%%%%%%%%%%%%%%
\section{\texorpdfstring{$\EE^*$ and $\GG^*$ cohomology}{E and G cohomology}}
We now turn to general computations of  $\EE^*$ and $\GG^*$.  We assume throughout that all categories discussed in this section have no non-identity invertible morphisms, so that these theories are well defined. Obtaining general computational results about $\EE^*$ and $\GG^*$ is more difficult even in dimension $0$, since the restriction to non-degenerate objects and as such concentrating on the meaningful morphisms in $\C$ gives a much more involved structure, and in particular higher cohomology. The main results in this section are the implication for a module $M\in k\C\mod$ to represent a $0$-cocycle, and bounds on the degrees in which positive-dimensional cohomology can appear in the case of a category with finite composition length.

%%%%%%%%%%%%%%%%%%%%%%%%%%%%%%%%%%%%%%%%%%%
%%%%%%%%%%%%%%%%%%%%%%%%%%%%%%%%%%%%%%%%%%%

\subsection{\texorpdfstring{The groups $\EE^0(\C,k)$ and $\GG^0(\C, k)$}{The groups E0 and G0}}
\label{SSec:Cohomology-chEG}

Recall that  $\E^\C_0 = \G^\C_0 =\C$. For $n>0$, the object sets $\obj(\G^\C_n) = \obj(\E^\C_n)$  consist of  the \hadgesh{non-degenerate} simplices of $|\C|$. The categories $\E^\C_n$ and  $\G^\C_n$ are  full subcategories of $\widetilde{\E}^\C_n$ and $\widetilde{\G}^\C_n$, respectively. 

The following proposition shows that $\EE^0$ behaves similarly to $\widetilde{\EE}^0$ and $\widetilde{\GG}^0$ on classes of genuine (as opposed to virtual) modules, with a slight variation.

\begin{Prop}\label{Prop:E-check-0-cocycles}
Let $\C$ be a small category and let $M\in k\C\mod$ be such that $[M]$ represents a class in $\EE^0(\C, k)$. Then $M$ is locally constant on each square-component of $\C$ that contains at least two different non-identity morphisms. 
\end{Prop}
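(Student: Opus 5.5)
The plan is to unpack the cocycle condition into a natural isomorphism, and then exploit naturality along suitably chosen morphisms of $\E^\C_1$.

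\emph{Step 1: the cocycle condition.} Since $[M]$ is a $0$-cocycle, $\delta^0[M]=\delta^1[M]$ in $\Gr(k\E^\C_1)$. Both sides are classes of genuine modules and the split Grothendieck group has unique decomposition into indecomposables. Hence there is a natural isomorphism $\alpha\colon\partial_0^*M\to\partial_1^*M$ of $k\E^\C_1$-modules.

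Concretely, objects of $\E^\C_1$ are the non-identity morphisms $v\colon a\to b$ of $\C$. For such $v$ we have $\partial_0^*M([v])=M(b)$ and $\partial_1^*M([v])=M(a)$, so each $\alpha_v\colon M(b)\to M(a)$ is an isomorphism. A morphism $[v]\to[u]$, with $u\colon c\to d$, is a pair $(f_0,f_1)$ satisfying $f_1v=uf_0$. Naturality of $\alpha$ then reads
\[
\alpha_u\circ M(f_1)=M(f_0)\circ\alpha_v .
\]

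\emph{Step 2: reduction to a single edge.} It suffices to show the following. Whenever $(f_0,f_1)\colon[v]\to[u]$ is a morphism in $\E^\C_1$ with $[v]\neq[u]$, both $M(v)$ and $M(u)$ are isomorphisms. The reduction works because every object of a square-component containing at least two non-identity morphisms is an endpoint of such a morphism.

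The key device is the diagonal $g=f_1v=uf_0\colon a\to d$. This is a non-identity morphism: if $g=1_a$, then $v$ would be a split monomorphism in a category without non-identity invertibles, and this case has to be excluded. I expect this to be a small technical point; in the direct/poset-like situations of interest it is automatic. Since $g$ is non-identity, $[g]$ is an object of $\E^\C_1$.

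There are then morphisms $(1_a,f_1)\colon[v]\to[g]$ and $(f_0,1_d)\colon[g]\to[u]$, whenever the relevant objects are non-identity. Applying naturality to these gives $\alpha_gM(f_1)=\alpha_v$ and $\alpha_u=M(f_0)\alpha_g$. In particular $M(f_0)$ and $M(f_1)$ are isomorphisms.

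\emph{Step 3: the case analysis.} This is where the real content lies: one has to produce, for $v$ itself, a morphism in $\E^\C_1$ whose naturality square isolates $M(v)$.

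If $f_1\neq 1$, use the morphism $(v,1_d)\colon[g]\to[f_1]$, which is valid since $f_1\circ v=1_d\circ g$. Naturality gives $\alpha_{f_1}=M(v)\alpha_g$, so $M(v)=\alpha_{f_1}\alpha_g^{-1}$ is an isomorphism.

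If $f_1=1$, then $f_0\neq 1$ (otherwise $[v]=[u]$), and $v=uf_0=g$. Use $(1_a,u)\colon[f_0]\to[g]$. Naturality gives $\alpha_gM(u)=\alpha_{f_0}$, so $M(u)$ is an isomorphism. Then $M(v)=M(u)M(f_0)$ is an isomorphism as well.

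The symmetric arguments treat $u$: use $[f_0]\to[g]$ when $f_0\neq1$, and otherwise note $u=g$. Since $[v]\neq[u]$, at least one of $f_0,f_1$ is non-identity, so one of these branches always applies.

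Hence $M$ sends every morphism in the square-component to an isomorphism, i.e.\ $M$ is locally constant there. The step I expect to be most delicate is keeping every auxiliary object ($[g]$, $[f_0]$, $[f_1]$) non-degenerate so that it actually lives in $\E^\C_1$. This is exactly where the hypothesis that the component has at least two non-identity morphisms, together with the absence of non-identity invertibles, is used.
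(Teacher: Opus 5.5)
Your argument is essentially the paper's proof. The paper also routes everything through the diagonal object $[t\circ u]$ and the auxiliary objects $[s]$, $[t]$, which are your $[g]$, $[f_0]$, $[f_1]$. Its Cases 1--3 are your branches ($f_0,f_1\neq 1$; $f_0=1$; $f_1=1$), and it uses the same naturality squares.

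The one point you leave open is the case $g=f_1v=uf_0=1_a$. Your closing remark, that the absence of non-identity invertibles keeps the auxiliary objects non-degenerate, is not correct. Take the category with morphisms $p\colon x\to y$ and $q\colon y\to x$, with $qp=1_x$ and $e=pq=e^2\neq 1_y$. It has no non-identity isomorphisms, yet $(p,q)\colon[p]\to[q]$ is a morphism of $\E^\C_1$ whose diagonal is $qp=1_x$. Note that this case only occurs when both $f_0$ and $f_1$ are non-identity, since $f_1=1$ forces $g=v$ and $f_0=1$ forces $g=u$. The paper's proof makes the same unstated assumption when it treats $[t\circ u]$ as an object of $\E^\C_1$ in Cases 1 and 2.

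The case closes by a dimension count, with no auxiliary object:
\begin{itemize}
\item Since $v$ is non-identity, $\alpha_v\colon M(b)\to M(a)$ is an isomorphism, so $\dim M(a)=\dim M(b)$. Also $M(f_1)M(v)=1_{M(a)}$, so $M(v)$ is injective and hence an isomorphism.
\item Symmetrically, $d=a$ and $\alpha_u\colon M(a)\to M(c)$ gives $\dim M(c)=\dim M(a)$. Also $M(u)M(f_0)=1_{M(a)}$, so $M(u)$ is surjective and hence an isomorphism.
\end{itemize}
With this case added, your reduction to single edges goes through for every edge $[v]\to[u]$ with $[v]\neq[u]$, and the proof is complete.
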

\begin{proof}
Fix a natural isomorphism $\eta\colon \partial_1^*M \to \partial_0^*M$.
Let $u\colon x\to y$ and $v\colon z\to w$ be any two distinct (non-identity) morphisms in the same square-component of $\C$. Assume first that there is a morphism in $\E_1^\C$ from $u$ to $v$. Thus there are morphisms $s\colon x\to z$ and $t\colon y\to w$ in $\C$, such that $t\circ u = v\circ s$. 

\noindent{\bf Case 1: $s$ and $t$ are non-identity morphisms.} In this case $[s]$, $[t]$ and $[t\circ u]$ are objects in  $\E_1^\C$, and we have morphisms $(1_x,v)\colon [s]\to [t\circ u]$  and $(u, 1_w)\colon[t\circ u]\to [t]$. Since $\eta$ is a natural isomorphism,  we have a commutative  diagram
\[\xymatrix{
M(x)=\partial_1^*M([s])\ar@{=}[d]_{\partial_1^*M(1_x,v) = M(1_x)}\ar[rr]^{\eta_{[s]}}_\cong &&  \partial_0^*M([s])=M(z)\ar[d]^{M(v) = \partial_0^*M(1_x,v)}\\
M(x)=\partial_1^*M([t\circ u])\ar[d]_{\partial_1^*M(u,1_w)=M(u)}\ar[rr]^{\eta_{[t\circ u]}}_\cong && \partial_0^*M([t\circ u])=M(w)\ar@{=}[d]^{M(1_w) = \partial_0^*M(u, 1_w)}\\
M(y)=\partial_1^*M([t])\ar[rr]^{\eta_{[t]}}_\cong &&  \partial_0^*M([t])=M(w)
}\]
It follows that $M(u)$ and $M(v)$ are both isomorphisms. 

\noindent{\bf Case 2: $s = 1_x$ and $t$ is a  non-identity morphism.} In this case $[u]$, $[t]$, and $[t\circ u]$ are objects in  $\E_1^\C$, and we have morphisms $(1_x,t)\colon [u]\to[t\circ u]$ and $(u,1_w)\colon [t\circ u]\to [t]$.  Again, since $\eta$ is a natural isomorphism,  we have a commutative  diagram
\[\xymatrix{
M(x)=\partial_1^*M([u])\ar@{=}[d]_{\partial_1^*M(1_x,t)=M(1_x)}\ar[rr]^{\eta_{[u]}}_\cong &&  \partial_0^*M([u])=M(y)\ar[d]^{\partial_0^*M(1_x,t) = M(t)}\\
M(x)=\partial_1^*M([t\circ u])\ar[d]_{\partial_1^*M(u,1_w) = M(u)}\ar[rr]^{\eta_{[t\circ u]}}_\cong && \partial_0^*M([t\circ u])=M(w)\ar@{=}[d]^{\partial_0^*M(u,1_w) = M(1_w)}\\
M(y)=\partial_1^*M([t])\ar[rr]^{\eta_{[t]}}_\cong &&  \partial_0^*M([t])=M(w)
}\]
As before, it follows that $M(t)$ and $M(u)$ are isomorphisms, and hence so is $M(t\circ u)=M(t)\circ M(u)$. But $M(t\circ u) = M(v\circ s) = M(v)$. Thus $M(v)$ is an isomorphism as well.

\noindent{\bf Case 3: $t = 1_y$ and $s$ is a  non-identity morphism.} This is similar to the previous cases and is left for the reader. 

Next consider the general case. Since $u$ and $v$ are in the same square-component of $\C$, the objects $[u]$ and $[v]$ are in the same connected component of $\E_1^\C$. This means that there exist objects $[q_1], [q_2], \ldots, [q_r]$, for some $r\geq 0$, and a zig-zag of morphisms 
\[[u] \rightarrow [q_1]\leftarrow [q_2]\rightarrow\cdots \leftarrow [q_r]\rightarrow[v].\]
Each such morphism can be used by the argument above to show that the morphisms that define its source and target induce isomorphisms upon applying $M$. Hence, in particular, both $M(u)$ and $M(v)$ are isomorphisms, and it follows that $M$ is locally constant on  each square-component of $\C$, as claimed.
\end{proof}

Notice that $\EE^0(\C, k)$ generally contains classes of virtual modules that are not locally constant, as Proposition \ref{Prop:hG0E0-[n]} demonstrates.

Next we consider $\GG^0(\C,k)$. As one could expect, the implications for a module $M$ to be a $0$-cocycle are considerably  weaker, but one still gains information about such modules. 

\begin{Lem}
\label{Lem:G-check-0-cocycles}
Let $\C$ be a small category and let $M\in k\C\mod$ be such that $[M]$ represents a class in $\GG^0(\C, k)$. Then, for any pair of morphisms $u\colon a\to b$ and $v\colon c\to d$  that are in the same line-component of $\C$, there are isomorphisms $\alpha\colon M(a)\to M(c)$ and $\beta\colon M(b)\to M(d)$ such that $\beta\circ M(u) = M(v)\circ\alpha$. 
\end{Lem}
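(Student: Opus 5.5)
The plan is to mirror the proof of Proposition \ref{Prop:E-check-0-cocycles}, using the weaker structure of $\G^\C_1$. Since $[M]$ is a $0$-cocycle, $\delta^0[M]=\delta^1[M]$ in $\Gr(k\G^\C_1)$. Uniqueness of decomposition in the split Grothendieck group then gives a natural isomorphism $\eta\colon \partial_1^*M\to\partial_0^*M$ of $k\G^\C_1$-modules. Concretely, for each non-identity morphism $u\colon a\to b$ there is an isomorphism $\eta_{[u]}\colon M(a)\to M(b)$, natural with respect to morphisms of $\G^\C_1$.

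First treat a single morphism of $\G^\C_1$. By Definition \ref{Def:subobjects}, a non-identity morphism $[u]\to[v]$ with $u\colon a\to b$, $v\colon c\to d$ is induced by some $w\colon b\to c$. It is the pair $(w\circ u, v\circ w)\colon [u]\to[v]$ (compare the square in the proof of Lemma \ref{Lem:delta-Loc-Const}). Naturality of $\eta$ along this morphism gives
\[\eta_{[v]}\circ M(w\circ u) = M(v\circ w)\circ \eta_{[u]}.\]
I would try the candidates $\alpha = M(w)\circ M(u)$-type maps, but these need not be isomorphisms. So instead I would take $\alpha=\eta_{[v]}^{-1}\circ\eta_{[u]}\colon$ — which does not type-check directly. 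The more natural choice is to compare $M(u)$ with the isomorphism $\eta_{[u]}$.

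Here is the key point. In the intended conclusion, $M(u)$ and $M(v)$ are conjugate, that is, equivalent as linear maps up to isomorphisms on source and target. This is equivalent to $u$ and $v$ having equal rank and equal source and target dimensions. This suggests proving the equivalent rank statement: $\dim M(a)=\dim M(c)$, $\dim M(b)=\dim M(d)$, and $\rk M(u)=\rk M(v)$. Then $\alpha,\beta$ exist by choosing bases adapted to kernel and image.

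The dimension equalities follow at once from $\eta$. We have $\dim M(a)=\dim M(b)$ via $\eta_{[u]}$, and similarly $\dim M(c)=\dim M(d)$. Along the morphism $[u]\to[v]$, the composite $M(v\circ w)\circ\eta_{[u]}=\eta_{[v]}\circ M(w\circ u)$ gives
\[\rk M(v\circ w)=\rk M(w\circ u).\]
This alone does not yet compare $\rk M(u)$ and $\rk M(v)$, and this is where I expect the main obstacle. Hence I also need the dimension link between $b$ and $c$, which a priori the structure does not provide.

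I would first try to use naturality at the two endpoints together with the isomorphisms $\eta$ to trap ranks. We have $\rk M(w\circ u)\le \rk M(u)$ and $\rk M(v\circ w)\le\rk M(v)$, which give only inequalities. If this fails, I would fall back on the decomposition argument of Proposition \ref{Prop:G0}, since the identity $\partial_1^*M\cong\partial_0^*M$ holds on each line-component as $k\G^\C_1$-modules. Restricting to the full subcategory of a line-component and comparing indecomposable summands might force the required equivalence.

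Once the statement is established for a single morphism $[u]\to[v]$, the general case follows by composing along a zig-zag $[u]\to[q_1]\leftarrow\cdots\to[v]$ in the line-component. Equivalence of linear maps is an equivalence relation, so composing the resulting $(\alpha_i,\beta_i)$, and inverting where arrows go backwards, yields $\alpha,\beta$ with $\beta\circ M(u)=M(v)\circ\alpha$.
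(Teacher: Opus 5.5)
\textbf{There is a genuine gap.} You never produce $\alpha$ and $\beta$ for a single morphism $[u]\to[v]$ of $\G^\C_1$, which is the heart of the lemma. You rightly note that the one naturality square for $(w\circ u, v\circ w)$ only involves the composites $M(w\circ u)$ and $M(v\circ w)$, from which $M(u)$ and $M(v)$ cannot be recovered. Your remedies are left open: the rank inequalities give nothing decisive, and the appeal to ``comparing indecomposable summands'' is unspecified. Your claim that the structure provides no link between $M(b)$ and $M(c)$ is also mistaken.

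The missing idea is to factor the morphism through an intermediate object of $\G^\C_1$.
\begin{itemize}
\item If $w=1_b$, then $b=c$ and the morphism is $(u,v)\colon[u]\to[v]$. Its naturality square reads $\eta_{[v]}\circ M(u)=M(v)\circ\eta_{[u]}$, so $\alpha=\eta_{[u]}$ and $\beta=\eta_{[v]}$ work.
\item If $w\neq 1_b$, then $w$ is a non-identity morphism, so $[w]$ is itself an object of $\G^\C_1$. Moreover $(w\circ u,v\circ w)=(w,v)\circ(u,w)$, where $(u,w)\colon[u]\to[w]$ and $(w,v)\colon[w]\to[v]$ are both induced by identity morphisms. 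Their naturality squares give
\[\eta_{[w]}\circ M(u)=M(w)\circ\eta_{[u]}\quad\text{and}\quad \eta_{[v]}\circ M(w)=M(v)\circ\eta_{[w]}.\]
Hence $\alpha=\eta_{[w]}\circ\eta_{[u]}\colon M(a)\to M(c)$ and $\beta=\eta_{[v]}\circ\eta_{[w]}\colon M(b)\to M(d)$ satisfy $\beta\circ M(u)=M(v)\circ\alpha$.
\end{itemize}
In particular, $\eta_{[w]}\colon M(b)\to M(c)$ is exactly the link you thought was unavailable. The same two squares also give $\rk M(u)=\rk M(w)=\rk M(v)$ if you prefer your rank formulation, though the detour through ranks is unnecessary.

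With this step in place, your zig-zag argument, composing and inverting the pairs $(\alpha_i,\beta_i)$ along a path in the line-component, completes the proof just as in the paper.
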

\begin{proof}
Since the morphisms $u$ and $v$  are in the same line-component of $\C$ by hypothesis, the corresponding objects  $[u]$ and $[v]$ are in the same connected component of $\G^\C_1$.    Assume first that there is a morphism $(\varphi,\psi)\colon[u]\to[v]$. By definition of the object $\G^\C_\bullet$, there is a morphism $w\colon b\to c$, such that $\varphi = w\circ u$ and $\psi = v\circ w$.   Let $\eta\colon\partial^*_1M\to\partial_0^*M$ be a natural isomorphism. If $w=1_b$, then set $\alpha =\eta_{[u]}$ and $\beta = \eta_{[v]}$. Any other morphism $(\varphi,\psi)\colon[u]\to[v]$, including the case where  $w$ is a non-invertible endomorphism of $b$, can be factored as $(\varphi,\psi) = (w,v)\circ (u,w)$, and one has the relations:
\[M(w)\circ\eta_{[u]} = \eta_{[w]}\circ M(u),\quad\text{and}\quad M(v)\circ\eta_{[w]} = \eta_{[v]}\circ M(w).\]
Thus, setting $\alpha \defeq \eta_{[w]}\circ\eta_{[u]}$ and $\beta \defeq \eta_{[v]}\circ\eta_{[w]}$, we obtain the desired relation $\beta\circ M(u) = M(v)\circ\alpha$.

In general, since the morphisms $u$ and $v$  are in the same line-component of $\C$,  there is a zig-zag of morphisms in $\G^\C_1$ connecting $[u]$ to $[v]$. The relation of the form above holds for each morphism in the zig-zag. Hence the claim follows by induction on the length of the chain connecting $[u]$ and $[v]$. 
\end{proof}

\begin{Rem}\label{Rem:G-check-0-cocycles}
Notice that Lemma \ref{Lem:G-check-0-cocycles} makes no claim of uniqueness of the isomorphisms $\alpha$ and $\beta$. Indeed, in general one cannot expect the choice of these maps to be unique. (Compare \cite[Theorem 5.3]{BLR})
\end{Rem}

\begin{Prop}
\label{Prop:G-check-0-cocycles-rk}
Let $\C$ be a small category and let $M, N\in k\C\mod$ be such that $X = [M] - [N]\in \GG^0(\C, k)$. Then $\rk X$ is a constant function on the set of all non-identity morphisms of each line-component of $\C$ and on the set of all identity morphisms of objects within each line-component. 
\end{Prop}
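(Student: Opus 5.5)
The plan is to reduce the virtual statement to the genuine one by moving $N$ to the other side of the cocycle equation. Since $X=[M]-[N]$ is a $0$-cocycle, $d^0X=0$ reads $\delta^0[M]-\delta^1[M]=\delta^0[N]-\delta^1[N]$ in $\Gr(k\G^\C_1)$. Rearranging gives
\[[\partial_1^*M\oplus\partial_0^*N]=[\partial_0^*M\oplus\partial_1^*N].\]
Set $A\defeq M\oplus N$ and consider the two $k\G^\C_1$-modules
\[L\defeq\partial_1^*M\oplus\partial_0^*N,\qquad R\defeq\partial_0^*M\oplus\partial_1^*N.\]
By Krull--Schmidt uniqueness in the split Grothendieck group (the modules are finite dimensional at each object), the equality of classes means $L\cong R$ as $k\G^\C_1$-modules. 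Fix a natural isomorphism $\eta\colon L\to R$.

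Next I would imitate the proof of Lemma \ref{Lem:G-check-0-cocycles} with $\eta$ in place of the isomorphism used there. Take a morphism $(w\circ u, v\circ w)\colon[u]\to[v]$ in $\G^\C_1$, with $u\colon a\to b$, $v\colon c\to d$ and $w\colon b\to c$. Evaluating at the objects $[u]$, $[w]$, $[v]$ and using the factorisation $(w,v)\circ(u,w)$, naturality of $\eta$ yields commutative squares. Along $(u,w)\colon[u]\to[w]$, the module $L$ acts by $M(u)\oplus N(w)$ and $R$ acts by $M(w)\oplus N(u)$. Along $(w,v)\colon[w]\to[v]$, the module $L$ acts by $M(w)\oplus N(v)$ and $R$ acts by $M(v)\oplus N(w)$.

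The rank comparison then goes as follows. Conjugate maps have equal rank, so the first square gives
\[\rk M(u)+\rk N(w)=\rk M(w)+\rk N(u),\]
that is, $\rk X(u)=\rk X(w)$. Similarly the second square gives $\rk X(w)=\rk X(v)$. Here $w$ may be an identity. In that case $[w]$ is not an object of $\G^\C_1$, and we directly get a single square from $[u]$ to $[v]$ with $L$ acting by $M(u)\oplus N(v)$ and $R$ acting by $M(v)\oplus N(u)$, which gives $\rk X(u)=\rk X(v)$.

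For the identity morphisms, compare $\dim L([u])$ with $\dim R([u])$. This gives
\[\dim M(a)+\dim N(b)=\dim M(b)+\dim N(a),\]
so $\rk X(1_a)=\rk X(1_b)$ for the source and target of every non-identity morphism in the component. Along a morphism $[u]\to[v]$ the objects $a,b,c,d$ are linked through $u,w,v$. Propagating along zig-zags then shows that $\rk X$ is constant on the non-identity morphisms of a line-component, and also on the identities of objects appearing in it.

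The main obstacle is the first step: the modules $L$ and $R$ only have equal classes, and I must deduce $L\cong R$. This needs cancellation in $\Gr(k\G^\C_1)$, which holds because the split Grothendieck group is free on indecomposables and Krull--Schmidt holds for pointwise finite-dimensional modules here. The secondary care point is the bookkeeping when $w$ is an identity, or when a morphism in the zig-zag is irreducible.
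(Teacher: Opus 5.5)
Your proposal is correct and follows essentially the same route as the paper. Both arguments rewrite the cocycle condition as an isomorphism $\partial_1^*M\oplus\partial_0^*N\cong\partial_0^*M\oplus\partial_1^*N$ of $k\G^\C_1$-modules, fix a natural isomorphism, read off rank equalities from the naturality squares and dimension equalities at each object $[u]$, and propagate along zig-zags in $\G^\C_1$. The only difference is one of detail: you handle a general morphism $[u]\to[v]$ explicitly via the factorisation $(w,v)\circ(u,w)$, with the case of $w$ an identity treated separately. The paper writes out only consecutive pairs $u\colon a\to b$, $v\colon b\to c$ and defers the general case to the argument of Lemma \ref{Lem:G-check-0-cocycles}.
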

\begin{proof}
By Lemma \ref{Lem:G-check-0-cocycles} the statement is clear if either $M$ or $N$ is trivial. Hence assume that neither of them is. Then by hypothesis $\delta^0[M]+\delta^1[N] = \delta^0[N]+\delta^1[M]$. Let 
\[\eta^X \colon M\circ\partial_0\oplus N\circ\partial_1 \to N\circ\partial_0\oplus M\circ\partial_1\]
be a natural isomorphism. Let $[u], [v]\in \G^\C_1$ be objects, where $u\colon a\to b$ and $v\colon b\to c$. 
\[\xymatrix{
(M\circ\partial_0\oplus N\circ\partial_1)[u]\ar@{=}[r]&M(b)\oplus N(a)\ar[rr]^{\eta^X_{[u]}}_{\cong}\ar[d]_{M(v)\oplus N(u)} && N(b)\oplus M(a) \ar[d]^{N(v)\oplus M(u)}\ar@{=}[r]& (N\circ\partial_0\oplus M\circ\partial_1)[u] \\
(M\circ\partial_0\oplus N\circ\partial_1)[v]\ar@{=}[r]&M(c)\oplus N(b) \ar[rr]_{\eta^X_{[v]}}^\cong&& N(c)\oplus M(b) \ar@{=}[r]& (N\circ\partial_0\oplus M\circ\partial_1)[v]
}\] 
The top isomorphism implies at once that
\[\rk X (1_a) = \rk[M](1_a) - \rk[N](1_a)  = \rk[M](1_b) - \rk[N](1_b)  = \rk X (1_b).\]
Similarly, the bottom isomorphism implies that $\rk X (1_b) = \rk X (1_c)$. The same inductive argument as in the proof of Lemma \ref{Lem:G-check-0-cocycles} now shows that $\rk X $ is constant on $1_x$ for all objects $x$ that are in the same line-component of $\C$.

Commutativity of the diagram also shows that 
\[\rk X (u) =  \rk[M](u) -  \rk[N](u) = \rk[M](v) - \rk[N](v) = \rk X (v).\]
Again, by induction and using line-connectivity, one shows that $\rk X (w)$ is constant for all morphisms $w$ that are in the same line-component of $\C$, and the proof is thus complete.
\end{proof}

Restricting to posets whose Hasse diagrams are rooted trees, the idea of the proof of Lemma \ref{Lem:G-check-0-cocycles} has another useful consequence.

\begin{Lem}\label{Lem:G-check-0-Kernels}
Let $\P$ be a poset with composition length at least $2$, whose Hasse diagram $\HH_\P$ is a rooted (not necessarily finite) tree with root $r$. Let $M\in k\P\mod$ be a module such that $[M]\in \GG^0(\P, k)$. Then the following statements hold. 
\begin{enumerate}[(1)]
\item For any two relations $x, y > a$ in $\P$, where  $a\neq r$, 
\[\Ker M(a<x) = \Ker M(a<y).\] \label{Lem:G-check-0-Kernels-1}
\item If $a < b$ and $c < d$ are in the same line-component of $\P$ and $a, c \neq r$, then $\Ker M(a<b)\cong\Ker M(c<d)$. \label{Lem:G-check-0-Kernels-2}
\item If, in addition, the root $r$ of $\P$ has a unique successor $s$, then  
\[\Ker M(r<s) = \Ker M(r<s')\]
 for any $s'>s>r$, such that $s'$ is not maximal. \label{Lem:G-check-0-Kernels-3}
\end{enumerate} 
\end{Lem}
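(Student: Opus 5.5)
The plan is to fix once and for all a natural isomorphism $\eta\colon \partial_1^*M\to\partial_0^*M$ of $k\G^\P_1$-modules, which exists because $[M]$ is a $0$-cocycle. All three statements will then be read off from naturality squares of $\eta$ along well-chosen morphisms of $\G^\P_1$. Recall how these morphisms look. For objects $[u]$ with $u\colon a\to b$ and $[v]$ with $v\colon c\to d$, a morphism $[u]\to[v]$ is induced by some $w\colon b\to c$ in $\P$, and it is the pair $(w\circ u, v\circ w)$. Moreover $\partial_1^*M([u])=M(a)$ and $\partial_0^*M([u])=M(b)$. Naturality of $\eta$ along this morphism therefore reads
\[\eta_{[v]}\circ M(w\circ u) = M(v\circ w)\circ\eta_{[u]}.\]
Since the $\eta$'s are isomorphisms, this identity gives $\Ker M(w\circ u) = \eta_{[u]}^{-1}\bigl(\Ker M(v\circ w)\bigr)$. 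This kernel-transport identity is the only tool I intend to use.

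For Part (1), I use that $a\neq r$ and $\HH_\P$ is a rooted tree, so $a$ has a (unique) predecessor $p$, and $[p<a]$ is a non-degenerate object of $\G^\P_1$. For any $x>a$, take the morphism $[p<a]\to[a<x]$ induced by $w=1_a$; this is the pair $(p<a,\,a<x)$. The identity above gives
\[M(a<x)\circ\eta_{[p<a]} = \eta_{[a<x]}\circ M(p<a),\]
hence $\Ker M(a<x)=\eta_{[p<a]}\bigl(\Ker M(p<a)\bigr)$. The right hand side does not depend on $x$, so $\Ker M(a<x)=\Ker M(a<y)$. For Part (2), I invoke Lemma \ref{Lem:G-check-0-cocycles}, which provides isomorphisms $\alpha,\beta$ with $\beta\circ M(a<b)=M(c<d)\circ\alpha$. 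Consequently $\alpha$ restricts to an isomorphism $\Ker M(a<b)\cong\Ker M(c<d)$. The hypothesis $a,c\neq r$ is not even needed for this step, though it is harmless.

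For Part (3), let $s'>s$ be non-maximal and choose $t>s'$. I compare two morphisms out of $[r<s]$. The first is $[r<s]\to[s'<t]$, induced by $w=(s<s')$; this is the pair $(r<s',\,s<t)$. The second is $[r<s]\to[s<t]$, induced by $w=1_s$; this is the pair $(r<s,\,s<t)$. Both have the same second coordinate $s<t$, so the kernel-transport identity yields
\[\Ker M(r<s')=\eta_{[r<s]}^{-1}\bigl(\Ker M(s<t)\bigr)=\Ker M(r<s).\]

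The main point to check is that every object used is genuinely an object of $\G^\P_1$, that is, a non-identity relation. This is exactly where the hypotheses enter: $a\neq r$ supplies $p$ in Part (1), non-maximality of $s'$ supplies $t$ in Part (3), and the tree structure guarantees these relations exist. The uniqueness of the successor $s$ is used only to make the statement meaningful, since otherwise $s'$ might not lie above $s$. One must also make sure that the second coordinate of the morphism really is $s<t$ in both cases, which holds because $\P$ is a poset and so compositions are unique.
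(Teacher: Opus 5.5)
Your proof is correct. Part (1) is the paper's argument; the paper uses an arbitrary $z<a$ rather than the predecessor, which makes no difference. Parts (2) and (3) take genuinely different routes.

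\textbf{Part (2).} The paper finds a non-minimal $y$ lying below both $a$ and $c$. It uses Part (1) to get $\Ker M(y<a)=\Ker M(y<c)$, and then transports this kernel along $\eta_{[y<a]}$ and $\eta_{[y<c]}$. You instead apply Lemma \ref{Lem:G-check-0-cocycles} directly: the relation $\beta\circ M(a<b)=M(c<d)\circ\alpha$ forces $\alpha$ to restrict to an isomorphism of kernels. This needs neither the tree structure nor $a,c\neq r$, and it avoids having to produce a common non-minimal lower bound.

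\textbf{Part (3).} The paper notes the inclusion $\Ker M(r<s)\subseteq\Ker M(r<s')$. It then shows the two kernels have equal dimension via the chain $\Ker M(r<s)\cong\Ker M(s<s')\cong\Ker M(s'<s'')\cong\Ker M(r<s')$, with the middle step by Part (2), and concludes by finite-dimensionality. You compare the morphisms $[r<s]\to[s'<t]$ and $[r<s]\to[s<t]$, which share the second component $s<t$. This identifies both kernels with $\eta_{[r<s]}^{-1}(\Ker M(s<t))$ on the nose. So you need no dimension count, no appeal to Part (2), and no use of the root. The same argument gives $\Ker M(a<b)=\Ker M(a<b')$ for any chain $a<b<b'$ in a poset with $b'$ non-maximal.

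\textbf{A small correction to your last paragraph.} The statement already quantifies over $s'>s$, so the uniqueness of the successor plays no role in the proof at all. It matters only later: there it guarantees that every non-maximal $s'\neq r,s$ lies above $s$, which is what makes $\K_M(r)$ in Definition \ref{Def:Kernel-submodule} independent of the choice of $y>r$.
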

\begin{proof}
Fix a natural isomorphism $\eta\colon\partial^*_1M\to\partial^*_0M$.   Since $a\neq r$, there is some object $z\in\P$ such that $z< a$.   Then there are obvious morphisms $[z<a]\to[a<x]$ and $[z<a]\to [a<y]$, so 
\[M(a<x)\circ\eta_{[z<a]}  = \eta_{[a<x]}\circ M(z<a),\quad\text{and}\quad M(a<y)\circ\eta_{[z<a]}  = \eta_{[a<y]}\circ M(z<a).\] 
It follows that \[\eta_{[a<x]}^{-1} \circ M(a<x)\circ \eta_{[z<a]} = \eta_{[a<y]}^{-1} \circ M(a<y)\circ \eta_{[z<a]},\] and since $\eta_{[z<a]} $ is an isomorphism, \[\eta_{[a<x]}^{-1} \circ M(a<x) = \eta_{[a<y]}^{-1} \circ M(a<y).\] Part \ref{Lem:G-check-0-Kernels-1} follows.

If $a < b$ and $c < d$ are in the same line-component of $\P$,  there is some $x<y$ in $\P$ such that $y\le a$ and $y\le c$. Since $y$ is not minimal, Part \ref{Lem:G-check-0-Kernels-1} implies that $\Ker M(y<a)=\Ker M(y<c)$. The relation $y<a<b$ implies that $M(a<b)\circ\eta_{[y<a]} = \eta_{[a<b]}\circ M(y<a)$. Similarly, the relation $y<c<d$ gives $M(c<d)\circ\eta_{[y<c]} = \eta_{[c<d]}\circ M(y<c) = \eta_{[c<d]}\circ M(y<a)$. Putting these observations together, we see that 
\[\Ker M(a<b) = \eta_{[y<a]}\left(\Ker M (y<a)\right),\quad\text{and}\quad \Ker M(c<d) = \eta_{[y<c]}\left(\Ker M(y<a)\right).\]
This proves Part \ref{Lem:G-check-0-Kernels-2}.

Since $s'$ is not maximal, there is some $s''>s'$ in $\P$. Clearly $\Ker M(r<s) \subseteq \Ker M(r<s')$, since $r<s<s'$. The same relation also implies  that $\Ker M(r<s)\cong \Ker M(s<s')$, while the relation $r<s'<s''$ implies similarly that $\Ker M(r<s')\cong \Ker M(s'<s'')$. Notice that the relations $s<s'$ and $s'<s''$ are in the same line-component of $\P$. Hence, by Part \ref{Lem:G-check-0-Kernels-2}, $\Ker M(s<s')\cong \Ker M(s'<s'')$. This shows that $\Ker M(r<s) \cong \Ker M(r<s')$, and the inclusion shows that they coincide. This proves Part \ref{Lem:G-check-0-Kernels-3}.
\end{proof}

Recall that a module $M\in k\C\mod$ is said to be virtually trivial if $M(\varphi)$ is the zero homomorphism for any non-identity morphism in $\C$. 

\begin{Defi}\label{Def:Kernel-submodule}
Let $\P$ be a poset whose Hasse diagram $\HH_\P$ is a rooted (not necessarily finite) tree. Assume further that the root of $\P$ has a unique successor. Let $M\in k\P\mod$ be a module such that $[M]\in \GG^0(\P, k)$. Define a virtually trivial submodule $\K_M\subseteq M$ by 
\[\K_M(a) = \begin{cases}
				\Ker(M(a<y)) & a\quad\text{not maximal}, \quad\text{any}\; y>a\\
				0 & a\quad\text{maximal}
		\end{cases}
\]
\end{Defi}

By Lemma \ref{Lem:G-check-0-Kernels}, the submodule $\K_M$ is well defined for all modules $M$ whose isomorphism class is a class in $\GG^0(\P, k)$. For any poset $\P$, we say that a relation  $a<b$ is a \hadgesh{maximal relation}, if $a$ is minimal and $b$ is maximal in $\P$.

\begin{Defi}\label{Def:M'}
Let $\P$ be a poset, whose Hasse diagram is a disjoint union of rooted trees, and let $M\in k\P\mod$ be any module. Define a submodule $M'\subseteq M$  by 
\[M'(x) = \begin{cases}
					M(x) & \text{if $x$ is not maximal}\\
					M(y<x)(M(y)) & \text{if $x$ is maximal and $y$ is its predecessor}
			\end{cases}
\]
with the obvious action on morphisms.
\end{Defi}

\begin{Prop}\label{Prop:G-check-0-quotients}
Let $\P$ be a poset of composition length at least $3$, whose Hasse diagram $\HH_\P$ is a rooted (not necessarily finite) tree whose root  has a unique successor. Let $M\in k\P\mod$ be a module such that $[M]\in \GG^0(\P, k)$. Let $M'\subseteq M$ be as defined in Definition \ref{Def:M'}, and let $\widebar{M}\in k\P\mod$ denote the quotient module $M'/\K_{M'}$. Then $\widebar{M}$ is a locally constant module. In particular, $\widebar{M}$ is projective.
\end{Prop}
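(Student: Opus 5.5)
The approach is to show directly that for every cover relation $a<b$ in $\P$ (an edge of $\HH_\P$) the induced map $\widebar{M}(a<b)\colon M'(a)/\K_{M'}(a)\to M'(b)/\K_{M'}(b)$ is an isomorphism. Since $\P$ is generated by its Hasse diagram, this shows that $\widebar{M}$ is locally constant. The projectivity statement then follows from Corollary \ref{Cor:Loc-const=constant}, because a rooted tree is a connected tree. The first step is to check that $\K_{M'}$ is well defined. Definition \ref{Def:Kernel-submodule} requires $[M']\in\GG^0(\P,k)$. Rather than prove this, I would work with the kernels of $M$ itself. Since $M'$ agrees with $M$ away from the maximal objects, $\Ker M'(a<y)=\Ker M(a<y)$ for every non-maximal $a$. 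So $\K_{M'}(a)=\K_M(a)$ for $a$ non-maximal, and $\K_{M'}(a)=0$ for $a$ maximal. Lemma \ref{Lem:G-check-0-Kernels} applied to $M$ (composition length $\ge 3\ge 2$) shows that these kernels do not depend on $y$, at least for $a\ne r$ and, via part \ref{Lem:G-check-0-Kernels-3}, at the root. Along the way I would check that $\K_{M'}$ is a submodule; this holds because a virtually trivial subfunctor only needs $M(a<b)(\K(a))\subseteq \K(b)$, and the image of a kernel element is $0$.

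Injectivity. Let $a<b$ be a cover relation. The kernel of $M'(a)\to M'(b)\to M'(b)/\K(b)$ is $M(a<b)^{-1}(\K(b))$. If $b$ is maximal, this is $\Ker M(a<b)=\K(a)$. If $b$ is not maximal, pick $y>b$. Then $M(a<b)^{-1}(\Ker M(b<y))=\Ker M(a<y)$, and this equals $\K(a)$ by Lemma \ref{Lem:G-check-0-Kernels}\ref{Lem:G-check-0-Kernels-1} for $a\neq r$. For $a=r$, $b$ is the unique successor $s$, and $y$ is a successor of $s$. If $y$ is not maximal, Lemma \ref{Lem:G-check-0-Kernels}\ref{Lem:G-check-0-Kernels-3} applies. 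If every successor of $s$ is maximal, the composition length hypothesis $\ge3$ fails along that branch, and I expect some care is needed there; this is where length $\ge 3$ enters.

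Surjectivity. If $b$ is maximal, $M'(b)$ is by definition the image of $M(a)$, so $M'(a)\to M'(b)$ is onto. If $b$ is not maximal, I would use the natural isomorphism $\eta\colon\partial_1^*M\to\partial_0^*M$ on the objects $[a<b]$ and $[b<y]$ of $\G^\P_1$. Along the morphism $[a<b]\to[b<y]$ of $\G^\P_1$, naturality gives
\[
M(b<y)\circ\eta_{[a<b]}=\eta_{[b<y]}\circ M(a<b).
\]
Hence $\operatorname{rk} M(a<b)=\operatorname{rk} M(b<y)$, so
\[
\dim M(b)-\dim\K(b)=\operatorname{rk}M(b<y)=\operatorname{rk}M(a<b)=\dim M(a)-\dim \K(a).
\]
The two quotients therefore have equal finite dimension, and the injection is a bijection. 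The main obstacle I expect is the root case: deriving $\Ker M(r<s)=\Ker M(r<y)$ when $s$ has a successor $y$ that is maximal. The plan there is to use another branch of length $\ge3$ through $s$ together with Lemma \ref{Lem:G-check-0-Kernels}\ref{Lem:G-check-0-Kernels-1} at $s$, since $\Ker M(s<y)$ does not depend on $y$. This reduces $\Ker M(r<y)$ to $\Ker M(r<y')$ for a non-maximal $y'$, and part \ref{Lem:G-check-0-Kernels-3} handles that case.
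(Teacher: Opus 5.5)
Your proof is correct and is essentially the paper's argument. Injectivity of $\widebar{M}(a<b)$ comes from $M(a<b)^{-1}(\Ker M(b<y))=\Ker M(a<y)=\K(a)$, which the paper phrases through the monomorphisms $\varphi^v_a$; the equality of dimensions comes from naturality of $\eta$ along $[a<b]\to[b<y]$; and the maximal case uses the surjectivity built into $M'$. Your additional checks also fill in points the paper passes over silently: that $\K_{M'}=\K_M$, so one never needs $[M']\in\GG^0(\P,k)$, and that $\Ker M(r<y)=\Ker M(r<s)$ also for maximal $y$, via parts (1) and (3) of Lemma~\ref{Lem:G-check-0-Kernels} applied at $s$ and at a non-maximal successor of $s$.
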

\begin{proof}
Fix a natural isomorphism $\eta\colon\partial_1^*M\to\partial_0^*M$ as before. Since $\HH_\P$ is a rooted tree whose root has a unique successor, the full subcategory of  $\G^\P_1$ whose objects are  all non-maximal relations   in $\P$ is connected, and $\G^\P_1$ has an isolated singleton component for each maximal relation in $\P$ (i.e. for each leaf of $\HH_\P$). Let $a<b$ be any non-maximal relation in $\P$. Assume first that $b$ is not maximal, 
so that $M$ and $M'$ coincide on $a$ and $b$ and the relation between them, and there is some  $v>b$ in $\P$.  The relation $a<b<v$ implies that
$M'(b<v)\circ\eta_{[a<b]} = \eta_{[b<v]}\circ M'(a<b)$.
It follows that 
\[\eta_{[a<b]}(\K_{M'}(a)) = \eta_{[a<b]}(\Ker M'(a<b)) = \Ker M'(b<v) = \K_{M'}(b),\]
where the first and third equality hold by Definition \ref{Def:Kernel-submodule}, and the second  since $\eta$ is a natural isomorphism.
Thus $\eta_{[a<b]}$ induces an isomorphism $\widebar{M}(a)\to\widebar{M}(b)$. Also, the projection $\pi_{-}\colon M'(-)\to\widebar{M}(-)$ is clearly a natural transformation. Next, consider the diagram,
\[\xymatrix{
\widebar{M}(a)\ar[drr] &&& M'(a)\ar@{->>}[rrr]^{\pi_a}\ar[dr]^{M'(a<v)}\ar[dl]_{M'(a<b)}\ar@{->>}[lll]_{\pi_a} &&& \widebar{M}(a)\ar@/^2pc/[ddlll]^{\widebar{M}(a<b)}\ar@{>->}[dll]_{\varphi^v_{a}}\\
&&M'(b)\ar@{->>}[dr]^{\pi_b}\ar[rr]^{M'(b<v)} && M'(v)\\
&&&\widebar{M}(b)\ar@{>->}[ur]^{\varphi^v_{b}}
}\]
All triangles commute by construction, except possibly the relation $\varphi^v_{b}\circ\widebar{M}(a<b) = \varphi^v_{a}$. But this relation holds if and only if it holds when precomposed with $\pi_a$, since $\pi_a$ is an epimorphism, and this follows easily by a diagram chase. Since $\varphi^v_{a}$ is a monomorphism, so is $\widebar{M}(a<b)$, and since its domain and codomain are isomorphic $\widebar{M}(a<b)$ is an isomorphism.

Next, assume that $b$ is maximal, and let $x$ be its predecessor. Then $M'(x<b)\colon M'(x)\to M'(b)$ is onto by definition, with kernel given by $\Ker M(x)$. Hence $\widebar{M}(x<b)\colon\widebar{M}(x) \to \widebar{M}(b)$ is an isomorphism. If $a\neq x$, then $a<x<b$, and by the previous argument $\widebar{M}(a<x)$ is an isomorphism, and hence so is the composition $\widebar{M}(a<b)$. This shows that $\widebar{M}$ is locally constant.

Finally, by Corollary \ref{Cor:Loc-const=constant}, $\widebar{M}$ is isomorphic to a finite sum of constant modules of the form $\underline{k}$, and is therefore projective.
\end{proof}

Theorem \ref{Th:0-cocycle-trees}, which we restate in more detail below, now follows as a consequence of Proposition \ref{Prop:G-check-0-quotients}. Recall that for a  category $\C$ and a subset $X\subseteq\obj(\C)$, we denote by $S_X$ the virtually trivial $k\C$-module that takes the value $k$ on each object $x\in X$ and zero everywhere else. If $X = \{x\}$ or $\obj(\C)$, we denote $S_X$ by $S_x$ and $S_\C$, respectively.

\begin{Thm}\label{Thm:0-cocycle-trees}
Let $\P$ be a finite poset of composition length at least $3$, whose Hasse diagram $\HH_\P$ is a rooted  tree. Assume that the  root  of $\HH_\P$ has a unique successor. Let $M\in k\P\mod$ be a module such that $[M]\in \GG^0(\P, k)$. Then with the notation of Proposition \ref{Prop:G-check-0-quotients}, 
\[M \cong  \bigoplus_n \underline{k}\oplus \bigoplus_d S_\P,\]
where  $d = \dim(\K_M(x))$ for any non-maximal $x\in \P$ and $n = \rk M(x<y)$ for any non-identity relation in $\P$. 
\end{Thm}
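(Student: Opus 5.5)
The plan is to build the decomposition from two pieces: the locally constant quotient from Proposition \ref{Prop:G-check-0-quotients} and the virtually trivial kernel submodule, followed by a splitting argument. Since $\P$ is finite with a root $r$, the constant module $\underline{k}$ is the indecomposable projective $P_r$ at the root. By Proposition \ref{Prop:G-check-0-quotients}, $\widebar{M} = M'/\K_{M'}$ is locally constant, hence $\widebar{M}\cong\bigoplus_n\underline{k}$ by Corollary \ref{Cor:Loc-const=constant}, and it is projective.

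\textbf{Splitting off the constant part.} First I show that $M = M'$. At a maximal $x$ with predecessor $y$, compare dimensions using Proposition \ref{Prop:G-check-0-cocycles-rk} applied to $X=[M]$. The maximal relations form isolated singletons in $\G^\P_1$, so they give no information there. Instead I would use the naturality square for $\eta$ on the morphism $[z<y]\to[y<x]$, with $z<y$, which exists because the composition length is at least $3$. This gives $M(y<x)\circ\eta_{[z<y]}=\eta_{[y<x]}\circ M(z<y)$, so $\rk M(y<x)=\rk M(z<y)$ and $\dim M(x)=\dim M(y)$. The rank statement, applied through a chain, should show $\dim M(x) = \rk M(y<x)$, i.e. $M(y<x)$ is onto. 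I expect this surjectivity at the leaves to be the delicate point. If it fails in general, I would keep $M'$ and handle the cokernel at leaves separately as singleton summands $S_x$. That would contradict the stated form, so I expect the cocycle condition (via $\eta_{[y<x]}\colon M(y)\cong M(x)$, which already gives $\dim M(x)=\dim M(y)$) to force it.

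Granting $M=M'$, the projection $M\to\widebar{M}$ splits because $\widebar{M}$ is projective. Hence $M\cong \widebar{M}\oplus\K_M$, where I use that $\K_{M'}=\K_M$ when $M=M'$.

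\textbf{Identifying the kernel part.} $\K_M$ is virtually trivial by construction, so it is the direct sum over objects $x$ of $\K_M(x)\otimes S_x$. It vanishes at maximal objects, and at non-maximal $x$ its dimension is $\dim\Ker M(x<y)$. By Lemma \ref{Lem:G-check-0-Kernels}, parts (2) and (3), together with the fact that all non-maximal relations form one line-component, this dimension is a constant $d$ on all non-maximal objects, the root included. The stated form requires $\K_M$ to be a sum of copies of $S_\P$, supported everywhere. For that I would regroup the summand as $\bigoplus_d S_{\P^{\circ}}$, with $\P^\circ$ the non-maximal objects, and then reconcile this with the maximal objects. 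The natural attempt is to transfer dimension: at a leaf $x$, $\dim M(x)=\dim M(y)=n+d$ by the $\eta$ isomorphism, while the $\underline{k}$ part contributes only $n$. So $M(x)$ has an extra $d$ dimensions, which contradicts $M=M'$ unless $d=0$, or unless the definitions must be read with $M(y<x)$ having kernel $\K_M(y)$.

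\textbf{Resolving the leaf bookkeeping.} The main obstacle is therefore the careful bookkeeping at maximal objects. I would first try: $\dim M(x)=n+d$, the image of $M(y<x)$ has dimension $n$, and so $M(x)$ decomposes as the image plus a $d$-dimensional complement $C$. Choose the complement $C$ so that, together with $\K_M$ on the non-maximal objects, it forms a submodule isomorphic to $\bigoplus_d S_\P$. This works because every morphism into $x$ kills $\K_M$, so the virtually trivial structure is automatic. The constant part $\widebar{M}$ still splits off by projectivity of $\underline{k}=P_r$. This yields $M\cong\bigoplus_n\underline{k}\oplus\bigoplus_d S_\P$ with $n=\rk M(x<y)$, constant by Proposition \ref{Prop:G-check-0-cocycles-rk}.
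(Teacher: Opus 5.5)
Your last paragraph gives a correct proof, and it is essentially the paper's argument. The plan you start from, however, is false and should be discarded rather than ``granted''.

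\textbf{The first step fails.} You cannot show $M=M'$: whenever $d>0$, the maps $M(y<x)$ into the leaves are not onto. For instance, $S_\P$ is a $0$-cocycle by Corollary \ref{Cor:S_C-in-G0}, yet $S_\P(y<x)=0$ while $S_\P(x)=k$.

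Your worry that splitting off the leaf cokernels as singleton summands ``would contradict the stated form'' is also unfounded. $S_\P$ is virtually trivial, so $S_\P=\bigoplus_{x\in\P}S_x$, and the leaf singletons are exactly what completes $\K_M$ to copies of $S_\P$.

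This is what the paper does:
\begin{enumerate}[(1)]
\item It writes $M/M'\cong\bigoplus_{x\ \text{maximal}}\bigoplus_{n_x}S_x$. Since $S_x$ is the indecomposable projective at a maximal $x$, this gives $M\cong M'\oplus M/M'$.
\item It splits $M'\cong\widebar{M}\oplus\K_{M'}$ using projectivity of $\widebar{M}$.
\item It computes $n_x=\dim M(y)-\rk M(y<x)=\dim\K_M(y)=d$ via $\eta_{[y<x]}$, and regroups $\K_{M'}\oplus M/M'\cong\bigoplus_d S_\P$.
\end{enumerate}

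\textbf{Your final version.} You do the same thing with a single splitting. Instead of invoking projectivity of $S_x$, you pick a vector-space complement $C_x$ of $\Ima M(y<x)$ at each leaf. This is legitimate because leaves have no outgoing non-identity morphisms, so $\K_M$ together with the $C_x$ is a virtually trivial submodule $K\cong\bigoplus_d S_\P$.

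The one check you omit is $M/K\cong\widebar{M}$. At non-maximal objects this is immediate. At a leaf, the induced map $M(y)/\K_M(y)\to M(x)/C_x$ is an isomorphism, because $\Ker M(y<x)=\K_M(y)$ and $M(x)=\Ima M(y<x)\oplus C_x$. Projectivity of $\widebar{M}\cong\bigoplus_n\underline{k}$ then splits $0\to K\to M\to M/K\to 0$.

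Finally, every non-identity relation has rank $n$, including the maximal relations $r<x$, which are isolated in $\G^\P_1$. This follows from the decomposition itself, not from Proposition \ref{Prop:G-check-0-cocycles-rk}, which only gives constancy within a line-component.
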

\begin{proof}
Write
\[M/M' = \bigoplus_{x\in\obj(\P)\atop\text{maximal}}\bigoplus_{n_x}S_x,\]
where $n_x=\dim(\coKer(M(y<x)))$ for $y$ the predecessor of $x$. When $x$ is maximal, each module $S_x$ is also projective, and hence $M/M'$, being a finite sum of projective modules, is itself projective. 
Consider the short exact sequence of $k\P$-modules
\[0\to M'\to M\to M/M'\to 0.\]
Since $M/M'$ is projective, the sequence splits and so $M\cong M'\oplus M/M'$.

Next, by Proposition  \ref{Prop:G-check-0-quotients}, $\widebar{M}$ is locally constant, and hence isomorphic to a finite direct sum of constant modules of the form $\underline{k}$. There is a short exact sequence of $k\P$-modules
\[0\to \K_{M'}\to M'\to \widebar{M}\to 0\]
and since $\widebar{M}$ is projective, the sequence is split. Now, $\K_{M'}$ is virtually trivial and  $\dim(\K_{M'}(x)) = \dim(\K_{M'}(y))$ for all non-maximal $x,y\in\P$. Furthermore, since the rank invariant of $M$ is constant by Proposition \ref{Prop:G-check-0-cocycles-rk}, $\dim((M/M')(z)) = \dim(\K_{M'}(x))$ for any maximal object $z$ and an arbitrary, non-maximal, object $x$ in $\P$. Thus 
\[M/M'\oplus \K_{M'} \cong \bigoplus_d S_\P,\]
where $d = \dim(\K_{M'}(x))$ for any non-maximal $x\in \P$. Thus we have 
\[M \cong M'\oplus M/M' \cong \widebar{M}\oplus \K_{M'} \oplus M/M' \cong \bigoplus_n \underline{k}\oplus \bigoplus_d S_\P,\]
where $n = \rk M(x<y)$ for any non-identity relation in $\P$. 
\end{proof}

%%%%%%%%%%%%%%%%%%%%%%%%%%%%%%%%%%%%
%%%%%%%%%%%%%%%%%%%%%%%%%%%%%%%%%%%%%

\subsection{\texorpdfstring{Higher-dimensional $\EE^*$ and $\GG^*$ groups}{Higher dimensional E and G groups}}
\label{SSec:Cohomology-chEG}

We observe here that for categories with finite composition length,  the higher-dimensional $\EE^*$ and $\GG^*$ cohomology is only interesting in a finite range of dimensions. Clearly,  if $N$ is the maximal composition length in $\C$, then $\E^\C_n=\G^\C_n=\emptyset$ for $n>N$. Hence in this case $\GG^n(\C, k) = \EE^n(\C,k) = 0$ for all $n>N$. However, a bit more can be said in both cases. Recall that a morphism $\varphi$ in a category $\C$ is said to be irreducible if for any expression of the form $\varphi = \alpha\circ\beta$, either $\alpha$ or $\beta$ is an identity morphism. 

\begin{Prop}\label{Prop:ch-G-mid-dim}
Let $\C$ be a finite direct category. Assume that  $N$ is the composition length of $\C$, for some positive integer $N$. Then $\EE^n(\C, k) = \GG^n(\C,k) = 0$ for all $n>N$. Furthermore,  the  map  
\[\iota^*\colon\GG^n(\C, k)\xto{\cong}\RH^n(\G^{\C,\emptyset},k) \cong H^n(|\C|, \Z)\]
induced by the inclusion $\iota\colon \G^{\C, \emptyset}_\bullet\subseteq \G^{\C}_\bullet$ is an isomorphism for all $n\geq n_0 \defeq \lfloor\frac{N}{2}\rfloor+1$ and an epimorphism for $1\le n\le \lfloor\frac{N}{2}\rfloor$. \label{Prop:ch-G-mid-dim-1}
\end{Prop}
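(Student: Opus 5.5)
The plan is to compare the cochain complex $\Gr(k\G^\C_\bullet)$ with the discrete subobject $\G^{\C,\emptyset}_\bullet$ through a short exact sequence of semi-cosimplicial groups. For the vanishing statement, there are no non-degenerate $n$-simplices in $|\C|$ when $n>N$. So $\E^\C_n=\G^\C_n=\emptyset$, the cochain groups vanish above $N$, and hence $\EE^n(\C,k)=\GG^n(\C,k)=0$ for $n>N$. The substance is the comparison with $H^*(|\C|,\Z)$, which by Example \ref{Ex:Discrete-cat-coho} is $\RH^*(\G^{\C,\emptyset}_\bullet,k)$.

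\textbf{Step 1: $\G^\C_n$ is discrete for $n\ge n_0$.} Let $\underline{x}\to\underline{y}$ be a non-identity morphism in $\G^\C_n$ between non-degenerate simplices. It is induced by a morphism $w\colon x_n\to y_0$, so we get a composable chain
\[x_0\to\cdots\to x_n\xto{w} y_0\to\cdots\to y_n\]
of length $2n+1$. All its morphisms except possibly $w$ are non-identity. If $w=1$, the morphism is not the identity, so $\underline{x}\neq\underline{y}$ and the chain still gives a non-degenerate $2n$-simplex. In a direct category composites of non-identity morphisms are non-identity, so such chains have length at most $N$. Hence non-identity morphisms can exist only when $2n\le N$. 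For $n\ge \lfloor N/2\rfloor+1$ we get $\G^\C_n=\G^{\C,\emptyset}_n$, and the restriction $\iota^*$ is the identity on cochains in these degrees.

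\textbf{Step 2: a cochain splitting.} We define $s\colon C^n_{\mathrm{nd}}(|\C|,\Z)\cong\Gr(k\G^{\C,\emptyset}_n)\to\Gr(k\G^\C_n)$ by $\chi_{\mathbf v}\mapsto[S_{\mathbf v}]$, the singleton module on the non-degenerate simplex $\mathbf v$. Since $\C$ is direct, $\{\mathbf v\}$ is convex, so $S_{\mathbf v}$ is a module; moreover $\iota^*\circ s=\Id$. The key check is that $s$ is a cochain map, i.e.\ $\partial_i^{-1}(\{\mathbf v\})$ is a disjoint union of singletons in $\G^\C_{n+1}$.

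For $i=0$, two preimages $[\alpha|\mathbf v]$ and $[\alpha'|\mathbf v]$ share their last object. A morphism between them would require a morphism from the end of $\mathbf v$ back to the source of $\alpha'$, which would produce a cycle, impossible in a direct category. The case $i=n+1$ is symmetric. For $0<i<n+1$, the first and last objects agree, and the same argument applies (compare Lemma \ref{Lem:preim-simples}). Then $\partial_i^*S_{\mathbf v}$ is the sum of the singleton modules on the preimage, which matches the coface in the normalized (non-degenerate) cochains of $|\C|$.

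\textbf{Step 3: conclusion.} Consider $0\to K^\bullet\to\Gr(k\G^\C_\bullet)\xto{\iota^*}\Gr(k\G^{\C,\emptyset}_\bullet)\to0$. It is exact and split by the cochain map $s$. Therefore $\iota^*$ is an epimorphism on cohomology in every degree, in particular for $1\le n\le\lfloor N/2\rfloor$. Moreover $H^*(\Gr(k\G^\C_\bullet))\cong H^*(K)\oplus H^*(|\C|,\Z)$.

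By Step 1, $K^n=0$ for $n\ge n_0$, so $H^n(K)=0$ for $n\ge n_0$. This gives the isomorphism in that range. The only delicate point is $n=n_0$, where $H^{n_0}(K)=0$ since $K^{n_0}$ itself vanishes. I expect the main obstacle to be Step 2: verifying that the non-degenerate chain complex model of $H^*(|\C|,\Z)$ used in Example \ref{Ex:Discrete-cat-coho} matches the coface formula for singleton modules, and that non-degeneracy is preserved by faces in $\G$ (which uses that $\C$ is direct).
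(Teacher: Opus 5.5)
Your proof is correct. Step 1 is the paper's own discreteness argument. Your route to surjectivity, and to the isomorphism at $n_0$, is genuinely different.

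\textbf{The paper's route.} It obtains the epimorphism by quoting Theorem \ref{Thm:Higher-G}: the map $\widetilde{\GG}^n(\C,k)\to H^n(|\C|,\Z)$ is onto for $n\ge 1$, a fact that rests on the reduced complex $\Gr_0(k\widetilde{\G}^\C_\bullet)$ and its acyclicity. It then factors this map through $\GG^n(\C,k)$ using Diagram (\ref{Diag:RH*-diag}). For $n\ge n_0$ it notes that $\G^\C_n=\G^{\C,\emptyset}_n$ and asserts that coboundaries in degree $n_0$ correspond.

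\textbf{Your route.} You build the section $s$ directly on $\G^\C_\bullet$. This is the non-degenerate analogue of the singleton subcomplex $\I^s(k\widetilde{\G}^\C_\bullet)\red$ used in the proof of Theorem \ref{Thm:Higher-G}. Here homogeneous objects need no special treatment, because even in degree $0$ the preimages $\partial_0^{-1}(\{x\})$ and $\partial_1^{-1}(\{x\})$ are already discrete in $\G^\C_1$.

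\textbf{What your route buys.} It is self-contained and gives a stronger conclusion. You get a splitting
\[
\GG^*(\C,k)\cong H^*(K^\bullet)\oplus H^*(|\C|,\Z),
\]
with $\iota^*$ split surjective in every degree, including $n=0$. There $s$ sends the constant $0$-cocycle to $[S_\C]$, consistent with Corollary \ref{Cor:S_C-in-G0}. It also makes the degree-$n_0$ step transparent. The paper's claim that coboundaries match there implicitly requires $\iota^*$ to be onto on $(n_0-1)$-cochains; your $s$ supplies exactly that, and in your formulation the step reduces to $K^{n_0}=0$.

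\textbf{The step you flagged as the main obstacle.} This is routine. In a direct category, faces of non-degenerate simplices are non-degenerate. So $\chi_{\mathbf v}\circ\partial_i=\sum_{\mathbf u\in\partial_i^{-1}(\mathbf v)}\chi_{\mathbf u}$ is exactly the coface of the normalized cochains, which is the identification the paper takes for granted in Example \ref{Ex:Discrete-cat-coho}. The paper's version is shorter only because it leans on the heavier theorem proved earlier.
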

\begin{proof}
First, it is clear that if $N$ is the maximal composition length in $\C$, then $\E^\C_n=\G^\C_n=\emptyset$ for $n>N$. The first statement follows.

By Theorem \ref{Thm:Higher-G}, the corresponding map 
\[\widetilde{\iota^*}\colon \widetilde{\GG}^n(\C, k)\to H^n(|\C|,\Z)\] is an epimorphism for all $n\geq 1$. Hence, by commutativity of Diagram (\ref{Diag:RH*-diag}),  $\iota^*$ is an epimorphism for all $n\geq 1$. Next, fix some $n>\lfloor N/2\rfloor$. Then any object $[u_1|u_2|\ldots|u_n] \in\G^\C_n$ is isolated. This is because the existence of any  morphism into or out of such an object  implies that $\C$ contains a chain of irreducible morphisms of length at least $2n > N$, which contradicts maximality of $N$. Hence, for such $n$,  one has $\G^\C_n\cong \G^{\C,\emptyset}_n$. Notice that  $n_0$ is the minimal integer for which $\G^\C_{n}$ is a discrete category for every $n\geq n_0$ and that the isomorphism $\Gr(k\G^\C_{n_0})\to \Gr(k\G^{\C,\emptyset}_{n_0})$ maps the subgroup of coboundaries in the domain to the corresponding subgroup of coboundaries in the codomain. The second statement follows. 
\end{proof}

%%%%%%%%%%%%%%%%%%%%%%%%%%%%%%%%%%%%%%%%%%%
%%%%%%%%%%%%%%%%%%%%%%%%%%%%%%%%%%%%%%%%%%%
%%%%%%%%%%%%%%%%%%%%%%%%%%%%%%%%%%%%%%%%%%%
%%%%%%%%%%%%%%%%%%%%%%%%%%%%%%%%%%%%%%%%%%%
%%%%%%%%%%%%%%%%%%%%%%%%%%%%%%%%%%%%%%%%%%%
%%%%%%%%%%%%%%%%%%%%%%%%%%%%%%%%%%%%%%%%%%%
%%%%%%%%%%%%%%%%%%%%%%%%%%%%%%%%%%%%%%%%%%%
%%%%%%%%%%%%%%%%%%%%%%%%%%%%%%%%%%%%%%%%%%%
%%%%%%%%%%%%%%%%%%%%%%%%%%%%%%%%%%%%%%%%%%%
%%%%%%%%%%%%%%%%%%%%%%%%%%%%%%%%%%%%%%%%%%%
%%%%%%%%%%%%%%%%%%%%%%%%%%%%%%%%%%%%%%%%%%%

\section{Some Computational Examples}
\label{Sec:Computational}

In this section we present cohomology computations for some simple families of examples.  Computing positive-dimensional cohomology is very challenging in general, but we are able to do it in some cases, particularly when  we can show that higher-dimensional cohomology vanishes. In some cases we consider  the representation cohomology generated by modules of bounded dimension $1$ (See Definition \ref{Def:MaxDim-d} and the following discussion).

%%%%%%%%%%%%%%%%%%%%%%%%%%%%%%%%%%%%%%%%%%%%%%%%
%%%%%%%%%%%%%%%%%%%%%%%%%%%%%%%%%%%%%%%%%%%%%%%%

\subsection{\texorpdfstring{The Poset $[n]$}{The Poset n}} 
\label{SSec:ch-no_ch-GE-[n]}
The category $[n]$ is, of course, the simplest example, but as we shall observe, it presents some interesting behaviour in dimension $0$, and is actually highly nontrivial for $\EE^*$ and $\GG^*$ theory, as our machine calculations demonstrate.   We start with $\widetilde{\EE}^*$ and $\widetilde{\GG}^*$. 

\begin{Prop}\label{Prop:GE-[n]}
For any $n\geq 2$ and a field $k$, 
\[\widetilde{\EE}^i([n], k) \cong \begin{cases} 	\Z & i=0\\
							0 & i>0\end{cases}
\quad\text{and}\quad 
\widetilde{\GG}^i([n],k) \cong \begin{cases}
					\Z & i=0\\
					\Z^n & i=1\\
					0 & i>1 \end{cases}.
\]
\end{Prop}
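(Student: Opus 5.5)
Both computations reduce to results already proved, applied to $\C=[n]$. The nerve $|[n]|$ is a simplex, so it is contractible: $\pi_1(|[n]|)=1$ and $H^i(|[n]|,\Z)=0$ for $i>0$. Moreover $[n]$ is a finite connected direct category.

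For $\widetilde{\EE}^*$ I invoke Theorem \ref{Thm:E0G0} directly. Part \ref{Thm:E0G0-3} gives acyclicity, so $\widetilde{\EE}^i([n],k)=0$ for $i>0$. It also gives $\widetilde{\EE}^0([n],k)\cong\Gr(k\Gamma)$ with $\Gamma$ trivial, and $\Gr(k)=\Z$, generated by $[k]$. Equivalently, Corollary \ref{Cor:Loc-const=constant} says the only indecomposable locally constant module is $\underline{k}$. Part \ref{Thm:E0G0-2} then gives $\widetilde{\GG}^0([n],k)\cong\Z$ as well.

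For $\widetilde{\GG}^i$ with $i\ge 2$, Theorem \ref{Thm:Higher-G}\ref{Thm:Higher-G-3} gives $\widetilde{\GG}^i([n],k)\cong H^i(|[n]|,\Z)=0$.

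The only real work is $i=1$. I will use the final assertion of Theorem \ref{Thm:Higher-G}, that $\I^s(k\widetilde{\G}^{[n]}_\bullet)\red\to C^*(|[n]|,\Z)$ induces an isomorphism in cohomology for all degrees $\ge1$, together with Part \ref{Thm:Higher-G-2}. Part \ref{Thm:Higher-G-2} identifies $\widetilde{\GG}^1$ with $H^1(\I^s(k\widetilde{\G}^{[n]}_\bullet))=H^1(\I^s\red)$.

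I will compute $H^1(\I^s\red)$ directly instead of comparing it with $H^1(|[n]|)=0$. The reason is that the isomorphism with $C^*$ cannot hold in degree $1$: $\I^s\red$ vanishes in degree $0$, so the comparison map is only surjective there, which is exactly why Part \ref{Thm:Higher-G-3} claims an epimorphism for $n=1$. From the proof of Theorem \ref{Thm:Higher-G}:
\begin{itemize}
\item $\I^s\red$ is zero in degree $0$;
\item for $m\ge1$, $\I^s\red$ is freely generated by the singletons $S_{\mathbf v}$ on non-degenerate simplices, and it maps isomorphically onto the normalized cochains $C^m\red(|[n]|,\Z)$, compatibly with coboundaries.
\end{itemize}
Hence $H^1(\I^s\red)$ equals the group of normalized $1$-cocycles $Z^1$ of $|[n]|$, with nothing to quotient by.

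Since $H^1(|[n]|,\Z)=0$, we get $Z^1=B^1=d^0(C^0)$. Now $C^0\cong\Z^{n+1}$, and the kernel of $d^0$ is the constant functions, a copy of $\Z$ because $|[n]|$ is connected. So $B^1\cong\Z^{n+1}/\Z\cong\Z^n$, which is free of rank $n$. This gives $\widetilde{\GG}^1([n],k)\cong\Z^n$.

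The step I expect to need the most care is checking that the identification of $\I^s\red$ with normalized cochains really is a cochain isomorphism in degrees $1$ and $2$. Concretely, I must check that under the map $S_{\mathbf v}\mapsto\chi_{\mathbf v}$, the terms of $d^1 S_{[i<j]}$ supported on degenerate simplices (such as $[1_i|i<j]$) are exactly those discarded by normalization. If this is awkward, the fallback is a direct check: compute $d^1$ on the $\binom{n+1}{2}$ generators $S_{[i<j]}$ and verify that the cocycles are spanned by the $n$ elements $\sum_{i<j}S_{[i<j]}$, indexed by $j=1,\dots,n$.
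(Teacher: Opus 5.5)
Your proposal is correct. For $i=0$ and $i\ge 2$ it is the paper's argument (Theorems \ref{Thm:E0G0} and \ref{Thm:Higher-G}); the two routes differ only in degree $1$.

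The paper writes out $d^1(S_{ij})$ and solves the system $x_{ab}+x_{bc}=x_{ac}$ by hand, with every $x_{i,i+k}$ determined by the free variables $x_{j,j+1}$. It then checks separately that no $d^0X$ with $X\in\Gr(k[n])$ is a nonzero combination of singletons. You instead identify $\Ker\bigl(d^1|_{\I^s(k\widetilde{\G}^{[n]}_1)}\bigr)$ with the normalized $1$-cocycles $Z^1$ of the simplex. You read off its rank from $H^1(|[n]|,\Z)=0$ and $H^0(|[n]|,\Z)=\Z$, giving $Z^1=B^1\cong\Z^{n+1}/\Z$.

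The two are the same computation in different language: the paper's parametrisation by the $x_{j,j+1}$ is exactly writing a cocycle as $d^0$ of a potential on the vertices. Your version buys an explanation of the answer as ``number of objects minus one'', and it extends directly. For any finite connected direct category $\C$, Theorem \ref{Thm:Higher-G}\ref{Thm:Higher-G-2} plus your identification gives $\widetilde{\GG}^1(\C,k)\cong Z^1$, an extension of $H^1(|\C|,\Z)$ by $\Z^{|\obj(\C)|-1}$. You are also right that the closing sentence of Theorem \ref{Thm:Higher-G} fails in degree $1$, and this proposition is a counterexample.

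The paper's extra check that $\Ima(d^0)$ meets $\I^s(k\widetilde{\G}^{[n]}_1)$ trivially is precisely the vanishing of the connecting map $H^0(\Gr_0(k\widetilde{\G}^{[n]}_\bullet))\to H^1(\I^s(k\widetilde{\G}^{[n]}_\bullet))$. You absorb this into Part \ref{Thm:Higher-G-2}, which is legitimate. It holds in general: for $M\neq 0$ indecomposable, both $\partial_0^*M$ and $\partial_1^*M$ are indecomposable and contain a homogeneous object $[1_b]$ in their support, so no nonzero $d^0X$ can lie in $\I^s$.

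Two small points. The step you flagged is harmless: $S_{iij}$ occurs in $\delta^0S_{ij}$ and $\delta^1S_{ij}$ with opposite signs, and $S_{ijj}$ in $\delta^1S_{ij}$ and $\delta^2S_{ij}$, so both cancel. Conceptually, Lemma \ref{Lem:preim-simples}\ref{Lem:preim-simples-b} gives $\delta^iS_{\mathbf v}=\sum_{\partial_i\mathbf u=\mathbf v}S_{\mathbf u}$, so $S_{\mathbf v}\mapsto\chi_{\mathbf v}$ is a map of cosimplicial groups.

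Your fallback basis is wrong as written. For $1\le j\le n-1$ the element $\sum_{i<j}S_{ij}$ is not a cocycle, since its coboundary has coefficient $1$ on $S_{ajc}$ for $a<j<c$. The correct basis is $\sum_{i<m\le j}S_{ij}$ for $m=1,\dots,n$, the coboundaries of the indicator functions of $\{m,\dots,n\}$.
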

\begin{proof}
By Theorem \ref{Th:E0G0},  $\widetilde{\EE}^0([n],k)\cong\widetilde{\GG}^0([n],k)\cong\Gr(k)\cong \Z$  is generated by the class of the constant module $\underline{k}$, and  $\widetilde{\EE}^i([n],k)= 0$ for any $i>0$.  Also by Theorem \ref{Th:Higher-G}, $\widetilde{\GG}^i([n],k) \cong H^i(|[n]|,\Z) = 0$, for all $i>1$, since the geometric realisation of the nerve $|[n]|$ is the standard $n$-simplex and hence contractible. 

It remains to compute $\widetilde{\GG}^1([n], k)$.  We denote objects of $\widetilde{\G}^{[n]}_2$ by triples $abc$ where $a\le b\le c$. By the more precise formulation of Theorem \ref{Th:Higher-G} as Theorem \ref{Thm:Higher-G}, $\widetilde{\GG}^i([n],k)$ is generated by linear combinations of non-homogeneous singleton modules, i.e., by simple modules $S_{ij}$, where $i<j$. Thus we only have to examine cocycles  that are combinations of such modules. For $0\le i<j \le n$ and $0\le a\le b\le c\le n$, we have 
\begin{align*}
\delta^0(S_{ij})(abc) = \begin{cases}
						k & (b,c) = (i,j)\\
						0 & \text{otherwise}
				\end{cases}
				\quad\text{so}\quad
				\delta^0(S_{ij}) =\sum_{a\le i}S_{aij} \\
				\\
\delta^1(S_{ij})(abc) = \begin{cases}
						k & (a,c) = (i,j)\\
						0 & \text{otherwise}
				\end{cases}
				\quad\text{so}\quad
				\delta^1(S_{ij}) = \sum_{i\le b\le j }S_{ibj} \\	
				\\
\delta^2(S_{ij})(abc) = \begin{cases}
						k & (a,b) = (i,j)\\
						0 & \text{otherwise}
				\end{cases}
				\quad\text{so}\quad
				\delta^2(S_{ij}) = \sum_{j\le c}S_{ijc} \\
\end{align*}

Thus we have 
\[d^1(S_{ij}) = \sum_{a< i}S_{aij} - \sum_{i< b< j }S_{ibj} + \sum_{j< c}S_{ijc}.\]
Let $z = \sum_{i<j}x_{ij}S_{ij}$ be a $1$-cocycle. Then 
\[0 = d^1(z) = \sum_{a< i<j}x_{ij}S_{aij} - \sum_{i< b< j }x_{ij}S_{ibj} + \sum_{i<j< c}x_{ij}S_{ijc} =
\sum_{0\le a<b<c\le n}(x_{bc} - x_{ac} + x_{ab})S_{abc}.\]
It follows by linear independence of the modules $S_{ijk}$ that for each triple $a<b<c$, the relation  $x_{bc} - x_{ac} + x_{ab} =0$ must hold. Thus the expressions $x_{ab}S_{ab} + x_{bc}S_{bc} - x_{ac}S_{ac}$, where the relation above is satisfied, generate a subgroup of $1$-cocycles.  One thus obtains a system of homogeneous linear equations in the variables $x_{ij}$ for all $0\le i<j\le n$. By basic linear algebra and an inductive argument,   the variables $x_{i,i+k}$ where $k>1$ can be expressed as 
\[x_{i, i+k} = \sum_{j=i}^{k-1} x_{j, j+1},\]
while no further relations hold among the $x_{j,j+1}$.
Hence the null space of the $d^1$ matrix is $n$-dimensional. 

On the other hand, for $n>1$, one may verify by direct calculation that for any $0\le i<j\le n$, the virtual module $d^0(J_{i,j})$ contains no simple summands, where $J_{i,j}$ is the interval module determined by the interval $[i,j]$ . Hence the image of $d^0$, when restricted to the subgroup generated by these interval modules, intersects trivially with $\Ker(d^1)$. Similarly, $d^0$ restricted to simple modules $S_m$, for any $m\ge 0$, is a combination of non-singleton intervals. Therefore, $\Ima(d^0)$ intersects trivially with $\Ker(d^1)$, and it follows that $\widetilde{\GG}^1([n], k) \cong \Z^n$ as claimed. 
\end{proof}

We now turn to $\EE^*$ and $\GG^*$ cohomology of $[n]$. We give a full computation of $0$-dimensional cohomology for both theories, and then present low dimension machine computations of $\EE^*([n],k)_1$ and $\GG^*([n],k)_1$.

\begin{Prop}\label{Prop:hG0E0-[n]}
Let $\C$ be the poset  $[n]$, with objects the natural numbers $0,\ldots, n$, where $n\geq 2$, and the natural order relation. Then 
$\GG^0([n], k)\cong \Z^3$ and $\EE^0([n], k)\cong \Z^2$. Furthermore, the natural map $\EE^0([n],k)\to\GG^0([n],k)$ is split injective with cokernel generated by  $[S_{[n]}]$.
\end{Prop}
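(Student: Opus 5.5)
The plan is to classify $0$-cocycles through the rank invariant of Definition~\ref{Def:Rank-Inv}, exhibit explicit cocycles realising all admissible rank data, and then prove that the rank invariant is injective on cocycles.

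\emph{Step 1: the components of $\E^{[n]}_1$ and $\G^{[n]}_1$.} First I would record how these two categories decompose.
\begin{enumerate}[(a)]
\item The category $\E^{[n]}_1$ is the poset of pairs $ij$ with $i<j$, ordered componentwise. It is connected, so all non-identity relations of $[n]$ form a single square-component, and it contains at least two morphisms because $n\geq 2$.
\item In $\G^{[n]}_1$ there is a morphism $ij\to kl$ exactly when $j\le k$. Hence the maximal relation $0<n$ is an isolated object. All other relations form one line-component, since they are joined through the chain $01\to 12\to\cdots\to (n-1)n$; compare Example~\ref{Ex:Line}.
\end{enumerate}

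\emph{Step 2: constraints on the rank invariant.} By Proposition~\ref{Prop:G-check-0-cocycles-rk}, for any $X\in\GG^0([n],k)$ the function $\rk X$ is determined by three integers: its constant value on identities (the dimension vector), its value on the non-maximal relations, and its value on $0<n$. This gives a homomorphism $\rho\colon\GG^0([n],k)\to\Z^3$.

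For $\EE^0$ I would run the argument of Proposition~\ref{Prop:E-check-0-cocycles} on virtual modules, in the form of Theorem~\ref{Th:E0G0-check}\ref{Th:E0G0-check-2}, applied to the single square-component. This should force the value on $0<n$ to agree with the value on the other relations, so that $\rho$ restricted to $\EE^0$ lands in a rank-$2$ sublattice.

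\emph{Step 3: realising the rank data.} I would exhibit explicit cocycles and check $d^0=0$ by direct computation of $\partial_0^*$ and $\partial_1^*$ on interval modules. By Lemma~\ref{Lem:Inverse-Interval}, pulling back an interval module gives a sum of interval modules over preimage intervals, so these checks are bookkeeping. The candidates are:
\begin{enumerate}[(a)]
\item $[\underline{k}]$, which lies in both groups by Lemma~\ref{Lem:delta-Loc-Const};
\item $[S_{[n]}]$, a cocycle for $\G$ but not for $\E$, since a natural isomorphism $\partial_1^*S\cong\partial_0^*S$ is the identity on each object and $\G$ has no morphisms that could obstruct it;
\item one further virtual interval combination, for example $[J_{[0,n-1]}]+[J_{[1,n]}]-[J_{[1,n-1]}]$, with rank vector $(1,1,0)$, or a variant of it adjusted so that it is a cocycle in $\E$.
\end{enumerate}
The correct third generator in each theory would be pinned down by the interval computation. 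The target is that the images span $\Z^3$ for $\GG^0$ and a rank-$2$ direct summand for $\EE^0$ whose complement is spanned by $\rho[S_{[n]}]$. That yields split injectivity with cokernel generated by $[S_{[n]}]$, once injectivity of $\rho$ is established.

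\emph{Step 4: injectivity of $\rho$ (the main obstacle).} The hard step is showing that a virtual cocycle $X=[M]-[N]$ with $\rho X=0$ vanishes. Theorem~\ref{Thm:0-cocycle-trees} applies to $[n]$ when $n\geq 3$ and handles genuine modules, which are determined by $\rho$. For virtual classes I would argue as in Proposition~\ref{Prop:G0}:
\begin{enumerate}[(a)]
\item decompose $M$ and $N$ into interval modules $J_{[a,b]}$, which are all the indecomposables since $[n]$ has finite type $A_{n+1}$;
\item compute $\delta^0J_{[a,b]}$ and $\delta^1J_{[a,b]}$ explicitly as sums of intervals in $\G^{[n]}_1$;
\item match indecomposable summands on the two sides of $\delta^0[M]+\delta^1[N]=\delta^0[N]+\delta^1[M]$, showing that the only cancellations possible are those produced by the generators above.
\end{enumerate}
This matching is where I expect the real work. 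The case $n=2$ (composition length $2$) must be checked by hand, since Theorem~\ref{Thm:0-cocycle-trees} does not cover it.
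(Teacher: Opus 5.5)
\textbf{The gap is in the $\EE^0$ half of Step 2.} The virtual-module statement you invoke, Theorem~\ref{Th:E0G0-check}\ref{Th:E0G0-check-2}, is not proved in the body of the paper, since Proposition~\ref{Prop:E-check-0-cocycles} treats only genuine modules. It is also false here. Your own candidate $T_{[n]}=[J_{0,n-1}]+[J_{1,n}]-[J_{1,n-1}]$ is already an $\E$-cocycle, with no adjustment needed. In $\E^{[n]}_1$ one has:
\begin{itemize}
\item $\delta^0[J_{0,n-1}]=\delta^0[J_{1,n-1}]$, both supported on $\{ij : j\le n-1\}$;
\item $\delta^1[J_{1,n}]=\delta^1[J_{1,n-1}]$, both supported on $\{ij : i\ge 1\}$;
\item $\delta^1[J_{0,n-1}]=\delta^0[J_{1,n}]$, the class of the constant module on $\E^{[n]}_1$.
\end{itemize}
Hence $d^0T_{[n]}=0$. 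Yet $\rk T_{[n]}$ is $1$ on $0<1$ and $0$ on $0<n$.

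If you actually run the argument of Proposition~\ref{Prop:E-check-0-cocycles} on $X=[M]-[N]$, a morphism $(s,t)$ of $\E^{[n]}_1$ yields only $\rk X(s)=\rk X(t)$, where $s$ or $t$ may be an identity. This forces the rank on every non-maximal relation to equal the (constant) dimension. It puts no constraint on $0<n$, because $0<n$ never occurs as a component of a morphism of $\E^{[n]}_1$. So in your coordinates $\rho(\EE^0)=\{(d,d,e)\}$, not $\{(d,r,r)\}$.

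Your plan as written is in fact inconsistent. The vector $\rho[S_{[n]}]=(1,0,0)$ lies in $\{(d,r,r)\}$, so it cannot span a complement. Injectivity of $\rho$ would then force $[S_{[n]}]\in\EE^0$, contradicting your 3(b). With the corrected lattice $\{(d,d,e)\}$, your complement claim does become true.

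\textbf{Repair, as in the paper.} Drop rank constraints for $\E$ entirely. Both $\EE^0$ and $\GG^0$ are kernels of $d^0$ on the same group $\Gr(k[n])$. Moreover $\EE^0\subseteq\GG^0$: $\G^{[n]}_1\subseteq\E^{[n]}_1$ is a semi-simplicial subobject with the same objects, so the equality $\delta^0X=\delta^1X$ in $\Gr(k\E^{[n]}_1)$ restricts to $\Gr(k\G^{[n]}_1)$. Once $[\underline{k}]$, $[S_{[n]}]$, $T_{[n]}$ is a basis of $\GG^0$:
\begin{itemize}
\item $[\underline{k}]$ and $T_{[n]}$ are $\E$-cocycles;
\item $d^0[S_{[n]}]\neq 0$ in $\Gr(k\E^{[n]}_1)$, since on $01\to 02=(1_0,1<2)$ the two pullbacks give $\mathrm{id}_k$ and $0$;
\item so $B\,d^0[S_{[n]}]=0$ forces $B=0$ in this free group.
\end{itemize}
Hence $\EE^0=\Z[\underline{k}]\oplus\Z T_{[n]}$ and $\GG^0=\EE^0\oplus\Z[S_{[n]}]$.

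In 3(b), the reason $[S_{[n]}]$ is a $\G$-cocycle is not that $\G^{[n]}_1$ lacks morphisms. It is that every non-identity morphism of $\G^{[n]}_1$ has both components non-identity, so both pullbacks are virtually trivial (Corollary~\ref{Cor:S_C-in-G0}).

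\textbf{Step 4 is not the main obstacle.} It needs no summand matching and no separate treatment of $n=2$. The function $\rk[J_{a,b}]$ is the indicator of $\{(x,y): a\le x\le y\le b\}$, and these indicators are unitriangular with respect to containment of intervals. So by M\"obius inversion the rank invariant is injective on all of $\Gr(k[n])$. Combined with Proposition~\ref{Prop:G-check-0-cocycles-rk}, this makes $\rho$ injective on $\GG^0$. Since $\rho$ sends the three cocycles to $(1,1,1)$, $(0\ldots)$—namely $(1,1,1)$, $(1,0,0)$, $(1,1,0)$—a basis of $\Z^3$, we get $\GG^0\cong\Z^3$. This is what the paper's coefficient computation does implicitly.
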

\begin{proof}
For non-negative integers $0\le a<b\le n$, let $J_{a,b}$ denote the interval module on $[n]$, with $J_{a,b}(i) = k$ for $a\le i\le b$ and $J_{a,b}(i) = 0$ otherwise. Let $S_a$ denote  the simple $k[n]$-module with value $k$ at $a$ and $0$ elsewhere. Similarly, denote objects in $\widetilde{\G}^{[n]}_1$ by $ab$, where $a<b$. For $a<b-1$, let $J_{a(a+1), (b-1)b}$ denote the module that is constant on the path
\[a(a+1)\to (a+1)(a+2)\to\cdots \to (b-1)b\] with value $k$ on objects, and $0$ everywhere else. Then the isomorphism classes of the modules $J_{a,b}$ form a basis for $\Gr(k[n])$. Clearly the modules $J_{0,n}$ and $\sum_{a=0}^n S_a$ are $0$-cocycles and hence represent two elements in $\GG^0([n],k)$. Consider the sum $[J_{0,n-1}] + [J_{1,n}] - [J_{1,n-1}]$. It is easy to calculate
\[J_{0,n-1}\partial_i = \begin{cases} 	J_{01,(n-2)(n-1)} & i=0\\
								J_{01,(n-1)n} & i=1, \end{cases}\quad
J_{1,n}\partial_i = \begin{cases} 		J_{01,(n-1)n} & i=0\\
								J_{12, (n-1)n} & i=1, \end{cases}\quad							
\]
and
\[J_{1,n-1}\partial_i = \begin{cases} 		J_{01,(n-2)(n-1)} & i=0\\
								J_{12, (n-1)n} & i=1. \end{cases}	
\]					
Consequently, $d^0([J_{0,n-1}] + [J_{1,n}] - [J_{1,n-1}]) = [J_{01,(n-1)n}] - [J_{01,(n-1)n}] =[0]$. The set  
\[\{J_{0,n}, \quad S_{[n]}\defeq \sum_{a=0}^n S_a, \quad T_{[n]}\defeq [J_{0,n-1}] + [J_{1,n}] - [J_{1,n-1}]\}\]
is clearly linearly independent and thus forms a free abelian subgroup of rank $3$ of $\GG^0([n], k)$. 

To show that the three elements above generate $\GG^0([n], k)$, it suffices to show that any element that does not include $J_{[0,n]}$ and $S_{[n]}$ in its decomposition is a multiple of $T_{[n]}$.  Let 
\[X = \sum_{0\le a < b< n}\alpha_{a,b}[J_{a,b}] + \sum_{0< a <  n}\beta_{a,n}[J_{a,n}]\]
be such an  element of $\Gr(k[n])$. Then  $\rk X$ is constant on all morphisms in a line-component of $[n]$. In particular, one has $\rk X(0<n-1) = \alpha_{0,n-1}$ and $\rk X(1<n) = \beta_{1,n}$. Thus $\alpha_{0,n-1} =\beta_{1,n}$. Also $\rk X(1<n-1) = \alpha_{0,n-1} + \beta_{1,n} + \alpha_{1,n-1}$. Hence $\alpha_{1,n-1} = -\alpha_{0,n-1}$. Write $A = \alpha_{0,n-1}$. Then $\rk X(a<b) = A$ for any interval $0\le a<b\le n$. Next, since  $[0,n-2]\subset [0,n-1]$, $A = \rk X(0<n-2) = A + \alpha_{0,n-2}$, so $\alpha_{0,n-2}=0$. By induction, $\alpha_{a,b} = 0$ for all $0\le a < b \le n-2$. Similarly, since $[2,n]\subset[1,n]$, we have $A = \rk X(2<n) = A + \beta_{2,n}$, so $\beta_{2,n} =0$. Again by induction, $\beta_{a,n} = 0$ for all $1<a<n$. Thus $X  = A\cdot T_{[n]}$, as claimed. 

Notice that $[n]$ has two line-components, one which consists only of the morphism $0<n$ and the other consisting of all the other morphisms. Hence if $X = A\cdot J_{0,n} + B\cdot S_{[n]} + C\cdot T_{[n]}$, then $\rk X$ takes the value $A$ on the first component and the constant value $A+B+C$ on the second. 

Finally, by naturality, $\EE^0([n], k)\le \GG^0([n],k)$. The module $J_{0,n}$ is clearly a cocycle in $\Gr(k\E^{[n]}_0)$, and it is easy to verify that so is $T_{[n]}$. On the other hand, $S_{[n]}$ is not a cocycle in $\Gr(k\E^{[n]}_0)$. For instance, for $n=2$, the category $\E^{[n]}_1$ is the interval $01< 02 < 12$, and $S_{[2]}\circ\partial_0$ is the module $k\xto{0}k\xto{=}k$, while $S_{[2]}\circ\partial_1$ is the module $k\xto{=}k\xto{0}k$, and these are clearly not isomorphic. A similar pattern holds for all $n\ge 2$.  This proves the last statement of the proposition.
\end{proof}

Computing higher dimensional cohomology, even for $[n]$, is highly nontrivial. To get an idea of what these groups look like for small examples, we used a simple Python code \cite{Code}. The code is designed to work on posets rather than general finite categories, and as such takes into account the subgroups of the respective Grothendieck groups generated by interval modules (all of which are indecomposable). These, of course, are always finitely generated for a finite poset, and are hence far from detecting the entire cohomology. We mark the resulting cohomology groups with a subscript $\I$. Example \ref{Ex:ChG*E*} lists the computations of  the groups $\EE^i([n], k)_\I$ and $\GG^i([n], k)_\I$ for small values of $n$. 
\begin{Ex}
\label{Ex:ChG*E*}
The following computations were done with the code available in \cite{Code}. We note that in general $\GG^i([n], k)_\I$ are $\EE^i([n], k)_\I$ are different from $\GG^i([n], k)_1$ and $\EE^i([n], k)_1$ because the categories $\G^{[n]}_i$ and $\E^{[n]}_i$ generally contain cycles for $i\ge 1$. 
\renewcommand{\arraystretch}{1.5}
\begin{center}
\begin{tabular}{||c|c|c|c|c||}
\hline
\multicolumn{5}{|c|}{$\GG^i([n], k)_\I$}\\
\hline
$i = $ & 0&1 & 2 & 3\\
\hline
$n=1$ & $\Z^2$ &  0 & 0 & 0\\
\hline
$n=2$ & $\Z^3$ & 0& 0 &0 \\
\hline
$n=3$ & $\Z^3$ & $\Z^6$ & 0 & 0\\
\hline
$n=4$ & $\Z^3$  & $\Z^{78}$ & 0 & 0\\
\hline
$n=5$ & $\Z^3$ & $\Z^{841}$ & $\Z^{44}$ & 0 \\
\hline
\end{tabular}
\begin{tabular}{||c|c|c|c|c|c||}
\hline
\multicolumn{6}{|c|}{$\EE^i([n], k)_\I$}\\
\hline
$i = $ & 0&1 & 2 & 3 &4 \\
\hline
$n=1$ & $\Z^2$ &  0 & 0 & 0& 0\\
\hline
$n=2$ & $\Z^2$ & $\Z$ & 0 & 0& 0\\
\hline
$n=3$ & $\Z^2$ & $\Z^7$ & 0 & 0& 0\\
\hline
$n=4$ & $\Z^2$  & $\Z^{28}$ & $\Z^{27}$ & 0& 0\\
\hline
$n=5$ & $\Z^2$ & $\Z^{103}$ & $\Z^{686}$ & $\Z^{75}$ & 0\\
\hline
\end{tabular}
\end{center}
\end{Ex}

The last observation in the proof of Proposition \ref{Prop:hG0E0-[n]} can be utilised further. Let $\C$ be an arbitrary finite category and let 
\[S_\C\defeq \bigoplus_{x\in \C} S_x,\]
namely $S_\C$ is the module that takes the value $k$ on each object and induces the $0$ homomorphism for each morphism. 

\begin{Cor}\label{Cor:S_C-in-G0}
For any finite category $\C$,  the element $[S_\C]$ is a nontrivial class in $\GG^0(\C, k)$.
\end{Cor}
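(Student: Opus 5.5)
The plan is to verify the cocycle condition $d^0[S_\C]=\delta^0[S_\C]-\delta^1[S_\C]=0$ directly. Since $\GG^0(\C,k)$ is the kernel of $d^0\colon\Gr(k\C)\to\Gr(k\G^\C_1)$ (there are no $0$-coboundaries), nontriviality is then automatic: $S_\C\neq 0$ as soon as $\C$ has an object, and the indecomposable summands $S_x$ are part of a basis of the free abelian group $\Gr(k\C)$, so $[S_\C]\neq 0$. As throughout this section, $\C$ is assumed to have no non-identity invertible morphisms, so that $\GG^*$ is defined.

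First I would describe the two pullbacks explicitly. Both $\partial_0^*S_\C$ and $\partial_1^*S_\C$ take the value $k$ on every object $[u]$ of $\G^\C_1$, with $u\colon a\to b$ non-identity. Let $[u]\to[v]$ be a morphism in $\G^\C_1$, with $v\colon c\to d$. By the definition of $\G^\C_\bullet$ it is induced by some $w\colon b\to c$ and equals $(w\circ u,\, v\circ w)$. Hence $\partial_1^*S_\C$ sends it to $S_\C(w\circ u)$ and $\partial_0^*S_\C$ sends it to $S_\C(v\circ w)$. Each of these maps is $1_k$ if the corresponding composite is an identity and $0$ otherwise.

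The natural isomorphism I would take is $\eta\colon\partial_1^*S_\C\to\partial_0^*S_\C$ with $\eta_{[u]}=1_k$ for every object. Naturality then reduces to the claim that, for every morphism of $\G^\C_1$, $w\circ u$ is an identity exactly when $v\circ w$ is. Identity morphisms of $\G^\C_1$ are fine, since both components are identities. So the content is that for non-identity $u,v$ neither composite can be an identity, or both are.

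This is the step I expect to be the main obstacle. If $w\circ u=1_a$ and $v\circ w=1_b$, then $w$ has both a left and a right inverse. Hence it is invertible, so by hypothesis $w$ is an identity, whence $u=1_a$, contradicting the choice of $u$; so the two composites are never both identities. It therefore remains to rule out a one-sided identity, that is, a non-identity split monomorphism $u$ with retraction $w$, or symmetrically a split epimorphism. I would first try to exploit finiteness: if $w\circ u=1_a$ then $e=u\circ w$ is an idempotent of $\C(b,b)$, and I would try to show that it must be $1_b$. That would make $u$ invertible and hence an identity, a contradiction.

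I am not confident this can be forced in general. A finite category containing a non-identity idempotent that splits through a retract seems to have no non-identity isomorphisms, and if such a split exists the naturality square with scalar components $k\to k$ fails for every choice of $\eta$. So if this reduction fails, the argument proves the statement precisely for finite categories without non-identity split monomorphisms or epimorphisms. Posets and direct categories are covered, since there all non-identity composites are non-identities. In that case I would record the retract case as the exact obstruction rather than claim more.
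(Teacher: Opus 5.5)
Your route is the paper's: show that both pullbacks equal the virtually trivial module $S_{\G^\C_1}$, i.e.\ that every non-identity morphism $(w\circ u,\,v\circ w)$ of $\G^\C_1$ acts by zero under both $\partial_1^*S_\C$ and $\partial_0^*S_\C$. The paper justifies this only by asserting that $\psi\circ\alpha$ and $\alpha\circ\varphi$ are never identities. That is exactly the step you flag, and you are right not to trust it.

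Your proposed rescue via finiteness would fail. Finiteness forces a one-sided inverse to be two-sided only inside a single endomorphism monoid. A non-identity split monomorphism between distinct objects is compatible both with finiteness and with having no non-identity isomorphisms. Concretely, let $\C$ have objects $a\neq b$ and non-identity morphisms $s\colon a\to b$, $r\colon b\to a$ and $e=s\circ r\colon b\to b$, subject to $r\circ s=1_a$. Then $e^2=e$, $e\circ s=s$ and $r\circ e=r$. This category is finite and its only isomorphisms are identities. Yet in $\G^\C_1$ the morphism $r$ induces the endomorphism $(r\circ s,\,s\circ r)=(1_a,e)$ of $[s]$. This endomorphism acts by $1_k$ on $\partial_1^*S_\C([s])=k$ and by $0$ on $\partial_0^*S_\C([s])=k$. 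Hence $\partial_1^*S_\C\not\cong\partial_0^*S_\C$ and $d^0[S_\C]\neq 0$. The corollary as stated is false, so no argument can close this gap; the defect is in the statement and in the paper's proof, not in your reasoning.

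Your fallback is the correct statement, and your observations actually give a characterisation for $\C\in\Cat_+$. If $\C$ has no non-identity split monomorphisms, then neither $w\circ u$ nor $v\circ w$ is ever an identity for non-identity $u,v$. So both pullbacks are $S_{\G^\C_1}$ and the identity transformation works; finiteness plays no role. Conversely, suppose $u\colon a\to b$ is a non-identity split monomorphism with retraction $w$. Then $u\circ w\neq 1_b$ by the $\Cat_+$ hypothesis, and the endomorphism $(1_a,u\circ w)$ of $[u]$ (your case with $v=u$) rules out every natural isomorphism, as you suspected.

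The hypothesis holds in direct categories, hence in all posets. That covers every later use of the corollary in the paper: $[n]$, $\D_n$ and rooted trees. Your nontriviality argument, via $\GG^0(\C,k)=\Ker d^0$ and $[S_\C]\neq 0$ in $\Gr(k\C)$, is fine and makes explicit a point the paper leaves implicit.
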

\begin{proof}
Let $\varphi\colon a\to b$ be a morphism in $\C$, where $a\neq b$. Thus $[\varphi]$ is an object in $\G^\C_1$. Now, 
\[\delta^0(S_\C)[\varphi] =  S_\C(b) = k = S_\C(a) = \delta^1(S_C)[\varphi].\]
In $\G^\C_1$  a morphism from $[\varphi]$ to $[\psi\colon c\to d]$ is determined by a morphism $\alpha\colon b\to c$, and $\delta^0(S_\C)[\alpha] = S_\C[\psi\circ\alpha] = 0$, since $\psi\circ\alpha$ is not the identity morphism. Similarly,  $\delta^1(S_\C)[\alpha] = S_\C[\alpha\circ\varphi] = 0$ for the same reason. Hence 
\[\delta^0(S_\C)\cong\delta^1(S_\C) = S_{\G^\C_1}.\]
It follows that $[S_\C]$ is a cocycle and hence an element of $\GG^0(\C,k)$. 
\end{proof}

It is easy to verify by a simple diagram chase that the natural restriction map $\EE^0(\C, k)\to \GG^0(\C, k)$ is a monomorphism. The following corollary singles out a family of modules that are not in its image. 

\begin{Cor}\label{Cor:S_C-in-E0}
Let $\C$ be an arbitrary  small category with composition length at least $2$, and let $M\in k\G_0^\C$ be a module such that $[M]$ represents a nontrivial cohomology class in $\GG^0(\C, k)$. Assume further that there is a faithful functor $\iota\colon [2]\to \C$, whose image is contained in a single line-component of $\C$ and such that the restriction $\iota^*(M)$ is isomorphic to a direct sum of simple $k[2]$-modules. Then $[M]$ is not  in the image of the restriction from  $\EE^0(\C, k)$.
\end{Cor}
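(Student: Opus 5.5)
The plan is to argue by contradiction, using the fact that in degree $0$ there are no coboundaries. Both $\EE^0(\C,k)$ and $\GG^0(\C,k)$ are therefore subgroups of $\Gr(k\C)$, namely the kernels of the respective $d^0$. The restriction map $\EE^0(\C,k)\to\GG^0(\C,k)$ is just the inclusion of one subgroup of $\Gr(k\C)$ in the other; this is the monomorphism noted before the statement. So if $[M]$ lies in its image, then $[M]$ itself is a $0$-cocycle in $\Gr(k\E^\C_\bullet)$, i.e.\ $[M]$ represents a class in $\EE^0(\C,k)$. We may then apply Proposition \ref{Prop:E-check-0-cocycles} to the genuine module $M$.

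Next we locate a suitable square-component. Write the image of $\iota$ as $x\xto{u}y\xto{v}z$, with composite $v\circ u$. Since $\iota$ is faithful, $u$, $v$ and $v\circ u$ are three distinct morphisms. Since $\C\in\Cat_+$ has no non-identity invertible morphisms and $[2]$ is thin, none of them is an identity (we check this with a line or two). By hypothesis they lie in a single line-component. A line-connected set is square-connected, as remarked after Definition \ref{Def:Line-conn}. Hence $u$ and $v$ lie in one square-component of $\C$ containing at least two different non-identity morphisms. Proposition \ref{Prop:E-check-0-cocycles} then says that $M(u)$ and $M(v)$ are isomorphisms, so
\[\dim M(x)=\dim M(y)=\dim M(z).\]

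On the other hand, $\iota^*(M)$ is a direct sum of simple $k[2]$-modules, so $M(u)$, $M(v)$ and $M(v\circ u)$ are all zero maps. A zero map that is also an isomorphism forces its source and target to vanish. Hence $M(x)=M(y)=M(z)=0$, that is, $\iota^*(M)=0$.

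The main obstacle is the final contradiction. It requires $\iota^*(M)$ to be a nonempty sum of simples, i.e.\ $M$ nonzero on at least one of $x,y,z$. I would read the hypothesis this way, since otherwise the restriction condition is vacuous. Nontriviality of $[M]$ in $\GG^0$ alone does not obviously exclude $M$ vanishing on $\iota([2])$. If one wants to derive it instead, I would try Proposition \ref{Prop:G-check-0-cocycles-rk}: $\rk[M]$ is constant on the identity morphisms of objects in the line-component. So if $M$ is nonzero somewhere in that component, all of $M(x),M(y),M(z)$ are nonzero, contradicting the vanishing just derived. With $\iota^*(M)\neq 0$, in either form, the two conclusions are incompatible, so $[M]$ is not in the image of the restriction from $\EE^0(\C,k)$.
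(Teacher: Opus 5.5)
\textbf{Verdict.} Your argument is correct, provided the hypothesis is read as $\iota^*(M)\neq 0$. It takes a somewhat different route from the paper's.

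\textbf{Comparison with the paper.} The paper's route is as follows.
\begin{enumerate}
\item It uses the $\GG^0$ hypothesis through Lemma~\ref{Lem:G-check-0-cocycles} to get $M(x)\cong M(y)\cong M(z)$, hence $\iota^*(M)\cong\bigoplus_m S_{[2]}$.
\item It pulls back along the cochain map $\Gr(k\E^\C_\bullet)\to\Gr(k\E^{[2]}_\bullet)$ induced by the faithful functor $\iota$.
\item It invokes the explicit computation in the proof of Proposition~\ref{Prop:hG0E0-[n]} that $S_{[2]}\circ\partial_0\not\cong S_{[2]}\circ\partial_1$ on $\E^{[2]}_1$.
\end{enumerate}
You instead stay in $\C$. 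You note that in degree $0$ the restriction is just an inclusion of kernels inside $\Gr(k\C)$, and you apply Proposition~\ref{Prop:E-check-0-cocycles} to the square-component of $u,v$.

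Both arguments rest on the same naturality squares for $[u]\to[v\circ u]\to[v]$ in $\E^\C_1$. Yours, however, needs neither the $\GG^0$-cocycle property of $M$, nor functoriality of $\E^-_\bullet$ along $\iota$, nor the equal-dimension step. In effect it shows that no genuine module whose class is an $\E$-cocycle can restrict along such an $\iota$ to a nonzero sum of simples. The line-component hypothesis is also superfluous for you, since $(u,v)\colon[u]\to[v]$ is always a morphism of $\G^\C_1$. The paper's version, in exchange, exhibits $S_{[2]}$ as the concrete obstruction.

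One small correction: $u,v,v\circ u$ being distinct non-identity morphisms follows from the paper's convention that faithful means injective on all morphisms, so $u\neq\iota(1_0)=1_x$. It does not follow from $\Cat_+$ or from thinness of $[2]$. Under hom-set-wise faithfulness the constant functor would qualify.

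\textbf{On the case $\iota^*(M)=0$.} Your worry is justified. The paper's proof tacitly needs the same assumption, because an empty sum of copies of $S_{[2]}$ is a cocycle. Without it the statement is false:
\begin{itemize}
\item Take $\C=[3]\sqcup\{w\}$, with $\iota$ the inclusion of $0<1<2$. Its three morphisms lie in one line-component by Example~\ref{Ex:Line}.
\item Take $M=S_w$. No non-identity morphism touches $w$, so $\partial_0^*S_w=\partial_1^*S_w=0$ on both $\G^\C_1$ and $\E^\C_1$.
\item Hence $[S_w]$ is a nonzero $0$-cocycle in both theories, and it does lie in the image of the restriction.
\end{itemize}

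Your suggested fix via Proposition~\ref{Prop:G-check-0-cocycles-rk} is weaker than what is available. Evaluate a natural isomorphism $\partial_1^*M\cong\partial_0^*M$ at the object $[\varphi]$ of $\G^\C_1$. This already gives $M(a)\cong M(b)$ for every non-identity $\varphi\colon a\to b$, so $\dim M$ is constant on connected components of $\C$, not merely on line-components. Thus if $\C$ is connected, or $M$ is nonzero somewhere on the component of $\C$ containing $\iota([2])$, nontriviality of $[M]$ forces $\iota^*(M)\neq 0$, and your argument closes.
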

\begin{proof}
If $[M]\in \Gr(k\G^\C_0)$ is a cocycle, then,  by Lemma \ref{Lem:G-check-0-cocycles}, $M(x)\cong M(y)$ for all $x, y\in\C$. Thus, $\iota^*{M}$ is isomorphic to a direct sum of modules of the form  $S_{[2]}$. Considering $[M]$ as an element of $\Gr(k\E^\C_0)$, we see that   $\iota^*[M]$ is not a cocycle in $\Gr(k\E^{[2]}_0)$, as  demonstrated in the proof of Proposition \ref{Prop:hG0E0-[n]}. But then $[M]$ cannot be a cocycle in $\EE^0(\C, k)$ since $i^*$ is a cochain map. Thus, $[M]$ is not in the image of the restriction map  $\EE^0(\C, k)\to \GG^0(\C, k)$.
\end{proof}

%%%%%%%%%%%%%%%%%%%%%%%%%%%%%%%%%%%%%%%%%%%%%%%%
%%%%%%%%%%%%%%%%%%%%%%%%%%%%%%%%%%%%%%%%%%%%%%%%

\subsection{\texorpdfstring{The poset $\D_n$}{The poset Dn}}
Next we consider another very simple family of posets. Let $\D_n$ denote the poset with $n+2$ objects $0\le i\le n+1$, with non-identity morphisms $0\to 1$ and $1\to i$ for all $i>1$. Since $\D_n$ is generated by a tree, $\widetilde{\GG}^0(\D_n, k)\cong \Z$ for all $n$ by Theorem \ref{Th:E0G0}. By contrast we will  show below that $\GG^0(\D_n, k)$ is infinitely generated in general. However, genuine (as opposed to virtual) modules that are cocycles are of a particularly easy form to describe. 

\begin{Prop}
\label{Prop:dandelion-1}
Let $M\in k\D_n\mod$ be a module that 
represents a cohomology class $[M]\in\GG^0(\D_n, k)$. Then 
\[[M] = a[\underline{k}] + b[S_{\D_n}],\]
where $a, b$ are non-negative integers and $S_{\D_n}$ is as in Corollary \ref{Cor:S_C-in-G0}. 
\end{Prop}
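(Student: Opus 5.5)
The plan is to read off the $0$-cocycle condition explicitly on the small category $\G^{\D_n}_1$ and then assemble a splitting of $M$ by hand, in the spirit of the proofs of Proposition \ref{Prop:G-check-0-quotients} and Theorem \ref{Thm:0-cocycle-trees}.

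\emph{Step 1: the category $\G^{\D_n}_1$.} Its objects are the non-identity relations $0<1$, $1<i$ and $0<i$ for $2\le i\le n+1$. A morphism $[u]\to[v]$ is induced by a morphism from the codomain of $u$ to the domain of $v$. Hence the only non-identity morphisms are $[0<1]\to[1<i]$, induced by $1_1$. So $\G^{\D_n}_1$ has one star-shaped line-component $\{0<1,\,1<2,\dots,1<n+1\}$, with $[0<1]$ initial, and the objects $[0<i]$ are isolated.

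\emph{Step 2: consequences of the cocycle condition.} Fix a natural isomorphism $\eta\colon\partial_1^*M\to\partial_0^*M$. On the isolated objects we only get $M(0)\cong M(i)$. On the star component, naturality along $[0<1]\to[1<i]$ gives
\[M(1<i)\circ\eta_{[0<1]}=\eta_{[1<i]}\circ M(0<1)\quad\text{for all } i\ge 2 .\]
Set $d=\dim M(0)$. Then all $M(x)$ have dimension $d$, and $r\defeq\rk M(0<1)=\rk M(1<i)$ for every $i$, in agreement with Proposition \ref{Prop:G-check-0-cocycles-rk}. Moreover $K\defeq\Ker M(1<i)=\eta_{[0<1]}(\Ker M(0<1))\subseteq M(1)$ is independent of $i$, exactly as in Lemma \ref{Lem:G-check-0-Kernels}.

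\emph{Step 3: splitting.} Choose a complement $C$ of $K$ in $M(1)$. Then each $M(1<i)|_C$ is injective with image of dimension $r$, and its cokernel has dimension $d-r$. The aim is to show that $M$ has a submodule that is constant of rank $r$, supported on all objects, together with a virtually trivial complement of dimension $d-r$ at every object. This would give $M\cong\bigoplus_r\underline k\oplus\bigoplus_{d-r}S_{\D_n}$, so $a=r$ and $b=d-r$.

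The key requirement is that $L\defeq\Ima M(0<1)$ meets $K$ trivially. If so, we may take $C\supseteq L$. Then a complement of $\Ker M(0<1)$ in $M(0)$, the image $L$, and the images of $L$ in each $M(i)$ span a summand isomorphic to $\bigoplus_r\underline k$. The remaining pieces are $\Ker M(0<1)$, a complement of $L$ in $M(1)$ chosen inside $K\oplus C$, and complements of the images in each $M(i)$. These all have dimension $d-r$, are annihilated by every structure map, and so assemble to $\bigoplus_{d-r}S_{\D_n}$.

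\emph{Main obstacle.} The hard step is proving $L\cap K=0$, equivalently $\rk M(0<i)=r$. The relation $0<i$ forms its own isolated line-component, so the argument of Lemma \ref{Lem:G-check-0-Kernels}\ref{Lem:G-check-0-Kernels-3}, which needs $s'$ non-maximal, is not available here.

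I would first try to extract this from uniqueness of decompositions. Decompose $M$ into indecomposable $k\D_n$-modules, which is possible since $\D_n$ is a tree poset. Then compare the restrictions $\partial_1^*M$ and $\partial_0^*M$ to the star component, viewed as representations of a source-star quiver. The goal is to show that indecomposable summands such as $J_{0,1}$ or $J_{1,\{2,\dots\}}$ would have to be matched by summands that cannot occur.

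I am genuinely unsure this works. The module $J_{0,1}\oplus J_{1,\{2,\dots,n+1\}}\oplus S_0\oplus\bigoplus_{i\ge2}S_i$ seems to give isomorphic restrictions $\partial_1^*M\cong\partial_0^*M$ on the star: both appear to be a $2$-dimensional space at $[0<1]$ mapping by the same rank-one map to each arm, with image equal to that map's kernel. This candidate must be checked carefully against the definitions before going further. If it really is a cocycle, the statement needs an extra hypothesis, for instance that $\rk M$ is constant on all non-identity morphisms. Under such a hypothesis, Step 3 goes through as written.
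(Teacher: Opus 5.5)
Your doubt is justified. The module you wrote down is a genuine $0$-cocycle, so the statement as written is false. Your Steps 1--2 and your identification of the ``main obstacle'' are exactly the right analysis.

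\textbf{The counterexample checks out.} Write $M_1\defeq J_{0,1}\oplus J_{1,\{2,\dots,n+1\}}\oplus S_0\oplus\bigoplus_{i\ge2}S_i$ in your notation.
\begin{enumerate}
\item Every $M_1(x)$ is $2$-dimensional, so the isolated objects $[0<i]$ of $\G^{\D_n}_1$ impose nothing.
\item On the star component, $\partial_1^*M_1$ has centre $M_1(0)$ and every arm map equal to $M_1(0<1)$. Likewise $\partial_0^*M_1$ has centre $M_1(1)$ and arm maps $M_1(1<i)$.
\item In both cases the arm maps have rank $1$, share a common $1$-dimensional kernel, and have $2$-dimensional targets. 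So both restrictions split as $S_{[0<1]}\oplus T\oplus\bigoplus_{i\ge2}S_{[1<i]}$, where $T$ is the module on the star with value $k$ and identity maps. Hence $\delta^0[M_1]=\delta^1[M_1]$.
\item Equivalently, in the notation of Proposition \ref{Prop:dandelion-2}, $[M_1]=\bigl([I(J_{0,1})]-[I(S_1)]\bigr)+[S_{\D_n}]$ is a sum of two cocycles.
\item However, $\rk[M_1](0<1)=1$ and $\rk[M_1](0<2)=0$, while $\rk\bigl(a[\underline k]+b[S_{\D_n}]\bigr)$ equals $a$ on both relations. Since $\rk$ is additive, $[M_1]$ is not of the asserted form for any integers $a,b$.
\end{enumerate}

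\textbf{Where the paper's proof fails.} It breaks precisely where you expected. It chooses a complement $I_1$ of $K_1=\Ker M(1<2)$ and sets $I_j\defeq\alpha_{1,j}(I_1)$ and $I_0\defeq\alpha_{0,1}^{-1}(I_1)$. It then asserts that $M(1<j)$ maps $I_1$ onto $I_j$, and that $M(0<1)$ maps $I_0$ onto $I_1$.

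The relation $M(1<i)\circ\alpha_{0,1}=\alpha_{1,i}\circ M(0<1)$ only yields $\alpha_{0,1}(\Ker M(0<1))=\Ker M(1<i)$ and $\alpha_{1,i}(\Ima M(0<1))=\Ima M(1<i)$. Because $\alpha_{0,1}$ and $\alpha_{1,i}$ are independent isomorphisms, nothing relates $\Ima M(0<1)$ to $\Ker M(1<i)$ inside $M(1)$. In fact $M(0<1)(I_0)=\Ima M(0<1)$, which for $M_1$ equals $K_1$ rather than being complementary to it.

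\textbf{Your repair is sound, with one correction.} Since $\rk M(0<i)=r-\dim(L\cap K)$, the hypothesis $\rk M(0<i)=\rk M(0<1)$ is equivalent to $L\cap K=0$. Under it, Step 3 gives $M\cong r\,\underline k\oplus(d-r)\,S_{\D_n}$.

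The detail to fix: since $\dim L+\dim K=d$, you have $M(1)=L\oplus K$. The complement of $L$ at the object $1$ must be $K$ itself. An arbitrary complement need not be killed by the maps $M(1<i)$.

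Without the hypothesis, set $c\defeq\dim(L\cap K)=\rk M(0<1)-\rk M(0<2)$. Refining your splitting by $c$ gives
\[M\cong(r-c)\,\underline k\oplus(d-r-c)\,S_{\D_n}\oplus c\,M_1,\]
which is the correct form of the statement.
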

\begin{proof}
As before, we will use the order notation to denote morphisms. Let $M\in k\D_n\mod$, $n\ge 2$,  be a module that represents a cohomology class $[M]\in\GG^0(\D_n, k)$. By Lemma \ref{Lem:G-check-0-cocycles}, $M(0)\cong M(i)$ for all $i>0$ and there are isomorphisms $\alpha_{0,1}\colon M(0)\to M(1)$ and $\alpha_{1,i}\colon M(1)\to M(i)$, such that 
\[M(1<i)\circ\alpha_{0,1} =  \alpha_{1,i}\circ M(0<1).\]
Then  for any $j\neq i$,
\[M(1<j) = \alpha_{1,j}\circ M(0<1)\circ\alpha_{0,1}^{-1}=\alpha_{1,j}\circ\alpha_{1,i}^{-1}\circ M(1<i)\circ\alpha_{0,1}\circ\alpha_{0,1}^{-1}=\alpha_{1,j}\circ\alpha_{1,i}^{-1}\circ M(1<i).\] 
Since $\alpha_{1,j}\circ\alpha_{1,i}^{-1}$ is an isomorphism, it follows that $\Ker M(1<i) = \Ker M(1<j)$ for all $2\le i, j \le n$. Thus $M(1) \cong K_1\oplus I_1$, where $K_1 = \Ker M(1<2)$, and  $I_1\subseteq M(1)$ is a complement subspace. For $2\le j\le n$, let $I_j = \alpha_{1,j}(I_1)$ and let $K_j = \alpha_{1,j}(K_1)$. Then $M(j)\cong K_j\oplus I_j$ and $M(1<j)|_{I_1}$ is an isomorphism from $I_1$ to $I_j$. 

Let $I_0 = \alpha_{0,1}^{-1}(I_1)$ and $K_0 = \alpha_{0,1}^{-1}(K_1)$. Let $\alpha_{0,1}^{-1}(x) = y\in K_0$, with $x\in K_1$. 
\[\alpha_{1,2}M(0<1)(y) = \alpha_{1,2}M(0<1)\alpha_{0,1}^{-1}(x) = M(1<2)(x) = 0.\]
Hence $K_0\le \Ker M(0<1)$, and since the dimensions are equal, $K_0= \Ker M(0<1)$. On the other hand, 
\[\alpha_{1,2}M(0<1)(I_0) = \alpha_{1,2}M(0<1) \alpha_{1,0}(I_1) = M(1<2)(I_1) = I_2.\] Hence $M(0<1)$ maps ${I_0}$  onto $I_1$, and by dimension comparison the restriction to $I_0$ is injective and hence an isomorphism $I_0\to I_1$.

Let $K$ denote the virtually trivial module with $K(i) = K_i$, and let $I$ denote the locally constant module with $I(j) = I_j$ and $I(j<k) = M(j<k)|_{I_j}$.   It follows that $M \cong K\oplus I$, where $K$ is a virtually trivial module of constant point dimension, and $I$ is locally constant. Thus 
\[I \cong a\cdot\underline{k},\quad\text{and}\quad K \cong b\cdot S_{\D_n},\]
where $a = \dim_k I_j$, $b = \dim_k K_j$  (both of which are the same for all $j$). The first isomorphism follows from Corollary \ref{Cor:Loc-const=constant}. The statement follows.
\end{proof}

We next show that $\GG^i(\D_n, k)\neq 0$ if and only if $i=0$ and describe $\GG^0(\D_n, k)$ for all $n\ge 2$. For any $N\in k\D_n\mod$, let $I(N)$ denote the module given by the direct sum
\small{\[\xymatrix{
N(2)  &\cdots  & N(n+1)\\
& N(1)\ar[ul]\ar[ur]\\
&N(0)\ar[u]} 
\xymatrix{
\\
\oplus
}
\xymatrix{
N(1) & \cdots & N(1)\\
& N(0)\ar[ul]\ar[ur]\\
&0\ar[u]} 
\xymatrix{
\\
\oplus
}
\xymatrix{
N(0) & \cdots & N(0)\\
& 0\ar[ul]\ar[ur]\\
&0\ar[u]}  \]
}
where the first summand is $N$ itself, and the morphism in the second  summand coincide with the corresponding ones in $N$.
\begin{Prop}
\label{Prop:dandelion-2}
For any $n\ge2$, the cohomology $\GG^i(\D_n, k)$ vanishes in positive dimensions and $\GG^0(\D_n, k)$ contains a  free abelian group generated by elements of the form $[I(N)]-[I(N_0)]$, where $N\in k\D_n\mod$ is an indecomposable module such that $N(0)\neq 0$ and $N_0$ is a module that coincides with $N$ everywhere except at $0$, where it takes the value $0$. 
\end{Prop}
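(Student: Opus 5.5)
The plan is to compute the categories $\G^{\D_n}_m$ explicitly. For $\D_n$ they are very small, so the cochain complex $\Gr(k\G^{\D_n}_\bullet)$ can be handled almost by hand. The non-identity morphisms of $\D_n$ are $0<1$, $1<i$ and $0<i$ for $2\le i\le n+1$, so the composition length is $2$. A morphism $[u]\to[v]$ in $\G^{\D_n}_1$ is given by a morphism from the codomain of $u$ to the domain of $v$. Hence $\G^{\D_n}_1$ is the disjoint union of two pieces: a corolla $\mathcal{S}$, with source $[0<1]$ and arrows $[0<1]\to[1<i]$ induced by $1_1$, and the $n$ isolated objects $[0<i]$. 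The category $\G^{\D_n}_2$ is discrete on the objects $\sigma_i=[0<1|1<i]$, with faces $\partial_0\sigma_i=[1<i]$, $\partial_1\sigma_i=[0<i]$ and $\partial_2\sigma_i=[0<1]$. Finally $\G^{\D_n}_m=\emptyset$ for $m>2$.

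\emph{Positive dimensions.} Since $\G^{\D_n}_2$ is discrete, $\Gr(k\G^{\D_n}_2)\cong\Z^n$ with basis $e_i=[S_{\sigma_i}]$. For $X\in k\G^{\D_n}_1\mod$ one gets
\[d^1[X]=\sum_i\bigl(\dim X[1<i]-\dim X[0<i]+\dim X[0<1]\bigr)e_i .\]
In particular $d^1[S_{[0<i]}]=-e_i$, so $d^1$ is onto, $\GG^2(\D_n,k)=0$, and $\GG^m(\D_n,k)=0$ for $m>2$ trivially. (Alternatively, $\GG^2$ vanishes by Proposition~\ref{Prop:ch-G-mid-dim} with $N=2$, since $|\D_n|$ is contractible.)

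For $\GG^1$, write $\Gr(k\G^{\D_n}_1)=\Gr(k\mathcal{S})\oplus\bigoplus_i\Z[S_{[0<i]}]$. From the face formulas, for $N\in k\D_n\mod$:
\begin{itemize}
\item $\partial_0^*N$ is the corolla representation $N(1)\to N(i)$ (given by the maps $N(1<i)$), together with $N(i)$ at $[0<i]$;
\item $\partial_1^*N$ is the corolla representation with every arm equal to $N(0<1)\colon N(0)\to N(1)$, together with $N(0)$ at $[0<i]$.
\end{itemize}
Now take any corolla representation $R$, with centre $V$ and maps $V\to W_i$, and let $N$ have $N(0)=0$, $N(1)=V$, $N(i)=W_i$. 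Then $d^0[N]=[R]+\sum_i\dim W_i\,[S_{[0<i]}]$. Hence modulo $\Ima d^0$ every class is a combination of the $[S_{[0<i]}]$. Since $d^1$ sends these to the basis $-e_i$, it is injective on $\Gr(k\G^{\D_n}_1)/\Ima d^0$. This gives $\Ker d^1=\Ima d^0$ and $\GG^1(\D_n,k)=0$. The real content of this step is that this particular choice of $N$ kills an arbitrary, possibly wild, corolla module at once. So no classification of $k\mathcal{S}$-modules is needed.

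\emph{Dimension zero.} Using the same description of $\partial_0^*$ and $\partial_1^*$, I would compute $d^0[I(N)]$ summand by summand. The second and third summands of $I(N)$ are built so that the corolla piece of $\partial_1^*$ of one summand matches the corolla piece of $\partial_0^*$ of the next, shifted down by one level. The $[S_{[0<i]}]$ contributions are then bookkept by dimensions. The upshot I expect is that $d^0[I(N)]$ depends only on data not involving $N(0<1)$ beyond $N(0)$ itself, and that this is exactly what is removed by subtracting $d^0[I(N_0)]$. This makes $[I(N)]-[I(N_0)]$ a cocycle.

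For linear independence, use the unique decomposition in $\Gr(k\D_n)$. The summand $N$ of $I(N)$ is indecomposable with $N(0)\ne0$. The other summands of $I(N)$, and all of $I(N_0)$, vanish at $0$ (they are $N_0$ and modules with zero at $0$, up to the constant-shift summands). So distinct $N$ contribute distinct leading terms, and the elements are independent. I expect the main obstacle to be the bookkeeping in the cocycle verification: the isomorphism $\delta^0[I(N)]-\delta^1[I(N)]=\delta^0[I(N_0)]-\delta^1[I(N_0)]$ has to be checked carefully on the corolla $\mathcal{S}$, where the arms of $\partial_1^*$ all carry the single map $N(0<1)$ while those of $\partial_0^*$ carry the various maps $N(1<i)$.
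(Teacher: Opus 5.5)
Your route is essentially the paper's. You use the same description of $\G^{\D_n}_1$ (a corolla $\mathcal S$ on $[0<1]$ plus $n$ isolated objects $[0<i]$) and of the discrete category $\G^{\D_n}_2$, surjectivity of $d^1$ via simple modules, killing $1$-cocycles with modules that vanish at $0$, a telescoping computation for $I(N)$, and independence via the summand $[N]$. Your independence argument, via the coefficient of $[N]$, is more explicit than the paper's. Reducing every class modulo $\Ima d^0$ to the span of the $[S_{[0<i]}]$, instead of first describing the $1$-cocycles, is a harmless variation. Two points need repair, however.

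First, your formula for $d^0[N]$ drops the $\delta^1$ term. Since $N(0)=0$ and $N(1)=V$, the module $\partial_1^*N$ is the corolla module with centre $0$ and value $V$ on every arm. Hence
\[
d^0[N]=[R]+\sum_i\dim W_i\,[S_{[0<i]}]-\dim V\sum_i[S_{[1<i]}],
\]
so a single choice of $N$ does not kill $R$ outright. The conclusion survives. For $R=S_{[1<i]}$ (that is, $N=S_i$ and $V=0$) the correction term vanishes, giving $[S_{[1<i]}]\equiv-[S_{[0<i]}]$ modulo $\Ima d^0$. After that your reduction, and the injectivity of $d^1$ on the span of the $[S_{[0<i]}]$, go through. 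Alternatively, use the paper's module $M(0)=0$, $M(1)=V$, $M(i)=W_i\oplus V$, with $M(1<i)$ the arm map into the first summand. The extra copies of $V$ make the corolla part of $\partial_0^*M$ isomorphic to $R\oplus\bigoplus_i S_{[1<i]}^{\oplus\dim V}$, which cancels $\partial_1^*M$, so $d^0[M]=[R]+\sum_i(\dim W_i+\dim V)[S_{[0<i]}]$.

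Second, in dimension $0$ you only announce the cocycle check, and the outcome you predict would not give a cocycle. If $d^0[I(N)]$ still depended on $N(0)$, subtracting $d^0[I(N_0)]$, whose value at $0$ is zero, would not cancel that dependence. The telescoping you describe is the right mechanism, but what it shows is that all dependence on $N(0)$ and $N(0<1)$ disappears. Write $I(N)=N\oplus A\oplus B$ for the three displayed summands. On $\mathcal S$ one has $\partial_0^*A=\partial_1^*N$, $\partial_0^*B=\partial_1^*A$ (centre $0$, arms $N(0)$) and $\partial_1^*B=0$. At $[0<i]$ the contributions are $\dim N(i)+\dim N(1)+\dim N(0)-\dim N(0)$. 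Hence
\[
d^0[I(N)]=\bigl[(\partial_0^*N)|_{\mathcal S}\bigr]+\sum_i\bigl(\dim N(i)+\dim N(1)\bigr)[S_{[0<i]}],
\]
which depends only on the restriction of $N$ to $\{1,\dots,n+1\}$. Since $N_0$ has the same restriction, $d^0[I(N)]=d^0[I(N_0)]$. It is the third summand $B$ that removes the $N(0)$-terms: the arms $N(0)$ of $\partial_1^*A$, and the $-\dim N(0)$ at each $[0<i]$ coming from $\partial_1^*N$. Without $B$, the dependence on $N(0)$ you anticipated would really be present and the difference would not be a cocycle.
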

\begin{proof}
The category $\G_1^{\D_n}$ has objects $01, 12,\ldots, 1(n+1)$ and $02, \ldots, 0(n+1)$ with non-identity morphisms $01\to 1j$ for all $2\le j\le n$. The category $\G_2^{\D_n}$ is discrete with objects $012, \ldots, 01(n+1)$. 
A module  $M\in \G_2^{\D_n}\mod$ is an assignment of a finite dimensional vector space to each object. Let $N\in k\G_1^{\D_n}$ be any module. Then 
\[N\circ\partial_i(01j) = \begin{cases}
					N(1j) & i=0\\
					N(0j) & i=1\\
					N(01) & i=2
				\end{cases}.\]
Let $V$ be a finite dimensional vector space and let $V_j$ denote the  $k\G_2^{\D_n}$-module that takes the value $V$ at $01j$ and $0$ elsewhere. Let $N_j\in k\G_1^{\D_n}\mod$  be the module that takes the value $V$ at $1j$ and $0$ everywhere else. Then $d^1[N_j] = [V_j]$, so $d^1$ is onto and $\GG^2(\D_n, k) = 0$.

Next, notice that $\Gr(k\G_1^{\D_n}) \cong \Gr(k\C_n)\times \Z^{n}$, where $\C_n\subseteq \G_1^{\D_n}$ is the full subcategory whose  objects are $01$ and $1j$ for $2\le j \le n+1$, and the factor $\Z^n$  corresponds to the Grothendieck group on the discrete subcategory consisting of the objects $02,\ldots, 0(n+1)$. Let $N\in k\C_n\mod$ be an indecomposable module, and let $(a_2,\ldots, a_{n+1})\in \Z^n$ be any element. Let $V = (k^{a_2},\ldots, k^{a_{n+1}})$. Then the $01j$-component of $d^1([N], [V])$ is $[N(1j)] + [N(01)] - a_j$. Thus, if we set $a_j = \dim_k N(1j) + \dim_kN(01)$ and let $V_N$ denote the corresponding module, then $(N, V_N)$ is a $d^1$-cocycle. Thus the elements of the form $([N], [V_N])$ form a basis for the subgroup $Z^1$ of $d^1$-cocycles. 

To prove that $\GG^1(\D_n, k) = 0$, it remains to show that every cocycle of the form $(N, V_N)$ is a coboundary. Let $M\in k\G^{\D_n}_0\mod$ denote the module defined by $M(0) = 0$, $M(1) = N(01)$, and $M(j) = N(1j)\oplus N(01)$  for all $2\le j\le n+1$. Let the morphism $M(1<j)$ be the map $N(01)\xto{N(01<1j)\top 0} N(1j)\oplus N(01)$. With the identification above it is now easy to observe that $d^0[M] = ([N], [V_N])$. This shows that $\GG^1(\D_n, k) = 0$.

Finally, let $N\in k\D_n\mod$ be any module. It is an easy observation that $\partial_0^*(I(N))$ is given by
\small{\[\xymatrix{
N(2)  &\cdots  & N(n+1)\\
& N(1)\ar[ul]\ar[ur]
} 
\xymatrix{
\\
\oplus
}
\xymatrix{
N(1) & \cdots & N(1)\\
& N(0)\ar[ul]\ar[ur]
} 
\xymatrix{
\\
\oplus
}
\xymatrix{
N(0) & \cdots & N(0)\\
& 0\ar[ul]\ar[ur]
},  \]
}
while $\partial_1^*(I(N))$ is 
\small{\[\xymatrix{
N(1)  &\cdots  & N(1)\\
& N(0)\ar[ul]\ar[ur]
} 
\xymatrix{
\\
\oplus
}
\xymatrix{
N(0) & \cdots & N(0)\\
& 0\ar[ul]\ar[ur]
} 
\]
}
Similarly, observe that $\partial_0^*(I(N_0))$ is given by
\small{\[\xymatrix{
N(2)  &\cdots  & N(n+1)\\
& N(1)\ar[ul]\ar[ur]
} 
\xymatrix{
\\
\oplus
}
\xymatrix{
N(1) & \cdots & N(1)\\
& N(0)\ar[ul]\ar[ur]
},  \]
}
while $\partial_1^*(I(N_0))$ is 
\small{\[\xymatrix{
N(1)  &\cdots  & N(1)\\
& N(0)\ar[ul]\ar[ur]
}. 
\]
}
Thus, $d^0([I(N)] - [I(N_0)])=0$, so each such virtual module is a $0$-cocycle. Furthermore, each such virtual module has $[N]$ as a summand. Hence the set 
\[\left\{[I(N)] - [I(N_0)]\in\Gr(k\D_n)\;|\; N\in k\D_n\mod\;\;\text{indecomposable}\right\}\]
is a set of linearly independent $0$-cocycles. This completes the proof.
\end{proof}

\begin{Rem}
The operation that takes a module $N$ to $I(N)$ can be performed on modules over  a much more general family of posets. In fact, if $\P$ is a poset whose Hasse diagram is a disjoint union of rooted trees, one can show that the set $\{[I(N)] -[I(N_0)]\}$, where $N$ runs over the set of representatives of isomorphism classes of all indecomposable $k\P$-modules, freely generates $\GG^0(k\P, k)$. A proof will appear in \cite{KLR2}.
\end{Rem}

Since the representation type of $\D_n$ is finite only for $n=2$, and infinite otherwise, the following corollary is immediate. 

\begin{Cor}\label{Cor:checkG-Dn}
The group $\GG^0(\D_n, k)$ is finitely generated if and only if $n\le 2$.
\end{Cor}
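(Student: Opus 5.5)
The corollary follows from Proposition \ref{Prop:dandelion-2}, together with a standard fact about the representation type of $\D_n$. I would treat the two directions separately.

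\emph{Infinite generation for $n\ge 3$.} Proposition \ref{Prop:dandelion-2} gives a linearly independent family $\{[I(N)]-[I(N_0)]\}$ in $\GG^0(\D_n,k)$. It is indexed by isomorphism classes of indecomposable $N\in k\D_n\mod$ with $N(0)\neq 0$, so it suffices to produce infinitely many such $N$. The Hasse diagram of $\D_n$ is the tree with edge $0\to1$ and $n$ edges $1\to j$. As an unoriented graph this is a star at $1$ of valency $n+1$.
\begin{itemize}
\item For $n=3$ it is $\widetilde D_4$, which is tame.
\item For $n\ge4$ it is wild.
\end{itemize}
In either case Gabriel's theorem gives infinitely many indecomposables. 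The posets here have no relations, so $k\D_n$ is a path algebra. I would pick an explicit one‑parameter family, for instance the $\widetilde D_4$ family supported on $\{0,1,2,3,4\}$ with dimension vector $(1,2,1,1,1)$ on these objects. In that family the four lines in $N(1)=k^2$ are, respectively, the image of $N(0)$ and the kernels of the maps to $2,3,4$. Distinct cross-ratios give non-isomorphic indecomposables, and each has $N(0)\neq0$. Extending by zero on the remaining objects handles $n\ge4$. Free abelian groups containing infinite linearly independent sets cannot be finitely generated, because subgroups of finitely generated free abelian groups are finitely generated. Hence $\GG^0(\D_n,k)$ is not finitely generated for $n\ge3$.

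\emph{Finite generation for $n\le2$.} The object $\D_n$ is only defined for $n\ge2$, so only $n=2$ needs checking. Here the underlying graph is $D_4$, of finite type, so $k\D_2\mod$ has finitely many indecomposables. Hence $\Gr(k\D_2)=\Gr(k\G^{\D_2}_0)$ is free of finite rank, and $\GG^0$, as a subgroup of the $0$-cochains, is finitely generated.

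The main point needing care is the infinite direction. There one must make sure the infinite family of indecomposables actually satisfies $N(0)\neq0$, rather than living only on $\{1,\dots,n+1\}$. The explicit cross-ratio family settles this, which is why I would exhibit it rather than just cite the representation type.
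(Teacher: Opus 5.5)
Your structure is the same as the paper's. You combine the linearly independent family of Proposition~\ref{Prop:dandelion-2} with the representation type of $\D_n$, and for $n=2$ you use that $\GG^0(\D_2,k)$ sits inside the finite-rank free group $\Gr(k\D_2)$. The $n=2$ direction is fine.

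You are also right that ``infinite type'' alone is not enough, a point the paper's one-line deduction passes over. For $n\ge 4$ the full subposet on $\{1,\dots,n+1\}$ is a star with $n\ge 4$ arms, so infinitely many indecomposables already have $N(0)=0$.

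The gap is in your remedy. The field $k$ is arbitrary, and the cross-ratio family is not infinite when $k$ is finite. Ordered quadruples of pairwise distinct lines in $k^2$ modulo $GL_2(k)$ are in bijection with $k\setminus\{0,1\}$. So over $\mathbb{F}_q$ you get only $q-2$ such modules, and over $\mathbb{F}_2$ none at all, since $\mathbb{F}_2^2$ has only three lines. As written, the infinite direction is not proved for finite fields.

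To close it, use a family of unbounded dimension on the same $\widetilde D_4$ subposet $\{0,1,2,3,4\}$.
\begin{itemize}
\item For $m\ge 1$ let $N(1)=k^m\oplus k^m$.
\item Let $N(0)=k^m\to N(1)$ be injective with image $V_1=k^m\oplus 0$.
\item For $j=2,3,4$ let $N(1)\to N(j)=k^m$ be surjective with kernels $V_2=0\oplus k^m$, $V_3=\{(x,x)\}$ and $V_4=\{(x,J_mx)\}$, where $J_m$ is the nilpotent Jordan block.
\end{itemize}
Because each of these maps is injective or surjective, $\mathrm{End}(N)$ is the ring of endomorphisms of $N(1)$ preserving $V_1,\dots,V_4$. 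Such an endomorphism has the form $\mathrm{diag}(A,A)$ with $AJ_m=J_mA$, so $\mathrm{End}(N)\cong k[t]/(t^m)$ is local.

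Hence $N$ is indecomposable with dimension vector $(m,2m,m,m,m)$, so in particular $N(0)\neq 0$, and the construction works over every field. Extending by zero gives, for every $n\ge 3$, infinitely many pairwise non-isomorphic indecomposable $k\D_n$-modules with $N(0)\neq 0$.

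Alternatively, any indecomposable of the $\widetilde D_4$ subquiver that vanishes at $0$ is a representation of the $D_4$ quiver on $\{1,2,3,4\}$, and there are only finitely many of those. Since $\widetilde D_4$ has infinitely many indecomposables over any field, infinitely many of them satisfy $N(0)\neq 0$. With either fix, the rest of your argument goes through.
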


The  cochain map 
\[\Gr(k\G_\bullet)_\I\to \Gr(k\G_\bullet)\]
clearly induces an injection on $0$-dimensional cohomology. Proposition \ref{Prop:dandelion-2} allows an easy calculation of $\GG^0(\D_n, k)_\I$ for all $n$. Notice that since $\D_n$ is generated by a tree, indecomposable $k\D_n$-modules of bounded dimension $1$ are in bijective correspondence with intervals in $\D_n$. Hence $\GG^*(\D_n,k)_\I=\GG^*(\D_n, k)_1$.

\begin{Cor}\label{Cor:ch-G-0-dandelion}
For each $n\geq 2$, the group $\GG^0(\D_n, k)_\I$ is free abelian of rank $2^n+1$.
\end{Cor}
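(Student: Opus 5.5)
The strategy is to identify $\GG^0(\D_n,k)_\I$ with the kernel of $d^0$ restricted to the subgroup $\I(k\D_n)\le\Gr(k\D_n)$ generated by interval modules. I will exhibit $2^n+1$ cocycles of the form $[I(N)]-[I(N_0)]$ from Proposition \ref{Prop:dandelion-2}, show they are linearly independent, and then show they span that kernel.

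First I count the intervals of $\D_n$ containing $0$. By convexity and connectivity such an interval is either $\{0\}$, or $\{0,1\}\cup T$ for an arbitrary subset $T\subseteq\{2,\ldots,n+1\}$. That gives $1+2^n$ intervals. For each such interval let $N$ be its interval module, which is indecomposable by Lemma \ref{Lem:Interval-modules-indec}. I will check directly from the definition of $I(N)$ that, since $N$ has bounded dimension $1$, each of the three summands defining $I(N)$ and $I(N_0)$ splits as a direct sum of interval modules:
\begin{itemize}
\item the first summand is $N$ itself;
\item the second summand is supported on $1$ and the leaves, and is an interval module or a sum of singletons at the leaves;
\item the third summand is a sum of singletons at the leaves.
\end{itemize}
Hence $[I(N)]-[I(N_0)]\in\I(k\D_n)$, and it is a $0$-cocycle by Proposition \ref{Prop:dandelion-2}. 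For linear independence I use that among all the summands of the $I(N)$ and $I(N_0)$, the only one whose support contains $0$ is $N$ itself. The coefficient of $[N]$ in a relation therefore forces each coefficient to vanish.

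For spanning, let $X\in\I(k\D_n)$ be a $0$-cocycle. Subtracting the appropriate integer combination of the elements above, I may assume that no interval module in the decomposition of $X$ has $0$ in its support, so $X(0)=0$ virtually. I then examine $\delta^0X=\delta^1X$ on $\G^{\D_n}_1$, whose connected components are the isolated objects $0j$ for $2\le j\le n+1$ and the corolla $\C_n$ on $01$ and $1j$.
\begin{itemize}
\item At $0j$, $\delta^0$ evaluates to $X(j)$ while $\delta^1$ evaluates to $X(0)=0$. Hence the virtual dimension vector of $X$ vanishes at every leaf.
\item On $\C_n$, $\delta^1X$ is the module $X(0)\to X(1)$ on the arrows $01\to 1j$, which is zero at $01$. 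Meanwhile $\delta^0X$ is $X(1)\to X(j)$, i.e.\ essentially $X$ restricted to the star at $1$, transported to $\C_n$.
\end{itemize}
Write $X$ as an integer combination of the interval modules supported in $\{1,\ldots,n+1\}$. Their pullbacks under $\partial_0$ and $\partial_1$ to $\C_n$ are again interval modules, by Lemma \ref{Lem:Inverse-Interval}. So uniqueness of decomposition in $\Gr(k\C_n)$ allows me to compare coefficients directly. The intervals containing $1$ pull back under $\partial_0$ to intervals containing $01$, and these cannot occur in $\delta^1X$, which vanishes at $01$. Hence their coefficients are zero. The remaining summands are the leaf singletons $S_j$; their coefficients vanish by the leaf dimension count above. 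Thus $X=0$, which completes spanning and gives rank $2^n+1$.

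The main obstacle is this last coefficient comparison on $\C_n$. I need to be careful that $\partial_0^*$ is injective on the relevant interval classes, so that distinct intervals through $1$ give distinct indecomposables at $01$; this holds since the pullback remembers the subset $T$ of leaves. I also need to confirm that the $\delta^1$ side really contributes nothing supported at $01$ after the reduction $X(0)=0$. I would carry this out as an explicit bookkeeping of supports rather than through dimension vectors alone.
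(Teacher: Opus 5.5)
Your proposal is correct and follows essentially the paper's argument: you take the $2^n+1$ cocycles $[I(N)]-[I(N_0)]$ from Proposition \ref{Prop:dandelion-2}, one for each interval through $0$, and then compare $\delta^0$ and $\delta^1$ on interval modules supported away from $0$ to show that no nonzero combination of them is a cocycle. Your version also spells out three points the paper leaves implicit: that these cocycles lie in the interval subgroup, the reduction that removes every interval through $0$, and the dimension count at the isolated objects $0j$.
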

\begin{proof}
We apply Proposition \ref{Prop:dandelion-2} to all indecomposable modules $M\in k\D_n\mod$ that satisfy the condition $M(0)\neq 0$ and $\dim_kM(x) \le 1$ for all $x\in\D_n$. The module $S_0$ is the only simple module in this set, and in addition, for each choice of $k\geq 0$ leaves among the $n$, one has  an interval module on  $\D_k\subseteq \D_n$. Thus there are $\sum_{k=0}^n{n\choose k} = 2^n$ such submodules. In total, one obtains  $2^n +1$ linearly independent $0$-cocycles.

Next, consider interval modules  $Q\in k\D_n\mod$ such that $Q(0)=0$. There are $n+1$ simple modules $S_i$, for $i>0$, and for each $1\le m\le n$ there are $n\choose m$  interval modules, one for each choice of $m$ objects among $2, \ldots, n+1$.  Computing coboundaries, we see that 
\[\delta^0[S_1] = [S_{01}], \quad \delta^1[S_1] = \sum_{j=2}^{n+1}[S_{1j}],\quad \text{and for}\;\; i>1,\quad \delta^0[S_i] = [S_{1i}],\quad\text{and}\quad \delta^1[S_i] = 0.\]
If $N\in k\D_n\mod$ is an interval module with $N(j)= k$ for $j=1$ and for $j\in\{i_1, i_2,\ldots i_m\;|\; 2\le i_r\le n+1\}$, then
\[\delta^1[N] = \sum_{j=2}^{n+1}[S_{1j}]\quad\text{while}\quad \delta^0[N] = \widehat{N}, \]
where $\widehat{N}\in k\G^{\D_n}_1\mod$ is the module that takes the value $k$ on the object $01$ and on the objects $1i_r$, where $2\le i_r\le n+1$ are the objects on which $N$ obtains non-zero values, and the value $0$ elsewhere. 

Notice that $[S_{01}]$ appears only in the coboundary of $[S_1]$. Hence $[S_1]$ cannot appear as a summand in any $0$-cocycle. Furthermore, the modules of the form $\widehat{N}$ form a linearly independent set of interval modules, and it follows that no interval of the form $N$ can appear in any $0$-cocycle. This leaves only the classes $[S_i]$ for  $i>1$, and since the set $[S_{1,i}]$ is again linearly independent, it follows that no combination of these classes will  yield a cocycle. This shows that the set of linearly independent cocycles we found is a basis for $\GG^0(\D_n, k)_\I$, and completes the proof.
\end{proof}

We end this subsection with a sample machine calculation of $\EE^i(\D_n, k)_\I$ for low values of $n$.

\begin{center}
\begin{tabular}{||c|c|c||}
\hline
 & $\EE^0(\D_n,k)_\I$ & $\EE^1(\D_n,k)_\I$\\
\hline
$\D_2$  & $\Z^4$ & $\Z^6$\\
\hline
$\D_3$ & $\Z^8$& $\Z^{21}$ \\
\hline
$\D_4$ &  $\Z^{16}$ & $\Z^{68}$ \\
\hline
$\D_5$ & $\Z^{32}$ & $\Z^{215}$\\
\hline
$\D_6$ & $\Z^{64}$ & $\Z^{670}$\\
\hline
$\D_7$ & $\Z^{128}$ & $\Z^{2065}$\\
\hline
$\D_8$ & $\Z^{256}$ & $\Z^{6312}$\\
\hline
$\D_9$ & $\Z^{512}$ & $\Z^{19179}$\\
\hline
\end{tabular}
\end{center}
The natural map $\EE^0(\D_n,k)\to \GG^0(\D_n,k)$ is a monomorphism and by Corollary  \ref{Cor:S_C-in-E0} its image does not contain the cocycle $[S_{\D_n}]\in\GG^0(\D_n,k)$. On the other hand, it is not hard to see that the simple module $S_0$ does not give rise to a $0$-cocycle, because $I(S_0) = S_{\D_n}$, which is not in the image of the restriction, while each of the intervals of Corollary \ref{Cor:ch-G-0-dandelion} are also $0$-cocycles in $\EE^0(\D_n,k)$. Thus $\EE^0(\D_n,k)_1$ is a free abelian group of rank $2^n$ for all $n$.  Also, the rank  $\rho_n$ of $\EE^1(\D_n, k)_1$ seems to follow the formula
\[\rho_n = 3^{n} -2^{n} + n -1,\]
but we could not verify this formula analytically.

\subsection{Cyclic Quivers}\label{SSec:Cyclic}
By a cyclic quiver we mean any quiver where the underlying undirected graph is a cycle. By Example \ref{Ex:E0-ex}, if $\Q$ is any such quiver, then $\widetilde{\EE}^0(\PP(\Q), k)\cong\Gr(k[\Z])$. Hence $\widetilde{\EE}^0(\PP(\Q), k)$ is a free abelian group generated by the indecomposable $k[\Z]$-modules, which we proceed to describe. 

\begin{Prop}\label{Prop:Cyclic-E0}
Let $\Q$ be a cyclic quiver. Then for any field $k$ the group $\widetilde{\EE}^0(\PP(\Q), k)$ is a free abelian group with a basis given by pairs $(q(t), m)$, where $q(t)\in k[t]$ is an irreducible polynomial of positive degree with a non-zero constant term and $m>0$ is an integer. 
\end{Prop}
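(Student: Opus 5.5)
By Example \ref{Ex:E0-ex} (equivalently Theorem \ref{Thm:E0G0} together with Corollary \ref{Cor:E0-trees}), $\widetilde{\EE}^0(\PP(\Q),k)\cong\Gr(k\Gamma)$ with $\Gamma=\pi_1(|\PP(\Q)|)\cong\Z$. This holds because $|\PP(\Q)|$ is homotopy equivalent to the geometric realisation of the underlying graph of $\Q$, which is a circle. The proof therefore reduces to a classification of the finite dimensional indecomposable $k[\Z]$-modules. Here $k[\Z]=k[t,t^{-1}]$ is the Laurent polynomial ring, and $\Gr(k\Z)$ is by definition free abelian on the isomorphism classes of these indecomposables. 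Since the decomposition into indecomposables is unique (Krull--Schmidt holds for finite dimensional modules), it suffices to exhibit a bijection between indecomposable finite dimensional $k[t,t^{-1}]$-modules and the pairs $(q(t),m)$ in the statement.

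First I identify the modules. A finite dimensional $k[\Z]$-module is a pair $(V,T)$, where $V$ is a finite dimensional vector space and $T=$ the action of the generator $1\in\Z$, an \emph{invertible} endomorphism. Equivalently, it is a finite dimensional $k[t]$-module on which $t$ acts invertibly. I would then apply the structure theorem for finitely generated modules over the PID $k[t]$. A finite dimensional $k[t]$-module is torsion, so it decomposes uniquely, up to order and isomorphism, as $\bigoplus k[t]/(q_i(t)^{m_i})$ with $q_i$ monic irreducible of positive degree and $m_i\geq 1$ (primary decomposition). Each cyclic primary module $k[t]/(q^m)$ is indecomposable, since it is local with unique maximal submodule $(q)/(q^m)$. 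Two such modules are isomorphic if and only if the pairs $(q,m)$ agree; for instance $q$ is the unique irreducible factor of the annihilator, and $m$ is recovered from $\dim_k$ and $\deg q$.

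Next I impose invertibility. The action of $t$ on $k[t]/(q^m)$ is invertible if and only if $t$ is a unit modulo $q^m$, that is, if and only if $\gcd(t,q)=1$. Since $q$ is irreducible, this holds exactly when $q\neq t$ up to scalar, which for monic irreducible $q$ of positive degree is equivalent to $q(0)\neq 0$. A direct sum has invertible $T$ if and only if each summand does. Hence the indecomposable $k[\Z]$-modules are exactly the modules $k[t]/(q^m)$ with $q$ monic irreducible, $q(0)\neq0$ and $m\ge1$, where the generator of $\Z$ acts as $t$ and its inverse acts as $t^{-1}$. Morphisms of $k[\Z]$-modules are the same as $k[t]$-module maps, because commuting with $T$ implies commuting with $T^{-1}$. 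Therefore isomorphism classes, and indecomposability, match between the $k[\Z]$ and $k[t]$ settings.

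Combining these steps gives the claimed basis. The only genuinely delicate point is the first identification, $\pi_1\cong\Z$ for an arbitrarily oriented cycle, which holds regardless of orientation; that is already covered by Corollary \ref{Cor:E0-trees} via the rank formula $N=1+|E|-|V|=1$. A minor point to state explicitly is the normalisation convention: irreducible polynomials must be taken up to scalar, or equivalently monic, for the indexing set to be in bijection with the basis.
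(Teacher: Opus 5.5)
Your proof is correct and follows the paper's route: reduce via Theorem~\ref{Thm:E0G0} and Example~\ref{Ex:E0-ex} to $\Gr(k[\Z])$, then classify finite dimensional $k[t,t^{-1}]$-modules by the structure theorem over a PID, excluding the irreducible factor $t$ because $t$ must act invertibly. The differences are cosmetic. You go directly to the primary (elementary divisor) decomposition over $k[t]$, whereas the paper passes through invariant factors and then splits each $k[t]/(p_j(t))$ into primary parts; you also make explicit the monic normalisation and the matching of isomorphism classes and indecomposables between $k[t]$ and $k[\Z]$, which the paper leaves implicit.
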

\begin{proof}
The group ring $k[\Z]$ is isomorphic to the ring of Laurent polynomials $k[t, t^{-1}]$, which is a principal ideal domain. The structure theorem for finitely generated modules over principal ideal domains implies that  any such module $M$ is isomorphic to a finite sum of cyclic modules. Namely, one has an isomorphism
\[M \cong k[t]/(p_1(t))\oplus  k[t]/(p_2(t))\oplus \cdots \oplus  k[t]/(p_n(t)),\]
where $p_i(t)|p_{i+1}(t)$ for each $1\le i\le n-1$, and $p_i(0)\neq 0$ for all $i$. The second condition is required because $t$ must act on $M$ as an  automorphism, and if $t|p_i(t)$, then $t$ will be a zero divisor in the corresponding summand, which is impossible.

Write each $p_j(t)$ for $1\le j\le n$ as a product 
\[p_j(t) = q_1(t)^{m_{j,1}}\cdot q_2(t)^{m_{j,2}}\cdots q_r(t)^{m_{j,r}},\]
where the $q_i(t)$ are irreducible with a non-zero constant term. Here we use the same list of irreducible polynomials for all $p_j(t)$, with the understanding that some of the powers may be $0$. Then 
\[k[t]/(p_j(t)) \cong k[t]/(q_1(t)^{m_{j,1}})\oplus  k[t]/(q_2(t)^{m_{j,2}})\oplus \cdots \oplus  k[t]/(q_r(t)^{m_{j,r}}),\]
and each one of these summands is an indecomposable $k[t, t^{-1}]$-module.

Thus $\widetilde{\EE}^0(\PP(\Q), k)$ is the free abelian group with generators indexed by pairs $(q(t), m)$ where $q(t)\in k[t]$ is an irreducible polynomial and $m>0$ is an integer, as claimed.
\end{proof}

\begin{Rem}
If $k$ is algebraically closed then the polynomials $q(t)$ in Proposition \ref{Prop:Cyclic-E0} are all linear. Hence in this case the generating set can be indexed by a pair $(\lambda, m)$, where $\lambda$ is the single root of the polynomial. In this case the indecomposable modules correspond to Jordan blocks of size $m$ and parameter $\lambda$. 
\end{Rem}

%%%%%%%%%%%%%%%%%%%%%%%%%%%%%%%%%%%%%%%%%%%
%%%%%%%%%%%%%%%%%%%%%%%%%%%%%%%%%%%%%%%%%%%
%%%%%%%%%%%%%%%%%%%%%%%%%%%%%%%%%%%%%%%%%%%
%%%%%%%%%%%%%%%%%%%%%%%%%%%%%%%%%%%%%%%%%%%
%%%%%%%%%%%%%%%%%%%%%%%%%%%%%%%%%%%%%%%%%%%
%%%%%%%%%%%%%%%%%%%%%%%%%%%%%%%%%%%%%%%%%%%
%%%%%%%%%%%%%%%%%%%%%%%%%%%%%%%%%%%%%%%%%%%
%%%%%%%%%%%%%%%%%%%%%%%%%%%%%%%%%%%%%%%%%%%
%%%%%%%%%%%%%%%%%%%%%%%%%%%%%%%%%%%%%%%%%%%
%%%%%%%%%%%%%%%%%%%%%%%%%%%%%%%%%%%%%%%%%%%
%%%%%%%%%%%%%%%%%%%%%%%%%%%%%%%%%%%%%%%%%%%

\end{document}